\documentclass[a4paper]{article}%
\usepackage{amsmath}
\usepackage{amsfonts}
\usepackage{amssymb}
\usepackage{graphicx}%
\providecommand{\U}[1]{\protect\rule{.1in}{.1in}}
\newtheorem{theorem}{Theorem}

\newtheorem{corollary}{Corollary}

\newtheorem{lemma}{Lemma}

\newtheorem{proposition}{Proposition}
\newtheorem{remark}{Remark}

\newenvironment{proof}[1][Proof]{\noindent\textbf{#1.} }{\ \rule{0.5em}{0.5em}}
\begin{document}

\title{Toda flow with unbounded initial data}
\author{Shinichi Kotani$^{1}$, \ Jiahao Xu$^{2}$, \ Shuo Zhang$^{3}$\ }
\date{}
\maketitle

\begin{abstract}
A Toda flow is constructed starting from a certain class of unbounded initial
conditions including sequences growing with power order of less than $1$.
Unbounded ergodic sequences are allowed, and especially $\beta$-ensembles
matrix models in random matrix theory can be an initial data and they yiled
invariant measures for the flow.

\end{abstract}

\section{Introduction}

In 1967 M. Toda\footnotetext[1]{Dept. of Math.\ Osaka Univ. Toyonaka Japan
\ skotani@outlook.com} \footnotetext[2]{Dept. of Math. Nanjing Univ. Nanjing
China 178881559@qq.com}introduced\footnotetext[3]{Dept. of Math. Nanjing Univ.
Nanjing China shuozhang@smail.nju.edu.cn} an anharmonic system described by an
infinite dimensional system of equations%
\[%
\begin{array}
[c]{l}%
\overset{\cdot}{p}_{n}(t)=e^{-\left(  q_{n}(t)-q_{n-1}(t)\right)
}-e^{-\left(  q_{n+1}(t)-q_{n}(t)\right)  }\\
\overset{\cdot}{q}_{n}(t)=p_{n}(t)
\end{array}
\]
with $n\in\mathbb{Z}$. This equation is called Toda lattice, and it is
rewritten in an equivalent form%
\begin{equation}%
\begin{array}
[c]{l}%
\overset{\cdot}{a}_{n}(t)=a_{n}(t)\left(  b_{n}(t)-b_{n-1}(t)\right) \\
\overset{\cdot}{b}_{n}(t)=2\left(  a_{n+1}(t)^{2}-a_{n}(t)^{2}\right)
\end{array}
\label{1-1}%
\end{equation}
by Flaschka variables%
\[
a_{n}(t)=\dfrac{1}{2}e^{-\left(  q_{n}(t)-q_{n-1}(t)\right)  /2}\text{,
\ \ }b_{n}(t)=-\dfrac{1}{2}p_{n}(t)\text{.}%
\]
Our aim in this article is to solve the Cauchy problem for (\ref{1-1}) with
initial data as general as possible. It is known that this equation has
infinitely many invariants and has a unique solution for any bounded initial
data (refer \cite{t}).

Recently there appeared several papers treating (\ref{1-1}) with unbounded
initial data. E. K. Ifantis-K. N. Vlachou\cite{iv} considered (\ref{1-1}) on a
half axis $\mathbb{Z}_{+}$ and found a very simple form of solutions with
unbounded initial data. Let $H_{q}^{+}$ be the Jacobi operator on
$\mathbb{Z}_{+}$ with coefficient $q=\left\{  a_{n},b_{n}\right\}
_{n\in\mathbb{Z}_{+}}$%
\[
\left\{
\begin{array}
[c]{ll}%
\left(  H_{q}^{+}u\right)  _{n}=a_{n+1}u_{n+1}+a_{n}u_{n-1}+b_{n}u_{n} &
\text{for }n\geq2\\
\left(  H_{q}^{+}u\right)  _{1}=a_{2}u_{2}+b_{1}u_{1} & \text{for }n=1
\end{array}
\right.  \text{, }%
\]
and $\sigma_{+}$ be the spectral measure defined by%
\[
\left(  \left(  H_{q}^{+}-z\right)  ^{-1}\delta_{1},\delta_{1}\right)
=\int_{-\infty}^{\infty}\dfrac{\sigma_{+}\left(  d\lambda\right)  }{\lambda
-z}\text{.}%
\]
Let $q\left(  t\right)  =\left\{  a_{n}\left(  t\right)  ,b_{n}\left(
t\right)  \right\}  _{n\in\mathbb{Z}_{+}}$ be the solution to (\ref{1-1}) with
initial data $q$. Assuming $\int_{-\infty}^{\infty}e^{t\lambda}\sigma
_{+}\left(  d\lambda\right)  <\infty$ for any $t\in\mathbb{R}$, they showed
that the spectral measure for $H_{q\left(  t\right)  }^{+}$ are given by
$e^{2\lambda t}\sigma_{+}\left(  d\lambda\right)  /\int_{-\infty}^{\infty
}e^{2\lambda t}\sigma_{+}\left(  d\lambda\right)  $. Then, the coefficients
$q\left(  t\right)  $ is obtained by solving the inverse spectral problem.

On the other hand, A. Aggarwal\cite{a} considered the problem on the whole
$\mathbb{Z}$ and showed in Proposition 4.7 that if $q=\left\{  a_{n}%
,b_{n}\right\}  _{n\in\mathbb{Z}}$ satisfies%
\[
a_{n}>0\text{, }b_{n}\in\mathbb{R}\text{ and }a_{n}+\left\vert b_{n}%
\right\vert =O\left(  \left\vert n\right\vert ^{\alpha}\right)  \text{ as
}n\rightarrow\pm\infty
\]
for some\textit{\ }$\alpha\in\lbrack0,1)$\textit{, }then there exists a
solution\textit{ }$q\left(  t\right)  =\left\{  a_{n}\left(  t\right)
,b_{n}\left(  t\right)  \right\}  _{n\in\mathbb{Z}}$\textit{ }to\textit{
}(\textit{\ref{1-1}})\textit{ }with $q(0)=q$\textit{.} His approach is to
approximate the unbounded initial data $q$ by $q_{N}=\left.  q\right\vert
_{\left[  -N,N\right]  }$: the restriction of $q$ on a finite set $\left[
-N,N\right]  \subset\mathbb{Z}$.

Our approach employs Sato-Segal-Wilson (SSW) theory \cite{sa}, \cite{sew}
developed by ourselves in \cite{z}. The present authors constructed the Toda
flow (Toda hierarchy) on the space of bounded coefficients $\left\{
a_{n},b_{n}\right\}  _{n\in\mathbb{Z}}$ \cite{z}, and we are interested in
constructing solutions to (\ref{1-1}) for a certain class of unbounded initial
data $\left\{  a_{n},b_{n}\right\}  _{n\in\mathbb{Z}}$ including the case:%
\[
a_{n}\text{, }b_{n}=O\left(  \left\vert n\right\vert ^{\alpha}\right)  \text{
\ as }n\rightarrow\pm\infty\text{ with }\alpha<1\text{.}%
\]

To state the main results we prepare necessary notions and notations. For
$q=\left\{  a_{n},b_{n}\right\}  _{n\in\mathbb{Z}}$ with $a_{n}>0$, $b_{n}%
\in\mathbb{R}$ a Jacobi operator $H_{q}$ on $\mathbb{Z}$ is defined by%
\[
\left(  H_{q}u\right)  _{n}=a_{n+1}u_{n+1}+a_{n}u_{n-1}+b_{n}u_{n}\text{,}%
\]
which yields a self-adjoint operator on $\ell^{2}\left(  \mathbb{Z}\right)  $
under a certain condition on $q$. If $q$ is unbounded, then the spectrum
$\Sigma_{q}$ of $H_{q}$ is an unbounded closed set in $\mathbb{R}$. In this
case the domain $D_{+}\subset\mathbb{C}$ in \cite{z} is chosen so that
$D_{+}\supset\phi^{-1}\left(  \Sigma_{q}\right)  $ with $\phi\left(  z\right)
=z+z^{-1}$. Assuming $\Sigma_{q}=\mathbb{R}$ which is the biggest spectrum, we
take a $D_{+}$:%
\[
D_{+}=\left\{  z\in\mathbb{C}\text{; \ }\left\vert \operatorname{Im}%
\phi\left(  z\right)  \right\vert <\vartheta\right\}  \text{ \ for }%
\vartheta>0\text{.}%
\]
where $\vartheta$ is a positive number satisfying%
\begin{equation}
\vartheta<\left(  \dfrac{\pi}{2}\right)  ^{1/N}\sin\dfrac{\pi}{2N}%
\text{.}\label{1-2}%
\end{equation}
$N$ is a positive integer fixed throughout the paper, which will indicate the
degree of the Toda hierarchy. The domain $D_{+}$ satisfies%
\[
D_{+}\supset\mathbb{R}\cup\left\{  z\in\mathbb{C}\text{; }\left\vert
z\right\vert =1\right\}  \text{, \ \ }D_{+}\ni z\Longrightarrow\overline
{z}\text{, \ }z^{-1}\in D_{+}\text{.}%
\]
For $\vartheta$ in (\ref{1-2}) we have%
\begin{equation}
\cosh\phi\left(  z\right)  ^{N}\neq0\text{ \ on }\overline{D}_{+}%
\backslash\left\{  0\right\}  \text{.}\label{1-3}%
\end{equation}
Let $\omega\left(  x\right)  $ be a positive smooth function on $\mathbb{R}$
defined by the equation:%
\[
\operatorname{Im}\phi\left(  x+i\omega\left(  x\right)  \right)
=\vartheta\text{.}%
\]
The boundary curve $C$ of $D_{+}$ consists of two curves $C_{1}$ and $C_{2}$:%
\[
C_{1}=\left\{  x\pm i\omega\left(  x\right)  \text{; \ }x\in\mathbb{R}%
\right\}  \text{, \ }C_{2}=\left\{  z^{-1}\in\mathbb{C}\text{; \ }z\in
C_{1}\right\}  \text{.}%
\]
The complementary domain of $D_{+}$ is denoted by $D_{-}$, that is,
\[
D_{-}=\mathbb{C}\diagdown\left(  D_{+}\cup C\cup\left\{  0\right\}  \right)
\text{.}%
\]
The curve $C_{1}$ (regarding it as a closed simple curve in $\mathbb{C}%
\cup\left\{  \infty\right\}  $) is oriented anti-clockwise and the orientation
of $C_{2}$ is inherited by the map $z\rightarrow z^{-1}$, that is, $C_{2}$ is
oriented clockwise. Two Hardy spaces on the domains $D_{\pm}$ are defined as
closed subspaces of $L^{2}\left(  C\right)  :$%
\[
H\left(  D_{\pm}\right)  =L^{2}\text{-closure of }\left\{  \text{all rational
functions with no poles in }D_{\pm}\right\}  \text{.}%
\]
Since rational functions $r_{\pm}\in L^{2}\left(  C\right)  $ with no poles in
$D_{\pm}$ has a Cauchy representation:%
\[
r_{\pm}\left(  z\right)  =\pm\dfrac{1}{2\pi i}\int_{C}\dfrac{r_{\pm}\left(
\lambda\right)  }{\lambda-z}d\lambda\text{ \ for }z\in D_{\pm}\text{,}%
\]
one can see that, if $u\in H\left(  D_{\pm}\right)  $,
\[
u\left(  z\right)  =\pm\dfrac{1}{2\pi i}\int_{C}\dfrac{u\left(  \lambda
\right)  }{\lambda-z}d\lambda\text{ \ for }z\in D_{\pm}\text{,}%
\]
which implies%
\[
L^{2}\left(  C\right)  =H\left(  D_{+}\right)  \oplus H\left(  D_{-}\right)
\text{ \ (a direct sum),}%
\]
and the projections to $H\left(  D_{\pm}\right)  $ are given by%
\[
\mathfrak{p}_{\pm}f\left(  z\right)  =\pm\dfrac{1}{2\pi i}\int_{C}%
\dfrac{f\left(  \lambda\right)  }{\lambda-z}d\lambda\text{ \ for }z\in D_{\pm}%
\]
as bounded operators on $L^{2}\left(  C\right)  $ (refer to \cite{v}). Since
we have to multiply functions like $z^{n}e^{cz^{k}}$ to functions in
$L^{2}\left(  C\right)  $, the space $H\left(  D_{+}\right)  $ should be
enlarged:%
\[
H_{N,c}\left(  D_{+}\right)  =\rho_{c}(z)H\left(  D_{+}\right)  \text{ \ for
}c>0\text{ \ with }\rho_{c}(z)=\left(  \cosh\phi\left(  z\right)  ^{N}\right)
^{c}\text{.}%
\]
The norm on $H_{N,c}\left(  D_{+}\right)  $ is defined by%
\[
\left\Vert u\right\Vert _{N,c}=\sqrt{\int_{C}\left\vert u\left(
\lambda\right)  \right\vert ^{2}\left\vert \rho_{c}(\lambda)\right\vert
^{-2}\left\vert d\lambda\right\vert }\text{.}%
\]
The function $\rho_{c}(z)$ is well-defined as an analytic function on a
neighborhood of $\overline{D}_{+}\backslash\left\{  0\right\}  $ due to
(\ref{1-3}) and $\cosh\phi\left(  z\right)  ^{N}>0$ holds on $\mathbb{R}%
\backslash\left\{  0\right\}  $. It should be noted that%
\[
\left\{  z^{n}\right\}  _{n\in\mathbb{Z}}\subset H_{N,c}\left(  D_{+}\right)
\text{.}%
\]
\ Denoting the Euclidean norm in $\mathbb{C}^{2}$ by $\left\Vert
\cdot\right\Vert $, for a positive integer $N$,\ a set of symbols on $C$ is
introduced by%
\begin{equation}
\boldsymbol{A}_{N}\left(  C\right)  =\left\{
\begin{array}
[c]{l}%
\boldsymbol{a}=\left(  a_{1},a_{2}\right)  \text{; }\boldsymbol{a}\text{ is
bounded on }C\text{. For any }c>0\text{ there exists }\\
\text{an analytic bounded vector function }\boldsymbol{f}=\left(  f_{1}%
,f_{2}\right)  \text{ on }D_{+}\text{ such that }\\
\sup_{\lambda\in C}\left\Vert \boldsymbol{b}\left(  \lambda\right)
\right\Vert <\infty\text{ and}\sup_{\lambda\in C_{1}}\left(  \left\Vert
\partial_{\lambda}\boldsymbol{b}\left(  \lambda\right)  \right\Vert
+\left\Vert \partial_{\lambda}\widetilde{\boldsymbol{b}}\left(  \lambda
\right)  \right\Vert \right)  <\infty\text{,}\\
\text{where }\boldsymbol{b}\left(  \lambda\right)  \equiv\rho_{c}\left(
\lambda\right)  \left(  \boldsymbol{a}-\boldsymbol{f}\right)  \left(
\lambda\right)
\end{array}
\right\}  \text{.}\label{1-4}%
\end{equation}
A product $\boldsymbol{a}f$ with function $f$ and the Toeplitz operator with
symbol $\boldsymbol{a}$ are defined by%
\begin{equation}
\left\{
\begin{array}
[c]{l}%
\left(  \boldsymbol{a}f\right)  \left(  \lambda\right)  =a_{1}\left(
\lambda\right)  f\left(  \lambda\right)  +a_{2}\left(  \lambda\right)
Rf\left(  \lambda\right)  \\
\left(  T\left(  \boldsymbol{a}\right)  u\right)  \left(  \lambda\right)
=\left(  \boldsymbol{f}u\right)  \left(  \lambda\right)  +\mathfrak{p}%
_{+}\left(  \left(  \boldsymbol{a}-\boldsymbol{f}\right)  u\right)  \left(
\lambda\right)  \text{ for }u\in H_{N,c}\left(  D_{+}\right)
\end{array}
\text{,}\right.  \label{1-5}%
\end{equation}
where%
\begin{equation}
Rf\left(  \lambda\right)  =\lambda^{-1}f\left(  \lambda^{-1}\right)
\text{.}\label{1-6}%
\end{equation}
It should be noted that $T\left(  \boldsymbol{a}\right)  $ does not depend on
the choice of $\boldsymbol{f}$ and $T\left(  \boldsymbol{a}\right)  $ defines
a bounded operator on $H_{N,c}\left(  D_{+}\right)  $, since $R$ is unitary on
$H_{N,c}\left(  D_{+}\right)  $. To define a group acting on $\boldsymbol{A}%
_{N}\left(  C\right)  $ we introduce the scale $\mathrm{\delta}_{D_{+}}\left(
g\right)  $ of an analytic function $g$ on $D_{+}$ with no zeros:%
\begin{equation}
\mathrm{\delta}_{D_{+}}\left(  g\right)  =\sup\limits_{z\in D_{+}}\left\vert
\dfrac{g^{\prime}\left(  z\right)  }{g\left(  z\right)  }\theta\left(
z\right)  ^{-1}\right\vert \text{, where\ }\theta\left(  z\right)  =N\left(
\left\vert z\right\vert ^{N-1}+\left\vert z\right\vert ^{-N-1}\right)
\text{.}\label{1-7}%
\end{equation}
If the scale $\mathrm{\delta}_{D_{+}}\left(  g\right)  $ satisfies
$\mathrm{\delta}_{D_{+}}\left(  g\right)  <c$, it implies, for some constant
$c_{1}$,%
\[
\left\{
\begin{array}
[c]{c}%
\left\vert g\left(  z\right)  \right\vert \leq c_{1}e^{c\left\vert
z\right\vert ^{N}}\text{ \ \ if }z\in D_{+}\cap\left\{  \left\vert
z\right\vert >1\right\}  \\
\left\vert g\left(  z\right)  \right\vert \leq c_{1}e^{c\left\vert
z\right\vert ^{-N}}\text{ \ if }z\in D_{+}\cap\left\{  \left\vert z\right\vert
<1\right\}
\end{array}
\right.  \text{. \ }%
\]
A group $\Gamma_{N}\left(  D_{+}\right)  $ is defined by%
\begin{equation}
\Gamma_{N}\left(  D_{+}\right)  =\left\{
\begin{array}
[c]{l}%
g\text{; }g\text{ is an analytic function on some }D_{+}^{\prime}\supset
D_{+}\\
\text{with no zeros in }D_{+}^{\prime}\text{ and }\mathrm{\delta}%
_{D_{+}^{\prime}}\left(  g\right)  <\infty
\end{array}
\right\}  \text{,}\label{1-8}%
\end{equation}
where $D_{+}^{\prime}$ is a similar domain defined by another $\vartheta
^{\prime}>\vartheta$. Any rational function $r$ with no zeros nor poles in
$D_{+}\cup C$ is an element of $\Gamma_{N}\left(  D_{+}\right)  $, especially
$z^{n}\in\Gamma_{N}\left(  D_{+}\right)  $ for any $n\in\mathbb{Z}$. Unless
$g$ is bounded, $g\boldsymbol{a}\notin\boldsymbol{A}_{N}\left(  C\right)  $
for $g\in\Gamma_{N}\left(  D_{+}\right)  $ and $\boldsymbol{a}\in
\boldsymbol{A}_{N}\left(  C\right)  $, which bothers us always. We define a
Toeplitz operator for $g\boldsymbol{a}$ by%
\begin{equation}
T\left(  g\boldsymbol{a}\right)  u=g\boldsymbol{f}u+\mathfrak{p}_{+}\left(
g\left(  \boldsymbol{a}-\boldsymbol{f}\right)  u\right)  \in gH_{N,c}\left(
D_{+}\right)  \text{ \ for }u\in H_{N,c}\left(  D_{+}\right)  \text{,}%
\label{1-9}%
\end{equation}
which is possible due to $g\left(  \boldsymbol{a}-\boldsymbol{f}\right)  u\in
L^{2}\left(  C\right)  $ for an appropriate $\boldsymbol{f}$ and does not
depend on the choice of $\boldsymbol{f}$. One can show in Section2 that
$g^{-1}T\left(  g\boldsymbol{a}\right)  -T\left(  \boldsymbol{a}\right)  $ is
a trace class operator on $H_{N,c}\left(  D_{+}\right)  $. This makes it
possible to introduce a tau-function as follows. Set%
\begin{equation}
\boldsymbol{A}_{N}^{inv}\left(  C\right)  =\left\{
\begin{array}
[c]{c}%
\boldsymbol{a}\in\boldsymbol{A}_{N}\left(  C\right)  \text{; \ }T\left(
\boldsymbol{a}\right)  ^{-1}\text{ exists as a bounded}\\
\text{ operator on }H_{N,c}\left(  D_{+}\right)  \text{ for any }c>0
\end{array}
\right\}  \text{,}\label{1-10}%
\end{equation}
Then, we define the tau-function by%
\begin{equation}
\tau_{\boldsymbol{a}}\left(  g\right)  =\det\left(  g^{-1}T\left(
g\boldsymbol{a}\right)  T\left(  \boldsymbol{a}\right)  ^{-1}\right)  \text{
\ for }g\in\Gamma_{N}\left(  D_{+}\right)  \text{.}\label{1-11}%
\end{equation}
This determinant is defined on the Hilbert space $H_{N,c}\left(  D_{+}\right)
$, but it does not depend on $c$ nor $N$ (see Lemma 4.8 of \cite{k2}).
Hereafter, for a given $g\in\Gamma_{N}\left(  D_{+}\right)  $ the positive $c$
is chosen so that $c>\mathrm{\delta}_{D_{+}}\left(  g\right)  $ if we argue
something for the operator $T\left(  g\boldsymbol{a}\right)  $.

A notion of positivity for symbols will be very useful to show the
non-vanishing of $\tau_{\boldsymbol{a}}\left(  g\right)  $. Define%
\[
\overline{f}\left(  z\right)  =\overline{f\left(  \overline{z}\right)  }%
\]
\ for a function $f$ on a set $D$ satisfying $D\ni z\rightarrow\overline{z}\in
D$, and set
\begin{equation}
\left\{
\begin{array}
[c]{l}%
\Gamma_{N}^{\operatorname{real}}\left(  D_{+}\right)  =\left\{  g\in\Gamma
_{N}\left(  D_{+}\right)  \text{; \ }g=\overline{g}\right\}  \text{,}\\
\boldsymbol{A}_{N,+}^{inv}\left(  C\right)  =\left\{  \boldsymbol{a}%
\in\boldsymbol{A}_{N}^{inv}\left(  C\right)  \text{;}%
\begin{array}
[c]{l}%
\boldsymbol{a}=\overline{\boldsymbol{a}}\text{ \ and }\tau_{\boldsymbol{a}%
}\left(  r\right)  \geq0\text{ for any}\\
\text{rational function }r\in\Gamma_{N}^{\operatorname{real}}\left(
D_{+}\right)
\end{array}
\right\}  \text{,}\\
\boldsymbol{A}_{N,++}^{inv}\left(  C\right)  =\left\{  \boldsymbol{a}%
\in\boldsymbol{A}_{N,+}^{inv}\left(  C\right)  \text{; }\tau_{\boldsymbol{a}%
}\left(  z^{n}\right)  >0\text{ \ for any }n\in\mathbb{Z}\right\}  \text{.}%
\end{array}
\right.  \label{1-12}%
\end{equation}
Then, one can show $\tau_{\boldsymbol{a}}\left(  g\right)  >0$ for any
$g\in\Gamma_{N}^{\operatorname{real}}\left(  D_{+}\right)  $ and
$\boldsymbol{a}\in\boldsymbol{A}_{N,++}^{inv}\left(  C\right)  $ in
Proposition \ref{p1}. The property $\tau_{\boldsymbol{a}}\left(  g\right)  >0$
guarantees the invertibility of $g^{-1}T\left(  g\boldsymbol{a}\right)  $ on
$H_{N,c}\left(  D_{+}\right)  $ in spite of $g\boldsymbol{a}\notin
\boldsymbol{A}_{N}\left(  C\right)  $, which allows us to define
$\tau_{g\boldsymbol{a}}\left(  g_{1}\right)  $ with the property
$\tau_{g\boldsymbol{a}}\left(  g_{1}\right)  >0$ by%
\[
\tau_{g\boldsymbol{a}}\left(  g_{1}\right)  =\det\left(  \left(
g_{1}g\right)  ^{-1}T\left(  g_{1}g\boldsymbol{a}\right)  \left(
g^{-1}T\left(  g\boldsymbol{a}\right)  \right)  ^{-1}\right)
\]
for any $g_{1}\in$ $\Gamma_{N}^{\operatorname{real}}\left(  D_{+}\right)  $.
With this in mind, the coefficients $a_{n}\left(  g\boldsymbol{a}\right)  $,
$b_{n}\left(  g\boldsymbol{a}\right)  $ are defined by:%
\begin{equation}
\left\{
\begin{array}
[c]{l}%
a_{n}\left(  g\boldsymbol{a}\right)  =\sqrt{\dfrac{\tau_{g\boldsymbol{a}%
}\left(  z^{n}\right)  \tau_{g\boldsymbol{a}}\left(  z^{n-2}\right)  }%
{\tau_{g\boldsymbol{a}}\left(  z^{n-1}\right)  ^{2}}}\\
b_{n}\left(  g\boldsymbol{a}\right)  =\dfrac{\left.  \partial_{\varepsilon
}\tau_{g\boldsymbol{a}}\left(  z^{n}q_{\varepsilon^{-1}}\right)  \right\vert
_{\varepsilon=0}}{\tau_{g\boldsymbol{a}}\left(  z^{n}\right)  }-\dfrac{\left.
\partial_{\varepsilon}\tau_{g\boldsymbol{a}}\left(  z^{n-1}q_{\varepsilon
^{-1}}\right)  \right\vert _{\varepsilon=0}}{\tau_{g\boldsymbol{a}}\left(
z^{n-1}\right)  }%
\end{array}
\right.  \label{1-13}%
\end{equation}
with $q_{\zeta}\left(  z\right)  =\left(  1-\zeta^{-1}z\right)  ^{-1}$.

Now we introduce a space of initial data. For a Jacobi coefficient $q=\left\{
a_{n},b_{n}\right\}  _{n\in\mathbb{Z}}$ let $m_{\pm}$ be the Weyl functions
for $H_{q}$ and $\sigma_{\pm}$ be their spectral measures (refer to Appendix
for their definitions). Set%
\begin{equation}
Q_{N}=\left\{  q=\left\{  a_{n},b_{n}\right\}  _{n\in\mathbb{Z}}\text{;
\ }\int_{-\infty}^{\infty}e^{c\left\vert \lambda\right\vert ^{N}}\sigma_{\pm
}\left(  d\lambda\right)  <\infty\text{ for any }c>0\right\}  \text{.}
\label{1-14}%
\end{equation}
The integrability condition for $\sigma_{\pm}$ is equivalent to%
\[
\left(  \exp c\left(  H_{q}\right)  ^{N}\delta_{0},\delta_{0}\right)
<\infty\text{ \ for any }c>0\text{,}%
\]
where $\delta_{0}$ is an element of $\ell^{2}\left(  \mathbb{Z}\right)  $
satisfying $\delta_{0}\left(  n\right)  =0$ for $n\neq0$, $\delta_{0}\left(
0\right)  =1$. For such $\sigma_{\pm}$ the moment problem is unique and
$H_{q}$ is uniquely extendable as a self-adjoint operator on $\ell^{2}\left(
\mathbb{Z}\right)  $. Define a symbol $\boldsymbol{m}$ by%
\begin{equation}
\left\{
\begin{array}
[c]{l}%
\boldsymbol{m}\left(  z\right)  =\left(  \dfrac{zm(z)-1}{z^{2}-1},z^{2}%
\dfrac{z-m(z)}{z^{2}-1}\right)  \text{ \ \ with}\\
m(z)=\left\{
\begin{array}
[c]{ll}%
z+z^{-1}+a_{1}^{2}m_{+}\left(  z+z^{-1}\right)  & \text{if }\left\vert
z\right\vert >1\\
-a_{0}^{2}m_{-}\left(  z+z^{-1}\right)  +b_{0} & \text{if }\left\vert
z\right\vert <1
\end{array}
\right.  \text{.}%
\end{array}
\right.  \label{1-15}%
\end{equation}
Then, the property $\boldsymbol{m}\in\boldsymbol{A}_{N,++}^{inv}\left(
C\right)  $ is verified for any domain $D_{+}$. For a group%
\begin{equation}
\Gamma_{N}^{\operatorname{real}}=\bigcup_{D_{+}}\Gamma_{N}%
^{\operatorname{real}}\left(  D_{+}\right)  \label{1-16}%
\end{equation}
define $\mathrm{Toda}\left(  g\right)  $ on $Q_{N}$ by%
\begin{equation}
\mathrm{Toda}\left(  g\right)  q=\left\{  a_{n}\left(  g\boldsymbol{m}\right)
,b_{n}\left(  g\boldsymbol{m}\right)  \right\}  _{n\in\mathbb{Z}}\in
Q_{N}\text{.} \label{1-17}%
\end{equation}
Here we have to choose a suitable $\vartheta$ in (\ref{1-2}) for each
$g\in\Gamma_{N}^{\operatorname{real}}$ so that $g$ has no poles nor zeros in
$D_{+}$, and $\tau_{g\boldsymbol{m}}\left(  z^{n}\right)  $ does not depend on
the choice of $D_{+}$. The metrics on $Q_{N}$ and $\Gamma_{N}%
^{\operatorname{real}}$ are defined%
\[%
\begin{array}
[c]{l}%
\mathrm{d}\left(  q_{1},q_{2}\right)  =\sum_{k\in\mathbb{Z}}2^{-\left\vert
k\right\vert }\left(
\begin{array}
[c]{c}%
\left\vert a_{k}\left(  q_{1}\right)  -a_{k}\left(  q_{2}\right)  \right\vert
+\left\vert b_{k}\left(  q_{1}\right)  -b_{k}\left(  q_{2}\right)  \right\vert
\\
+\left\vert \left(  \left(  \cosh\left(  kH_{q_{1}}^{N}\right)  -\cosh\left(
kH_{q_{2}}^{N}\right)  \right)  \delta_{0},\delta_{0}\right)  \right\vert
\end{array}
\right)  \wedge1\text{,}\\
\mathrm{d}_{c}\left(  g_{1},g_{2}\right)  =\sqrt{%
%TCIMACRO{\dint _{C^{2}}}%
%BeginExpansion
{\displaystyle\int_{C^{2}}}
%EndExpansion
\left\vert \dfrac{g_{1}\left(  z\right)  ^{-1}g_{1}\left(  \lambda\right)
-g_{2}\left(  z\right)  ^{-1}g_{2}\left(  \lambda\right)  }{2\pi i\left(
\lambda-z\right)  }\right\vert ^{2}\dfrac{\left\vert d\lambda\right\vert
\left\vert dz\right\vert }{\left\vert \rho_{c}\left(  z\right)  \right\vert
^{2}\left\vert \rho_{c}\left(  \lambda\right)  \right\vert ^{2}}}\text{.}%
\end{array}
\]

A sequence $g_{n}\in\Gamma_{N}^{\operatorname{real}}$ is said to converge to
$g\in\Gamma_{N}^{\operatorname{real}}$ in the metric $\mathrm{d}_{c}\left(
g_{1},g_{2}\right)  $, if $\mathrm{\delta}_{D_{+}}\left(  g\right)  <c$,
$\sup_{n\geq1}\mathrm{\delta}_{D_{+}}\left(  g_{n}\right)  <c$ and
$\mathrm{d}_{c}\left(  g_{n},g\right)  \rightarrow0$ hold. Our main theorem is

\begin{theorem}
\label{t1}$\left\{  \mathrm{Toda}\left(  g\right)  \right\}  _{g\in\Gamma
_{N}^{\operatorname{real}}}$ defines a continuous flow on $Q_{N}$. Moreover,
for a real polynomial $p$ of $\mathrm{deg}$\textrm{\thinspace}$p\leq N$
$\mathrm{Toda}\left(  e^{-2t\widehat{p}}\right)  q$ solves the Cauchy problem
of Toda hierarchy%
\begin{equation}
\partial_{t}H_{q_{t}}=\left[  H_{q_{t}},p\left(  H_{q_{t}}\right)
_{a}\right]  \text{ \ with }q_{0}=q\text{,} \label{1-18}%
\end{equation}
where $\widehat{p}$ is the polynomial part of $p\left(  z+z^{-1}\right)  $ and
$\left(  \cdot\right)  _{a}$ denotes the anti-symmetrization of an operator
$\cdot$. Especially, if $p(z)=z$, $\mathrm{Toda}\left(  e^{-2tz}\right)  q$
gives a solution to the Toda lattice.
\end{theorem}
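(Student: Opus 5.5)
The plan is to split Theorem \ref{t1} into four claims and handle them in order: well-definedness, the group (flow) property, continuity, and the differential equation (\ref{1-18}).

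\emph{Well-definedness.} First I would verify that $\boldsymbol{m}\in\boldsymbol{A}_{N,++}^{inv}(C)$ for $q\in Q_N$.
\begin{itemize}
\item The decomposition $\boldsymbol{m}=\boldsymbol{f}+(\boldsymbol{m}-\boldsymbol{f})$ in (\ref{1-4}) should come from splitting $m_\pm$. One part is a piece analytic in $D_+$; the other is a Cauchy-type integral of $\sigma_\pm$ restricted to large $|\lambda|$, whose decay beats $\rho_c$ thanks to $\int e^{c|\lambda|^N}\sigma_\pm(d\lambda)<\infty$.
\item Invertibility of $T(\boldsymbol{m})$ and positivity of $\tau_{\boldsymbol{m}}$ should follow as in \cite{z}. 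There one identifies $T(\boldsymbol{m})$ with the Riemann--Hilbert/Weyl structure of $H_q$, and writes $\tau_{\boldsymbol{m}}(r)$ for real rational $r$ as a Gram-type determinant built from the positive measures $\sigma_\pm$.
\end{itemize}
Then Proposition \ref{p1} gives $\tau_{g\boldsymbol{m}}(g_1)>0$ for all real $g,g_1$, so the coefficients (\ref{1-13}) make sense.

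To show $\mathrm{Toda}(g)q\in Q_N$, I would prove a key identification: the symbol $g\boldsymbol{m}$ is equivalent, modulo multiplication by an element of $\Gamma_N$ acting trivially on the $\tau$-ratios, to the symbol $\boldsymbol{m}_{q'}$ of $q'=\mathrm{Toda}(g)q$. Concretely, the Weyl functions of $q'$ should satisfy the following.
\begin{itemize}
\item The new spectral measures are $\sigma'_\pm \propto |g(\phi^{-1}\lambda)|^{\pm}$-weighted versions of $\sigma_\pm$, in the spirit of Ifantis--Vlachou's $e^{2\lambda t}\sigma_+$ formula.
\item Since $\delta_{D_+}(g)<\infty$ gives $|g|\le c_1e^{c|z|^N}$, the condition $\int e^{c|\lambda|^N}\sigma'_\pm<\infty$ for every $c$ is preserved.
\end{itemize}

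\emph{Group property.} Next I would establish $\mathrm{Toda}(g_1)\mathrm{Toda}(g_2)=\mathrm{Toda}(g_1g_2)$. This follows from the cocycle identity
$$\tau_{g\boldsymbol{a}}(g_1)=\tau_{\boldsymbol{a}}(g_1g)/\tau_{\boldsymbol{a}}(g),$$
which holds by multiplicativity of Fredholm determinants, combined with the identification $\boldsymbol{m}_{\mathrm{Toda}(g)q}\sim g\boldsymbol{m}$ above. Also $\mathrm{Toda}(1)=\mathrm{id}$.

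\emph{Continuity.} I would show that $(g,q)\mapsto\tau_{g\boldsymbol{m}_q}(z^n)$ and $\tau_{g\boldsymbol{m}_q}(z^nq_{\varepsilon^{-1}})$ are continuous.
\begin{itemize}
\item In $g$, the difference $g_1^{-1}T(g_1\boldsymbol{a})-g_2^{-1}T(g_2\boldsymbol{a})$ has Hilbert--Schmidt kernel $(g_1(z)^{-1}g_1(\lambda)-g_2(z)^{-1}g_2(\lambda))/(2\pi i(\lambda-z))$ against $\boldsymbol{a}-\boldsymbol{f}$. This is exactly what $\mathrm{d}_c$ controls.
\item In $q$, the metric $\mathrm{d}$ contains the terms $(\cosh(kH^N)\delta_0,\delta_0)$. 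These give uniform exponential moment control, hence tightness and convergence of $\rho_c(\boldsymbol{m}-\boldsymbol{f})$ in the relevant norms.
\end{itemize}
Trace-norm continuity of the determinants then gives continuity of $a_n,b_n$. The $\cosh$ part of $\mathrm{d}$ for the image should also be handled by the explicit $\sigma'_\pm$ formula.

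\emph{The equation (\ref{1-18}).} For $g_t=e^{-2t\widehat p}$, I would differentiate $\tau_{\boldsymbol{a}}(z^ng_t)$ in $t$. Using the group property, it suffices to compute at $t=0$ for a general $q\in Q_N$. Since $\widehat p$ is a polynomial in $z$ of degree $\le N$, the derivative $\partial_t\log\tau$ is expressible through matrix elements $(p(H_q)\delta_n,\delta_n)$. Standard manipulation, as in the bounded case of \cite{z}, then yields the Lax equation $\partial_tH=[H,p(H)_a]$; for $p(z)=z$ it reduces to (\ref{1-1}). All matrix elements are finite because $\delta_n$ lies in the domain of $e^{c|H|^N}$.

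The main obstacle I expect is the identification $\boldsymbol{m}_{\mathrm{Toda}(g)q}\sim g\boldsymbol{m}$ for unbounded $g$ with $g\boldsymbol{m}\notin\boldsymbol{A}_N(C)$. It is simultaneously the basis of the invariance of $Q_N$, the flow property, and the reduction of (\ref{1-18}) to $t=0$. I would first try to prove it via the Riemann--Hilbert factorization $T(g\boldsymbol{m})=g\,(\text{invertible})$. The Weyl solutions of $H_{q'}$ should then appear as the $\mathfrak{p}_+$-components of $T(g\boldsymbol{m})^{-1}$ applied to $1$ and $z^{-1}$, and one would read off $m'$ from them.
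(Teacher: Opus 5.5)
The proposal breaks at the step you flag as the main obstacle, and the concrete mechanism you offer there is false. The rule ``$\sigma'_\pm$ is a $|g(\phi^{-1}\lambda)|^{\pm}$-weighted version of $\sigma_\pm$'' is the Moser/Ifantis--Vlachou formula for the \emph{half-line} lattice, where the flow is anchored at $\delta_1$. On $\mathbb{Z}$ the flow couples the two half-lines. Already for rational $g$, the operation $d_\eta$ and the reflection formula in (\ref{2-29}) mix $m_+$, $m_-$ and $b_0$. A direct counterexample: the free lattice $a_n\equiv1$, $b_n\equiv0$ is a stationary solution of (\ref{1-1}), so its $\sigma_+$ does not move. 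Yet for $g=e^{-2tz}$ and $\lambda=\phi(e^{i\theta})$ you get $|g(e^{i\theta})|=e^{-t\lambda}$, a nonconstant weight on $[-2,2]$. So you have no valid argument that $\mathrm{Toda}(g)q\in Q_N$, that $\pm\infty$ are limit point for $H_{q(g\boldsymbol{m})}$, or that its Weyl functions are the two pieces of $m_{g\boldsymbol{m}}$ as in (\ref{3-12}). Everything downstream depends on this. Without it, the symbol $\boldsymbol{m}_{q'}$ in your flow argument is not even defined. The $\cosh$ terms of $\mathrm{d}$ in your continuity argument need the uniform bound (\ref{4-7}). Your Lax computation needs the $\varphi$-data of $g\boldsymbol{m}$ to encode the spectral data of $H_{q'}$.

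The paper gets this identification without any explicit formula for $\sigma'_\pm$ (Lemmas \ref{l3-4}, \ref{l3-5}). It sets $f_n(\zeta)=\zeta^{-n}(1+\varphi^{(0)}_{z^ng\boldsymbol{a}}(\zeta))/\sqrt{1+\varphi^{(0)}_{z^ng\boldsymbol{a}}(0)}$, built from the $H(D_-)$ parts rather than the $\mathfrak{p}_+$ parts. It verifies the recurrence (\ref{3-7}) by tau-function identities. It then uses $\operatorname{Im}n_{\boldsymbol{b}}<0$ on $D_-^1\cap\mathbb{C}_+$, a consequence of positivity (Lemma \ref{l2-14}), to get $f\in\ell^2(\mathbb{Z}_+)$ with $-f_1/(a_1f_0)$ in every Weyl disk $D_L(\phi(\zeta))$. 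The integrand $u_j$ in the Cauchy representation of $\varphi^{(0)}_{z^jg\boldsymbol{a}}$ decays like $e^{-c(|\lambda|^N+|\lambda|^{-N})}$ on $C$ for every $c$. Hence the asymptotic coefficients of $f_1/f_0$ satisfy $|\mu_k|\le c_1c^{-k}\Gamma(k/N+1)$. The disk radius $O(\zeta^{-2L-1})$ transfers this bound to any Weyl function. Lemmas \ref{a-3} and \ref{a-4} then give $\int e^{c|\lambda|^N}\sigma_\pm(d\lambda)<\infty$ for all $c$ and, via Carleman, limit point type. Run uniformly along a sequence $g_n$, the same estimates give (\ref{4-7}).

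The flow property then follows from Lemma \ref{l2-12}(iii): equal $m$-functions remain equal after acting by $g$, proved from (\ref{2-29}) for rational $g$ plus approximation. It does not come from relating $\boldsymbol{m}_{q'}$ and $g\boldsymbol{m}$ by a scalar factor. Likewise, positivity of $\boldsymbol{m}$ comes from Lemma \ref{l3-3} (closure of $\mathcal{M}$ under $d_0$, $d_\eta d_{\overline{\eta}}$ and reflection), not from a Gram determinant.

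Once this is in place, your direct differentiation at $t=0$ is a reasonable alternative to the paper's route. The paper instead approximates $q$ by bounded $q_n$ with truncated $\sigma_{\pm}$, identifies the constructions on $C$ and on a bounded curve via (\ref{4-8}), and passes to the limit in (\ref{4-9}) using Lemma \ref{l2-6}. Your route works because $\partial_t(g_t^{-1}H_{g_t})|_{t=0}$ has a polynomial, hence finite-rank, kernel.
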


The condition in $Q_{N}$ is given indirectly in terms of the spectral measures
$\sigma_{\pm}$. A sufficient condition for this is

\begin{theorem}
\label{t2}For $q=\left\{  a_{n},b_{n}\right\}  _{n\in\mathbb{Z}}$ assume%
\begin{equation}
a_{n}\text{, }b_{n}=o\left(  \left\vert n\right\vert ^{1/N}\right)  \text{
\ as }n\rightarrow\pm\infty\text{.} \label{1-19}%
\end{equation}
Then, $q\in Q_{N}$ is valid, hence $\mathrm{Toda}\left(  g\right)  q$ is
well-defined for $g\in\Gamma_{N}^{\operatorname{real}}$.
\end{theorem}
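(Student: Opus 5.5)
We need to show that $\int e^{c|\lambda|^N}\sigma_\pm(d\lambda)<\infty$ for every $c>0$. Equivalently, we show $(\exp(c H^N)\delta_0,\delta_0)<\infty$ with $H$ replaced by the half-line operators $H_q^{\pm}$, whose spectral measures at $\delta_{\pm1}$ are $\sigma_\pm$. We work with $\sigma_+$; $\sigma_-$ is symmetric.

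The plan is to bound the moments and sum the exponential series. The integrand is dominated as
\[
e^{c|\lambda|^N}\le \sum_{k\ge0}\frac{c^k\lambda^{2Nk}}{k!}+e^{c}\sum_{k\ge 0}\frac{c^k}{k!},
\]
since $|\lambda|^{N}\le \lambda^{2N}+1$. Hence it is enough to show
\[
\sum_k \frac{c^k}{k!}\,M_{2Nk}<\infty,\qquad M_j=\int\lambda^j\sigma_+(d\lambda)=((H_q^+)^j\delta_1,\delta_1).
\]
Here the moments are computed from the formal matrix action, which is legitimate because $\delta_1$ lies in the domain of every power of the closure. Once finiteness of the exponential moments is established, determinacy and uniqueness of the self-adjoint extension follow from the remark already made in the paper.

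The key step is a combinatorial bound on $M_j$. Expand $((H^+)^j\delta_1,\delta_1)$ as a sum over lattice paths of length $j$ starting and ending at site $1$. Each step is $\pm1$ or a stay, with weight $a$ or $b$. There are at most $3^j$ such paths, and each stays within $[1,j/2+1]$. Let $\varepsilon(n)=\sup_{1\le m\le n}(|a_m|+|b_m|)$. Then
\[
M_j\le \bigl(3\,\varepsilon(j)\bigr)^j .
\]
By (\ref{1-19}), for any $\eta>0$ we have $\varepsilon(n)\le C_\eta+\eta n^{1/N}$. With $j=2Nk$ this gives
\[
M_{2Nk}\le \bigl(3C_\eta+3\eta(2Nk)^{1/N}\bigr)^{2Nk}\le C'^{\,k}+\bigl(6\eta\bigr)^{2Nk}(2Nk)^{2k}2^{2Nk}.
\]
By Stirling, $k^{2k}/k!\approx e^k k^{k}$, so this grows too fast. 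The naive estimate loses a factor of two in the exponent: we need $\lambda^{N}$-type growth, but the moment of order $2Nk$ only yields $(k^{1/N})^{2Nk}=k^{2k}$.

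The remedy is to aim for the correct estimate directly. Bound $\int e^{c|\lambda|^N}d\sigma$ by $\int \cosh(c\lambda^N)\,d\sigma\cdot 2$ when $N$ is even, and use $e^{c|x|}\le e^{cx}+e^{-cx}$ on $x=\lambda^N$ in general. Then
\[
\int e^{\pm c\lambda^N}d\sigma=\sum_k \frac{(\pm c)^k}{k!}\,M_{Nk},
\]
and it suffices to check absolute convergence of $\sum_k c^k|M_{Nk}|/k!$. Bounding $|M_{Nk}|$ by the path sum as above gives
\[
|M_{Nk}|\le \bigl(3C_\eta+3\eta(Nk)^{1/N}\bigr)^{Nk}\le 2^{Nk}\Bigl((3C_\eta)^{Nk}+(3\eta)^{Nk}Nk\Bigr)^{\!k}.
\]
The right-hand side is at most $A^k+(6^N\eta^N N)^k k^k$. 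Dividing by $k!\ge (k/e)^k$, the series is dominated by
\[
\sum_k \frac{(cA)^k}{k!}+\sum_k \bigl(c\,e\,6^N N\eta^N\bigr)^k ,
\]
which converges once $\eta$ is chosen small relative to $c$. This is exactly where the little-$o$ hypothesis enters: big-$O$ would give only a finite radius in $c$.

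The main obstacle is making the path-counting bound tight enough, specifically the $(\max_{m\le j}|a_m|+|b_m|)^j$ localization together with the $3^j$ path count. The delicate point is keeping the exponent at $Nk$ (odd moments included) rather than $2Nk$. The other routine checks are justifying the termwise expansion via Tonelli/monotone convergence and the link between $M_j$ and $\sigma_+$, which requires the domain and essential self-adjointness statements recalled in the Appendix.
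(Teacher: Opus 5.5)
Your argument is correct, and it follows a genuinely different and more elementary route than the paper.

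\textbf{The paper's route.} It first assumes a big-$O$ bound $a_n,|b_n|\le c_1n^{1/\alpha}$. From the recursion $\|H_+^{k+1}\delta_1\|^2\le 9c_1^2(k+2)^{2/\alpha}\|H_+^k\delta_1\|^2$ it gets $(H_+^{2k}\delta_1,\delta_1)\lesssim (9c_1^2)^k((k+1)!)^{2/\alpha}$. Stirling and Lemma \ref{a-4} then give $\int e^{c|\lambda|^\alpha}d\sigma_+<\infty$, but only for $c$ below an explicit threshold (Lemma \ref{l4-2}). To reach the little-$o$ case it restricts $H_+$ to $\mathbb{Z}_{\ge L}$, where the constant becomes $\varepsilon$. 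It then recovers $m_+$ from $m_+^L$ through the M\"obius relation (\ref{4-12}) with polynomial coefficients $c_L,s_L$. The moment growth is transferred with the formal-power-series estimates of Lemma \ref{a-2}, losing a factor $2$ in $c$, before letting $\varepsilon\to0$.

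\textbf{Your route.} You absorb the finite initial segment directly into $\varepsilon(n)\le C_\eta+\eta n^{1/N}$ and split with $(x+y)^p\le 2^{p-1}(x^p+y^p)$. The bounded part contributes only a geometric factor $A^k$, which is harmless against $k!$. The small-growth part contributes $(B\eta^N)^kk^k$, which is summable against $k!\ge(k/e)^k$ once $\eta$ is small.

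\textbf{Comparison.} Your max-to-the-power path count loses a factor $C^k$ relative to the paper's product bound, but with little-$o$ data that loss is irrelevant. The paper's route buys a quantitative statement for big-$O$ data and reuses its Weyl-function machinery. Yours avoids the restriction-and-recovery step, Lemmas \ref{a-2} and \ref{a-4}, and all Stirling comparisons.

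\textbf{Two small repairs.}
\begin{enumerate}
\item You justify sufficiency by the identity $\int e^{\pm c\lambda^N}d\sigma=\sum_k(\pm c)^kM_{Nk}/k!$. For odd $N$ this identity presupposes the integrability you are proving, so it cannot be the reason. The correct reason is that $e^{c|\lambda|^N}\le 2\cosh(c\lambda^N)$, and by Tonelli $\int 2\cosh(c\lambda^N)\,d\sigma_+=2\sum_m c^{2m}M_{2Nm}/(2m)!$ with all terms nonnegative. So odd moments are never needed; what matters is pairing $M_{2Nm}$ with $(2m)!$ rather than with $m!$. That pairing is exactly what went wrong in your abandoned first attempt, not a lost ``factor of two in the exponent''.
\item The intermediate display should read $2^{Nk}\bigl((3C_\eta)^{Nk}+(3\eta)^{Nk}(Nk)^k\bigr)$. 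The next line, $A^k+(6^NN\eta^N)^kk^k$, is correct as stated.
\end{enumerate}
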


Theorem \ref{t2} generalizes the result obtained by A. Aggarwal\cite{a}, in
Proposition 4.7. Although it is expected that $\mathrm{Toda}\left(  g\right)
q$ satisfies (\ref{1-19}) if so does $q\in Q_{N}$, its validity is open. The
problem to be solved here is to obtain a necessary condition for $q$ to
satisfy%
\[
\int_{-\infty}^{\infty}e^{c\left\vert \lambda\right\vert ^{N}}\sigma
_{+}\left(  d\lambda\right)  <\infty\text{ for any }c>0\text{.}%
\]

Theorem \ref{t2} has a corollary.

\begin{corollary}
\label{c1}Let $q_{\omega}=\left\{  a\left(  \theta^{n}\omega\right)  ,b\left(
\theta^{n}\omega\right)  \right\}  _{n\in\mathbb{Z}}$ be an ergodic Jacobi
sequence and assume%
\begin{equation}
\mathbb{E}\left(  a\left(  \omega\right)  ^{N}+\left\vert b\left(
\omega\right)  \right\vert ^{N}\right)  <\infty\text{.} \label{1-20}%
\end{equation}
Then, $q_{\omega}$ satisfies the condition in Theorem\ref{t2} for a.s.
$\omega$. Especially, if $q_{\omega}$ are independent random sequences with
distributions%
\[
\left\{
\begin{array}
[c]{l}%
P\left(  a\left(  \theta^{n}\omega\right)  <x\right)  =\dfrac{2}{\sigma
\Gamma\left(  \nu/2\right)  }%
%TCIMACRO{\dint _{0}^{x}}%
%BeginExpansion
{\displaystyle\int_{0}^{x}}
%EndExpansion
e^{-y^{2}/\sigma^{2}}y^{\nu-1}dy\text{ \ }(a\left(  \theta^{n}\omega\right)
^{2}\text{ }\sim\chi^{2}\text{-distribution})\\
P\left(  b\left(  \theta^{n}\omega\right)  <x\right)  =\dfrac{1}{\sqrt
{2\pi\sigma^{2}}}%
%TCIMACRO{\dint _{-\infty}^{x}}%
%BeginExpansion
{\displaystyle\int_{-\infty}^{x}}
%EndExpansion
e^{-\left(  y-m\right)  ^{2}/2\sigma^{2}}dy\text{ \ }\left(  \sim
\mathcal{N}\left(  m,\sigma^{2}\right)  \right)
\end{array}
\right.
\]
for $\sigma>0$, $\nu>0$, $m\in\mathbb{R}$, then $\mathrm{Toda}\left(
g\right)  q_{\omega}$ is well-defined for $g\in\Gamma_{N}^{\operatorname{real}%
}$ for any $N\geq1$. In this case the probability measure induced by this
$q_{\omega}$ satisfies $\left(  \mathrm{Toda}\left(  g\right)  \right)
^{\ast}\mu=\mu$ for any $g\in\Gamma_{N}^{\operatorname{real}}$.
\end{corollary}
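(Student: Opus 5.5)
The corollary has three parts. First, the growth condition for ergodic sequences under (\ref{1-20}). Second, the Gaussian/$\chi$ model. Third, invariance of the induced measure $\mu$.

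\emph{Growth condition.} Set $X_n=a(\theta^n\omega)^N+|b(\theta^n\omega)|^N$. By stationarity $X_n$ has the same law as $X_0$, and $\mathbb{E}X_0<\infty$. Fix $\varepsilon>0$. Then
\[
\sum_{n\in\mathbb{Z}}P\left(X_n>\varepsilon|n|\right)=\sum_{n\in\mathbb{Z}}P\left(X_0>\varepsilon|n|\right)\leq 1+2\varepsilon^{-1}\mathbb{E}X_0<\infty .
\]
By Borel--Cantelli, almost surely $X_n\leq\varepsilon|n|$ for all large $|n|$. Intersecting over $\varepsilon=1/k$, $k\in\mathbb{N}$, gives $X_n=o(|n|)$ a.s. Hence $a_n,|b_n|=o(|n|^{1/N})$ a.s., which is (\ref{1-19}). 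Theorem \ref{t2} then yields $q_\omega\in Q_N$ a.s., so $\mathrm{Toda}(g)q_\omega$ is defined by Theorem \ref{t1}. Ergodicity is not needed for this part; stationarity suffices. The $\chi$ and Gaussian laws have moments of every order, so (\ref{1-20}) holds for every $N\geq1$, and the well-definedness claim follows for all $N$.

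\emph{Invariance.} This is the main obstacle. My plan is to approximate by finite Jacobi matrices and use the classical $\beta$-ensemble picture, following the spirit of Aggarwal's approach.

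1. Restrict $q_\omega$ to $[-L,L]$. The resulting finite Jacobi matrix has independent $\chi$/Gaussian entries. By Dumitriu--Edelman, its eigenvalues $\lambda_j$ and the first components $v_j$ of its normalized eigenvectors are independent. The eigenvalues follow a $\beta$-ensemble law with $\beta$ determined by $\nu$. The weights $(v_j^2)$ are Dirichlet distributed.

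2. For the finite Toda flow driven by $g$, the eigenvalues are preserved and the weights evolve by $v_j^2\mapsto g(\lambda_j)v_j^2/\sum_k g(\lambda_k)v_k^2$ (Moser). For the specific Dirichlet parameters of these ensembles, this map should preserve the joint law; this is known for the Toda lattice on the $\chi/\mathcal N$ ensemble.

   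The problem is that the correct Dirichlet law need not be invariant under general reweighting. I would therefore restrict first to $g=e^{-2t\widehat p}$. For these, invariance follows from Liouville's theorem: the Hamiltonian flow preserves the Gibbs measure $\exp(-\operatorname{tr}V(H))\prod a_n^{\nu-1}\,da\,db$ of the finite periodic or open chain, since this measure is built from conserved quantities.

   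Here is a caution. The $\chi$/Gaussian product measure equals $\exp(-\operatorname{tr}H^2/2\sigma^2)$ times a product of powers of $a_n$. The factor $\prod a_n^{\nu-1}$ is not Liouville-invariant; the invariant volume is $\prod da_n/a_n\,db_n$. So I would instead check invariance via the generator: compute the divergence of the Toda vector field against the density. Telescoping sums should leave only boundary terms at $\pm L$.

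3. Pass to the limit $L\to\infty$. Show that $\mathrm{Toda}(g)q_L\to\mathrm{Toda}(g)q$ in the metric $\mathrm{d}$ in probability, using the continuity of the flow from Theorem \ref{t1}. The input needed is convergence of the spectral data $\boldsymbol m_L\to\boldsymbol m$, i.e.\ of the Weyl functions, together with uniform exponential moments of $\sigma_\pm$. Show also that the boundary terms at $\pm L$ vanish in law, by finite speed of propagation in expectation.

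4. Extend from polynomial $g$ to all $g\in\Gamma_N^{\operatorname{real}}$. Use the group property of $\mathrm{Toda}(g)$ together with its continuity in $\mathrm{d}_c$ from Theorem \ref{t1}, and approximate general $g$ by products of $e^{-2t\widehat p}$ and rational factors. This density argument is where I expect real difficulty, along with the boundary-term control in step 3.
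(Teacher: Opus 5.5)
Your growth-condition argument is correct and is essentially the paper's Lemma \ref{l4-3}: identical distribution plus Borel--Cantelli give $X_n=o(|n|)$ a.s., hence (\ref{1-19}) and $q_\omega\in Q_N$ by Theorem \ref{t2}, and all moments are finite in the $\chi$/Gaussian case. The gap is in the invariance part, and it starts with your choice of finite-volume approximation. On the open chain $q_\omega|_{[-L,L]}$ the $\chi_\nu$/Gaussian product law is genuinely not invariant: your own divergence computation leaves a boundary term proportional to $\nu(b_L-b_{-L})$ for the Toda lattice flow. Nothing in the paper supplies the ``finite speed of propagation'' you would need to kill it, since $\mathrm{Toda}(g)$ is defined globally through tau-functions. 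Moreover, a truncated sequence is not an element of $Q_N$ (the framework needs $a_n>0$ for all $n\in\mathbb{Z}$), so $\mathrm{Toda}(g)q_L$ in your Step 3 is not even defined. Steps 1--2 also do not apply. The Dumitriu--Edelman $\beta$-Hermite model has $\chi$-parameters decreasing along the chain, so for i.i.d.\ $\chi_\nu$ entries the eigenvalues and weights are not an independent $\beta$-ensemble/Dirichlet pair.

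The missing idea is to periodize instead. On $\Omega_L$ the quantity $I_0=a_1\cdots a_L$ Poisson-commutes with every $I_k=\mathrm{tr}(H^k)$. The vector field (\ref{4-15}) is divergence-free for $da\,db$, because the telescoping sums close up. Hence $\exp(c_1I_1-\tfrac{c_2}{2}I_2)\,I_0^{\nu-1}\,da\,db$ is invariant with no boundary term at all, and this is exactly the i.i.d.\ $\chi$/Gaussian law with $c_2=1/(2\sigma^2)$ and $c_1=m/\sigma^2$. Your caution that $\prod a_n^{\nu-1}$ spoils Liouville invariance is true only for the open chain. On the periodic chain it is a function of the conserved quantity $I_0$.

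Your Step 4 cannot work as stated either. The products of $e^{-2t\widehat{p}}$ with $\deg p\le N$ are just $e^{P}$ with $P$ a real polynomial of degree at most $N$. This is a finite-dimensional group, not dense in $\Gamma_{N}^{\operatorname{real}}$, and you give no invariance argument for rational factors. The paper gets around this precisely because of periodicity: on $\Omega_L$ the measure is invariant under the flows of all $I_k$, i.e.\ under $e^{p}$ for real polynomials $p$ of any degree, and the spectrum $\Sigma$ is bounded. The paper's route is:
\begin{itemize}
\item write $g=z^kg_1g_2$ as in Lemma \ref{l2-7};
\item approximate $\log g_1$ by polynomials via Runge's theorem on a compact set whose boundary curve surrounds $\phi^{-1}(\Sigma)$;
\item handle $g_2$ through $\mathrm{Toda}(\widetilde g)=\mathrm{Toda}(g^{-1})$ from (\ref{4-16}), since $\widetilde g_2^{-1}$ is of the first type;
\item note that $z^k$ acts as a shift, which preserves the product measure (Lemma \ref{l4-4}).
\end{itemize}
Only after that does the paper let $L\to\infty$. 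There the periodized law coincides with the i.i.d.\ law on any fixed window for large $L$, so the limit needs only continuity of the flow, not control of boundary terms.
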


Since $\mathrm{Toda}\left(  g\right)  q_{\omega}=\left\{  a_{n}^{\omega
}\left(  g\right)  ,b_{n}^{\omega}\left(  g\right)  \right\}  _{n\in
\mathbb{Z}}$ has the same distribution as that of $q_{\omega}$, $\left\{
a_{n}^{\omega}\left(  g\right)  ,b_{n}^{\omega}\left(  g\right)  \right\}
_{n\in\mathbb{Z}}$ are also independent and have the $\chi^{2}$-distribution
and the normal distribution, hence $\mathrm{Toda}\left(  g\right)  q_{\omega}$
satisfies (\ref{1-19}).

The proof of Theorem \ref{t1} can be explained by the Darboux transformation,
which has been an effective tool in solving integrable systems having Lax
pairs. If the spectrum $\Sigma_{q}$ of a Jacobi operator $H_{q}$ satisfies
$\sup\Sigma_{q}<\infty$ and let $\zeta\in\left(  \sup\Sigma_{q},\infty\right)
$, then one can regard $q_{\zeta}\left(  z\right)  =\left(  1-\zeta
^{-1}z\right)  ^{-1}\in$ $\Gamma_{N}^{\operatorname{real}}$. The coefficients
$\widehat{q}=\left\{  \widehat{a}_{n},\widehat{b}_{n}\right\}  $ associated
with $\mathrm{Toda}\left(  q_{\zeta}\right)  q$ are%
\[
\widehat{a}_{n}=a_{n-1}\left(  a_{n}\dfrac{f_{n}}{f_{n-1}}\right)
^{1/2}\left(  a_{n-1}\dfrac{f_{n-1}}{f_{n-2}}\right)  ^{-1/2}\text{,
\ }\widehat{b}_{n}=a_{n+1}\dfrac{f_{n+1}}{f_{n}}-a_{n}\dfrac{f_{n}}{f_{n-1}%
}+b_{n}%
\]
with $q=\left\{  a_{n},b_{n}\right\}  $ and $f_{n}=\zeta^{-n}\tau
_{z^{n}\boldsymbol{m}}\left(  q_{\zeta}\right)  \tau_{z^{n}\boldsymbol{m}%
}\left(  z^{-1}\right)  ^{-1/2}$ ($\boldsymbol{m}$ is in (\ref{1-15}) for
$q$), where $f_{n}$ satisfies%
\[
\left(  H_{q}f\right)  _{n}=a_{n+1}f_{n+1}+a_{n}f_{n-1}+b_{n}f_{n}=\left(
\zeta+\zeta^{-1}\right)  f_{n}\text{ and }f_{n}\neq0\text{.}%
\]
This defines a map $q\rightarrow\widehat{q}$ on $Q_{N}$, which is called
Darboux transformation for Jacobi operators. The commutativity of the Toda
flow implies that for any Toda lattice $q$ we have a new Toda lattice
$\widehat{q}$. This procedure to obtain a new solution for Toda lattice was
extensively employed by V.B. Matveev- M.A.Salle\cite{m1}. The novelty in the
present article is that one can solve Cauchy problem for Toda lattice by
iterating Darboux transformation infinitely many times, since%
\[
e^{tz}=\lim_{n\rightarrow\infty}q_{nt^{-1}}\left(  z\right)  ^{n}%
\Longrightarrow\mathrm{Toda}\left(  e^{tz}\right)  q=\lim_{n\rightarrow\infty
}\mathrm{Toda}\left(  q_{nt^{-1}}^{n}\right)  q\text{. }%
\]
If no semi-infinite gap exists, we take $\left(  q_{n\zeta}q_{n\overline
{\zeta}}\right)  ^{n}$ for $\zeta\in\mathbb{C}\backslash\mathbb{R}$ s.t.
$2\operatorname{Re}\zeta=t^{-1}$ as a sequence approximating $e^{tz}$.

The generality of the conditions in the Theorems can be examined by the works
of E. K. Ifantis-K. N. Vlachou. In \cite{iv} they proved%
\[
a_{n}=\sqrt{n}\text{, \ }b_{n}=\gamma n\text{ }\Longrightarrow\text{ }%
a_{n}\left(  t\right)  =\sqrt{n}e^{\gamma t}\text{, \ }b_{n}\left(  t\right)
=n\gamma-\gamma^{-1}\left(  1-e^{2\gamma t}\right)  \text{,}%
\]
which indicates that (\ref{1-19}) does not seem to be necessary for the
existence of solutions for $N=1$. But the condition $\int_{-\infty}^{\infty
}e^{t\lambda}\sigma_{+}\left(  d\lambda\right)  <\infty$ is satisfied in this
case. On the other hand, in \cite{iv2} they provided examples of Toda lattice
which are exploding at finite time. Although they considered the case where
$a_{n}\left(  t\right)  $, $b_{n}\left(  t\right)  $ take a form of
$a_{n}\left(  t\right)  =f(t)a_{n}$, $b_{n}\left(  t\right)  =g(t)b_{n}+h(t)$
for $n\geq1$ and $t\geq0$, this process can be applied also to Toda lattice on
the whole $\mathbb{Z}$, and among other solutions one has
\[
\left\{
\begin{array}
[c]{l}%
a_{n}\left(  t\right)  =c\dfrac{\sqrt{n\left(  n-1\right)  +\alpha n+\beta}%
}{1-2ct}\\
b_{n}\left(  t\right)  =c\dfrac{2n+\alpha}{1-2ct}%
\end{array}
\right.
\]
with constants $c>0$ and $\alpha$, $\beta$ satisfying $\beta>\left(
\alpha-1\right)  ^{2}/4$. This solution explodes at time $t=\left(  2c\right)
^{-1}$, which suggests that if the initial data $a_{n}$ violates the condition
(\ref{1-19}), then we may have explosion at finite time.

One of the motivations to study the Toda lattice with unbounded initial data
is to apply it to Toda lattice with random initial data, which are often
unbounded. The invariant measures appearing in Corollary\ref{c1} is the
$\beta$-ensembles introduced by I. Dumitriu-A. Edelman\cite{d} in relation to
random matrix theory. The $\beta$-ensembles models are known by H.
Spohn\cite{sp} to give invariant measures (Gibbs measures) for Toda lattice,
and its rigorous treatment was implemented by A. Aggarwal\cite{a}. In this
context D.A. Croydon-M. Sasada-S. Tsujimoto\cite{c} obtained an invariant
measure for the time discrete Toda lattice by applying Pitman's transformation
in probability theory, and R. Killip-J. Murphy-M. Visan\cite{kmv} constructed
a solution to the KdV equation starting from Gaussian white noise and proved
the induced measure is invariant under the equation.

In the KdV case SSW theory was extended in \cite{k} and \cite{k2} to construct
the KdV flow in a general space of initial conditions. In order to establish
Theorem \ref{t1} we follow the arguments in \cite{z} faithfully with \cite{k}
and \cite{k2} as our guide, but most of the parts proceed in a self-contained
way except the equation (\ref{1-18}). In the unbounded case symbols
$g\boldsymbol{a}$ are not generally bounded even if so is $\boldsymbol{a}$,
which makes arguments difficult. This obstacle is overcome by subtracting
compensators from symbols similarly as \cite{k}. In this case also the
invertibility of Toeplitz operators is crucial to prove the absence of
singulalities of the flow, and is achieved through the tau-functions.

\section{Tau-function}

In this paper the tau-functions are employed to show the invertibility of
Toeplitz operators and to express the Toda flow as in \cite{z}. The difference
between the present tau-functions and the previous ones is in the fact that
the operators $g^{-1}T\left(  g\boldsymbol{a}\right)  T\left(  \boldsymbol{a}%
\right)  ^{-1}-I$ in question are not immediately of trace class.

\subsection{A subclass of $\boldsymbol{A}_{N}^{inv}\left(  C\right)  $}

The invertibility of $T\left(  \boldsymbol{a}\right)  $ and more generally of
$T\left(  g\boldsymbol{a}\right)  $ is a key factor for the Toda flow to have
no singuralities. Therefore, we start with giving a concrete subclass of
$\boldsymbol{A}_{N}^{inv}\left(  C\right)  $. The argument proceeds similarly
to that of \cite{z}.

We first show that $H\left(  D_{+}\right)  $ is dense in $H_{N,c}\left(
D_{+}\right)  \left(  =\rho_{c}H\left(  D_{+}\right)  \right)  $.

\begin{lemma}
\label{l2-1}$H\left(  D_{+}\right)  $ is dense in $H_{N,c}\left(
D_{+}\right)  $.
\end{lemma}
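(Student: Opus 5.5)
The plan is to reduce the statement to an approximation problem inside $H\left(D_{+}\right)$ itself. By definition $H_{N,c}\left(D_{+}\right)=\rho_{c}H\left(D_{+}\right)$ with $\left\Vert u\right\Vert _{N,c}=\left\Vert \rho_{c}^{-1}u\right\Vert _{L^{2}(C)}$. So it suffices to show two things. First, $H\left(D_{+}\right)\subset H_{N,c}\left(D_{+}\right)$, i.e. $\rho_{c}^{-1}h\in H\left(D_{+}\right)$ for $h\in H\left(D_{+}\right)$. Second, for every $v\in H\left(D_{+}\right)$ there are $h_{\varepsilon}\in H\left(D_{+}\right)$ with $\left\Vert \rho_{c}^{-1}h_{\varepsilon}-v\right\Vert _{L^{2}(C)}\rightarrow0$.

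\textbf{Step 1: growth of $\rho_{c}$ on $\overline{D}_{+}$.} Write $\phi(z)=x+iy$ with $\left\vert y\right\vert \leq\vartheta$. The choice (\ref{1-2}) should keep $\left\vert \operatorname{Im}\phi(z)^{N}\right\vert$ uniformly below $\pi/2$ (up to a bounded region). Then the elementary bound $\left\vert \cosh w\right\vert \geq\cos(\operatorname{Im}w)\cosh(\operatorname{Re}w)$ gives $\left\vert \rho_{c}(z)\right\vert \geq c_{0}e^{c\left\vert \operatorname{Re}\phi(z)^{N}\right\vert }$ away from $0$. Together with (\ref{1-3}), this shows that $\rho_{c}^{-1}$ is bounded and analytic on a neighbourhood of $\overline{D}_{+}\backslash\{0\}$ and tends to $0$ as $z\rightarrow0$ or $z\rightarrow\infty$. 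Also $\left\vert \rho_{c}(z)\right\vert \leq e^{c\left\vert \phi(z)\right\vert ^{N}}$ there.

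\textbf{Step 2: multipliers.} Let $f$ be bounded and analytic on a neighbourhood of $\overline{D}_{+}$ (with $z=0,\infty$ treated as boundary points of $D_{+}$). I claim $fH\left(D_{+}\right)\subset H\left(D_{+}\right)$. Since multiplication by $f$ is bounded on $L^{2}(C)$, it suffices to check that $\mathfrak{p}_{-}(fr)=0$ for a rational $r$ without poles in $D_{+}$. For $z\in D_{-}$, $\mathfrak{p}_{-}(fr)(z)=-\frac{1}{2\pi i}\int_{C}\frac{f(\lambda)r(\lambda)}{\lambda-z}d\lambda$. This integral vanishes by Cauchy's theorem applied in $D_{+}$, after a routine truncation near $0$ and $\infty$ using the decay of $r/(\lambda-z)$ along $C$. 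Applying the claim to $f=\rho_{c}^{-1}$ proves the inclusion $H\left(D_{+}\right)\subset H_{N,c}\left(D_{+}\right)$.

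\textbf{Step 3: approximation.} Put $\chi_{\varepsilon}(z)=\exp\left(-\varepsilon\phi(z)^{2N}\right)$ and $h_{\varepsilon}=\rho_{c}\chi_{\varepsilon}v$. On $D_{+}$ we have $\operatorname{Re}\phi^{2N}=x^{2N}+O\left(\left\vert x\right\vert ^{2N-2}\right)$ since $y$ is bounded. Hence $\left\vert \chi_{\varepsilon}\right\vert \leq C_{\varepsilon}e^{-\varepsilon x^{2N}}$ and $\left\vert \chi_{\varepsilon}\right\vert$ is bounded uniformly in $\varepsilon\leq1$. Consequently $\rho_{c}\chi_{\varepsilon}$ is bounded analytic on a neighbourhood of $\overline{D}_{+}$, because the Gaussian-type decay beats $e^{c\left\vert x\right\vert ^{N}}$. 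By Step 2, $h_{\varepsilon}\in H\left(D_{+}\right)$. Finally
\[
\left\Vert h_{\varepsilon}-v\right\Vert _{N,c}=\left\Vert \left(\chi_{\varepsilon}-1\right)v\right\Vert _{L^{2}(C)}\rightarrow0
\]
by dominated convergence, since $\chi_{\varepsilon}\rightarrow1$ pointwise on $C$ with a uniform bound. This proves density.

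I expect the main obstacle to be Step 2 (together with the sector estimate in Step 1). The contour $C$ is unbounded and accumulates at $0$, so the Cauchy-theorem argument must be carried out with care. If it becomes delicate, I would instead use conformal maps of the components of $D_{+}$ onto a half-plane and identify $H\left(D_{+}\right)$ with a Smirnov class $E^{2}$, for which bounded analytic multipliers are standard.
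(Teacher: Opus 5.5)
Your argument is sound in outline and uses the same strategy as the paper. Both approximate $\rho_c$ by bounded analytic multipliers whose quotient by $\rho_c$ is uniformly bounded on $C$ and tends to $1$ pointwise, and then apply dominated convergence. The difference is the choice of multiplier. The paper takes $\bigl(\tfrac{r_n+r_n^{-1}}{2}\bigr)^c=\bigl(\cosh I_n(\phi)\bigr)^c$, where $r_n=R_n(\phi)$ and $R_n=e^{I_n}$ are rational approximants of $e^{\zeta^N}$. This forces it to prove $|R_n^{\pm1}|\le c_4e^{|\operatorname{Re}\zeta^N|}$ uniformly in $n$, and $\cosh I_n\neq0$ on the strip so that the $c$-th power exists. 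Your $\rho_c e^{-\varepsilon\phi^{2N}}$ needs only that $\operatorname{Re}\phi^{2N}$, which grows like $x^{2N}$ on the strip, beats $c|\phi|^N$. That is simpler. You also make explicit the multiplier property $fH(D_+)\subset H(D_+)$ and the inclusion $H(D_+)\subset H_{N,c}(D_+)$, both of which the paper uses tacitly.

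Step 2, which you flagged, goes through as you describe. A rational $r\in L^2(C)$ with no poles in $D_+$ must be regular at $0$ and $O(\lambda^{-1})$ at $\infty$. The arcs of $|\lambda|=R$ and $|\lambda|=\epsilon$ lying in $D_+$ have lengths $O(1)$ and $O(\epsilon^2)$, so their contributions vanish.

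One justification is wrong, in Step 1.
\begin{itemize}
\item For $N\ge2$ the quantity $\operatorname{Im}\phi^N$ is unbounded on $D_+$: its leading term is $Nx^{N-1}y$, and for $N=2$ it is exactly $2xy$.
\item So (\ref{1-2}) does not keep $|\operatorname{Im}\phi^N|<\pi/2$ outside any bounded set. The factor $\cos(\operatorname{Im}\phi^N)$ in your inequality vanishes along curves in $D_+$ running to $0$ and $\infty$, so it gives no lower bound there.
\item What (\ref{1-2}) actually guarantees is only that the zeros of $\cosh\zeta^N$ (where $\zeta^N\in i\pi(\mathbb{Z}+\tfrac12)$) lie outside the closed strip $|\operatorname{Im}\zeta|\le\vartheta$, i.e.\ (\ref{1-3}).
\end{itemize}
The repair is as follows. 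Use $|\cosh w|\ge|\sinh(\operatorname{Re}w)|$ together with $\operatorname{Re}\phi^N=x^N+O(|x|^{N-2})$. This gives $|\rho_c|\ge4^{-c}e^{c|\operatorname{Re}\phi^N|}$ once $|x|$ is large. On the remaining compact part of $\overline{D}_+\setminus\{0\}$, use (\ref{1-3}) and continuity. The paper needs exactly this estimate, implicitly, to pass from the bound on $R_n^{\pm1}$ to (\ref{2-2}).

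Two slips are harmless:
\begin{itemize}
\item Since $\operatorname{Re}\phi^{2N}=x^{2N}+O(|x|^{2N-2})$, you get $|\chi_\varepsilon|\le C_\varepsilon e^{-\varepsilon x^{2N}/2}$ rather than $e^{-\varepsilon x^{2N}}$.
\item The last display should read $\|h_\varepsilon-\rho_cv\|_{N,c}=\|(\chi_\varepsilon-1)v\|_{L^2(C)}$.
\end{itemize}
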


\begin{proof}
To show the statement we approximate $\rho_{c}$ by bounded analytic functions
on $D_{+}$. Define%
\[
R_{n}\left(  \zeta\right)  =\left(  1+\dfrac{\zeta^{N}/n}{1+\zeta^{2N}%
/n}\right)  ^{n}=\exp\left(  \int_{0}^{1}\dfrac{\zeta^{N}dt}{1+\left(
\zeta^{2N}+\zeta^{N}t\right)  /n}\right)  \equiv\exp I_{n}\left(
\zeta\right)  \text{.}%
\]
Clearly, $\lim_{n\rightarrow\infty}R_{n}\left(  \zeta\right)  =$ $e^{\zeta
^{N}}$ holds. For $c_{1}$, $c_{2}>0$ set%
\[
U=\left\{  \zeta\in\mathbb{C}\text{; }\left\vert \operatorname{Im}%
\zeta\right\vert <c_{1}\right\}  \text{, \ }E=\left\{  \zeta\in U\text{;
}\left\vert \operatorname{Re}\zeta\right\vert >c_{2}\right\}  \text{.}%
\]
The real part and imaginary part of the exponent $I_{n}\left(  \zeta\right)  $
are%
\begin{equation}
\left\{
\begin{array}
[c]{l}%
\operatorname{Re}I_{n}\left(  \zeta\right)  =%
%TCIMACRO{\dint _{0}^{1}}%
%BeginExpansion
{\displaystyle\int_{0}^{1}}
%EndExpansion
\dfrac{\operatorname{Re}\zeta^{N}+\left\vert \zeta^{N}\right\vert ^{2}\left(
\left\vert \operatorname{Re}\zeta^{N}\right\vert +t\right)  /n}{\left\vert
1+\left(  \zeta^{2N}+\zeta^{N}t\right)  /n\right\vert ^{2}}dt\\
\operatorname{Im}I_{n}\left(  \zeta\right)  =\left(  \operatorname{Im}%
\zeta^{N}\right)  \left(  1-\left\vert \zeta^{N}\right\vert ^{2}/n\right)
%TCIMACRO{\dint _{0}^{1}}%
%BeginExpansion
{\displaystyle\int_{0}^{1}}
%EndExpansion
\dfrac{dt}{\left\vert 1+\left(  \zeta^{2N}+\zeta^{N}t\right)  /n\right\vert
^{2}}%
\end{array}
\right.  \text{.} \label{2-1}%
\end{equation}
If $c_{2}$ is sufficiently large, one has $\operatorname{Re}\left(  \zeta
^{2N}+\zeta^{N}t\right)  >0$ for $\zeta\in E$ and $t\in\left[  0,1\right]  $,
hence%
\[
\left\vert R_{n}\left(  \zeta\right)  ^{\pm1}\right\vert \leq\exp\left(
\left\vert \operatorname{Re}\zeta^{N}\right\vert +\int_{0}^{1}\dfrac
{\left\vert \zeta^{N}\right\vert ^{2}\left(  \left\vert \operatorname{Re}%
\zeta^{N}\right\vert +t\right)  }{\left\vert \operatorname{Re}\left(
\zeta^{2N}+\zeta^{N}t\right)  \right\vert ^{2}}dt\right)  \text{.}%
\]
Since%
\[
c_{3}\equiv\sup_{\zeta\in E,\text{ }t\in\left[  0,1\right]  }\dfrac{\left\vert
\zeta^{N}\right\vert ^{2}\left(  \left\vert \operatorname{Re}\zeta
^{N}\right\vert +t\right)  }{\left\vert \operatorname{Re}\left(  \zeta
^{2N}+\zeta^{N}t\right)  \right\vert ^{2}}<\infty
\]
holds, one has $\left\vert R_{n}\left(  \zeta\right)  \right\vert \leq
e^{c_{3}}e^{\left\vert \operatorname{Re}\zeta^{N}\right\vert }$ \ on $E$. On
the other hand, the boundedness of $U\backslash E$ implies%
\[
\sup_{n\geq n_{1}\text{, }\zeta\in U\backslash E}\left\vert R_{n}\left(
\zeta\right)  ^{\pm1}\right\vert <\infty
\]
for sufficiently large $n_{1}\geq1$. Therefore, for any $n\geq n_{1}$ one has
$\left\vert R_{n}\left(  \zeta\right)  ^{\pm1}\right\vert \leq c_{4}%
e^{\left\vert \operatorname{Re}\zeta^{N}\right\vert }$ \ on $U$ with a
constant $c_{4}$. Define rational functions $r_{n}$ by $r_{n}\left(  z\right)
=R_{n}\left(  \phi\left(  z\right)  \right)  $. Recalling $\phi\left(
D_{+}\right)  =U$ with $c_{1}=\vartheta$, we obtain the estimates
\begin{equation}
\left\vert r_{n}\left(  z\right)  ^{\pm1}\right\vert \leq c_{4}\left\vert
\cosh\phi\left(  z\right)  ^{N}\right\vert =c_{4}\left\vert \rho_{1}\left(
z\right)  \right\vert \text{ \ on }D_{+}\text{.} \label{2-2}%
\end{equation}
The property $r_{n}\left(  z\right)  +r_{n}\left(  z\right)  ^{-1}\neq0$ on
$D_{+}$ should be examined. If $c_{2}$ is sufficiently large,
$\operatorname{Re}\zeta^{N}>0$ or $\operatorname{Re}\zeta^{N}<-1$ holds for
$\zeta\in E$, which implies $\operatorname{Re}I_{n}\neq0$ by (\ref{2-1}).
Observing $R_{n}\left(  \zeta\right)  +R_{n}\left(  \zeta\right)  ^{-1}=2\cosh
I_{n}\left(  \zeta\right)  $ and the zeros of $\cosh z$ are $\left\{  \left(
k+1/2\right)  \pi i\right\}  _{k\in\mathbb{Z}}\subset i\mathbb{R}$, one has
$\cosh I_{n}\left(  \zeta\right)  \neq0$ for $\zeta\in E$. The analytic
functions $\cosh I_{n}\left(  \zeta\right)  $ converges $\cosh\zeta^{N}$
uniformly on a compact set $\overline{U\diagdown E}$, and $\cosh\zeta^{N}%
\neq0$ holds on $\overline{U}$ by the definition of $U$ ($c_{1}=\vartheta$).
Thus, there exists a sufficiently large $n_{1}$ such that $\cosh I_{n}\left(
\zeta\right)  \neq0$ on $\overline{U\diagdown E}$ for any $n\geq n_{1}$.
Consequently, one has $r_{n}\left(  z\right)  +r_{n}\left(  z\right)
^{-1}\neq0$ on $D_{+}$ for any $n\geq n_{1}$. For $u\in H\left(  D_{+}\right)
$ set
\[
u_{n}\left(  z\right)  =\left(  \dfrac{r_{n}\left(  z\right)  +r_{n}\left(
z\right)  ^{-1}}{2}\right)  ^{c}\rho_{c}\left(  z\right)  ^{-1}u\left(
z\right)  \text{.}%
\]
Since $r_{n}\left(  z\right)  $ are bounded on $U$ for each fixed $n\geq
n_{1}$, one sees $u_{n}\in H\left(  D_{+}\right)  $. And, in the identity%
\[
\left\Vert u_{n}-u\right\Vert _{H_{N.c}}^{2}=\int_{C}\left\vert \left(
\dfrac{r_{n}\left(  \lambda\right)  +r_{n}\left(  \lambda\right)  ^{-1}}%
{2}\right)  ^{c}\rho_{c}\left(  \lambda\right)  ^{-1}-1\right\vert
^{2}\left\vert u\left(  \lambda\right)  \right\vert ^{2}\dfrac{\left\vert
d\lambda\right\vert }{\left\vert \rho_{c}\left(  \lambda\right)  \right\vert
^{2}}%
\]
applying the dominated convergence theorem due to (\ref{2-2}), we obtain
$\lim_{n\rightarrow\infty}\left\Vert u_{n}-u\right\Vert _{H_{N.c}}=0$.\bigskip
\end{proof}

This Lemma ensures the eligibility of the definition (\ref{1-5}) of $T\left(
\boldsymbol{a}\right)  $. Originally $T\left(  \boldsymbol{a}\right)  $ is
defined on $H\left(  D_{+}\right)  $ by%
\[
T\left(  \boldsymbol{a}\right)  u=\mathfrak{p}_{+}\left(  \boldsymbol{a}%
u\right)  \text{,}%
\]
which is equal to%
\begin{equation}
T\left(  \boldsymbol{a}\right)  u=\boldsymbol{f}u+\mathfrak{p}_{+}\left(
\left(  \boldsymbol{a}-\boldsymbol{f}\right)  u\right)  \label{2-3}%
\end{equation}
due to $\boldsymbol{f}u\in H\left(  D_{+}\right)  $. This formula enables us
to extend the definition of $T\left(  \boldsymbol{a}\right)  $ to
$H_{N,c}\left(  D_{+}\right)  $ thanks to Lemma \ref{l2-1}. This also shows
that the operator $T\left(  \boldsymbol{a}\right)  $ is independent of the
choice of the compensators $\boldsymbol{f}$.

Let $\boldsymbol{M}_{N}\left(  C\right)  $ be the set of all symbols
$\boldsymbol{m}=\left(  m_{1},m_{2}\right)  $ satisfying%

\begin{equation}%
\begin{tabular}
[c]{ll}%
(i) & $m_{j}$, $m_{j}^{\prime}$ are bounded analytic on $D_{-}$, and
$m_{j}=\overline{m}_{j}$ for $j=1$, $2$,\\
(ii) & $\boldsymbol{m}\in\boldsymbol{A}_{N}\left(  C\right)  $,\\
(iii) & $m_{1}-1$, $\widetilde{m}_{1}-1\in H\left(  D_{-}\right)  $ and
$m_{2}$, $\ \widetilde{m}_{2}\in H\left(  D_{-}\right)  $,\\
(iv) & $\inf_{z\in D_{-}}\left\vert m_{1}(z)\widetilde{m}_{1}(z)-m_{2}%
(z)\widetilde{m}_{2}(z)\right\vert >0$,
\end{tabular}
\ \label{2-4}%
\end{equation}
where $\widetilde{f}\left(  z\right)  =f\left(  z^{-1}\right)  $. For
$\boldsymbol{m}=\left(  m_{1},m_{2}\right)  $, $\boldsymbol{n}=\left(
n_{1},n_{2}\right)  \in\boldsymbol{M}_{N}\left(  C\right)  $ define a product%
\[
\boldsymbol{m}\cdot\boldsymbol{n}=\left(  m_{1}n_{1}+m_{2}\widetilde{n}%
_{2},m_{1}n_{2}+m_{2}\widetilde{n}_{1}\right)  \text{.}%
\]
Since (i), (ii), (iii) are valid for $\boldsymbol{m}\cdot\boldsymbol{n}$ and
\begin{align*}
&  \left(  \boldsymbol{m}\cdot\boldsymbol{n}\right)  _{1}(z)\widetilde{\left(
\boldsymbol{m}\cdot\boldsymbol{n}\right)  }_{1}(z)-\left(  \boldsymbol{m}%
\cdot\boldsymbol{n}\right)  _{2}(z)\widetilde{\left(  \boldsymbol{m}%
\cdot\boldsymbol{n}\right)  }_{2}(z)\\
&  =\left(  m_{1}(z)\widetilde{m}_{1}(z)-m_{2}(z)\widetilde{m}_{2}(z)\right)
\left(  n_{1}(z)\widetilde{n}_{1}(z)-n_{2}(z)\widetilde{n}_{2}(z)\right)
\end{align*}
holds, one has $\boldsymbol{m}\cdot\boldsymbol{n}\in$ $\boldsymbol{M}%
_{N}\left(  C\right)  $. Clearly, the inclusion $\boldsymbol{M}_{N}\left(
C\right)  \subset\boldsymbol{A}_{N}\left(  C\right)  $ is valid.

\begin{lemma}
\label{l2-2}(i) For $\boldsymbol{m}$, $\boldsymbol{n}\in$ $\boldsymbol{M}%
_{N}\left(  C\right)  $ it holds that%
\[
T\left(  \boldsymbol{m}\cdot\boldsymbol{n}\right)  =T\left(  \boldsymbol{m}%
\right)  T\left(  \boldsymbol{n}\right)  \text{.}%
\]
(ii) There exists a bounded analytic function $f$ on $D_{+}$ such that the
compensators $f_{j}$ for $\boldsymbol{m}\in$ $\boldsymbol{M}_{N}\left(
C\right)  $ satisfy%
\begin{equation}
\left\{
\begin{array}
[c]{l}%
\sup_{\lambda\in C}\left\vert \rho_{c}\left(  M-f\right)  \left(
\lambda\right)  \right\vert <\infty\\
\sup_{\lambda\in C_{1}}\left(  \left\vert \partial_{\lambda}\left(  \rho
_{c}\left(  M-f\right)  \left(  \lambda\right)  \right)  \right\vert
+\left\vert \partial_{\lambda}\left(  \rho_{c}\left(  M-\widetilde{f}\right)
\left(  \lambda\right)  \right)  \right\vert \right)  <\infty\\
\inf_{z\in D_{+}}\left\vert f\left(  z\right)  \right\vert >0
\end{array}
\right.  \text{,} \label{2-5}%
\end{equation}
where $M=m_{1}\widetilde{m}_{1}-m_{2}\widetilde{m}_{2}$. Then, the symbol
$\boldsymbol{m}^{-1}=\left(  \widetilde{m}_{1}/M,-m_{2}/M\text{ }\right)  $
satisfies $\boldsymbol{m}^{-1}\in$ $\boldsymbol{M}_{N}\left(  C\right)  $ and
$T\left(  \boldsymbol{m}\right)  ^{-1}=T\left(  \boldsymbol{m}^{-1}\right)  $.
\end{lemma}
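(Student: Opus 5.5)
The plan is to prove (i) first on the dense subspace $H(D_+)$, where everything reduces to the classical Hardy-space identity, and then extend by continuity. Lemma \ref{l2-1} supplies the density and the boundedness of $T(\cdot)$ on $H_{N,c}(D_+)$. After that, (ii) follows from (i) together with one extra argument placing $\boldsymbol{m}^{-1}$ in $\boldsymbol{M}_N(C)$.

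\emph{Step 1: the product rule for the symbol.} For $u\in H(D_+)$, one has $T(\boldsymbol{a})u=\mathfrak{p}_+(\boldsymbol{a}u)$ by (\ref{2-3}). Here $(\boldsymbol{a}u)(\lambda)=a_1u+a_2Ru$ with $R$ from (\ref{1-6}). Since $D_+$ and $D_-$ are invariant under $z\mapsto z^{-1}$, $R$ maps $H(D_\pm)$ onto $H(D_\pm)$, and $R$ is an involution. A short computation gives
\[
R(a u)=\widetilde{a}\,Ru ,\qquad \boldsymbol{m}(\boldsymbol{n}u)=(\boldsymbol{m}\cdot\boldsymbol{n})u ,
\]
and the second identity is exactly how the product $\boldsymbol{m}\cdot\boldsymbol{n}$ was designed. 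So it suffices to show
\[
\mathfrak{p}_+\bigl(\boldsymbol{m}\,\mathfrak{p}_-(\boldsymbol{n}u)\bigr)=0 .
\]
Put $v=\mathfrak{p}_-(\boldsymbol{n}u)\in H(D_-)$. Then $Rv\in H(D_-)$. By (i) and (iii) of (\ref{2-4}), $m_1,m_2$ are bounded analytic on $D_-$, hence multipliers of $H(D_-)$. Therefore $m_1v+m_2Rv\in H(D_-)$, and $\mathfrak{p}_+$ kills it.

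\emph{Step 2: passage to $H_{N,c}(D_+)$.} Both sides of (i) are bounded on $H_{N,c}(D_+)$ and agree on the dense subspace $H(D_+)$. This uses that $T(\boldsymbol{a})$, defined via a compensator, restricts to $\mathfrak{p}_+(\boldsymbol{a}\,\cdot)$ on $H(D_+)$. Hence they agree everywhere.

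\emph{Step 3: (ii).} The identity $\boldsymbol{m}\cdot\boldsymbol{m}^{-1}=(1,0)$ is a direct computation using $\widetilde{M}=M$. The same computation gives $\boldsymbol{m}^{-1}\cdot\boldsymbol{m}=(1,0)$. Since $T(1,0)=I$, part (i) yields $T(\boldsymbol{m})T(\boldsymbol{m}^{-1})=T(\boldsymbol{m}^{-1})T(\boldsymbol{m})=I$, once $\boldsymbol{m}^{-1}\in\boldsymbol{M}_N(C)$ is known.

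We check the conditions of (\ref{2-4}) for $\boldsymbol{m}^{-1}$ in turn.
\begin{itemize}
\item Condition (i): by (iv), $1/M$ is bounded analytic on $D_-$, and $(1/M)'=-M'/M^2$ is bounded. Reality of $\boldsymbol{m}^{-1}$ is clear.
\item Condition (iv): the determinant of $\boldsymbol{m}^{-1}$ is $1/M$, bounded below because $M$ is bounded.
\item Condition (iii): $\widetilde{m}_1/M-1=(\widetilde{m}_1-M)/M$, and $\widetilde{m}_1-M\in H(D_-)$ because $m_1-1,\widetilde{m}_1-1,m_2,\widetilde{m}_2\in H(D_-)$ are bounded. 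Multiplying by the bounded analytic function $1/M$ keeps it in $H(D_-)$. The component $-m_2/M$ is handled the same way.
\end{itemize}

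\emph{Main obstacle: condition (ii), i.e.\ $\boldsymbol{m}^{-1}\in\boldsymbol{A}_N(C)$.} This is where the hypothesis (\ref{2-5}) is used. Let $\boldsymbol{g}=(g_1,g_2)$ be the compensators for $\boldsymbol{m}$. I would take as compensators for $\boldsymbol{m}^{-1}$
\[
\Bigl(\widetilde{g}_1/f,\; -g_2/f\Bigr),
\]
which are bounded analytic on $D_+$ since $\inf_{D_+}|f|>0$ and $D_+$ is inversion-invariant. One then writes
\[
\frac{\widetilde{m}_1}{M}-\frac{\widetilde{g}_1}{f}
=\frac{\widetilde{m}_1-\widetilde{g}_1}{M}-\widetilde{g}_1\,\frac{M-f}{Mf} .
\]
Multiplying by $\rho_c$, the first term is bounded on $C$ because $|1/M|$ is bounded on $C\subset\overline{D}_-$. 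For this one must know that $\rho_c(\widetilde{m}_1-\widetilde{g}_1)$ is bounded, which holds since $|\rho_c(\lambda^{-1})|=|\rho_c(\lambda)|$. The second term is bounded by the first line of (\ref{2-5}). The derivative bounds on $C_1$ follow from the product rule, combining:
\begin{itemize}
\item the second line of (\ref{2-5}),
\item boundedness of $M'$ and $m_j'$ from (i) of (\ref{2-4}),
\item boundedness of $\rho_c(\boldsymbol{m}-\boldsymbol{g})$ and of its derivative.
\end{itemize}
The delicate bookkeeping is that derivatives of $g$ and $f$ themselves need not be bounded near $C$. I would avoid differentiating them alone, always grouping them with $m$ terms so that only derivatives of the controlled combinations $\rho_c(m-g)$ and $\rho_c(M-f)$ appear. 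This is why (\ref{2-5}) is stated in exactly that form, including the tilde version.
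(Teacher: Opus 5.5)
Your proposal is correct and follows the paper's own argument. For (i), you restrict to the dense subspace $H(D_+)$ of Lemma \ref{l2-1} and use $m_jH(D_-)\subset H(D_-)$; your identity $\boldsymbol{m}(\boldsymbol{n}u)=(\boldsymbol{m}\cdot\boldsymbol{n})u$ is a tidier packaging of the paper's expansion via $R\mathfrak{p}_+=\mathfrak{p}_+R$. For (ii), you use the same compensator $(\widetilde{f}_1/f,-f_2/f)$ and $\boldsymbol{m}\cdot\boldsymbol{m}^{-1}=\boldsymbol{m}^{-1}\cdot\boldsymbol{m}=(1,0)$, with your decomposition filling in the estimates the paper calls ``not difficult.''

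One small caveat, which the paper shares, since it never checks condition (i) of (\ref{2-4}) for $\boldsymbol{m}^{-1}$. Bounding $M'$ on all of $D_-$ needs $\widetilde{m}_j'(z)=-z^{-2}m_j'(z^{-1})$ to stay bounded as $z\to0$ in $D_-^2$. That requires decay of $m_j'$ at infinity and does not follow from (i) of (\ref{2-4}) alone. Your use of $M'$ on $C_1$, where $|\lambda|>1$, is fine.
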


\begin{proof}
The identity in (i) is verified as follows. Let $\boldsymbol{m}=\left(
m_{1},m_{2}\right)  $, $\boldsymbol{n}=\left(  n_{1},n_{2}\right)
\in\boldsymbol{M}_{N}\left(  C\right)  $. Since $m_{j}$ is bounded analytic on
$D_{-}$, $m_{j}H\left(  D_{-}\right)  \subset H\left(  D_{-}\right)  $ for
$j=1$, $2$ are valid. Hence, for $u\in H\left(  D_{+}\right)  $ one has%
\[
\mathfrak{p}_{+}\left(  m_{i}n_{j}u\right)  =\mathfrak{p}_{+}\left(
m_{i}\mathfrak{p}_{+}n_{j}u\right)  +\mathfrak{p}_{+}\left(  m_{i}%
\mathfrak{p}_{-}n_{j}u\right)  =\mathfrak{p}_{+}\left(  m_{i}\mathfrak{p}%
_{+}n_{j}u\right)  \text{.}%
\]
Therefore, it holds that%
\begin{align*}
T\left(  \boldsymbol{m}\right)  T\left(  \boldsymbol{n}\right)  u  &
=\mathfrak{p}_{+}\left(  m_{1}\mathfrak{p}_{+}\left(  n_{1}u+n_{2}Ru\right)
+m_{2}R\mathfrak{p}_{+}\left(  n_{1}u+n_{2}Ru\right)  \right) \\
&  =\mathfrak{p}_{+}\left(  m_{1}\left(  n_{1}u+n_{2}Ru\right)  +m_{2}%
\mathfrak{p}_{+}\left(  \widetilde{n}_{1}Ru+\widetilde{n}_{2}u\right)  \right)
\\
&  =\mathfrak{p}_{+}\left(  m_{1}\left(  n_{1}u+n_{2}Ru\right)  +m_{2}\left(
\widetilde{n}_{1}Ru+\widetilde{n}_{2}u\right)  \right) \\
&  =\mathfrak{p}_{+}\left(  \left(  \boldsymbol{m}\cdot\boldsymbol{n}\right)
u\right)  =T\left(  \boldsymbol{m}\cdot\boldsymbol{n}\right)  u\text{,}%
\end{align*}
where where we have used the property $R\mathfrak{p}_{+}=\mathfrak{p}_{+}R$.
Then, Lemma \ref{l2-1} and the boundedness in $H_{N,c}\left(  D_{+}\right)  $
of $T\left(  \boldsymbol{m}\right)  $, $T\left(  \boldsymbol{n}\right)  $
imply (i).

The bounded analytic vector function $\left(  \widetilde{f}_{1}/f,-f_{2}%
/f\right)  $ is a candidate of a compensator for $\boldsymbol{m}^{-1}$. Under
the condition (\ref{2-5}), it is not difficult to verify that it works really
as a compensator, which leads us to (ii). The conditions (iii) and (iv) of
(\ref{2-4}) for $\boldsymbol{m}^{-1}$ are easily verified to hold, hence
$\boldsymbol{m}^{-1}\in\boldsymbol{M}_{N}\left(  C\right)  $. This together
with $\boldsymbol{m}^{-1}\cdot\boldsymbol{m}=\boldsymbol{m}\cdot
\boldsymbol{m}^{-1}=\left(  1,0\right)  \equiv\boldsymbol{1}$ yields (ii) of
the lemma.\bigskip
\end{proof}

Later we will show $\boldsymbol{m}$ in (\ref{1-15}) is an element of
$\boldsymbol{M}_{N}$ and satisfies (\ref{2-5}).

\subsection{Basic properties of tau-functions}

In this section we show that $T\left(  g\boldsymbol{a}\right)  $,
$\tau_{\boldsymbol{a}}\left(  g\right)  $ are well-defined and they are
continuous under suitable norms.

For $g\in\Gamma_{N}\left(  D_{+}\right)  $ and $\boldsymbol{a}\in
\boldsymbol{A}_{N}\left(  C\right)  $ the operator $T\left(  g\boldsymbol{a}%
\right)  $ is defined in (\ref{1-9})%
\[
T\left(  g\boldsymbol{a}\right)  u=g\boldsymbol{f}u+\mathfrak{p}_{+}\left(
g\left(  \boldsymbol{a}-\boldsymbol{f}\right)  u\right)  \text{ \ for }u\in
H_{N,c}\left(  D_{+}\right)  \text{,}%
\]
where the compensator $\boldsymbol{f}$ is so chosen that $\rho_{c+c^{\prime}%
}\left(  \boldsymbol{a}-\boldsymbol{f}\right)  =O\left(  1\right)  $ for
$c^{\prime}=\mathrm{\delta}_{D_{+}}\left(  g\right)  $. This definition of
$T\left(  g\boldsymbol{a}\right)  $ does not depend on the choice of
compensators $\boldsymbol{f}$. Indeed, the right hand side turns to \
\[
g\boldsymbol{f}u+\mathfrak{p}_{+}\left(  g\left(  \boldsymbol{a}%
-\boldsymbol{f}\right)  u\right)  =g\rho_{c^{\prime}}^{-1}\boldsymbol{f}%
\rho_{c^{\prime}}u+\mathfrak{p}_{+}\left(  g\rho_{c^{\prime}}^{-1}\left(
\boldsymbol{a}-\boldsymbol{f}\right)  \rho_{c^{\prime}}u\right)  =T\left(
g\rho_{c^{\prime}}^{-1}\boldsymbol{a}\right)  \rho_{c^{\prime}}u\text{,}%
\]
where $g\rho_{c^{\prime}}^{-1}\boldsymbol{a}\in\boldsymbol{A}_{N}\left(
C\right)  $ since $g\rho_{c^{\prime}}^{-1}$ is bounded. Here, the operator
$T\left(  g\rho_{c^{\prime}}^{-1}\boldsymbol{a}\right)  $ maps
$H_{N,c+c^{\prime}}\left(  D_{+}\right)  $ to $H_{N,c+c^{\prime}}\left(
D_{+}\right)  $, thus (\ref{2-3}) implies that the left hand side is
independent of the choice of $\boldsymbol{f}$.

For the definition of the tau-functions $\tau_{\boldsymbol{a}}\left(
g\right)  $ it is necessary to have a different expression of $T\left(
g\boldsymbol{a}\right)  $:
\begin{align*}
&  T\left(  g\boldsymbol{a}\right)  u\left(  z\right) \\
&  =g\boldsymbol{f}u\left(  z\right)  +g\mathfrak{p}_{+}\left(  \left(
\boldsymbol{a}-\boldsymbol{f}\right)  u\right)  \left(  z\right)  +\int
_{C}\dfrac{g\left(  \lambda\right)  -g\left(  z\right)  }{2\pi i\left(
\lambda-z\right)  }\left(  \boldsymbol{a}-\boldsymbol{f}\right)  u\left(
\lambda\right)  d\lambda\\
&  =gT\left(  \boldsymbol{a}\right)  u\left(  z\right)  +\int_{C}%
\dfrac{g\left(  \lambda\right)  -g\left(  z\right)  }{2\pi i\left(
\lambda-z\right)  }\left(  \boldsymbol{a}-\boldsymbol{f}\right)  u\left(
\lambda\right)  d\lambda\text{.}%
\end{align*}
If $\mathrm{\delta}_{D_{+}}\left(  g\right)  <c$ and a compensator
$\boldsymbol{f}$ is chosen such that $\rho_{c}\left(  \boldsymbol{a}%
-\boldsymbol{f}\right)  u\in L^{2}\left(  C\right)  $, one has
\[
\int_{C}\dfrac{g\left(  \lambda\right)  -g\left(  z\right)  }{2\pi i\left(
\lambda-z\right)  }\rho_{c}\left(  \lambda\right)  ^{-1}\mathfrak{p}%
_{+}\left(  \rho_{c}\left(  \boldsymbol{a}-\boldsymbol{f}\right)  u\right)
\left(  \lambda\right)  d\lambda=0\text{ \ for }z\in D_{+}\text{,}%
\]
since, for fixed $z\in D_{+}$, the function of $\lambda$:%
\[
\dfrac{g\left(  \lambda\right)  -g\left(  z\right)  }{2\pi i\left(
\lambda-z\right)  }\rho_{c}\left(  \lambda\right)  ^{-1}\mathfrak{p}%
_{+}\left(  \rho_{c}\left(  \boldsymbol{a}-\boldsymbol{f}\right)  u\right)
\left(  \lambda\right)
\]
is analytic on $D_{+}$. Thus, we have%
\begin{align}
&  \int_{C}\dfrac{g\left(  \lambda\right)  -g\left(  z\right)  }{2\pi i\left(
\lambda-z\right)  }\left(  \boldsymbol{a}-\boldsymbol{f}\right)  u\left(
\lambda\right)  d\lambda\nonumber\\
&  =\int_{C}\dfrac{g\left(  \lambda\right)  -g\left(  z\right)  }{2\pi
i\left(  \lambda-z\right)  }\rho_{c}\left(  \lambda\right)  ^{-1}%
\mathfrak{p}_{-}\left(  \rho_{c}\left(  \boldsymbol{a}-\boldsymbol{f}\right)
u\right)  \left(  \lambda\right)  d\lambda\text{.} \label{2-6}%
\end{align}
Now, we introduce two operators:
\begin{equation}
\left\{
\begin{array}
[c]{ll}%
S_{\boldsymbol{a}}u=\mathfrak{p}_{-}\left(  \rho_{c}\left(  \boldsymbol{a}%
-\boldsymbol{f}\right)  u\right)  & \text{: \ }H_{N,c}\left(  D_{+}\right)
\rightarrow H\left(  D_{-}\right) \\
\left(  H_{g}v\right)  \left(  z\right)  =\int_{C}\dfrac{\left(  g\left(
\lambda\right)  -g\left(  z\right)  \right)  \rho_{c}\left(  \lambda\right)
^{-1}}{2\pi i\left(  \lambda-z\right)  }v\left(  \lambda\right)  d\lambda &
\text{: \ }H\left(  D_{-}\right)  \rightarrow H_{N,c}\left(  D_{+}\right)
\end{array}
\right.  \text{.} \label{2-7}%
\end{equation}
Then, (\ref{2-6}) implies%
\begin{equation}
T\left(  g\boldsymbol{a}\right)  =gT\left(  \boldsymbol{a}\right)
+H_{g}S_{\boldsymbol{a}}\text{.} \label{2-8}%
\end{equation}

We estimate the Hilbert-Schmidt (HS in short) norms of $S_{\boldsymbol{a}}$
and $g^{-1}H_{g}$. Noting%
\[
\rho_{c}\left(  \lambda\right)  \left(  \boldsymbol{a}-\boldsymbol{f}\right)
\left(  \lambda\right)  u\left(  \lambda\right)  =\rho_{2c}\left(
\lambda\right)  \left(  \boldsymbol{a}-\boldsymbol{f}\right)  \left(
\lambda\right)  \rho_{c}\left(  \lambda\right)  ^{-1}u\left(  \lambda\right)
\]
due to $\rho_{c}\left(  \lambda\right)  =\rho_{c}\left(  \lambda^{-1}\right)
$, we can modify $S_{\boldsymbol{a}}$:%
\begin{align*}
\left(  S_{\boldsymbol{a}}u\right)  \left(  z\right)   &  =\int_{C}\dfrac
{\rho_{2c}\left(  \lambda\right)  \left(  \boldsymbol{a}-\boldsymbol{f}%
\right)  \left(  \lambda\right)  \rho_{c}\left(  \lambda\right)  ^{-1}u\left(
\lambda\right)  }{2\pi i\left(  \lambda-z\right)  }d\lambda\\
&  =\int_{C}\dfrac{\left(  \rho_{2c}\left(  \lambda\right)  \left(
\boldsymbol{a}-\boldsymbol{f}\right)  \left(  \lambda\right)  -\rho
_{2c}\left(  z\right)  \left(  \boldsymbol{a}-\boldsymbol{f}\right)  \left(
z\right)  \right)  \rho_{c}\left(  \lambda\right)  ^{-1}u\left(
\lambda\right)  }{2\pi i\left(  \lambda-z\right)  }d\lambda\text{,}%
\end{align*}
where we have used%
\[
\int_{C}\dfrac{\rho_{c}\left(  \lambda\right)  ^{-1}u\left(  \lambda\right)
}{2\pi i\left(  \lambda-z\right)  }d\lambda=-\mathfrak{p}_{-}\left(  \rho
_{c}^{-1}u\right)  \left(  z\right)  =0\text{ \ for }z\in D_{-}\text{,}%
\]
since $\rho_{c}^{-1}u\in H\left(  D_{+}\right)  $, which leads to
\[
\left\Vert S_{\boldsymbol{a}}\right\Vert _{HS}^{2}\leq\int_{C^{2}}\left\Vert
U_{\boldsymbol{a}}\left(  z,\lambda\right)  \right\Vert ^{2}\left\vert
\rho_{2c}\left(  \lambda\right)  \right\vert ^{-2}\left\vert d\lambda
\right\vert \left\vert dz\right\vert
\]
with%
\[
U_{\boldsymbol{a}}\left(  z,\lambda\right)  =\dfrac{\rho_{2c}\left(
\lambda\right)  \left(  \boldsymbol{a}-\boldsymbol{f}\right)  \left(
\lambda\right)  -\rho_{2c}\left(  z\right)  \left(  \boldsymbol{a}%
-\boldsymbol{f}\right)  \left(  z\right)  }{2\pi i\left(  \lambda-z\right)
}\text{.}%
\]
Here, $\left\Vert S_{\boldsymbol{a}}\right\Vert _{HS}$ denotes the HS norm of
the operator $S_{\boldsymbol{a}}$ from $H_{N,c}\left(  D_{+}\right)  $ to
$H\left(  D_{-}\right)  $. In order to estimate $\left\Vert S_{\boldsymbol{a}%
}\right\Vert _{HS}$, precisely speaking, first we have to consider the
operator $S_{\boldsymbol{a}}$ from $L_{N}^{2}\left(  C\right)  $ to
$L^{2}\left(  C\right)  $ and then restrict it to $H_{N,c}\left(
D_{+}\right)  $. For two $\boldsymbol{a}_{1}$, $\boldsymbol{a}_{2}%
\in\boldsymbol{A}_{N}\left(  C\right)  $, we have%
\begin{equation}
\left\Vert S_{\boldsymbol{a}_{1}}-S_{\boldsymbol{a}_{2}}\right\Vert _{HS}%
^{2}\leq\int_{C^{2}}\left\Vert U_{\boldsymbol{a}_{1}}\left(  z,\lambda\right)
-U_{\boldsymbol{a}_{2}}\left(  z,\lambda\right)  \right\Vert ^{2}\left\vert
\rho_{2c}\left(  \lambda\right)  \right\vert ^{-2}\left\vert d\lambda
\right\vert \left\vert dz\right\vert \text{.} \label{2-9}%
\end{equation}
Similarly, for $g\in\Gamma_{N}\left(  D_{+}\right)  $, the HS norm of the
operator $g^{-1}H_{g}$ from $H_{-}$ to $H_{N,c}\left(  D_{+}\right)  $ is
estimated as%
\[
\left\Vert g^{-1}H_{g}\right\Vert _{HS}^{2}\leq\int_{C^{2}}\left\vert
K_{g}\left(  z,\lambda\right)  \right\vert ^{2}\left\vert \rho_{c}\left(
z\right)  \right\vert ^{-2}\left\vert \rho_{c}\left(  \lambda\right)
\right\vert ^{-2}\left\vert d\lambda\right\vert \left\vert dz\right\vert
\]
with%
\[
K_{g}\left(  z,\lambda\right)  =\dfrac{g\left(  z\right)  ^{-1}g\left(
\lambda\right)  -1}{2\pi i\left(  \lambda-z\right)  }\text{,}%
\]
and, for $g_{1}$, $g_{2}\in\Gamma_{N}\left(  D_{+}\right)  $, we have
\begin{equation}
\left\Vert g_{1}^{-1}H_{g_{1}}-g_{2}^{-1}H_{g_{2}}\right\Vert _{HS}^{2}%
\leq\int_{C^{2}}\left\vert K_{g_{1}}\left(  z,\lambda\right)  -K_{g_{2}%
}\left(  z,\lambda\right)  \right\vert ^{2}\dfrac{\left\vert d\lambda
\right\vert \left\vert dz\right\vert }{\left\vert \rho_{c}\left(  z\right)
\right\vert ^{2}\left\vert \rho_{c}\left(  \lambda\right)  \right\vert ^{2}%
}\text{.} \label{2-10}%
\end{equation}
Set%
\begin{equation}
\left\{
\begin{array}
[c]{l}%
\mathrm{d}_{c}\left(  g_{1},g_{2}\right)  =\sqrt{%
%TCIMACRO{\dint _{C^{2}}}%
%BeginExpansion
{\displaystyle\int_{C^{2}}}
%EndExpansion
\left\vert K_{g_{1}}\left(  z,\lambda\right)  -K_{g_{2}}\left(  z,\lambda
\right)  \right\vert ^{2}\dfrac{\left\vert d\lambda\right\vert \left\vert
dz\right\vert }{\left\vert \rho_{c}\left(  z\right)  \right\vert
^{2}\left\vert \rho_{c}\left(  \lambda\right)  \right\vert ^{2}}}\\
\left\Vert \boldsymbol{a}\right\Vert _{c,1}=\inf\limits_{\boldsymbol{f}%
}\left(  \sup_{\lambda\in C}\left(  \left\Vert \boldsymbol{f}\left(
\lambda\right)  \right\Vert +\left\Vert \rho_{c}\left(  \boldsymbol{a}%
-\boldsymbol{f}\right)  \left(  \lambda\right)  \right\Vert \right)  \right)
\text{ }\\
\left\Vert \boldsymbol{a}\right\Vert _{c,2}=\inf\limits_{\boldsymbol{f}%
}\left(  \sqrt{%
%TCIMACRO{\dint _{C^{2}}}%
%BeginExpansion
{\displaystyle\int_{C^{2}}}
%EndExpansion
\left\Vert \dfrac{\rho_{c}\left(  \lambda\right)  \left(  \boldsymbol{a}%
-\boldsymbol{f}\right)  \left(  \lambda\right)  -\rho_{c}\left(  z\right)
\left(  \boldsymbol{a}-\boldsymbol{f}\right)  \left(  z\right)  }{2\pi
i\left(  \lambda-z\right)  }\right\Vert ^{2}\dfrac{\left\vert d\lambda
\right\vert \left\vert dz\right\vert }{\left\vert \rho_{c}\left(
\lambda\right)  \right\vert ^{2}}}\right)  \text{ }\\
\left\Vert \boldsymbol{a}\right\Vert _{c,3}=\inf\limits_{\boldsymbol{f}%
}\left(  \sup_{\lambda\in C_{1}}\left(  \left\Vert \partial_{\lambda}\rho
_{c}\left(  \boldsymbol{a}-\boldsymbol{f}\right)  \left(  \lambda\right)
\right\Vert +\left\Vert \partial_{\lambda}\widetilde{\rho_{c}\left(
\boldsymbol{a}-\boldsymbol{f}\right)  }\left(  \lambda\right)  \right\Vert
\right)  \right)
\end{array}
\right.  \text{.} \label{2-11}%
\end{equation}
For the metric $\mathrm{d}_{c}\left(  g_{1},g_{2}\right)  $, $\mathrm{d}%
_{c}\left(  g_{1},g_{2}\right)  =0$ holds if and only if $g_{1}%
=\mathrm{const.}g_{2}$. The quantities $\left\Vert \boldsymbol{a}\right\Vert
_{c,j}$ define semi-norms on $\boldsymbol{A}_{N}\left(  C\right)  $. The
continuity of the relevant operators with respect to $g$ and $\boldsymbol{a}$
is proved by the following:

\begin{lemma}
\label{l2-3}There exists a constant $c_{1}$ such that for $\boldsymbol{a}_{1}%
$, $\boldsymbol{a}_{2}$, $\boldsymbol{a}\in\boldsymbol{A}_{N}\left(  C\right)
$ and $g_{1}$, $g_{2}\in\Gamma_{N}\left(  D_{+}\right)  $
\begin{equation}
\left\{
\begin{array}
[c]{l}%
\left\Vert T\left(  \boldsymbol{a}\right)  \right\Vert \leq c_{1}\left\Vert
\boldsymbol{a}\right\Vert _{c,1}\\
\left\Vert S_{\boldsymbol{a}_{1}}-S_{\boldsymbol{a}_{2}}\right\Vert _{HS}%
\leq\left\Vert \boldsymbol{a}_{1}-\boldsymbol{a}_{2}\right\Vert _{c,2}\\
\left\Vert g_{1}^{-1}H_{g_{1}}-g_{2}^{-1}H_{g_{2}}\right\Vert _{HS}%
\leq\mathrm{d}_{c}\left(  g_{1},g_{2}\right)
\end{array}
\right.  \text{,} \label{2-12}%
\end{equation}
and for $\boldsymbol{a}\in\boldsymbol{A}_{N}\left(  C\right)  $
\begin{equation}
\left\Vert \boldsymbol{a}\right\Vert _{c,2}\leq c_{1}\left\Vert \boldsymbol{a}%
\right\Vert _{c,3}\text{.} \label{2-13}%
\end{equation}

\end{lemma}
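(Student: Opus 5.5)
The first estimate is a direct bound. Take $u\in H_{N,c}(D_+)$ and a compensator $\boldsymbol{f}$ that nearly attains the infimum in $\left\Vert \boldsymbol{a}\right\Vert_{c,1}$. We have $T(\boldsymbol{a})u=\boldsymbol{f}u+\mathfrak{p}_+((\boldsymbol{a}-\boldsymbol{f})u)$, and we bound the two pieces separately.

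For the first piece, $R$ is unitary on $H_{N,c}(D_+)$ and $\boldsymbol{f}$ is bounded analytic on $D_+$. Hence $\left\Vert \boldsymbol{f}u\right\Vert_{N,c}\le 2\sup\left\Vert \boldsymbol{f}\right\Vert\left\Vert u\right\Vert_{N,c}$.

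For the second piece, write $(\boldsymbol{a}-\boldsymbol{f})u=\rho_c^{-1}\cdot\rho_c(\boldsymbol{a}-\boldsymbol{f})u$. Since $|\rho_c(\lambda)|=|\rho_c(\lambda^{-1})|$, the function $\rho_c(\boldsymbol{a}-\boldsymbol{f})$ is bounded and $\rho_c^{-1}u$ together with $\rho_c^{-1}Ru$ lie in $L^2(C)$ with norm $\left\Vert u\right\Vert_{N,c}$. Thus $(\boldsymbol{a}-\boldsymbol{f})u\in L^2(C)$. The projection $\mathfrak{p}_+$ is bounded on $L^2(C)$, and $L^2(C)$ embeds boundedly into the weighted space because $|\rho_c|\ge c'>0$ on $C$, using $\cosh\phi^N>0$ and (\ref{1-3}). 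So $\left\Vert\mathfrak{p}_+((\boldsymbol{a}-\boldsymbol{f})u)\right\Vert_{N,c}\le C\left\Vert\mathfrak{p}_+(\cdot)\right\Vert_{L^2}\le C'\sup\left\Vert\rho_c(\boldsymbol{a}-\boldsymbol{f})\right\Vert\left\Vert u\right\Vert_{N,c}$. Taking the infimum over $\boldsymbol{f}$ gives $\left\Vert T(\boldsymbol{a})\right\Vert\le c_1\left\Vert\boldsymbol{a}\right\Vert_{c,1}$.

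The two Hilbert–Schmidt estimates come from the kernel representations already derived.
\begin{itemize}
\item For $S_{\boldsymbol{a}_1}-S_{\boldsymbol{a}_2}$, the kernel identity before (\ref{2-9}) expresses the operator as an integral operator against $\rho_c(\lambda)^{-1}u(\lambda)$. Here $\rho_c^{-1}u$ ranges over an isometric copy of $H_{N,c}(D_+)$, namely $H(D_+)$ (with the $R$-component handled by unitarity). The HS norm is therefore bounded by the $L^2(C^2)$ norm of the kernel difference with the appropriate weight, which is (\ref{2-9}).
\item Note that $\boldsymbol{a}_1-\boldsymbol{a}_2$ admits the compensator $\boldsymbol{f}_1-\boldsymbol{f}_2$, and $S$ is linear in $(\boldsymbol{a},\boldsymbol{f})$ and independent of the choice of compensator. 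Taking the infimum over compensators of $\boldsymbol{a}_1-\boldsymbol{a}_2$ yields $\left\Vert\boldsymbol{a}_1-\boldsymbol{a}_2\right\Vert_{c,2}$. The weight $\rho_{2c}$ versus $\rho_c$ has to be matched with the definition (\ref{2-11}), after possibly renaming $c$.
\item The bound on $g^{-1}H_g$ is likewise the HS bound of an integral operator from $H(D_-)\subset L^2(C)$ into the weighted space with kernel $K_g(z,\lambda)\rho_c(\lambda)^{-1}$ and output weight $|\rho_c(z)|^{-1}$. Subtracting kernels gives $\mathrm{d}_c(g_1,g_2)$ directly, by (\ref{2-10}).
\end{itemize}

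The main work is (\ref{2-13}): bounding a Hilbert–Schmidt type double integral by a Lipschitz norm. Put $h=\rho_c(\boldsymbol{a}-\boldsymbol{f})$, which is bounded on $C$ with $\partial_\lambda h$ and $\partial_\lambda\widetilde h$ bounded on $C_1$. We must bound $\int_{C^2}|h(\lambda)-h(z)|^2|\lambda-z|^{-2}|\rho_c(\lambda)|^{-2}|d\lambda||dz|$. I would split $C^2$ into pieces.

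\textbf{Both points on $C_1$ and close, $|\lambda-z|\le1$.} Use the mean value theorem along the arc of $C_1$, which is a smooth graph whose arc length is comparable to the chord. The difference quotient is then bounded by $\sup|\partial h|$, and the region has finite weighted measure because $|\rho_c(\lambda)|^{-2}$ decays like $e^{-2c|\operatorname{Re}\phi^N|}$ along $C_1$, which is integrable in $\lambda$ with the $z$-range of bounded length.

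\textbf{Far pairs, $|\lambda-z|\ge1$.} Bound $|h(\lambda)-h(z)|\le2\sup|h|$. The integral of $|\lambda-z|^{-2}$ in $z$ over $C$ is bounded uniformly, and $|\rho_c(\lambda)|^{-2}$ is integrable in $\lambda$.

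\textbf{Pairs involving $C_2$.} Substitute $\lambda\mapsto\lambda^{-1}$, $z\mapsto z^{-1}$. This transports $C_2$ to $C_1$ and $h$ to $\widetilde h$, whose derivative on $C_1$ is controlled; the Jacobians are bounded since $C_2$ stays away from $0$ except asymptotically, where the decay of $|\rho_c|^{-2}$ compensates. Mixed pairs with one point on $C_1$ and one on $C_2$ are at positive distance apart except near $\pm1$, where $C_1$ and $C_2$ approach each other but do not meet ($\phi(\pm1)=\pm2$ is real). I expect this bookkeeping near $0$, $\infty$ and the crossings to be the real obstacle. I would handle it by the reflection $R$ together with the symmetry $\rho_c(\lambda)=\rho_c(\lambda^{-1})$, reducing everything to the $C_1\times C_1$ estimate. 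Finally, take the infimum over $\boldsymbol{f}$.
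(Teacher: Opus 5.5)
Your proofs of the first estimate and of the two Hilbert--Schmidt bounds match the paper's. It also bounds $T(\boldsymbol{a})$ directly from (\ref{1-5}) and reads the other two off the kernel bounds (\ref{2-9}), (\ref{2-10}). Your remark that the weight $\rho_{2c}$ in (\ref{2-9}) does not literally match $\rho_{c}$ in (\ref{2-11}) is a fair point the paper passes over. For (\ref{2-13}) your skeleton is also the paper's: derivative bound near the diagonal, $\sup$ bound away from it, integrability of $|\rho_c|^{-2}$, inversion for $C_2$. However, the step you single out as the real obstacle rests on a wrong picture of the contour, and the remedy you propose would not have worked.

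Since $\operatorname{Im}\phi(re^{i\theta})=(r-r^{-1})\sin\theta$, points of $C$ satisfy $|r-r^{-1}|\geq\vartheta$. Hence $C_1\subset\{|z|\geq r_0\}$ and $C_2\subset\{|z|\leq r_0^{-1}\}$, where $r_0-r_0^{-1}=\vartheta$. The points $\pm1$, and indeed the whole unit circle, are interior points of $D_+$; that is exactly what $\phi(\pm1)=\pm2$ being real means. So $\mathrm{dist}(C_1,C_2)\geq\vartheta$, and $C_1$, $C_2$ never approach each other. The mixed terms are then trivial, as in the paper:
\[
I_{ij}\leq\pi^{-2}\Vert h\Vert_\infty^2\int_{C_i}|\rho_c(\lambda)|^{-2}|d\lambda|\,\sup_{\lambda\in C_i}\int_{C_j}|\lambda-z|^{-2}|dz|<\infty .
\]
Had such a region existed, reflection could not have handled it. Inverting only one variable turns $(h(\lambda)-h(z))/(\lambda-z)$ into an expression in $h(\lambda)-\widetilde{h}(w)$, which is not a difference quotient of a single function. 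So mixed pairs cannot be reduced to the $C_1\times C_1$ estimate this way.

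Reflection is needed only for $C_2\times C_2$, and there it is exact. With $\lambda=\mu^{-1}$, $z=w^{-1}$, the factor $|\mu w|^2$ produced by $|\lambda-z|^{-2}$ cancels the two Jacobians. This gives $I_{22}(h)=I_{11}(\widetilde{h})$ with no Jacobian bound or weight compensation required. This matters because the two branches of $C_2$ are tangent at $0$.

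Similarly, $C_1$ is not connected: $C_1^{\pm}\subset\{\pm\operatorname{Im}z>\vartheta\}$. Your mean value argument along arcs therefore applies only on $C_1^{+}\times C_1^{+}$ and $C_1^{-}\times C_1^{-}$. Pairs in $C_1^{+}\times C_1^{-}$ with $|\lambda-z|\leq1$ (possible if $\vartheta<1/2$) must be handled by the separation $|\lambda-z|>2\vartheta$, which the paper does explicitly.

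Finally, your far-pair bound, like the paper's, uses $\sup_C\Vert h\Vert$, which is not part of $\Vert\boldsymbol{a}\Vert_{c,3}$. What either argument actually proves is a bound by the infimum over compensators of $\sup_C\Vert\rho_c(\boldsymbol{a}-\boldsymbol{f})\Vert$ plus the derivative terms. That is what is needed in Lemma \ref{l3-7}(iii), where both kinds of convergence are available for a common compensator.
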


\begin{proof}
The first estimate in (\ref{2-12}) is derived from (\ref{1-5}):%
\begin{align*}
\left\Vert T\left(  \boldsymbol{a}\right)  u\right\Vert _{N,c}  &
\leq\left\Vert \boldsymbol{f}u\right\Vert _{N,c}+\left\Vert \mathfrak{p}%
_{+}\left(  \left(  \boldsymbol{a}-\boldsymbol{f}\right)  u\right)
\right\Vert _{L^{2}\left(  C\right)  }\\
&  \leq\left\Vert \boldsymbol{f}\right\Vert _{\infty}\left\Vert u\right\Vert
_{N,c}+\left\Vert \mathfrak{p}_{+}\right\Vert \left\Vert \left(
\boldsymbol{a}-\boldsymbol{f}\right)  u\right\Vert _{L^{2}\left(  C\right)
}\text{\ }\\
&  \leq\left\Vert \boldsymbol{f}\right\Vert _{\infty}\left\Vert u\right\Vert
_{N,c}+\left\Vert \mathfrak{p}_{+}\right\Vert \left\Vert \rho_{c}\left(
\boldsymbol{a}-\boldsymbol{f}\right)  \right\Vert _{\infty}\left\Vert
u\right\Vert _{N,c}\leq c_{1}\left\Vert \boldsymbol{a}\right\Vert
_{c,1}\left\Vert u\right\Vert _{N,c}\text{,}%
\end{align*}
where $\left\Vert \cdot\right\Vert _{N,c}$ denotes the norm in $H_{N,c}\left(
D_{+}\right)  $ and $\left\Vert \boldsymbol{f}\right\Vert _{\infty}%
=\sup_{\lambda\in C}\left\Vert \boldsymbol{f}\left(  \lambda\right)
\right\Vert $. The rest two estimates of (\ref{2-12}) follow from (\ref{2-9})
and (\ref{2-10}).

For $\boldsymbol{a}\in\boldsymbol{A}_{N}\left(  C\right)  $ let
$\boldsymbol{b}\equiv\rho_{c}\left(  \boldsymbol{a}-\boldsymbol{f}\right)  $.
We divide the integral in $\left\Vert \boldsymbol{a}\right\Vert _{c,2}$ into
the 4 parts:%
\[
\int_{C^{2}}\left\Vert \dfrac{\boldsymbol{b}\left(  \lambda\right)
-\boldsymbol{b}\left(  z\right)  }{2\pi i\left(  \lambda-z\right)
}\right\Vert ^{2}\left\vert \rho_{c}\left(  \lambda\right)  \right\vert
^{-2}\left\vert d\lambda\right\vert \left\vert dz\right\vert =I_{11}%
+I_{12}+I_{21}+I_{22}\text{,}%
\]
where%
\[
I_{ij}=\int_{C_{i}\times C_{j}}\left\Vert \dfrac{\boldsymbol{b}\left(
\lambda\right)  -\boldsymbol{b}\left(  z\right)  }{2\pi i\left(
\lambda-z\right)  }\right\Vert ^{2}\left\vert \rho_{c}\left(  \lambda\right)
\right\vert ^{-2}\left\vert d\lambda\right\vert \left\vert dz\right\vert
\text{.}%
\]
The terms $I_{ij}$ for $i\neq j$ are estimated as
\[
I_{ij}\leq\pi^{-2}\left\Vert \boldsymbol{b}\right\Vert _{\infty}^{2}%
%TCIMACRO{\dint _{C_{i}}}%
%BeginExpansion
{\displaystyle\int_{C_{i}}}
%EndExpansion
\dfrac{\left\vert d\lambda\right\vert }{\left\vert \rho_{c}\left(
\lambda\right)  \right\vert ^{2}}\int_{C_{j}}\dfrac{\left\vert dz\right\vert
}{\left\vert \lambda-z\right\vert ^{2}}\text{,}%
\]
which are finite, since $\left\vert \lambda-z\right\vert \geq\mathrm{dist}%
\left(  C_{1},C_{2}\right)  >0$. The curve $C_{1}$ consists of two components
$C_{1}^{\pm}=C_{1}\cap\mathbb{C}_{\pm}$, and the integral $I_{11}$ is a sum of
the four integrals on the domains $C_{1}^{+}\times C_{1}^{+}$, $C_{1}%
^{+}\times C_{1}^{-}$, $C_{1}^{-}\times C_{1}^{+}$ and $C_{1}^{-}\times
C_{1}^{-}$. The second and third integrals can be estimated by $\mathrm{const}%
.\left\Vert \boldsymbol{b}\right\Vert _{\infty}^{2}$ similarly to $I_{21}$.
For the first integral, the region where $\lambda$ and $z$ are very close has
to be treated carefully. The integral on $C_{1}^{+}$ is divided into the two
parts:
\[
\int_{C_{1}^{+}}\left\Vert \dfrac{\boldsymbol{b}\left(  \lambda\right)
-\boldsymbol{b}\left(  z\right)  }{2\pi i\left(  \lambda-z\right)
}\right\Vert ^{2}\left\vert dz\right\vert =I_{1}+I_{2}%
\]
with%
\[
I_{1}=\int_{\left\vert \lambda-z\right\vert \leq1}\left\Vert \dfrac
{\boldsymbol{b}\left(  \lambda\right)  -\boldsymbol{b}\left(  z\right)  }{2\pi
i\left(  \lambda-z\right)  }\right\Vert ^{2}\left\vert dz\right\vert \text{,
\ }I_{2}=\int_{\left\vert \lambda-z\right\vert >1}\left\Vert \dfrac
{\boldsymbol{b}\left(  \lambda\right)  -\boldsymbol{b}\left(  z\right)  }{2\pi
i\left(  \lambda-z\right)  }\right\Vert ^{2}\left\vert dz\right\vert \text{.}%
\]
Observing that the curve $C_{1}^{+}$ is almost parallel to the real line as
$\lambda\rightarrow\pm\infty$, we treat the integrals as if they were on the
real line. Then, the term $I_{1}$ is estimated from above by $c_{1}%
\sup_{\lambda\in C_{1}^{+}}\left\Vert \partial_{\lambda}\boldsymbol{b}\left(
\lambda\right)  \right\Vert ^{2}$ with a constant $c_{1}$, and $I_{2}$ is
estimated by $c_{2}\sup_{\lambda\in C_{1}^{+}}\left\Vert \boldsymbol{b}\left(
\lambda\right)  \right\Vert ^{2}$ with another constant $c_{2}$. This leads us
to%
\[
\int_{C_{1}^{+}\times C_{1}^{+}}\left\Vert \dfrac{\boldsymbol{b}\left(
\lambda\right)  -\boldsymbol{b}\left(  z\right)  }{2\pi i\left(
\lambda-z\right)  }\right\Vert ^{2}\dfrac{\left\vert d\lambda\right\vert
\left\vert dz\right\vert }{\left\vert \rho_{c}\left(  \lambda\right)
\right\vert ^{2}}\leq c_{3}\sup_{\lambda\in C_{1}^{+}}\left(  \left\Vert
\partial_{\lambda}\boldsymbol{b}\left(  \lambda\right)  \right\Vert
^{2}+\left\Vert \boldsymbol{b}\left(  \lambda\right)  \right\Vert ^{2}\right)
\text{,}%
\]
where $c_{3}=\max\left(  c_{1},c_{2}\right)  \int_{C_{1}^{+}}\left\vert
\rho_{c}\left(  \lambda\right)  \right\vert ^{-2}\left\vert d\lambda
\right\vert $. The fourth integral on $C_{1}^{-}\times C_{1}^{-}$ is equal to
the first integral on $C_{1}^{+}\times C_{1}^{+}$. The integral $I_{22}$ is
equal to $I_{11}$ replacing $\boldsymbol{b}$ by $\widetilde{\boldsymbol{b}}$.
Therefore, we obtain (\ref{2-13}).\bigskip
\end{proof}

To verify convergences by the metric $\mathrm{d}_{c}\left(  g_{1}%
,g_{2}\right)  $ it is convenient to give a sufficient condition. Set%
\[
G_{c}\left(  z,\lambda\right)  =\left(  2c\int_{z}^{\lambda}\theta\left(
\xi\right)  \left\vert d\xi\right\vert \right)  \exp\left(  c\int_{z}%
^{\lambda}\theta\left(  s\right)  \left\vert ds\right\vert \right)  \text{,}%
\]
where the integral $\int_{z}^{\lambda}\cdot\left\vert ds\right\vert $ denotes
the integral along the shortest path connecting $z$ and $\lambda$ in $D_{+}$.

\begin{lemma}
\label{l2-4}For $g_{1}$, $g_{2}\in\Gamma_{N}\left(  D_{+}\right)  $ and any
$c>c^{\prime}\equiv\max_{j}\left\{  \mathrm{\delta}_{D_{+}}\left(
g_{j}\right)  \right\}  $,
\[
\mathrm{d}_{c}\left(  g_{1},g_{2}\right)  \leq\sqrt{\int_{C^{2}}\left\vert
\dfrac{G_{c^{\prime}}\left(  z,\lambda\right)  }{\lambda-z}\right\vert
^{2}\left\vert \rho_{c}\left(  \lambda\right)  \right\vert ^{-2}\left\vert
\rho_{c}\left(  z\right)  \right\vert ^{-2}\left\vert d\lambda\right\vert
\left\vert dz\right\vert }<\infty
\]
is valid. Especially, $\mathrm{d}_{c}\left(  g,1\right)  <\infty$ holds for
$g\in\Gamma_{N}\left(  D_{+}\right)  $, if $c>\mathrm{\delta}_{D_{+}}\left(
g\right)  $. Moreover, for a sequence $g_{n}\in\Gamma_{N}\left(  D_{+}\right)
$, suppose $\sup_{n\geq1}\mathrm{\delta}_{D_{+}}\left(  g_{n}\right)  <c$ and
$g_{n}\left(  \lambda\right)  \rightarrow g\left(  \lambda\right)  $ for a.e.
$\lambda\in C$. Then, $\mathrm{d}_{c}\left(  g_{n},g\right)  \rightarrow0$ as
$n\rightarrow\infty$ holds.\newline
\end{lemma}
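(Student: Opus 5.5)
The idea is to bound the kernel difference $K_{g_1}-K_{g_2}$ pointwise by $G_{c'}(z,\lambda)/|\lambda-z|$ (up to the factor $2\pi$), and then check that this bound is square integrable against the weight $|\rho_c(z)|^{-2}|\rho_c(\lambda)|^{-2}$.

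For the pointwise bound, write $h_j(z,\lambda)=g_j(z)^{-1}g_j(\lambda)$, so that $K_{g_1}-K_{g_2}=(h_1-h_2)/(2\pi i(\lambda-z))$. Let $\gamma$ be the shortest path in $D_+$ from $z$ to $\lambda$, and set $\ell(z,\lambda)=\int_z^\lambda\theta(s)|ds|$ along $\gamma$. Since
\[
h_j(z,\lambda)=\exp\Bigl(\int_\gamma \frac{g_j'}{g_j}(s)\,ds\Bigr),
\]
the definition (\ref{1-7}) of the scale gives $\bigl|\int_\gamma g_j'/g_j\,ds\bigr|\le c'\ell$. The elementary inequality $|e^{w}-1|\le |w|e^{|w|}$ then yields $|h_j-1|\le c'\ell\, e^{c'\ell}$. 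Hence
\[
|h_1-h_2|\le|h_1-1|+|h_2-1|\le 2c'\ell\, e^{c'\ell}=G_{c'}(z,\lambda),
\]
and the claimed inequality follows after dividing by $2\pi|\lambda-z|$.

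The main obstacle is finiteness of the resulting integral. I would split $C^2$ into the pieces $C_i\times C_j$, as in the proof of Lemma \ref{l2-3}.

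\emph{Near the diagonal}, where $|\lambda-z|\le1$ on the same component, $\ell\le c\,\theta(z)|\lambda-z|$ because $\theta$ is comparable at nearby points. The integrand is therefore bounded by roughly $\theta(z)^2e^{c'\theta(z)}$. Against $|\rho_c(z)|^{-2}|\rho_c(\lambda)|^{-2}$ this is integrable, since on $C_1$ we have $|\rho_c(z)|\approx e^{c|z|^N}$ (because $\phi(z)^N$ has real part $\sim|z|^N$ there), while $\theta(z)\sim N|z|^{N-1}$. On $C_2$ we use the symmetry $z\mapsto z^{-1}$.

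\emph{Off the diagonal}, $|\lambda-z|^{-1}$ is bounded. Since $\theta$ is the derivative in modulus of $z^N-z^{-N}$, we get $\ell\le |z|^N+|\lambda|^N+|z|^{-N}+|\lambda|^{-N}+\mathrm{const}$. Thus $e^{c'\ell}\le \mathrm{const}\cdot|\rho_{c'}(z)\rho_{c'}(\lambda)|$, up to polynomial factors, and $c>c'$ makes the double integral converge. Points with $|z|$ small are handled by the inversion symmetry, which is where the term $|z|^{-N-1}$ in $\theta$ matters. Some care is needed to check that the shortest path in $D_+$ between points of $C_1$ and $C_2$ has $\theta$-length bounded this way; I would route it through the unit circle, which lies inside $D_+$. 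Taking $g_2=1$ gives $\mathrm{d}_c(g,1)<\infty$.

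For the convergence statement, fix $c'$ with $\sup_n\mathrm{\delta}_{D_+}(g_n)\le c'<c$. We also need $\mathrm{\delta}_{D_+}(g)\le c'$. This follows since $g_n\to g$ a.e. on $C$, so $g_n/g_n(z_0)$ converges locally uniformly on $D_+$ by normal-family arguments, and the limit of the logarithmic derivatives inherits the bound. Then $K_{g_n}-K_g\to0$ a.e. on $C^2$, and the integrand is dominated by the integrable function $|G_{c'}/(\lambda-z)|^2|\rho_c(z)\rho_c(\lambda)|^{-2}$. Dominated convergence therefore gives $\mathrm{d}_c(g_n,g)\to0$.
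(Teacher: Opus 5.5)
Your proof is correct and follows the paper's route. The steps match: the same pointwise bound $|K_{g_1}-K_{g_2}|\le G_{c'}/(2\pi|\lambda-z|)$, the same $C_i\times C_j$ splitting with near-diagonal and inversion-symmetry treatment, and dominated convergence. Two differences: you get the bound via $|h_1-h_2|\le|h_1-1|+|h_2-1|$ rather than the paper's integral identity in $g_1'/g_1-g_2'/g_2$, with the identical result. You also justify $\mathrm{\delta}_{D_+}(g)\le c'$, which the paper uses tacitly.

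One correction: off the diagonal, $\ell\le|z|^N+|\lambda|^N+\ldots+\mathrm{const}$ and ``up to polynomial factors'' are false for $N\ge3$, since along $C_1$ the excess is of order $|z|^{N-2}$. What holds is $\ell\le(1+o(1))(|z|^N+|\lambda|^N)$. With $c>c'$ that still gives convergence, as in the paper's remark that the exponential part grows at most like $\rho_{c''}$.
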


\begin{proof}
We use an identity: for $z$, $\lambda\in D_{+}$
\begin{align*}
&  g_{1}\left(  z\right)  ^{-1}g_{1}\left(  \lambda\right)  -g_{2}\left(
z\right)  ^{-1}g_{2}\left(  \lambda\right) \\
&  =\int_{z}^{\lambda}\left(  \dfrac{g_{1}^{\prime}\left(  \xi\right)  }%
{g_{1}\left(  \xi\right)  }-\dfrac{g_{2}^{\prime}\left(  \xi\right)  }%
{g_{2}\left(  \xi\right)  }\right)  \exp\left(  \int_{z}^{\xi}\dfrac
{g_{1}^{\prime}\left(  s\right)  }{g_{1}\left(  s\right)  }ds+\int_{\xi
}^{\lambda}\dfrac{g_{2}^{\prime}\left(  s\right)  }{g_{2}\left(  s\right)
}ds\right)  d\xi\text{,}%
\end{align*}
which leads to%
\begin{align*}
&  \left\vert g_{1}\left(  z\right)  ^{-1}g_{1}\left(  \lambda\right)
-g_{2}\left(  z\right)  ^{-1}g_{2}\left(  \lambda\right)  \right\vert \\
&  \leq\int_{z}^{\lambda}\left(  \left\vert \dfrac{g_{1}^{\prime}\left(
\xi\right)  }{g_{1}\left(  \xi\right)  }-\dfrac{g_{2}^{\prime}\left(
\xi\right)  }{g_{2}\left(  \xi\right)  }\right\vert \right)  \exp\left(
\int_{z}^{\xi}\left\vert \dfrac{g_{1}^{\prime}\left(  s\right)  }{g_{1}\left(
s\right)  }\right\vert \left\vert ds\right\vert +\int_{\xi}^{\lambda
}\left\vert \dfrac{g_{2}^{\prime}\left(  s\right)  }{g_{2}\left(  s\right)
}\right\vert \left\vert ds\right\vert \right)  \left\vert d\xi\right\vert \\
&  \leq\left(  2c^{\prime}\int_{z}^{\lambda}\theta\left(  \xi\right)
\left\vert d\xi\right\vert \right)  \exp\left(  c^{\prime}\int_{z}^{\lambda
}\theta\left(  s\right)  \left\vert ds\right\vert \right)  =G_{c^{\prime}%
}\left(  z,\lambda\right)  \text{.}%
\end{align*}
If we can show%
\[
I\equiv\int_{C^{2}}\left\vert \dfrac{G_{c^{\prime}}\left(  z,\lambda\right)
}{\lambda-z}\right\vert ^{2}\left\vert \rho_{c}\left(  \lambda\right)
\right\vert ^{-2}\left\vert \rho_{c}\left(  z\right)  \right\vert
^{-2}\left\vert d\lambda\right\vert \left\vert dz\right\vert <\infty\text{,}%
\]
the dominated convergence theorem implies the second statement of the lemma.
The integral $I$ is divided into the four parts $I_{ij}$, where $I_{ij}$
denote the integrals on $C_{i}\times C_{j}$ for $i$, $j=1$, $2$. Since the
distance $\mathrm{dist}\left(  C_{1},C_{2}\right)  >0$, the integrals $I_{ij}$
for $i\neq j$ are easily shown to be finite. Observe%
\[
\lim_{\lambda\rightarrow\infty,\text{ }\lambda\in C_{1}\text{ }}\left\vert
\operatorname{Re}\lambda\right\vert ^{-N}\int_{\lambda_{0}}^{\lambda}%
\theta\left(  s\right)  \left\vert ds\right\vert =1
\]
for a fixed $\lambda_{0}\in C_{1}$, which implies that the exponential part of
$G\left(  z,\lambda\right)  $ grows at most of order $\rho_{c^{\prime\prime}%
}\left(  z\right)  $, $\rho_{c^{\prime\prime}}\left(  \lambda\right)  $. To
have the finiteness of $I_{11}$, we have to consider the affect by the
denominator $\lambda-z$ of the integrant. This singularity is compensated by
the term $\int_{z}^{\lambda}\theta\left(  \xi\right)  \left\vert
d\xi\right\vert $, which is dominated by%
\[
\dfrac{N}{\left\vert \lambda-z\right\vert }\int_{z}^{\lambda}\left\vert
\xi\right\vert ^{N-1}\left\vert d\xi\right\vert \leq c_{3}\left\vert
\lambda\right\vert ^{N-1}\text{, \ if }\left\vert \lambda\right\vert
/2<\left\vert z\right\vert <2\left\vert \lambda\right\vert \text{.}%
\]
If $\left\vert z\right\vert \leq\left\vert \lambda\right\vert /2$ or
$\left\vert z\right\vert \geq2\left\vert \lambda\right\vert $ holds, we have
$\left\vert \lambda-z\right\vert \geq\left\vert \lambda\right\vert -\left\vert
z\right\vert \geq\left\vert \lambda\right\vert /2$ or $\left\vert
\lambda-z\right\vert \geq\left\vert z\right\vert -\left\vert \lambda
\right\vert \geq\left\vert \lambda\right\vert $ respectively, which ensures
the finiteness of $I_{11}$. The identity $G\left(  z,\lambda\right)  =G\left(
z^{-1},\lambda^{-1}\right)  $ yields $I_{22}=$ $I_{11}$, which completes the
proof of the lemma.\bigskip
\end{proof}

Letting $\boldsymbol{a}_{0}=\boldsymbol{0}$ and $g_{2}=1$ in Lemma \ref{l2-3},
we see that $S_{\boldsymbol{a}}$ and $H_{g}$ are of Hilbert-Schmidt class.
Then, (\ref{2-8}) implies that $g^{-1}T\left(  g\boldsymbol{a}\right)
-T\left(  \boldsymbol{a}\right)  $ is of trace class on $H_{N,c}\left(
D_{+}\right)  $ for any $c>\mathrm{\delta}_{D_{+}}\left(  g\right)  $. Hence,
one can define $\tau_{\boldsymbol{a}}\left(  g\right)  $ by (\ref{1-11}) for
$\boldsymbol{a}\in\boldsymbol{A}_{N}^{inv}\left(  C\right)  $, $g\in\Gamma
_{N}\left(  D_{+}\right)  $, namely%
\[
\tau_{\boldsymbol{a}}\left(  g\right)  =\det\left(  g^{-1}T\left(
g\boldsymbol{a}\right)  T\left(  \boldsymbol{a}\right)  ^{-1}\right)  \text{.}%
\]
Lemma \ref{l2-1} implies that $\tau_{\boldsymbol{a}}\left(  g\right)  $ does
not depend on $c$ nor $N$. It is generally known that, for a trace class
operator $K$, $\det\left(  I+K\right)  \neq0$ holds if and only if $I+K$ is
invertible as a bounded operator. Therefore, if $\tau_{\boldsymbol{a}}\left(
g\right)  $ $\neq0$, the operator $g^{-1}T\left(  g\boldsymbol{a}\right)  $ is
invertible on $H_{N,c}\left(  D_{+}\right)  $ for any $c>\mathrm{\delta
}_{D_{+}}\left(  g\right)  $.

The tau-function $\tau_{g\boldsymbol{a}}$ for an extended symbol
$g\boldsymbol{a}$ is defined by:%
\[
\tau_{g\boldsymbol{a}}\left(  g_{1}\right)  =\det\left(  g_{1}^{-1}%
g^{-1}T\left(  g_{1}g\boldsymbol{a}\right)  \left(  g^{-1}T\left(
g\boldsymbol{a}\right)  \right)  ^{-1}\right)
\]
for $g$, $g_{1}\in\Gamma_{N}\left(  D_{+}\right)  $ and $\boldsymbol{a}%
\in\boldsymbol{A}_{N}^{inv}\left(  C\right)  $, assuming $g^{-1}T\left(
g\boldsymbol{a}\right)  $ is invertible (equivalently, $\tau_{\boldsymbol{a}%
}\left(  g\right)  \neq0$). The basic properties of the tau-functions are as
follows. Recall the notations%
\[
\widetilde{f}\left(  z\right)  =f\left(  z^{-1}\right)  \text{, \ \ }%
Rf(z)=z^{-1}f\left(  z^{-1}\right)  \text{, \ \ }\widetilde{\boldsymbol{a}%
}=\left(  \widetilde{a}_{1},\widetilde{a}_{2}\right)  \text{.}%
\]
The operator $R$ is unitary in $H_{N,c}\left(  D_{+}\right)  $, and the
identity $R\mathfrak{p}_{+}=\mathfrak{p}_{+}R$ yields%
\begin{equation}
RT\left(  g\boldsymbol{a}\right)  =T\left(  \widetilde{g\boldsymbol{a}%
}\right)  R\text{.} \label{2-14}%
\end{equation}

\begin{lemma}
\label{l2-5}(i) For $\boldsymbol{a}\in\boldsymbol{A}_{N}^{inv}\left(
C\right)  $ and $g\in\Gamma_{N}\left(  D_{+}\right)  $ with $\tau
_{\boldsymbol{a}}\left(  g\right)  \neq0$ it holds that $\widetilde
{\boldsymbol{a}}\in\boldsymbol{A}_{N}^{inv}\left(  C\right)  $ and
$\tau_{g\boldsymbol{a}}\left(  g_{1}\right)  =\tau_{\widetilde{g\boldsymbol{a}%
}}\left(  \widetilde{g}_{1}\right)  $ for any $g_{1}\in\Gamma_{N}\left(
D_{+}\right)  $.\newline(ii) (Cocycle property) \ For $\boldsymbol{a}%
\in\boldsymbol{A}_{N}^{inv}\left(  C\right)  $ and $g$, $g_{1}$, $g_{2}%
\in\Gamma_{N}\left(  D_{+}\right)  $ with $\tau_{\boldsymbol{a}}\left(
g\right)  $, $\tau_{\boldsymbol{a}}\left(  gg_{1}\right)  \neq0$, one has%
\[
\tau_{g\boldsymbol{a}}\left(  g_{1}g_{2}\right)  =\tau_{g\boldsymbol{a}%
}\left(  g_{1}\right)  \tau_{g_{1}g\boldsymbol{a}}\left(  g_{2}\right)
\text{.}%
\]
Especially, if $g_{1}(z)=g_{1}(z^{-1})$ holds, $\tau_{g_{1}g\boldsymbol{a}%
}\left(  g_{2}\right)  =\tau_{g\boldsymbol{a}}\left(  g_{2}\right)  $ and
$\tau_{g\boldsymbol{a}}\left(  g_{1}g_{2}\right)  =\tau_{g\boldsymbol{a}%
}\left(  g_{1}\right)  \tau_{g\boldsymbol{a}}\left(  g_{2}\right)  $ are
valid. In this case one has $\tau_{g\boldsymbol{a}}\left(  g_{1}\right)
\neq0$.\newline
\end{lemma}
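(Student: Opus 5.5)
For (i) I will conjugate everything by the unitary $R$ and use (\ref{2-14}). First, $\widetilde{\boldsymbol{a}}\in\boldsymbol{A}_{N}\left(C\right)$: the map $\lambda\mapsto\lambda^{-1}$ exchanges $C_{1}$ and $C_{2}$ and preserves $D_{+}$. Also $\rho_{c}(\lambda)=\rho_{c}(\lambda^{-1})$. Hence $\widetilde{\boldsymbol{f}}$ is a compensator for $\widetilde{\boldsymbol{a}}$, and the bounds in (\ref{1-4}) are symmetric because both $\boldsymbol{b}$ and $\widetilde{\boldsymbol{b}}$ already appear there. Next, (\ref{2-14}) gives $T(\widetilde{\boldsymbol{a}})=RT(\boldsymbol{a})R^{-1}$, which is invertible on $H_{N,c}(D_{+})$, so $\widetilde{\boldsymbol{a}}\in\boldsymbol{A}_{N}^{inv}(C)$. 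For the identity, note that $R(gu)=\widetilde{g}\,Ru$, i.e. $Rg=\widetilde{g}R$ as multiplication operators, and $\widetilde{g}\in\Gamma_{N}(D_{+})$ with the same scale. Then
\[
R\left(g_{1}g\right)^{-1}T\left(g_{1}g\boldsymbol{a}\right)\left(g^{-1}T\left(g\boldsymbol{a}\right)\right)^{-1}R^{-1}=\left(\widetilde{g}_{1}\widetilde{g}\right)^{-1}T\left(\widetilde{g}_{1}\widetilde{g}\widetilde{\boldsymbol{a}}\right)\left(\widetilde{g}^{-1}T\left(\widetilde{g}\widetilde{\boldsymbol{a}}\right)\right)^{-1}.
\]
The Fredholm determinant is invariant under unitary conjugation, so $\tau_{g\boldsymbol{a}}(g_{1})=\tau_{\widetilde{g\boldsymbol{a}}}(\widetilde{g}_{1})$. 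The only care needed is checking that (\ref{2-14}) holds for extended symbols. This follows from (\ref{1-9}) together with $R\mathfrak{p}_{+}=\mathfrak{p}_{+}R$ and $R(\boldsymbol{f}u)=\widetilde{\boldsymbol{f}}Ru$ in the two-component product sense.

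For (ii), set $A=g^{-1}T(g\boldsymbol{a})$, $B=(gg_{1})^{-1}T(gg_{1}\boldsymbol{a})$ and $C'=(gg_{1}g_{2})^{-1}T(gg_{1}g_{2}\boldsymbol{a})$. The assumptions $\tau_{\boldsymbol{a}}(g),\tau_{\boldsymbol{a}}(gg_{1})\neq0$ make $A$ and $B$ invertible. By definition $\tau_{g\boldsymbol{a}}(g_{1}g_{2})=\det(C'A^{-1})$, $\tau_{g\boldsymbol{a}}(g_{1})=\det(BA^{-1})$ and $\tau_{g_{1}g\boldsymbol{a}}(g_{2})=\det(C'B^{-1})$. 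Since $C'A^{-1}=(C'B^{-1})(BA^{-1})$, multiplicativity of Fredholm determinants gives the cocycle identity. This requires each factor to be of the form $I+$trace class. By (\ref{2-8}) applied with base symbol $\boldsymbol{a}$, each of $A,B,C'$ differs from $T(\boldsymbol{a})$ by a product of two Hilbert–Schmidt operators, i.e. by a trace class operator. Here the HS bounds come from Lemmas \ref{l2-3} and \ref{l2-4} with $c$ exceeding all the relevant scales. Hence $C'B^{-1}-I=(C'-B)B^{-1}$ is trace class, and similarly for the other factors. I expect this bookkeeping to be the main point requiring care: all operators must act on a common space $H_{N,c}(D_{+})$ with $c$ large, and we need the independence of $c$ (Lemma \ref{l2-1}).

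For the special case $\widetilde{g}_{1}=g_{1}$, the plan is to show that the operator $g_{1}^{-1}T(g_{1}h\boldsymbol{a})$ with $h=g$ or $h=gg_{2}$ equals $T$-conjugation by $g_{1}$ up to factors that cancel. Concretely, because $g_{1}$ is invariant under $z\mapsto z^{-1}$, it commutes with the twisted product: $g_{1}(h\boldsymbol{a}u)=h\boldsymbol{a}(g_{1}u)$, since $R(g_{1}u)=g_{1}Ru$. Formally this gives $T(g_{1}h\boldsymbol{a})=\mathfrak{p}_{+}g_{1}h\boldsymbol{a}$, from which I expect
\[
g_{1}^{-1}T\left(g_{1}h\boldsymbol{a}\right)=\left(g_{1}^{-1}\mathfrak{p}_{+}g_{1}\right)T\left(h\boldsymbol{a}\right)g_{1}^{-1}\cdots
\]
More cleanly, the aim is the identity $(g_{1}h)^{-1}T(g_{1}h\boldsymbol{a})=g_{1}^{-1}\,P\,h^{-1}T(h\boldsymbol{a})\,g_{1}$ with $P=g_{1}^{-1}\mathfrak{p}_{+}g_{1}$-type corrections. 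I would verify this on the dense set $H(D_{+})$, using compensators as in (\ref{2-3}). From it, $\tau_{gg_{1}\boldsymbol{a}}(g_{2})=\det\left(g_{1}^{-1}X g_{1}\right)=\det X=\tau_{g\boldsymbol{a}}(g_{2})$, where $X=(gg_{2})^{-1}T(gg_{2}\boldsymbol{a})(g^{-1}T(g\boldsymbol{a}))^{-1}$. Here the determinant is invariant under the similarity by the (possibly unbounded but invertible between the $H_{N,c}$ scales) multiplication by $g_{1}$, after passing to a larger $c$. This similarity step is the genuine obstacle, since $g_{1}$ is unbounded. I would handle it by approximating $g_{1}$ by bounded symmetric functions, as in the proof of Lemma \ref{l2-1}, and using the continuity from Lemma \ref{l2-3}.

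Finally, combining with the cocycle identity gives $\tau_{g\boldsymbol{a}}(g_{1}g_{2})=\tau_{g\boldsymbol{a}}(g_{1})\tau_{g\boldsymbol{a}}(g_{2})$. Taking $g_{2}=g_{1}^{-1}$, which is also symmetric, gives $\tau_{g\boldsymbol{a}}(g_{1})\tau_{g\boldsymbol{a}}(g_{1}^{-1})=\tau_{g\boldsymbol{a}}(1)=1$, hence $\tau_{g\boldsymbol{a}}(g_{1})\neq0$. This must be done without assuming $\tau_{\boldsymbol{a}}(gg_{1})\neq0$ in advance, so I would first establish $\tau_{g_{1}g\boldsymbol{a}}=\tau_{g\boldsymbol{a}}$ directly from the similarity identity.
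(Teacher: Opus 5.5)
Your proposal follows the paper's proof: part (i) by conjugating with the unitary $R$ via (\ref{2-14}), the cocycle identity by factoring the operator into two factors of the form identity plus trace class and using multiplicativity of the Fredholm determinant, and the case $g_{1}(z)=g_{1}(z^{-1})$ by commuting $g_{1}$ through the symbol. The one point to tighten is your hedged key identity, which has no $\mathfrak{p}_{+}$-type corrections at all: from (\ref{1-9}) and $R(g_{1}u)=g_{1}Ru$ (applied to both $\boldsymbol{f}$ and $\boldsymbol{a}-\boldsymbol{f}$) you get exactly $T(g_{1}h\boldsymbol{a})=T(h\boldsymbol{a})g_{1}$, i.e.\ $(g_{1}h)^{-1}T(g_{1}h\boldsymbol{a})=g_{1}^{-1}\bigl(h^{-1}T(h\boldsymbol{a})\bigr)g_{1}$. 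This is what the paper uses and what your conclusion $\det(g_{1}^{-1}Xg_{1})=\det X$ needs. It also settles the circularity you rightly flag. If $v$ lies in the kernel of $(gg_{1})^{-1}T(gg_{1}\boldsymbol{a})T(\boldsymbol{a})^{-1}$, then with $w=T(\boldsymbol{a})^{-1}v$ the function $g_{1}w$ lies in the kernel of $g^{-1}T(g\boldsymbol{a})$, which is invertible on a larger $H_{N,c}\left(D_{+}\right)$. Hence that operator is injective, so invertible, and $\tau_{\boldsymbol{a}}(gg_{1})\neq0$ before any cocycle identity is invoked.
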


\begin{proof}
The invariance
\[%
%TCIMACRO{\dint _{C^{2}}}%
%BeginExpansion
{\displaystyle\int_{C^{2}}}
%EndExpansion
\left\vert \dfrac{\widetilde{f}\left(  \lambda\right)  -\widetilde{f}\left(
z\right)  }{2\pi i\left(  \lambda-z\right)  }\right\vert ^{2}\dfrac{\left\vert
d\lambda\right\vert \left\vert dz\right\vert }{\left\vert \rho_{c}\left(
\lambda\right)  \right\vert ^{2}}=%
%TCIMACRO{\dint _{C^{2}}}%
%BeginExpansion
{\displaystyle\int_{C^{2}}}
%EndExpansion
\left\vert \dfrac{f\left(  \lambda\right)  -f\left(  z\right)  }{2\pi i\left(
\lambda-z\right)  }\right\vert ^{2}\dfrac{\left\vert d\lambda\right\vert
\left\vert dz\right\vert }{\left\vert \rho_{c}\left(  \lambda\right)
\right\vert ^{2}}%
\]
shows $\widetilde{\boldsymbol{a}}\in\boldsymbol{A}_{N}\left(  C\right)  $ for
$\boldsymbol{a}\in\boldsymbol{A}_{N}\left(  C\right)  $, and the identity
$RT\left(  \boldsymbol{a}\right)  =T\left(  \widetilde{\boldsymbol{a}}\right)
R$ of (\ref{2-14}) for $g=1$ implies $\widetilde{\boldsymbol{a}}%
\in\boldsymbol{A}_{N}^{inv}\left(  C\right)  $ if $\boldsymbol{a}%
\in\boldsymbol{A}_{N}^{inv}\left(  C\right)  $. The identity $\tau
_{g\boldsymbol{a}}\left(  g_{1}\right)  =\tau_{\widetilde{g\boldsymbol{a}}%
}\left(  \widetilde{g}_{1}\right)  $ follows easily from (\ref{2-14}), which
proves (i).

Observe%
\[
\tau_{g\boldsymbol{a}}\left(  g_{1}g_{2}\right)  =\det\left(  \left(
g_{1}g_{2}g\right)  ^{-1}T\left(  g_{1}g_{2}g\boldsymbol{a}\right)  \left(
g^{-1}T\left(  g\boldsymbol{a}\right)  \right)  ^{-1}\right)  =\det\left(
AB\right)
\]
with%
\[
\left\{
\begin{array}
[c]{l}%
A=\left(  g_{1}g_{2}g\right)  ^{-1}T\left(  g_{1}g_{2}g\boldsymbol{a}\right)
\left(  \left(  g_{1}g\right)  ^{-1}T\left(  g_{1}g\boldsymbol{a}\right)
\right)  ^{-1}\\
B=\left(  g_{1}g\right)  ^{-1}T\left(  g_{1}g\boldsymbol{a}\right)  \left(
g^{-1}T\left(  g\boldsymbol{a}\right)  \right)  ^{-1}%
\end{array}
\right.  \text{.}%
\]
The identity $\tau_{g\boldsymbol{a}}\left(  g_{1}g_{2}\right)  =\tau
_{g\boldsymbol{a}}\left(  g_{1}\right)  \tau_{g_{1}g\boldsymbol{a}}\left(
g_{2}\right)  $ follows from the property:%
\[
\det\left(  I+X\right)  \left(  I+Y\right)  =\det\left(  I+X\right)
\det\left(  I+Y\right)
\]
for trace class operators $X$, $Y$. The second identity is a consequence of
$g_{1}g\boldsymbol{a}=g\boldsymbol{a}g_{1}$ and $T\left(  g_{1}g\boldsymbol{a}%
\right)  =T\left(  g\boldsymbol{a}\right)  g_{1}$. The property $\tau
_{g\boldsymbol{a}}\left(  g_{1}\right)  \neq0$ follows by setting $g_{2}%
=g_{1}^{-1}$, which shows (ii).\bigskip\ 
\end{proof}

The continuity of tau-functions can be obtained by Lemma \ref{l2-3}.

\begin{lemma}
\label{l2-6}For $g_{1}$, $g_{2}$, $g\in\Gamma_{N}\left(  D_{+}\right)  $ and
$\boldsymbol{a}_{1}$, $\boldsymbol{a}_{2}$, $\boldsymbol{a}\in\boldsymbol{A}%
_{N}^{inv}\left(  C\right)  $ assume for an $M>0$
\[
\left\Vert \boldsymbol{a}\right\Vert _{c,2}\text{, }\left\Vert \boldsymbol{a}%
_{j}\right\Vert _{c,2}\text{, }\left\Vert T\left(  \boldsymbol{a}\right)
^{-1}\right\Vert \text{, }\left\Vert T\left(  \boldsymbol{a}_{j}\right)
^{-1}\right\Vert \leq M\text{.}%
\]
Then, there exist constants $c_{j}$ depending on $M$, $c$ such that the
followings are valid. \newline(i) \ $\left\vert \tau_{\boldsymbol{a}}\left(
g_{1}\right)  -\tau_{\boldsymbol{a}}\left(  g_{2}\right)  \right\vert \leq
c_{1}\mathrm{d}_{c}\left(  g_{1},g_{2}\right)  $ if $\mathrm{\delta}_{D_{+}%
}\left(  g_{j}\right)  <c$.\newline(ii) $\left\vert \tau_{\boldsymbol{a}_{1}%
}\left(  g\right)  -\tau_{\boldsymbol{a}_{2}}\left(  g\right)  \right\vert
\leq c_{2}\left(  \left\Vert \boldsymbol{a}_{1}-\boldsymbol{a}_{2}\right\Vert
_{c,1}+\left\Vert \boldsymbol{a}_{1}-\boldsymbol{a}_{2}\right\Vert
_{c,2}\right)  $ if $\mathrm{\delta}_{D_{+}}\left(  g\right)  <c$.
$\ $\newline The cocycle property $\tau_{g\boldsymbol{a}}\left(  g_{1}\right)
=\tau_{\boldsymbol{a}}\left(  gg_{1}\right)  /\tau_{\boldsymbol{a}}\left(
g\right)  $ reduces the continuity of general $\tau_{g\boldsymbol{a}}\left(
g_{1}\right)  $ to that of $\tau_{\boldsymbol{a}}\left(  g_{1}\right)  $.
\end{lemma}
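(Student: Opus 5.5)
The plan is to reduce both estimates to the standard Lipschitz bound for Fredholm determinants. For trace class operators $K_{1}$, $K_{2}$ one has
\[
\left\vert \det\left(I+K_{1}\right)-\det\left(I+K_{2}\right)\right\vert \leq\left\Vert K_{1}-K_{2}\right\Vert _{1}\exp\left(1+\left\Vert K_{1}\right\Vert _{1}+\left\Vert K_{2}\right\Vert _{1}\right),
\]
where $\left\Vert \cdot\right\Vert _{1}$ is the trace norm. By (\ref{2-8}) we can write
\[
g^{-1}T\left(g\boldsymbol{a}\right)T\left(\boldsymbol{a}\right)^{-1}=I+K\left(g,\boldsymbol{a}\right),\qquad K\left(g,\boldsymbol{a}\right)=g^{-1}H_{g}\,S_{\boldsymbol{a}}\,T\left(\boldsymbol{a}\right)^{-1}.
\]
This gives $\tau_{\boldsymbol{a}}(g)=\det\left(I+K(g,\boldsymbol{a})\right)$, and everything reduces to estimating trace norms of $K$ and of differences of $K$'s. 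Each $K$ is a product of two Hilbert--Schmidt operators and one bounded operator, so I will use
\[
\left\Vert XYZ\right\Vert _{1}\leq\left\Vert X\right\Vert _{HS}\left\Vert Y\right\Vert _{HS}\left\Vert Z\right\Vert .
\]

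First I bound the individual trace norms. Take $\boldsymbol{a}_{2}=\boldsymbol{0}$ in Lemma \ref{l2-3} (note $S_{\boldsymbol{0}}=0$); this gives $\left\Vert S_{\boldsymbol{a}}\right\Vert _{HS}\leq\left\Vert \boldsymbol{a}\right\Vert _{c,2}\leq M$. Take $g_{2}=1$; this gives $\left\Vert g^{-1}H_{g}\right\Vert _{HS}\leq\mathrm{d}_{c}(g,1)$. Lemma \ref{l2-4} bounds $\mathrm{d}_{c}(g,1)$ by the finite integral of $G_{c^{\prime}}$, which depends only on $c^{\prime}=\mathrm{\delta}_{D_{+}}(g)<c$ and $c$. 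Hence
\[
\left\Vert K(g,\boldsymbol{a})\right\Vert _{1}\leq M^{2}\,\mathrm{d}_{c}(g,1),
\]
and this is uniformly bounded for the $g$, $g_{j}$ under consideration.

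For (i), with $\boldsymbol{a}$ fixed,
\[
K(g_{1},\boldsymbol{a})-K(g_{2},\boldsymbol{a})=\left(g_{1}^{-1}H_{g_{1}}-g_{2}^{-1}H_{g_{2}}\right)S_{\boldsymbol{a}}T(\boldsymbol{a})^{-1}.
\]
Its trace norm is at most $M^{2}\,\mathrm{d}_{c}(g_{1},g_{2})$ by the third estimate of (\ref{2-12}). Inserting this into the determinant bound gives (i).

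For (ii), with $g$ fixed, I use the resolvent-type splitting
\begin{align*}
S_{\boldsymbol{a}_{1}}T(\boldsymbol{a}_{1})^{-1}-S_{\boldsymbol{a}_{2}}T(\boldsymbol{a}_{2})^{-1}
&=\left(S_{\boldsymbol{a}_{1}}-S_{\boldsymbol{a}_{2}}\right)T(\boldsymbol{a}_{1})^{-1}\\
&\quad+S_{\boldsymbol{a}_{2}}T(\boldsymbol{a}_{1})^{-1}\left(T(\boldsymbol{a}_{2})-T(\boldsymbol{a}_{1})\right)T(\boldsymbol{a}_{2})^{-1}.
\end{align*}
Then $T(\boldsymbol{a}_{2})-T(\boldsymbol{a}_{1})=T(\boldsymbol{a}_{2}-\boldsymbol{a}_{1})$, because $T$ is linear in the symbol once compensators are subtracted. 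By the first estimate of (\ref{2-12}), its norm is at most $c_{1}\left\Vert \boldsymbol{a}_{1}-\boldsymbol{a}_{2}\right\Vert _{c,1}$. The second estimate of (\ref{2-12}) controls $S_{\boldsymbol{a}_{1}}-S_{\boldsymbol{a}_{2}}$ in HS norm by $\left\Vert \boldsymbol{a}_{1}-\boldsymbol{a}_{2}\right\Vert _{c,2}$. Multiplying on the left by $g^{-1}H_{g}$, whose HS norm is bounded as above, gives
\[
\left\Vert K(g,\boldsymbol{a}_{1})-K(g,\boldsymbol{a}_{2})\right\Vert _{1}\leq C\left(\left\Vert \boldsymbol{a}_{1}-\boldsymbol{a}_{2}\right\Vert _{c,1}+\left\Vert \boldsymbol{a}_{1}-\boldsymbol{a}_{2}\right\Vert _{c,2}\right),
\]
with $C$ depending on $M$, $c$ and the bound on $\mathrm{d}_{c}(g,1)$. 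The determinant bound then yields (ii).

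The final remark about $\tau_{g\boldsymbol{a}}(g_{1})$ is immediate from the cocycle identity of Lemma \ref{l2-5}(ii) once $\tau_{\boldsymbol{a}}(g)\neq0$.

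The main obstacle I expect is bookkeeping of the weights rather than any new idea. $S_{\boldsymbol{a}}$ is estimated through $\rho_{2c}$ while $\left\Vert \cdot\right\Vert _{c,2}$ is written with $\rho_{c}$, and the constant must be independent of the particular $g$ with $\mathrm{\delta}_{D_{+}}(g)<c$. To handle this I would fix $c^{\prime\prime}$ with $\max_{j}\mathrm{\delta}_{D_{+}}(g_{j})<c^{\prime\prime}<c$. I would bound all $\mathrm{d}_{c}(g_{j},1)$ by the single $G_{c^{\prime\prime}}$-integral of Lemma \ref{l2-4}, and check that the compensators can be chosen with $\rho_{2c}(\boldsymbol{a}-\boldsymbol{f})$ bounded so that Lemma \ref{l2-3} applies in the same Hilbert space $H_{N,c}(D_{+})$.
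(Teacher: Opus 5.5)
Your proposal is correct and follows the paper's proof essentially verbatim: you write $\tau_{\boldsymbol{a}}(g)=\det\bigl(I+g^{-1}H_{g}S_{\boldsymbol{a}}T(\boldsymbol{a})^{-1}\bigr)$ via (\ref{2-8}) and combine the HS$\times$HS$\times$bounded trace-norm estimate with the Lipschitz bound for Fredholm determinants for (i); for (ii) you use the same resolvent splitting of $S_{\boldsymbol{a}_{1}}T(\boldsymbol{a}_{1})^{-1}-S_{\boldsymbol{a}_{2}}T(\boldsymbol{a}_{2})^{-1}$ together with Lemma \ref{l2-3}. Your extra remark is a fair sharpening of the paper's looser claim that the constants depend only on $M$ and $c$: they also depend, through $\mathrm{d}_{c}(g_{j},1)$ and Lemma \ref{l2-4}, on a margin $c''$ with $\max_{j}\mathrm{\delta}_{D_{+}}(g_{j})<c''<c$.
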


\begin{proof}
Set $A_{j}=g^{-1}T\left(  g\boldsymbol{a}_{j}\right)  T\left(  \boldsymbol{a}%
_{j}\right)  ^{-1}-I$. We have from (\ref{2-8})%
\[
\left\{
\begin{array}
[c]{l}%
\left\Vert A_{1}-A_{2}\right\Vert _{tr}=\left\Vert \left(  g_{1}^{-1}H_{g_{1}%
}-g_{2}^{-1}H_{g_{2}}\right)  S_{\boldsymbol{a}}T\left(  \boldsymbol{a}%
\right)  ^{-1}\right\Vert _{tr}\\
\text{ \ \ \ \ \ \ \ \ \ \ \ \ \ \ \ }\leq\left\Vert g_{1}^{-1}H_{g_{1}}%
-g_{2}^{-1}H_{g_{2}}\right\Vert _{HS}\left\Vert S_{\boldsymbol{a}}\right\Vert
_{HS}\left\Vert T\left(  \boldsymbol{a}\right)  ^{-1}\right\Vert \text{,}\\
\left\Vert A_{j}\right\Vert _{tr}=\left\Vert g_{j}^{-1}H_{g_{j}}%
S_{\boldsymbol{a}}T\left(  \boldsymbol{a}\right)  ^{-1}\right\Vert _{tr}%
\leq\left\Vert g_{j}^{-1}H_{g_{j}}\right\Vert _{HS}\left\Vert
S_{\boldsymbol{a}}\right\Vert _{HS}\left\Vert T\left(  \boldsymbol{a}\right)
^{-1}\right\Vert \text{.}%
\end{array}
\right.
\]
Lemmas \ref{l2-3}, \ref{l2-4} and a general inequality (refer \cite{si})%
\[
\left\vert \det\left(  I+A_{1}\right)  -\det\left(  I+A_{2}\right)
\right\vert \leq\left\Vert A_{1}-A_{2}\right\Vert _{tr}\exp\left(  \left\Vert
A_{1}\right\Vert _{tr}+\left\Vert A_{2}\right\Vert _{tr}+1\right)
\]
for trace class operators $A_{j}$ imply (i).

(\ref{2-8}) yields%
\[
g^{-1}T\left(  g\boldsymbol{a}_{1}\right)  T\left(  \boldsymbol{a}_{1}\right)
^{-1}-g^{-1}T\left(  g\boldsymbol{a}_{2}\right)  T\left(  \boldsymbol{a}%
_{2}\right)  ^{-1}=g^{-1}H_{g}\left(  S_{\boldsymbol{a}_{1}}T\left(
\boldsymbol{a}_{1}\right)  ^{-1}-S_{\boldsymbol{a}_{2}}T\left(  \boldsymbol{a}%
_{2}\right)  ^{-1}\right)  \text{,}%
\]
and the second term is estimated as%
\begin{align*}
&  \left\Vert S_{\boldsymbol{a}_{1}}T\left(  \boldsymbol{a}_{1}\right)
^{-1}-S_{\boldsymbol{a}_{2}}T\left(  \boldsymbol{a}_{2}\right)  ^{-1}%
\right\Vert _{HS}\\
&  =\left\Vert \left(  S_{\boldsymbol{a}_{1}}-S_{\boldsymbol{a}_{2}}\right)
T\left(  \boldsymbol{a}_{1}\right)  ^{-1}+S_{\boldsymbol{a}_{2}}\left(
T\left(  \boldsymbol{a}_{1}\right)  ^{-1}-T\left(  \boldsymbol{a}_{2}\right)
^{-1}\right)  \right\Vert _{HS}\\
&  \leq\left\Vert S_{\boldsymbol{a}_{1}}-S_{\boldsymbol{a}_{2}}\right\Vert
_{HS}\left\Vert T\left(  \boldsymbol{a}_{1}\right)  ^{-1}\right\Vert
+\left\Vert S_{\boldsymbol{a}_{2}}\right\Vert _{HS}\left\Vert T\left(
\boldsymbol{a}_{1}\right)  -T\left(  \boldsymbol{a}_{2}\right)  \right\Vert
\left\Vert T\left(  \boldsymbol{a}_{1}\right)  ^{-1}\right\Vert \left\Vert
T\left(  \boldsymbol{a}_{2}\right)  ^{-1}\right\Vert \text{,}%
\end{align*}
which leads to (ii) by applying Lemma \ref{l2-3}.\bigskip
\end{proof}

We show the non-vanishing of the tau-function $\tau_{\boldsymbol{a}}\left(
g\right)  $ by approximating $g$ by rational functions. Therefore, this
approximation is crucial in this article. It is convenient for us to decompose
the group $\Gamma_{N}\left(  D_{+}\right)  $ into 2 parts. For this purpose we
introduce 4 domains:%
\[
\left\{
\begin{array}
[c]{ll}%
D_{-}^{1}=D_{-}\cap\left\{  \left\vert z\right\vert >1\right\}  \text{,} &
D_{-}^{2}=D_{-}\cap\left\{  \left\vert z\right\vert <1\right\} \\
D_{+}^{1}=\mathbb{C}\diagdown\overline{D}_{-}^{1}\text{,} & D_{+}^{2}=\left(
\mathbb{C}\diagdown\overline{D}_{-}^{2}\right)  \mathbb{\cup}\left\{
\infty\right\}
\end{array}
\right.  \text{.}%
\]
Then, $D_{-}=D_{-}^{1}\cup D_{-}^{2}$, $D_{+}=D_{+}^{1}\cap D_{+}^{2}$ hold.
Define%
\[
\text{\ }\mathrm{\delta}_{D_{+}^{1}}\left(  g\right)  =\sup_{z\in D_{+}%
^{1},\left\vert z\right\vert \geq1}\left\vert \dfrac{g^{\prime}(z)}%
{g(z)}\dfrac{1}{Nz^{N-1}}\right\vert \text{, \ }\mathrm{\delta}_{D_{+}^{2}%
}\left(  g\right)  =\sup_{z\in D_{+}^{2},\left\vert z\right\vert \leq
1}\left\vert \dfrac{g^{\prime}(z)}{g(z)}\dfrac{1}{Nz^{-N-1}}\right\vert
\text{,}%
\]
and%
\[
\Gamma_{N}\left(  D_{+}^{j}\right)  =\left\{
\begin{array}
[c]{c}%
g\text{; }g\text{ is analytic on }\left(  D_{+}^{\prime}\right)  ^{j}\text{,
}g\left(  z\right)  \neq0\text{ on }\left(  D_{+}^{\prime}\right)  ^{j}\text{
}\\
\text{and }\mathrm{\delta}_{\left(  D_{+}^{\prime}\right)  ^{j}}\left(
g\right)  <\infty\text{ for some }D_{+}^{\prime}\supset D_{+}%
\end{array}
\right\}  \text{,}%
\]
where $D_{+}^{\prime}$ is a domain similar to $D_{+}$. Clearly, it holds that
$\widetilde{g}\in\Gamma_{N}\left(  D_{+}^{2}\right)  $ if and only if
$g\in\Gamma_{N}\left(  D_{+}^{1}\right)  $, and $\Gamma_{N}\left(  D_{+}%
^{j}\right)  \subset\Gamma_{N}\left(  D_{+}\right)  $ for $j=1$, $2$. Although
$q_{\zeta}\in\Gamma_{N}\left(  D_{+}^{1}\right)  $ for $\zeta\in D_{-}^{1}$,
one has $q_{\zeta}\notin\Gamma_{N}\left(  D_{+}^{2}\right)  $ for $\zeta\in
D_{-}^{2}$, since $q_{\zeta}\left(  \infty\right)  =0$. Moreover, $z^{n}%
\notin$ $\Gamma_{N}\left(  D_{+}^{1}\right)  \cup\Gamma_{N}\left(  D_{+}%
^{2}\right)  $ for any $n\in\mathbb{Z}\backslash\left\{  0\right\}  $.

\begin{lemma}
\label{l2-7}For $g\in\Gamma_{N}\left(  D_{+}\right)  $ there exist $g_{j}%
\in\Gamma_{N}\left(  D_{+}^{j}\right)  $ such that $g=z^{n}g_{1}g_{2}$ for
$n\in\mathbb{Z}$.
\end{lemma}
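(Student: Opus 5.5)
We split $\log g$ by a Cauchy integral over the boundary $C$ of $D_{+}$ (a Wiener--Hopf type factorization). First we remove the winding. The domain $D_{+}$ contains the unit circle and is a "strip-like" region around $\mathbb{R}\cup\{|z|=1\}$ in $\mathbb{C}\backslash\{0\}$. Its first homology is generated by a loop around $0$, for example the unit circle. Let $n$ be the winding number of $g$ along the unit circle, $n=\frac{1}{2\pi i}\oint_{|z|=1}g'/g\,dz$. Then $h=z^{-n}g$ has zero winding. Since the complement components $D_{-}^{1},D_{-}^{2}$ are simply connected up to $\infty$ and $0$, $h$ admits an analytic logarithm $L=\log h$ on the slightly larger domain $D_{+}'$, with $L'=g'/g-n/z$. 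Hence $|L'(z)|\le c\,\theta(z)$ on $D_{+}'$, because $|n/z|\le |n|\theta(z)$ there.

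Next we take an intermediate domain $D_{+}''$ with $\vartheta<\vartheta''<\vartheta'$ and boundary $C''=C_1''\cup C_2''$. We decompose
\[
L'(z)=\frac{1}{2\pi i}\int_{C_1''}\frac{L'(\lambda)}{\lambda-z}d\lambda+\frac{1}{2\pi i}\int_{C_2''}\frac{L'(\lambda)}{\lambda-z}d\lambda\equiv \ell_1(z)+\ell_2(z)
\]
for $z\in D_{+}''$. The Cauchy formula is legitimate because $L'$ grows only polynomially, like $|z|^{N-1}$ at $\infty$ and $|z|^{-N-1}$ at $0$. Since these integrals do not converge absolutely, we apply the formula to $L'(\lambda)/(\lambda-z_0)^{k}$ or subtract a Taylor polynomial in the standard way; equivalently, we write $L'(z)=P(z)+\tilde P(z)+$ (a decaying remainder), where $P$ and $\tilde P$ are polynomial parts at $\infty$ and $0$. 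The first integral $\ell_1$ is analytic off $C_1''$, so on $(D_{+}')^{1}$ in the appropriate sense, and the second $\ell_2$ is analytic off $C_2''$. We then set $g_j=\exp\int^{z}\ell_j$. The single-valuedness of these primitives is where the zero winding of $h$ is used: the residue of $\ell_1$ around $0$ must vanish. If necessary, the total period $\oint\ell_1=-\oint\ell_2$ is absorbed by adjusting $n$.

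The main obstacle is the growth estimate $\mathrm{\delta}_{(D_{+}')^{j}}(g_j)<\infty$. We need $|\ell_1(z)|\le c N|z|^{N-1}$ for $|z|\ge1$ in $D_{+}^{1}$, and boundedness near $0$ is automatic since $C_1''$ stays away from $0$. Symmetrically, $|\ell_2(z)|\le cN|z|^{-N-1}$ for $|z|\le1$. The plan for $\ell_1$ is to split $C_1''$ into the part near $z$ and the part far from $z$. Far from $z$, the kernel is bounded by $1/|\lambda-z|$. Near $z$, the kernel is not singular because $z$ stays at distance $\gtrsim 1/|z|$ from $C_1''$. This distance behaviour comes from $\omega(x)\sim\vartheta/ (1-x^{-2})$, which tends to $\vartheta$ at infinity, so we get a positive separation from $C_1''$ for $z$ in a slightly smaller domain. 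We combine this with the bound $|L'(\lambda)|\le c|\lambda|^{N-1}$ on $C_1''$. A logarithmically divergent tail may appear; to kill it, we first subtract the polynomial part $P$ of degree $\le N-1$, which is placed wholly into $\ell_1$ and is harmless. Then $L'-P=O(|z|^{N-2})$ along $C_1''$, because $L'$ is analytic in a strip around $\mathbb{R}$ with polynomial growth. We treat the inner curve $C_2''$ by the inversion $z\mapsto z^{-1}$, using $\Gamma_N(D_{+}^{2})=\{\widetilde g\}$ symmetry.

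Finally, $g_1$ and $g_2$ are nonvanishing exponentials. They are analytic on $(D_{+}')^{1}$ and $(D_{+}')^{2}$ respectively, after shrinking $\vartheta'$ slightly, which the definition allows. Moreover, $z^{n}g_1g_2$ has the same logarithmic derivative as $g$, so it equals $g$ up to a constant, which we absorb into $g_1$.
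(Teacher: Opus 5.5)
Your overall route is the paper's: split the logarithmic derivative additively by Cauchy integrals over the two boundary curves, and carry the winding around $0$ by $z^{n}$. You remove the winding first, whereas the paper collects it at the end as the $\gamma z^{-1}$ term of the Laurent polynomial $r$; either order is fine. The gap is in the step you call the main obstacle.

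\textbf{The regularization is false.} Your device for making the integrals converge and getting the bound assumes $L'$ has ``polynomial parts'' at $\infty$ and $0$, with $L'-P=O(|z|^{N-2})$ along $C_1''$. This fails: near $\infty$, $L'$ is analytic only in a strip-like neighbourhood of $\mathbb{R}$, so it has no asymptotic expansion there. For example, take $g(z)=\exp\int_0^z\zeta^{N-1}e^{i\zeta}\,d\zeta$. Since $\operatorname{Im}z$ is bounded on $D_+'$, this $g$ lies in $\Gamma_N(D_+)$, and it has zero winding. Yet $L'(z)=z^{N-1}e^{iz}-P(z)$ is not $o(|z|^{N-1})$ along $C_1''$ for any polynomial $P$. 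Even if your claim held, an $O(|\lambda|^{N-2})$ density would still not be Cauchy-integrable along $C_1''$ when $N\ge 2$.

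\textbf{The legitimate alternative is incomplete.} You also mention the weighted Cauchy formula: divide by a polynomial of degree $N$ with zeros in $D_-$ and by a weight vanishing to order $N+1$ at $0$, then multiply back. This is the paper's $p\widetilde q$, and with it two issues remain unaddressed.
\begin{itemize}
\item Multiplying back gives the $C_2''$-piece a polynomial pole at $\infty$ and the $C_1''$-piece a pole at $0$. These must be Taylor-subtracted and reassigned, because $g_2$ must be analytic and nonvanishing at $\infty$, i.e.\ $\ell_2=O(z^{-2})$ there. This is exactly the bookkeeping in the paper's $h_1'$, $h_2'$, $r$.
\item A direct size estimate of the $C_1''$-integral, using $|L'(\lambda)|\le c|\lambda|^{N-1}$ and $|\lambda-z|\ge \mathrm{dist}>0$, only gives $|\ell_1(z)|\le c|z|^{N-1}\log|z|$. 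That does not yield $\mathrm{\delta}_{D_{+}^{1}}(g_1)<\infty$.
\end{itemize}

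\textbf{The missing idea} is the paper's: never estimate the $C_1'$-piece directly. First arrange $g'/g=h_1'+h_2'+r$ on $D_+'$, where
\begin{itemize}
\item $h_1'$ is analytic on $(D_+')^1$;
\item $h_2'$ is analytic on $(D_+')^2\ni\infty$ with $h_2'=O(z^{-2})$;
\item $r$ is a Laurent polynomial with powers between $-N-1$ and $N-1$.
\end{itemize}
Then on $D_+^1\cap\{|z|\ge1\}\subset D_+'$,
\[
h_1'=g'/g-h_2'-r=O(|z|^{N-1}).
\]
This follows from $|g'/g|\le c\,\theta$, the form of $r$, and the boundedness of $h_2'$ on $\{|z|\ge1\}$, which lies at positive distance from $C_2'$. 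The case at $0$ is symmetric. The logarithm disappears through analyticity (cancellation), not through kernel size bounds.

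A smaller correction: the residue that must vanish is the $z^{-1}$-coefficient of $\ell_2$ at $\infty$, not a residue of $\ell_1$ at $0$, where $\ell_1$ is analytic. It vanishes because $\oint_{|z|=1}L'=0$ once $z^{n}$ is removed, and $\oint_{|z|=1}\ell_1=0$.
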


\begin{proof}
To eliminate the singularities of an analytic function $g^{\prime}/g$ in
$D_{+}^{\prime}$ at $\infty$ and $0$ of forms $z^{N-1}$ and $z^{-N-1}$ we
prepare polynomials $p(z)$, $q(z)$ of degree $N$, $N+1$ having no zeros in
$\overline{D}_{+}^{^{\prime}}$. Observe an identity on $D_{+}^{\prime}$:%
\[
\dfrac{g^{\prime}\left(  z\right)  }{g\left(  z\right)  }=p\left(  z\right)
\widetilde{q}\left(  z\right)  \dfrac{1}{2\pi i}\int_{C^{\prime}}\dfrac
{1}{\lambda-z}\dfrac{g^{\prime}\left(  \lambda\right)  }{g\left(
\lambda\right)  }\dfrac{1}{p\left(  \lambda\right)  \widetilde{q}\left(
\lambda\right)  }d\lambda\text{.}%
\]
Setting%
\[
\left\{
\begin{array}
[c]{l}%
h_{1}^{\prime}(z)=\left(  \widetilde{q}\left(  z\right)  -q\left(  0\right)
\right)  \left(  pf_{1}(z)-\sum\limits_{0\leq k\leq N}\dfrac{\left(
pf_{1}\right)  ^{\left(  k\right)  }(0)}{k!}z^{k}\right)  +q\left(  0\right)
pf_{1}(z)\\
h_{2}^{\prime}(z)=p\left(  z\right)  \left(  \left(  q\widetilde{f}%
_{2}\right)  (z^{-1})-\sum\limits_{1\leq k\leq N+1}\dfrac{\left(
q\widetilde{f}_{2}\right)  ^{\left(  k\right)  }(0)}{k!}z^{-k}\right) \\
r(z)=\left(  \widetilde{q}\left(  z\right)  -q\left(  0\right)  \right)
\sum\limits_{0\leq k\leq N}\dfrac{\left(  pf_{1}\right)  ^{\left(  k\right)
}(0)}{k!}z^{k}+p\left(  z\right)  \sum\limits_{1\leq k\leq N+1}\dfrac{\left(
q\widetilde{f}_{2}\right)  ^{\left(  k\right)  }(0)}{k!}z^{-k}%
\end{array}
\right.  \text{,}%
\]
where%
\[
f_{j}(z)=\dfrac{1}{2\pi i}\int_{C_{j}^{\prime}}\dfrac{1}{\lambda-z}%
\dfrac{g^{\prime}\left(  \lambda\right)  }{g\left(  \lambda\right)  }\dfrac
{1}{p\left(  \lambda\right)  q\left(  \lambda^{-1}\right)  }d\lambda\text{.}%
\]
One sees that $h_{j}^{\prime}(z)$ are analytic on $\left(  D_{+}^{^{\prime}%
}\right)  ^{j}$ for $j=1$, $2$, and for $z\in D_{+}^{\prime}$ they satisfy%
\begin{equation}
h_{1}^{\prime}(z)+h_{2}^{\prime}(z)+r(z)=\dfrac{g^{\prime}\left(  z\right)
}{g\left(  z\right)  }\text{.} \label{2-15}%
\end{equation}
$h_{2}^{\prime}(z)$ has the asymptotic behavior:%
\[
h_{2}^{\prime}(z)=O\left(  z^{-2}\right)  \text{, }r(z)=O\left(
z^{N-1}\right)  \text{ \ as }z\rightarrow\infty\text{ in }D_{+}^{^{\prime}%
}\text{,}%
\]
and $g^{\prime}(z)/g(z)=O\left(  z^{N-1}\right)  $ as $z\rightarrow\infty$ in
$D_{+}^{^{\prime}}$. Therefore, (\ref{2-15}) implies $h_{1}^{\prime
}(z)=O\left(  z^{N-1}\right)  $ as $z\rightarrow\infty$ in $D_{+}^{^{\prime}}%
$. Whereas, $h_{1}^{\prime}(z)$ is analytic at $0$, hence, the property
$h_{2}^{\prime}(z)=O\left(  z^{-N-1}\right)  $ as $z\rightarrow0$ in
$D_{+}^{^{\prime}}$ follows due to $g^{\prime}(z)/g(z)$, $r(z)=O\left(
z^{-N-1}\right)  $ as $z\rightarrow0$ in $D_{+}^{^{\prime}}$. Let%
\[
r(z)=\sum_{0\leq k\leq N-1}\alpha_{k}z^{k}+\sum_{2\leq k\leq N+1}\beta
_{k}z^{-k}+\gamma z^{-1}\equiv r_{1}(z)+r_{2}(z)+\gamma z^{-1}\text{.}%
\]
Defining%
\[
g_{j}\left(  z\right)  =g\left(  z_{0}\right)  ^{1/2}\exp\left(  \int_{z_{0}%
}^{z}\left(  h_{j}^{\prime}(\zeta)+r_{j}(\zeta)\right)  d\zeta\right)  \text{
for a }z_{0}\in D_{+}\text{,}%
\]
we see $g_{j}\in\Gamma_{N}\left(  D_{+}^{j}\right)  $ and $g\left(  z\right)
=\left(  z/z_{0}\right)  ^{\gamma}g_{1}\left(  z\right)  g_{2}\left(
z\right)  $. Since the domain $\left(  D_{+}^{^{\prime}}\right)  ^{1}$ is
simply connected, the function $g_{1}(z)$ can be determined as a single valued
analytic function on $\left(  D_{+}^{^{\prime}}\right)  ^{1}$, which together
with $h_{1}^{\prime}(\zeta)+r_{1}(\zeta)=O(\zeta^{N-1})$ as $\zeta
\rightarrow\infty$ implies $g_{1}\in\Gamma_{N}\left(  D_{+}^{1}\right)  $.
Since $h_{2}^{\prime}(\zeta)+r_{2}(\zeta)=O(\zeta^{-2})$ as $\zeta
\rightarrow\infty$, the integral $\int_{z_{0}}^{z}\left(  h_{2}^{\prime}%
(\zeta)+r_{2}(\zeta)\right)  d\zeta$ defines an analytic function at
$z=\infty$, hence on $\left(  D_{+}^{\prime}\right)  ^{2}$, which with
$h_{2}^{\prime}(\zeta)+r_{2}(\zeta)=O(\zeta^{-N-1})$ as $\zeta\rightarrow0$
shows $g_{2}\in\Gamma_{N}\left(  D_{+}^{2}\right)  $. The number\ $\gamma$
should be an integer because $g\left(  z\right)  $ is single-valued on
$D_{+}^{\prime}$.\bigskip
\end{proof}

The approximation of $g\in\Gamma_{N}\left(  D_{+}\right)  $ by rational
functions is crucial in the following arguments.

\begin{lemma}
\label{l2-8}For $g\in\Gamma_{N}\left(  D_{+}^{j}\right)  $ there exists
rational functions $\left\{  r_{n}\right\}  _{n\geq1}\subset\Gamma_{N}\left(
D_{+}^{j}\right)  $ such that for sufficiently large $c$%
\[
\lim_{n\rightarrow\infty}\mathrm{d}_{c}\left(  r_{n},g\right)  =0\text{.}%
\]

\end{lemma}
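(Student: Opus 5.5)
By the symmetry $\widetilde{g}\in\Gamma_{N}(D_{+}^{2})\Leftrightarrow g\in\Gamma_{N}(D_{+}^{1})$ (and the invariance of $\mathrm{d}_{c}$ under $z\to z^{-1}$, which one checks as in Lemma \ref{l2-5}(i)), it is enough to treat $j=1$. The tool is the last part of Lemma \ref{l2-4}: once we have rational $r_{n}\in\Gamma_{N}(D_{+}^{1})$ with $\sup_{n}\mathrm{\delta}_{D_{+}}(r_{n})<c$ and $r_{n}\to g$ pointwise on $C$, we get $\mathrm{d}_{c}(r_{n},g)\to0$. So the task is to build rational approximants of $g$ whose scales stay uniformly bounded.

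\textbf{Step 1: reduce to an entire exponent.} For $g\in\Gamma_{N}(D_{+}^{1})$ write $g=e^{h}$ with $h$ analytic on the simply connected domain $(D_{+}^{\prime})^{1}$ and $|h^{\prime}(z)|\le c^{\prime}N|z|^{N-1}$ for large $|z|$. Since $0\in(D_{+}^{\prime})^{1}$, the set $(D_{+}^{\prime})^{1}$ contains a neighbourhood of $\overline{D_{+}^{1}}$ and is a strip-like domain around $\mathbb{R}$ (plus the unit disc region). Subtract the polynomial part of the growth as in Lemma \ref{l2-7}: put $h=P+h_{0}$ with $P$ a polynomial of degree $\le N$ and $h_{0}^{\prime}=O(z^{-2})$, so that $h_{0}$ is bounded at $\infty$.

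\textbf{Step 2: approximate each factor.} For the polynomial factor $e^{P}$, use the device of Lemma \ref{l2-1}, namely $r_{n}=(1+P/n)^{n}$. Modify it as there so that no zeros fall in $\overline{D_{+}^{1}}$, for instance by $\left(1+\frac{P/n}{1+P^{2}/n}\right)^{n}$, or by using $(1-P/n)^{-n}$ on the side where $\operatorname{Re}P$ has the right sign. Then $r_{n}^{\prime}/r_{n}=P^{\prime}/(1+P/n)$ plus a comparable term. On $D_{+}^{1}$, $|P/n|$ is large only where $|z|^{N}\gtrsim n$, and there the denominator keeps the ratio $\le\mathrm{const}\cdot|z|^{N-1}$. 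This gives the uniform bound on the scales.

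For $e^{h_{0}}$, $h_{0}$ is analytic and bounded on a neighbourhood of $\overline{D_{+}^{1}}\cup\{\infty\}$, whose complement is the compact set $\overline{D_{-}^{1}}$ in the Riemann sphere. Runge's theorem gives rational functions $s_{n}$ with poles in $D_{-}^{1}$ such that $s_{n}\to h_{0}$ uniformly on $\overline{D_{+}^{1}}$, with $s_{n}^{\prime}\to h_{0}^{\prime}$ as well by Cauchy estimates on the slightly larger domain. Next replace $e^{s_{n}}$ by the rational function $(1+s_{n}/m_{n})^{m_{n}}$ with $m_{n}$ large; since $s_{n}$ is bounded this has no zeros and converges uniformly. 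Its logarithmic derivative is bounded uniformly, hence $O(|z|^{N-1})$. The products of the two approximants are then rational, lie in $\Gamma_{N}(D_{+}^{1})$, converge pointwise on $C$, and have uniformly bounded scale, so Lemma \ref{l2-4} finishes the argument.

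\textbf{Main obstacle.} The delicate point is the polynomial factor. The zeros of $1+P/n$ must stay out of $\overline{D_{+}^{1}}$, and $\mathrm{\delta}_{D_{+}}$ must stay bounded near $\infty$ inside the strip-like region. In the strip $|\operatorname{Im}\phi|<\vartheta$ the values $P(z)$ for real $z$ are real, so the zeros of $1+P/n$ (roots of $P=-n$) lie near the real axis exactly where $P$ is negative. That is why the symmetric Lemma \ref{l2-1}-type form, or a splitting according to the sign of $\operatorname{Re}P$ together with the choice of $\vartheta$ in (\ref{1-2}), is needed. I would try first to copy the estimates of Lemma \ref{l2-1} with $\zeta^{N}$ replaced by $P(z)$, taking $c$ larger if necessary to absorb the constants.
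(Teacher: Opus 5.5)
Your reduction to $j=1$ and your final appeal to the last part of Lemma \ref{l2-4} are fine. The gap is in Step 1, and Step 2 depends on it.

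Membership $g=e^{h}\in\Gamma_{N}(D_{+}^{1})$ only gives $h^{\prime}=O(z^{N-1})$ on the strip-like domain $(D_{+}^{\prime})^{1}$. This does not make $h$ a polynomial plus a function with $h_{0}^{\prime}=O(z^{-2})$. For $N=1$, $g(z)=e^{\sin z}$ lies in $\Gamma_{1}(D_{+}^{1})$ because $\cos z$ is bounded on horizontal strips. Yet $\cos z-P^{\prime}(z)$ is never $O(z^{-2})$, and $\sin z$ has an essential singularity at $\infty$. Lemma \ref{l2-7} gives no such splitting: it only separates the behaviour at $0$ from that at $\infty$, and its factor $g_{1}$ keeps the whole, generally non-polynomial, growth of $g$ along the strip.

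For the same reason Runge's theorem on $\overline{D_{+}^{1}}\cup\{\infty\}$ is unavailable. It needs $h_{0}$ analytic on a neighbourhood of $\infty$ in the Riemann sphere, i.e.\ on a full exterior $\{|z|>R\}$, which contains parts of $D_{-}^{1}$. Even when $h_{0}^{\prime}=O(z^{-2})$ on the strip, $h_{0}$ can have different limits at the two ends, e.g.\ $h_{0}(z)=\arctan(z/10)$. Then no rational function, having a single limit at $\infty$, approximates $h_{0}$ uniformly on $D_{+}^{1}$. So uniform rational approximation of the exponent is the wrong target.

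What is missing is a way to approximate $h$ by rational $h_{n}$ only locally uniformly, while keeping the scales $\mathrm{\delta}_{D_{+}}$ of the resulting approximants of $e^{h}$ uniformly bounded. The paper does this in three moves.
\begin{enumerate}
\item Damp the exponent: $g_{n}=\exp\int_{0}^{z}h^{\prime}(\lambda)(1+\lambda^{2}/n)^{-1}d\lambda$, which converges to $g$ with bounded scale.
\item Write $\lambda^{-1}h(\lambda)(\lambda-b)^{-N}$ as a Cauchy integral over $C_{1}^{\prime}$ with an $L^{1}$ density. Cut off the tail, which is uniformly small on $D_{+}^{1}$ since $\mathrm{dist}(C_{1},C_{1}^{\prime})>0$, and approximate the compact part by rational functions $s_{n}$. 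This gives $h_{n}=z(z-b)^{N}s_{n}$ with $h_{n}=O(z^{N})$ and $h_{n}^{\prime}=O(z^{N-1})$ uniformly in $n$.
\item Take $r_{n}=(1+(z^{2N}-h_{n})/n)^{-n}(1+z^{2N}/n)^{n}$. The stabilizer $z^{2N}$ gives $\operatorname{Re}(z^{2N}-h_{n})\geq c_{1}(\operatorname{Re}z)^{2N}$ for large $|\operatorname{Re}z|$ in the strip. Hence $r_{n}$ has no zeros or poles there and $|r_{n}^{\prime}/r_{n}|\leq c_{4}(|h_{n}^{\prime}|+|z|^{-1}|h_{n}|)$.
\end{enumerate}

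This device also handles the obstacle you singled out, which your Lemma \ref{l2-1}-type factor does not handle in general. Here $g$ need not be real, and for $P(z)=iz$ the denominator $1+P^{2}/n=1-z^{2}/n$ vanishes at $\pm\sqrt{n}$, inside the strip.
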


\begin{proof}
Let $g\in\Gamma_{N}\left(  D_{+}^{1}\right)  $ and assume $g(0)=1$ without
loss of generality. The proof consists of 3 steps.

\textbf{Step} \textbf{1}. Set%
\[
h\left(  z\right)  =%
%TCIMACRO{\dint _{0}^{z}}%
%BeginExpansion
{\displaystyle\int_{0}^{z}}
%EndExpansion
\dfrac{g^{\prime}\left(  \zeta\right)  }{g\left(  \zeta\right)  }d\zeta\text{,
\ }g_{n}\left(  z\right)  =\exp\left(  \int_{0}^{z}\dfrac{h^{\prime}\left(
\lambda\right)  }{1+\lambda^{2}/n}d\lambda\right)  \in\Gamma_{N-1}\left(
D_{+}^{1}\right)  \text{.}%
\]
Then, $g_{n}\left(  z\right)  \rightarrow$ $g\left(  z\right)  $ on $\left(
D_{+}^{\prime}\right)  ^{1}$ and%
\[
\left\vert \dfrac{g_{n}^{\prime}\left(  z\right)  }{g_{n}\left(  z\right)
}\right\vert =\dfrac{\left\vert h^{\prime}\left(  z\right)  \right\vert
}{\left\vert 1+z^{2}/n\right\vert }\leq c_{1}\left\vert z\right\vert
^{N-1}\text{ \ on }\left(  D_{+}^{\prime}\right)  ^{1}\text{ if }\left\vert
z\right\vert \geq1\text{,}%
\]
which implies $\sup_{n\geq1}\mathrm{\delta}_{D_{+}^{1}}\left(  g_{n}\right)
<\infty$, and $\lim_{n\rightarrow\infty}\mathrm{d}_{c}\left(  g_{n},g\right)
=0$ for sufficiently large $c$ due to Lemma \ref{l2-4}.

\textbf{Step} \textbf{2}. Assume $g\in\Gamma_{N-1}\left(  D_{+}^{1}\right)  $
and set $h\left(  z\right)  =\int_{0}^{z}g^{\prime}\left(  \zeta\right)
/g\left(  \zeta\right)  d\zeta$ again. We approximate $h(z)$ by rational
functions. Let $f$ be an integrable function on the curve $C_{1}^{\prime}$.
Then, for $M>0$, $u\left(  z\right)  \equiv\dfrac{1}{2\pi i}\int_{C^{\prime}%
}\dfrac{1}{\lambda-z}f\left(  \lambda\right)  d\lambda$ has a decomposition:%
\begin{equation}
u\left(  z\right)  =\dfrac{1}{2\pi i}\int_{C_{1}^{\prime}:\left\vert
\lambda\right\vert \leq M}\dfrac{1}{\lambda-z}f\left(  \lambda\right)
d\lambda+\dfrac{1}{2\pi i}\int_{C_{1}^{\prime}:\left\vert \lambda\right\vert
>M}\dfrac{1}{\lambda-z}f\left(  \lambda\right)  d\lambda\text{.} \label{2-16}%
\end{equation}
Noting $\mathrm{dist}\left(  C_{1},C_{1}^{\prime}\right)  >0$, one has%
\[
\sup_{z\in D_{+}^{1}}\left\vert \dfrac{1}{2\pi i}\int_{C_{1}^{\prime
}:\left\vert \lambda\right\vert >M}\dfrac{1}{\lambda-z}f\left(  \lambda
\right)  d\lambda\right\vert \leq\dfrac{1}{2\pi\mathrm{dist}\left(
C_{1},C_{1}^{\prime}\right)  }\int_{C_{1}^{\prime}:\left\vert \lambda
\right\vert >M}\left\vert f\left(  \lambda\right)  \right\vert \left\vert
d\lambda\right\vert \text{,}%
\]
which can be made small as much as possible by choosing a large $M$. On the
other hand, the first term of (\ref{2-16}) is an integral on a compact set. If
a compact set $K$ of $\mathbb{C}$ has diameter $\varepsilon$, choosing $b\in
K$, the expansion:%
\[
\dfrac{1}{2\pi i}\int_{K}\dfrac{1}{\lambda-z}f\left(  \lambda\right)
d\lambda=\left(  b-z\right)  ^{-1}\sum_{j\geq0}\left(  b-z\right)  ^{-j}%
\dfrac{1}{2\pi i}\int_{K}\left(  b-\lambda\right)  ^{j}f\left(  \lambda
\right)  d\lambda
\]
converges uniformly on the region $\left\vert b-z\right\vert \geq2\varepsilon
$. A Cauchy integral on a general compact set $K$ can be approximated by
rational functions uniformly on the $2\varepsilon$-neighborhood of $K$ by
covering $K$ by a finite number of $\varepsilon$-disks. Hence, $u(z)$ can be
approximated by rational functions with no poles in $D_{+}^{1}$ uniformly on
$D_{+}^{1}$. Clearly, the derivative $u^{\prime}(z)$ also can be approximated
by the derivatives of the rational functions uniformly. Applying this argument
to $f(\lambda)=\lambda^{-1}h\left(  \lambda\right)  \left(  \lambda-b\right)
^{-N}\in L^{1}\left(  C_{1}^{\prime}\right)  $ with $b\in\mathbb{C}%
\diagdown\overline{D_{+}^{1}}$, one sees that $f\left(  z\right)  $ can be
approximated uniformly by rational functions $s_{k}(z)$ with no poles on
$\overline{D_{+}^{1}}$. Note $s_{k}(z)=O\left(  z^{-1}\right)  $ as
$z\rightarrow\infty$. In this case, since $h^{\prime}\left(  z\right)
=O\left(  z^{N-1}\right)  $, one has $s_{k}^{\prime}(z)=O\left(
z^{-2}\right)  $ as $z\rightarrow\infty$ uniformly in $k\geq1$ as well.

\textbf{Step 3}. Set
\[
g_{n}\left(  z\right)  =\left(  1+\dfrac{z^{2N}-h\left(  z\right)  }%
{n}\right)  ^{-n}\left(  1+\dfrac{z^{2N}}{n}\right)  ^{n}\text{,}%
\]
and define rational functions:%
\[
r_{n}\left(  z\right)  =\left(  1+\dfrac{z^{2N}-h_{n}\left(  z\right)  }%
{n}\right)  ^{-n}\left(  1+\dfrac{z^{2N}}{n}\right)  ^{n}\text{ with }%
h_{n}\left(  z\right)  =z\left(  z-b\right)  ^{N}s_{n}\left(  z\right)
\text{.}%
\]
It is clear that $\lim_{n\rightarrow\infty}r_{n}\left(  z\right)  =e^{h\left(
z\right)  }=g(z)$ uniformly on each compact set of $\overline{D}_{+}^{1}$. We
show $\sup_{n\geq n_{1}}\mathrm{\delta}_{D_{+}^{1}}\left(  r_{n}\right)
<\infty$ for some $n_{1}\geq1$. Observe%
\[
\dfrac{r_{n}^{\prime}\left(  z\right)  }{r_{n}\left(  z\right)  }=\dfrac
{h_{n}^{\prime}(z)}{1+\left(  z^{2N}-h_{n}\left(  z\right)  \right)
/n}-\dfrac{2Nz^{2N}/n}{1+z^{2N}/n}\dfrac{z^{-1}h_{n}\left(  z\right)
}{1+\left(  z^{2N}-h_{n}\left(  z\right)  \right)  /n}\text{.}%
\]
For $M>0$ set%
\[
\left\{
\begin{array}
[c]{l}%
c_{1}=\inf_{z\in D_{+}^{1},\left\vert \operatorname{Re}z\right\vert
>M}\operatorname{Re}\left(  z^{2N}-h_{n}\left(  z\right)  \right)  \left(
\operatorname{Re}z\right)  ^{-2N}\\
c_{2}=\sup_{z\in D_{+}^{1},\left\vert \operatorname{Re}z\right\vert \leq
M}\left\vert z^{2N}-h_{n}\left(  z\right)  \right\vert
\end{array}
\right.  \text{.}%
\]
We have $c_{1}>0$ for sufficiently large $M$, since $h_{n}\left(  z\right)
=O\left(  z^{N}\right)  $ as $z\rightarrow\infty$ uniformly on $n$ and
$\sup_{z\in D_{+}^{1}}\left\vert \operatorname{Im}z\right\vert <\infty$. Thus,
we have%
\[
\left\vert 1+\dfrac{z^{2N}-h_{n}\left(  z\right)  }{n}\right\vert \geq\left\{
\begin{array}
[c]{ll}%
1+c_{1}\left(  \operatorname{Re}z\right)  ^{2N}/n & \text{if \ }\left\vert
\operatorname{Re}z\right\vert >M\text{, }z\in D_{+}^{1}\\
1-c_{2}/n & \text{if \ }\left\vert \operatorname{Re}z\right\vert \leq M\text{,
}z\in D_{+}^{1}%
\end{array}
\right.  \text{.}%
\]
Similarly, one has%
\[
\left\vert \dfrac{z^{2N}/n}{1+z^{2N}/n}\right\vert \leq c_{3}\dfrac{\left(
\operatorname{Re}z\right)  ^{2N}/n}{1+\left(  \operatorname{Re}z\right)
^{2N}/n}\leq c_{3}%
\]
with some constant $c_{3}$. Consequently,%
\[
\left\vert \dfrac{r_{n}^{\prime}\left(  z\right)  }{r_{n}\left(  z\right)
}\right\vert \leq c_{4}\left(  \left\vert h_{n}^{\prime}(z)\right\vert
+\left\vert z\right\vert ^{-1}\left\vert h_{n}\left(  z\right)  \right\vert
\right)
\]
for another constant $c_{4}$, which implies $\sup_{n\geq n_{1}}\mathrm{\delta
}_{D_{+}^{1}}\left(  r_{n}\right)  <\infty$ (note $h_{n}(0)=0$ and
$h_{n}\left(  z\right)  =O\left(  z^{N}\right)  $, $h_{n}^{\prime}\left(
z\right)  =O\left(  z^{N-1}\right)  $ uniformly in $n$) for $n_{1}>c_{2}$, and
Lemma \ref{l2-4} yields $\mathrm{d}_{c}\left(  r_{n},g\right)  \rightarrow0$
for a sufficiently large $c>0$. The case $g\in\Gamma_{N}\left(  D_{+}%
^{2}\right)  $ can be treated by applying the above argument to $\widetilde
{g}\in\Gamma_{N}\left(  D_{+}^{1}\right)  $.\bigskip
\end{proof}

\subsection{$m$-function}

The $m$-function is defined for any symbol $\boldsymbol{a}\in\boldsymbol{A}%
_{N}^{inv}\left(  C\right)  $ and $g\in\Gamma_{N}\left(  D_{+}\right)  $
satisfying $\tau_{\boldsymbol{a}}\left(  g\right)  \neq0$, and is identified
with the Weyl functions for $\boldsymbol{a}\in\boldsymbol{A}_{N,++}%
^{inv}\left(  C\right)  $. It is used to show the non-vanishing of
$\tau_{\boldsymbol{a}}\left(  g\right)  $ and to express the Toda flow itself.
The flow is determined uniquely not by the symbol $\boldsymbol{a}$ but by the
$m$-function, and an equivalent symbol can be defined by the $m$-function. In
such a way the $m$-function is a crucial quantity in this paper. In this
section we provide basic properties for the $m$-function. Almost every part of
this section can be obtained by following the arguments in \cite{z} faithfully.

Since $g^{-1}z^{n}\in H_{N,c}\left(  D_{+}\right)  $ for any $n\in\mathbb{Z}$
and $g\in\Gamma_{N}\left(  D_{+}\right)  $ such that $\mathrm{\delta}_{D_{+}%
}\left(  g\right)  <c$, one can define
\begin{equation}
\varphi_{g\boldsymbol{a}}^{\left(  n\right)  }\equiv\mathfrak{p}_{-}\left(
g\left(  \boldsymbol{a}-\boldsymbol{f}\right)  \left(  g^{-1}T\left(
g\boldsymbol{a}\right)  \right)  ^{-1}g^{-1}z^{n}\right)  \in H\left(
D_{-}\right)  \label{2-18}%
\end{equation}
for $\boldsymbol{a}\in\boldsymbol{A}_{N}^{inv}\left(  C\right)  $ satisfying
$\tau_{\boldsymbol{a}}\left(  g\right)  \neq0$. Then, we have%
\begin{equation}
g\boldsymbol{a}\left(  g^{-1}T\left(  g\boldsymbol{a}\right)  \right)
^{-1}g^{-1}z^{n}=z^{n}+\varphi_{g\boldsymbol{a}}^{\left(  n\right)  }\text{.}
\label{2-19}%
\end{equation}
If the curve $C$ is bounded as in \cite{z}, $\mathfrak{p}_{-}g\boldsymbol{f}%
\left(  g^{-1}T\left(  g\boldsymbol{a}\right)  \right)  ^{-1}g^{-1}z^{n}=0$ is
valid and $\varphi_{g\boldsymbol{a}}^{\left(  n\right)  }$ turns to%
\[
\varphi_{g\boldsymbol{a}}^{\left(  n\right)  }=\mathfrak{p}_{-}\left(
g\boldsymbol{a}T\left(  g\boldsymbol{a}\right)  ^{-1}z^{n}\right)  \text{.}%
\]
$\left\{  \varphi_{g\boldsymbol{a}}^{\left(  n\right)  }\right\}
_{n\in\mathbb{Z}}$ are fundamental in SSW theory by which any important
quantity can be described. The definition of $\varphi_{g\boldsymbol{a}%
}^{\left(  n\right)  }$ yields%
\[
\varphi_{g\boldsymbol{a}}^{\left(  n\right)  }\left(  z\right)  =\int
_{C}\dfrac{g\left(  \lambda\right)  \left(  \boldsymbol{a}-\boldsymbol{f}%
\right)  \left(  \lambda\right)  \left(  \left(  g^{-1}T\left(
g\boldsymbol{a}\right)  \right)  ^{-1}g^{-1}z^{n}\right)  \left(
\lambda\right)  }{2\pi i\left(  z-\lambda\right)  }d\lambda\text{.}%
\]

We know $\left(  g^{-1}T\left(  g\boldsymbol{a}\right)  \right)  ^{-1}%
g^{-1}z^{n}\in H_{N,c}\left(  D_{+}\right)  $, and for any $c^{\prime}>0$ we
can choose $\boldsymbol{f}$ such that $\rho_{c^{\prime}}\left(  \boldsymbol{a}%
-\boldsymbol{f}\right)  =O\left(  1\right)  $. Hence, one can assume
\[
g\left(  \boldsymbol{a}-\boldsymbol{f}\right)  \left(  \left(  g^{-1}T\left(
g\boldsymbol{a}\right)  \right)  ^{-1}g^{-1}z^{n}\right)
\]
decays exponentially fast on $C$ as $z\rightarrow\infty$ or $z\rightarrow0$,
which implies%
\begin{equation}
\varphi_{g\boldsymbol{a}}^{\left(  n\right)  }\left(  z\right)  =\left\{
\begin{array}
[c]{l}%
\sum_{1\leq k\leq K}\alpha_{k}z^{-k}+z^{-K}\psi_{K,1}\left(  z\right) \\
\sum_{0\leq k\leq K-1}\beta_{k}z^{k}+z^{K}\text{ }\psi_{K,2}\left(  z\right)
\end{array}
\right.  \label{2-20}%
\end{equation}
for any $K\geq1$ with $\psi_{K,j}\in H\left(  D_{-}\right)  $ for $j=1$, $2$
and
\[
\left\{
\begin{array}
[c]{l}%
\alpha_{k}=\dfrac{1}{2\pi i}\int_{C}\lambda^{k-1}g\left(  \lambda\right)
\left(  \boldsymbol{a}-\boldsymbol{f}\right)  \left(  \lambda\right)  \left(
\left(  g^{-1}T\left(  g\boldsymbol{a}\right)  \right)  ^{-1}g^{-1}%
z^{n}\right)  \left(  \lambda\right)  d\lambda\\
\beta_{k}=-\dfrac{1}{2\pi i}\int_{C}\lambda^{-k-1}g\left(  \lambda\right)
\left(  \boldsymbol{a}-\boldsymbol{f}\right)  \left(  \lambda\right)  \left(
\left(  g^{-1}T\left(  g\boldsymbol{a}\right)  \right)  ^{-1}g^{-1}%
z^{n}\right)  \left(  \lambda\right)  d\lambda
\end{array}
\right.  \text{.}%
\]
We denote%
\[
\varphi_{g\boldsymbol{a}}^{\left(  n\right)  }\left(  0\right)  =\lim
_{z\rightarrow0}\varphi_{g\boldsymbol{a}}^{\left(  n\right)  }\left(
z\right)  =\beta_{0}\text{.}%
\]

Here, we remark the following fact. Let $f$ be a function on $C$ such that%
\[
\int_{C}\left\vert f\left(  \lambda\right)  \right\vert ^{2}\left\vert
d\lambda\right\vert <\infty\text{, \ }\int_{C}\left\vert \lambda^{\pm
1}f\left(  \lambda\right)  \right\vert ^{2}\left\vert d\lambda\right\vert
<\infty\text{.}%
\]
Under this condition one has%
\begin{align*}
\int_{C}\left\vert f\left(  \lambda\right)  \right\vert \left\vert
d\lambda\right\vert  &  =\int_{C_{1}}\left\vert f\left(  \lambda\right)
\right\vert \left\vert d\lambda\right\vert +\int_{C_{2}}\left\vert f\left(
\lambda\right)  \right\vert \left\vert d\lambda\right\vert \\
&  \leq\left\Vert \lambda^{-1}\right\Vert _{C_{1}}\left\Vert \lambda
f\right\Vert _{C_{1}}+\left\Vert 1\right\Vert _{C_{2}}\left\Vert f\right\Vert
_{C_{2}}<\infty\text{,}%
\end{align*}
similarly $\int_{C}\left\vert \lambda^{-1}f\left(  \lambda\right)  \right\vert
\left\vert d\lambda\right\vert <\infty$ also holds. Therefore, we have%
\begin{align}
\left(  \mathfrak{p}_{+}\left(  \lambda f\right)  \right)  \left(  z\right)
&  =\dfrac{1}{2\pi i}\int_{C}\dfrac{\lambda f\left(  \lambda\right)  }%
{\lambda-z}d\lambda\nonumber\\
&  =z\dfrac{1}{2\pi i}\int_{C}\dfrac{f\left(  \lambda\right)  }{\lambda
-z}d\lambda+\dfrac{1}{2\pi i}\int_{C}f\left(  \lambda\right)  d\lambda
\nonumber\\
&  =z\left(  \mathfrak{p}_{+}f\right)  \left(  z\right)  +\lim_{z\rightarrow
\infty}z\left(  \mathfrak{p}_{-}f\right)  \left(  z\right)  \text{,}
\label{2-21}%
\end{align}
and similarly%
\begin{equation}
\left(  \mathfrak{p}_{+}\left(  \lambda^{-1}f\right)  \right)  \left(
z\right)  =\dfrac{1}{2\pi i}\int_{C}\dfrac{\lambda^{-1}f\left(  \lambda
\right)  }{\lambda-z}d\lambda=z^{-1}\left(  \mathfrak{p}_{+}f\right)  \left(
z\right)  +z^{-1}\left(  \mathfrak{p}_{-}f\right)  \left(  0\right)
\label{2-22}%
\end{equation}

The Lemma below states that $\varphi_{g\boldsymbol{a}}^{\left(  n\right)  }$
can be expressed by $\left\{  \varphi_{g\boldsymbol{a}}^{\left(  0\right)
},\varphi_{g\boldsymbol{a}}^{\left(  -1\right)  }\right\}  $, which follows
from the invariance of $D_{-}$ under $z\rightarrow z^{-1}$ and $\boldsymbol{a}%
\phi=\phi\boldsymbol{a}$.

\begin{lemma}
\label{l2-9}Assume $\tau_{\boldsymbol{a}}\left(  g\right)  \neq0$ for
$\boldsymbol{a}\in\boldsymbol{A}_{N}^{inv}\left(  C\right)  $, $g\in\Gamma
_{N}\left(  D_{+}\right)  $. Then, $\varphi_{g\boldsymbol{a}}^{\left(
n\right)  }\left(  z\right)  $ satisfies\newline(i) \ $\varphi_{\widetilde
{g\boldsymbol{a}}}^{\left(  n\right)  }\left(  z\right)  =z^{-1}%
\varphi_{g\boldsymbol{a}}^{\left(  -n-1\right)  }\left(  z^{-1}\right)
$\newline(ii) $\left(  z+z^{-1}\right)  \left(  z^{n}+\varphi_{g\boldsymbol{a}%
}^{\left(  n\right)  }\right)  =z^{n+1}+\varphi_{g\boldsymbol{a}}^{\left(
n+1\right)  }+z^{n-1}+\varphi_{g\boldsymbol{a}}^{\left(  n-1\right)  }$

$\ \ \ \ \ \ \ \ \ \ \ \ \ \ \ \ \ \ \ \ \ \ \ \ \ +\varphi_{\widetilde
{g\boldsymbol{a}}}^{\left(  -n-1\right)  }\left(  0\right)  \left(
1+\varphi_{g\boldsymbol{a}}^{\left(  0\right)  }\right)  +\varphi
_{g\boldsymbol{a}}^{\left(  n\right)  }\left(  0\right)  \left(
z^{-1}+\varphi_{g\boldsymbol{a}}^{\left(  -1\right)  }\right)
\ \ \ \ \ \ \ \ \ \ \ \ \ \ \ \ \ \ \ \ \ \ \ \ \ \ \ \ \ \ \ \ \ \ \ \ \ \ \ \ \ $%

\end{lemma}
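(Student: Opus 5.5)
Both identities will be derived from (\ref{2-19}) together with the uniqueness of the splitting $L^{2}\left(  C\right)  =H\left(  D_{+}\right)  \oplus H\left(  D_{-}\right)$, transported to the weighted setting. Set $u_{n}=\left(  g^{-1}T\left(  g\boldsymbol{a}\right)  \right)  ^{-1}g^{-1}z^{n}\in H_{N,c}\left(  D_{+}\right)$, so that (\ref{2-19}) reads $g\boldsymbol{a}u_{n}=z^{n}+\varphi_{g\boldsymbol{a}}^{\left(  n\right)  }$. The key uniqueness statement I will use is: if $u\in H_{N,c}\left(  D_{+}\right)$ and $g\boldsymbol{a}u=p+\psi$ with $p$ a Laurent polynomial and $\psi\in H\left(  D_{-}\right)$, then $g^{-1}T\left(  g\boldsymbol{a}\right)  u=g^{-1}p$. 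Indeed, from (\ref{1-9}), $T\left(  g\boldsymbol{a}\right)  u=g\boldsymbol{f}u+\mathfrak{p}_{+}\left(  p+\psi-g\boldsymbol{f}u\right)$. Since $p$ is analytic on $D_{+}$ and $g\boldsymbol{f}u$ lies in $gH_{N,c}\left(  D_{+}\right)$, this reduces to $p$. Justifying this requires the compensator trick: multiply by $\rho_{c'}^{-1}$ to land in genuine $L^{2}$ spaces, as in the discussion preceding (\ref{2-6}). By invertibility of $g^{-1}T\left(  g\boldsymbol{a}\right)$ (from $\tau_{\boldsymbol{a}}\left(  g\right)  \neq0$), $u$ and hence $\psi$ are then uniquely determined by $p$.

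For (i), apply $R$. Using $R\left(  \boldsymbol{a}f\right)  =\widetilde{\boldsymbol{a}}Rf$ (immediate from (\ref{1-5}) and (\ref{1-6})) and $\widetilde{g}$ in place of $g$, we get $\widetilde{g\boldsymbol{a}}\,Ru_{-n-1}=R\left(  z^{-n-1}+\varphi_{g\boldsymbol{a}}^{\left(  -n-1\right)  }\right)  =z^{n}+z^{-1}\varphi_{g\boldsymbol{a}}^{\left(  -n-1\right)  }\left(  z^{-1}\right)$. The map $R$ preserves $H\left(  D_{-}\right)$ and $H_{N,c}\left(  D_{+}\right)$, since $D_{\pm}$ are invariant under inversion and $\rho_{c}$ is symmetric. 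Moreover, (\ref{2-14}) shows that $\widetilde{g}^{-1}T\left(  \widetilde{g\boldsymbol{a}}\right)$ is invertible. The uniqueness statement then identifies $Ru_{-n-1}$ with the $u_{n}$ attached to $\widetilde{g\boldsymbol{a}}$, and (i) follows.

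For (ii), multiply (\ref{2-19}) by $\phi=z+z^{-1}$. Since $\boldsymbol{a}\left(  \phi u\right)  =\phi\,\boldsymbol{a}u$ (because $R\phi=\phi R$), we obtain $g\boldsymbol{a}\left(  \phi u_{n}\right)  =z^{n+1}+z^{n-1}+\phi\varphi_{g\boldsymbol{a}}^{\left(  n\right)  }$. Next decompose $\phi\varphi^{\left(  n\right)  }$ using (\ref{2-20}) with $\varphi^{\left(  n\right)  }=\alpha_{1}z^{-1}+O(z^{-2})$ near $\infty$ and $\varphi^{\left(  n\right)  }=\beta_{0}+O(z)$ near $0$. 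This gives $z\varphi^{\left(  n\right)  }-\alpha_{1}\in H\left(  D_{-}\right)$ and $z^{-1}\left(  \varphi^{\left(  n\right)  }-\beta_{0}\right)  \in H\left(  D_{-}\right)$, which are exactly the identities (\ref{2-21}) and (\ref{2-22}). Hence $\phi\varphi^{\left(  n\right)  }=\alpha_{1}+\beta_{0}z^{-1}+\psi$ with $\psi\in H\left(  D_{-}\right)$. The uniqueness statement and linearity then give
\[
\phi u_{n}=u_{n+1}+u_{n-1}+\alpha_{1}u_{0}+\beta_{0}u_{-1},
\]
and applying $g\boldsymbol{a}$ yields (ii) with coefficients $\alpha_{1}$ and $\beta_{0}=\varphi_{g\boldsymbol{a}}^{\left(  n\right)  }\left(  0\right)$. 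It remains to identify $\alpha_{1}=\lim_{z\rightarrow\infty}z\varphi_{g\boldsymbol{a}}^{\left(  n\right)  }\left(  z\right)$ with $\varphi_{\widetilde{g\boldsymbol{a}}}^{\left(  -n-1\right)  }\left(  0\right)$. By (i), $\varphi_{\widetilde{g\boldsymbol{a}}}^{\left(  -n-1\right)  }\left(  z\right)  =z^{-1}\varphi_{g\boldsymbol{a}}^{\left(  n\right)  }\left(  z^{-1}\right)$, and letting $z\rightarrow0$ gives exactly this limit.

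The main obstacle is the uniqueness step in the unbounded setting. There $g\boldsymbol{a}u$ is not in $L^{2}\left(  C\right)$ and $\phi u_{n}$ need not lie in $H_{N,c}\left(  D_{+}\right)$. I would handle both issues by enlarging $c$, since $\phi u_{n}\in H_{N,c'}\left(  D_{+}\right)$ for $c'>c$, using the independence of $c$ established via Lemma \ref{l2-1}. I would also always split $g\boldsymbol{a}=g\boldsymbol{f}+g\left(  \boldsymbol{a}-\boldsymbol{f}\right)$ with a compensator $\boldsymbol{f}$ decaying fast enough that all $\mathfrak{p}_{\pm}$ act on honest $L^{2}$ functions.
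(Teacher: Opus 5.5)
Your proposal is correct and follows the paper's route: (i) comes from transporting everything with $R$ via (\ref{2-14}), and (ii) from multiplying by $\phi$ and using (\ref{2-21})--(\ref{2-22}) on $g(\boldsymbol{a}-\boldsymbol{f})u_n$. This gives $T(g\boldsymbol{a})(\phi u_n)=z^{n+1}+z^{n-1}+\varphi_{\widetilde{g\boldsymbol{a}}}^{(-n-1)}(0)+\varphi_{g\boldsymbol{a}}^{(n)}(0)z^{-1}$, after which one inverts and applies $g\boldsymbol{a}$ with (\ref{2-19}). Your general uniqueness statement is a repackaging of that direct computation, and in both applications the needed fact $p-g\boldsymbol{f}u\in H(D_{+})$ follows explicitly from (\ref{2-21})--(\ref{2-22}) (resp. from $R\mathfrak{p}_{+}=\mathfrak{p}_{+}R$), so no extra analytic argument is required.
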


\begin{proof}
(i) is immediate from its definition and (\ref{2-14}). Recall $\phi\left(
z\right)  =z+z^{-1}$. Set%
\[
u=\left(  g^{-1}T\left(  g\boldsymbol{a}\right)  \right)  ^{-1}g^{-1}z^{n}\in
H_{N,c}\left(  D_{+}\right)  \text{.}%
\]
Then, applying (\ref{2-21}), (\ref{2-22}) to $f=g\left(  \boldsymbol{a}%
-\boldsymbol{f}\right)  u$ one has
\begin{align*}
&  \mathfrak{p}_{+}\left(  \phi g\left(  \boldsymbol{a}-\boldsymbol{f}\right)
u\right) \\
&  =\phi\mathfrak{p}_{+}\left(  g\left(  \boldsymbol{a}-\boldsymbol{f}\right)
u\right)  +\lim_{z\rightarrow\infty}z\left(  \mathfrak{p}_{-}g\left(
\boldsymbol{a}-\boldsymbol{f}\right)  u\right)  \left(  z\right)
+z^{-1}\left(  \mathfrak{p}_{-}g\left(  \boldsymbol{a}-\boldsymbol{f}\right)
u\right)  \left(  0\right) \\
&  =\phi\mathfrak{p}_{+}\left(  g\left(  \boldsymbol{a}-\boldsymbol{f}\right)
u\right)  +\lim_{z\rightarrow\infty}z\varphi_{g\boldsymbol{a}}^{\left(
n\right)  }\left(  z\right)  +z^{-1}\varphi_{g\boldsymbol{a}}^{\left(
n\right)  }\left(  0\right) \\
&  =\phi\mathfrak{p}_{+}\left(  g\left(  \boldsymbol{a}-\boldsymbol{f}\right)
u\right)  +\varphi_{\widetilde{g\boldsymbol{a}}}^{\left(  -n-1\right)
}\left(  0\right)  +z^{-1}\varphi_{g\boldsymbol{a}}^{\left(  n\right)
}\left(  0\right)  \text{ \ (due to (i)),}%
\end{align*}
hence, $\boldsymbol{a}\phi u=\phi\boldsymbol{a}u$ implies%
\begin{align*}
T\left(  g\boldsymbol{a}\right)  \phi u  &  =\phi g\boldsymbol{f}%
u+\mathfrak{p}_{+}\left(  \phi g\left(  \boldsymbol{a}-\boldsymbol{f}\right)
u\right) \\
&  =\phi T\left(  g\boldsymbol{a}\right)  u+\varphi_{\widetilde
{g\boldsymbol{a}}}^{\left(  -n-1\right)  }\left(  0\right)  +z^{-1}%
\varphi_{g\boldsymbol{a}}^{\left(  n\right)  }\left(  0\right) \\
&  =z^{n+1}+z^{n-1}+\varphi_{\widetilde{g\boldsymbol{a}}}^{\left(
-n-1\right)  }\left(  0\right)  +z^{-1}\varphi_{g\boldsymbol{a}}^{\left(
n\right)  }\left(  0\right)  \text{.}%
\end{align*}
Applying $g\boldsymbol{a}T\left(  g\boldsymbol{a}\right)  ^{-1}$ to the above
identity one has from (\ref{2-19})
\begin{align*}
\phi\left(  z^{n}+\varphi_{g\boldsymbol{a}}^{\left(  n\right)  }\right)   &
=z^{n+1}+\varphi_{g\boldsymbol{a}}^{\left(  n+1\right)  }+z^{n-1}%
+\varphi_{g\boldsymbol{a}}^{\left(  n-1\right)  }\\
&  +\varphi_{\widetilde{g\boldsymbol{a}}}^{\left(  -n-1\right)  }\left(
0\right)  \left(  1+\varphi_{g\boldsymbol{a}}^{\left(  0\right)  }\right)
+\varphi_{g\boldsymbol{a}}^{\left(  n\right)  }\left(  0\right)  \left(
z^{-1}+\varphi_{g\boldsymbol{a}}^{\left(  -1\right)  }\right)  \text{,}%
\end{align*}
which is (ii).\bigskip
\end{proof}

For rational $r\in\Gamma_{N}\left(  D_{+}\right)  $ the relevant operators
become of finite rank.

\begin{lemma}
\label{l2-10}Let $r=p/q\in\Gamma_{N}\left(  D_{+}\right)  $ with polynomials
$p$, $q$. For $\boldsymbol{a}\in\boldsymbol{A}_{N}\left(  C\right)  $,
$g\in\Gamma_{N}\left(  D_{+}\right)  $, the operator $r^{-1}g^{-1}T\left(
rg\boldsymbol{a}\right)  -g^{-1}T\left(  g\boldsymbol{a}\right)  $ is of
finite rank, whose image is a subspace spanned by
\[
\left\{  g^{-1}r^{-1}z^{k}\right\}  _{0\leq k\leq\deg p-1}\cup\left\{
g^{-1}z^{k}\right\}  _{0\leq k\leq\deg q-1}\left(  \subset H_{N,c}\left(
D_{+}\right)  \right)  \text{.}%
\]

\end{lemma}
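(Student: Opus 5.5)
The plan is to write the difference explicitly from the definition (\ref{1-9}) with a common compensator, and to identify it as the Cauchy integral of a rational kernel whose $z$-dependence is polynomial after division by $q$ and $g$.

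Fix $u\in H_{N,c}(D_+)$ and choose $\boldsymbol{f}$ so that $\rho_{c+c'}(\boldsymbol{a}-\boldsymbol{f})$ is bounded, with $c'$ exceeding the scales of $g$ and $rg$. Set $F=g(\boldsymbol{a}-\boldsymbol{f})u$. Then
\[
r^{-1}g^{-1}T(rg\boldsymbol{a})u-g^{-1}T(g\boldsymbol{a})u
=g^{-1}\left(r^{-1}\mathfrak{p}_{+}(rF)-\mathfrak{p}_{+}F\right),
\]
because the terms $g\boldsymbol{f}u$ cancel. Now write $r=p/q$. Since $r$ has no zeros or poles in $D_+\cup C$, $p$ and $q$ have no zeros there, so $1/q$ is analytic on $D_+$. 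The first step is to compute $\mathfrak{p}_{+}(rF)$ using
\[
\frac{r(\lambda)}{\lambda-z}=\frac{r(z)}{\lambda-z}+\frac{r(\lambda)-r(z)}{\lambda-z},
\]
so that $r^{-1}\mathfrak{p}_{+}(rF)-\mathfrak{p}_{+}F=r(z)^{-1}\frac{1}{2\pi i}\int_C\frac{r(\lambda)-r(z)}{\lambda-z}F(\lambda)\,d\lambda$. Next we use
\[
\frac{r(\lambda)-r(z)}{\lambda-z}=\frac{1}{q(z)}\left(\frac{p(\lambda)-p(z)}{\lambda-z}\cdot\frac{1}{q(\lambda)}\cdot q(z)\cdot\frac{1}{q(z)}\cdot q(z)\right)\ \text{(schematically)},
\]
or, more cleanly,
\[
p(\lambda)q(z)-p(z)q(\lambda)=q(z)\bigl(p(\lambda)-p(z)\bigr)-p(z)\bigl(q(\lambda)-q(z)\bigr).
\]
This gives
\[
\frac{r(\lambda)-r(z)}{\lambda-z}=\frac{1}{q(\lambda)q(z)}\left(q(z)\frac{p(\lambda)-p(z)}{\lambda-z}-p(z)\frac{q(\lambda)-q(z)}{\lambda-z}\right).
\]
The difference quotients are polynomials in $z$ of degrees $\le\deg p-1$ and $\le\deg q-1$, and their coefficients are polynomials in $\lambda$. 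Multiplying by $r(z)^{-1}=q(z)/p(z)$ and $g^{-1}$, the operator equals
\[
g^{-1}r^{-1}\sum_{k<\deg p}z^{k}\ell_k(u)-g^{-1}\sum_{k<\deg q}z^{k}m_k(u),
\]
where $\ell_k(u)$ and $m_k(u)$ are integrals over $C$ of $F$ against $\lambda$-polynomials times $1/q(\lambda)$. This already displays the range as the span of $\{g^{-1}r^{-1}z^k\}$ and $\{g^{-1}z^k\}$.

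The main obstacle is justifying that the linear functionals $\ell_k,m_k$ are finite and bounded on $H_{N,c}(D_+)$, since $F$ carries polynomial weights $\lambda^j/q(\lambda)$ on the unbounded curve $C_1$ and near $0$ on $C_2$. We would handle this by noting that $\rho_{c+c'}(\boldsymbol{a}-\boldsymbol{f})$ is bounded, so $F=g\rho_{c'}^{-1}\cdot\rho_{c'}\rho_{c}(\boldsymbol{a}-\boldsymbol{f})\cdot\rho_c^{-1}u$ decays like $e^{-\epsilon|\lambda|^{\pm N}}$ times an $L^2$ function, exactly as in (\ref{2-20}). Any polynomial growth in $\lambda^{\pm1}$ is therefore integrable by Cauchy--Schwarz, with a bound by $\|u\|_{N,c}$.

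We also check that the resulting functions lie in $H_{N,c}(D_+)$. Indeed, $g^{-1}z^k\in H_{N,c}(D_+)$ as noted before (\ref{2-18}). For $g^{-1}r^{-1}z^k$ we have $r^{-1}=q/p$, which is bounded at $\infty$ up to the factor $z^{\deg q-\deg p}$ and is analytic on $D_+$, so $g^{-1}r^{-1}z^{k}$ is a rational-type multiple of $g^{-1}$ and hence also lies in $H_{N,c}(D_+)$. Finally, the splitting of the Cauchy integral is legitimate because all the integrals converge absolutely, by the same decay estimate.
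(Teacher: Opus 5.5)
Your proposal is correct and is essentially the paper's proof: the same Cauchy-integral formula for the difference (the $\boldsymbol{f}u$ terms cancel) and the same splitting $\frac{r(\lambda)-r(z)}{\lambda-z}=q(\lambda)^{-1}\frac{p(\lambda)-p(z)}{\lambda-z}-r(z)q(\lambda)^{-1}\frac{q(\lambda)-q(z)}{\lambda-z}$, with your decay estimate on $F$ supplying the finiteness of the coefficient functionals, which the paper leaves implicit. Delete the garbled ``schematic'' display, and in the membership check note that $q/p$ can also blow up at $0$ (e.g.\ $r=z/(z-\zeta)$ with $\zeta\in D_-$); this is harmless because $r^{-1}$ is a Laurent polynomial plus a function bounded and analytic on $D_+$, and $g^{-1}z^{n}\in H_{N,c}(D_+)$ for every $n\in\mathbb{Z}$.
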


\begin{proof}
For $u\in H_{N,c}\left(  D_{+}\right)  $ one has
\begin{align*}
&  \left(  g^{-1}r^{-1}T\left(  rg\boldsymbol{a}\right)  -g^{-1}T\left(
g\boldsymbol{a}\right)  \right)  u\left(  z\right) \\
&  =\left(  g\left(  z\right)  r\left(  z\right)  \right)  ^{-1}\int_{C}%
\dfrac{r\left(  \lambda\right)  -r\left(  z\right)  }{2\pi i\left(
\lambda-z\right)  }g\left(  \lambda\right)  \left(  \boldsymbol{a}%
-\boldsymbol{f}\right)  u\left(  \lambda\right)  d\lambda\text{.}%
\end{align*}
Noting%
\[
\dfrac{r\left(  \lambda\right)  -r\left(  z\right)  }{\lambda-z}=q\left(
\lambda\right)  ^{-1}\dfrac{p\left(  \lambda\right)  -p\left(  z\right)
}{\lambda-z}-r\left(  z\right)  q\left(  \lambda\right)  ^{-1}\dfrac{q\left(
\lambda\right)  -q\left(  z\right)  }{\lambda-z}%
\]
and%
\[
\dfrac{\lambda^{n}-z^{n}}{\lambda-z}=\sum_{0\leq j\leq n-1}\lambda^{^{j}%
}z^{n-j-1}\text{ \ for }n\geq1\text{,}%
\]
one has the conclusion.\bigskip
\end{proof}

For any rational function $r\in\Gamma_{N}\left(  D_{+}\right)  $ the
tau-function $\tau_{g\boldsymbol{a}}\left(  r\right)  $ can be expressed by
$\left\{  \varphi_{g\boldsymbol{a}}^{\left(  0\right)  },\varphi
_{g\boldsymbol{a}}^{\left(  -1\right)  }\right\}  $. Here we compute it for
$r=q_{\zeta}$, $q_{\zeta}q_{\eta}$, $q_{\zeta}\widetilde{q}_{\eta}$, and
remark the vanishing of $\tau_{g\boldsymbol{a}}\left(  r\right)  $ for a
certain $r$ under a degenerate condition for later purpose.

\begin{lemma}
\label{l2-11}Let $g\in\Gamma_{N}\left(  D_{+}\right)  $, $\boldsymbol{a}%
\in\boldsymbol{A}_{N}^{inv}\left(  C\right)  $ with $\tau_{\boldsymbol{a}%
}\left(  g\right)  \neq0$. Then, it holds that for $\zeta$, $\eta\in D_{-}%
$\newline(i) \ $\tau_{g\boldsymbol{a}}\left(  q_{\zeta}\right)  =1+\varphi
_{g\boldsymbol{a}}^{\left(  0\right)  }\left(  \zeta\right)  $,\newline(ii)
$\tau_{g\boldsymbol{a}}\left(  q_{\zeta}q_{\eta}\right)  =\dfrac{1}{\zeta
-\eta}\det\left(
\begin{array}
[c]{cc}%
1+\varphi_{g\boldsymbol{a}}^{\left(  0\right)  }\left(  \eta\right)  &
1+\varphi_{g\boldsymbol{a}}^{\left(  0\right)  }\left(  \zeta\right) \\
\eta+\varphi_{g\boldsymbol{a}}^{\left(  1\right)  }\left(  \eta\right)  &
\zeta+\varphi_{g\boldsymbol{a}}^{\left(  1\right)  }\left(  \zeta\right)
\end{array}
\right)  $,\newline(iii) $\tau_{g\boldsymbol{a}}\left(  q_{\zeta}\widetilde
{q}_{\eta}\right)  =\dfrac{1}{\eta-\zeta^{-1}}\det\left(
\begin{array}
[c]{cc}%
\eta+\varphi_{g\boldsymbol{a}}^{\left(  -1\right)  }\left(  \eta^{-1}\right)
& \zeta^{-1}+\varphi_{g\boldsymbol{a}}^{\left(  -1\right)  }\left(
\zeta\right) \\
1+\varphi_{g\boldsymbol{a}}^{\left(  0\right)  }\left(  \eta^{-1}\right)  &
1+\varphi_{g\boldsymbol{a}}^{\left(  0\right)  }\left(  \zeta\right)
\end{array}
\right)  $,\newline(iv) If $\tau_{g\boldsymbol{a}}\left(  q_{\zeta}\right)
=0$ holds for any $\zeta\in D_{-}^{2}$, then $\tau_{g\boldsymbol{a}}\left(
q^{-1}\right)  =0$ for any polynomial $q\in\Gamma_{N}\left(  D_{+}\right)  $
if $q\left(  \zeta_{0}\right)  =0$ for a $\zeta_{0}\in D_{-}^{2}$.
\end{lemma}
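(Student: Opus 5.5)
The plan is to reduce each tau-function to a finite-dimensional determinant via Lemma \ref{l2-10}, and then evaluate the entries with the reproducing identity (\ref{2-19}) together with Cauchy's formula. Write $A=g^{-1}T(g\boldsymbol{a})$, which is invertible since $\tau_{\boldsymbol{a}}(g)\neq0$.

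For rational $r$, Lemma \ref{l2-10} gives that $r^{-1}g^{-1}T(rg\boldsymbol{a})-A$ has finite rank. Hence
\[
\tau_{g\boldsymbol{a}}(r)=\det\left(I+\left(r^{-1}g^{-1}T(rg\boldsymbol{a})-A\right)A^{-1}\right)
\]
is the determinant of a finite matrix. To compute it, start from the identity used in the proof of Lemma \ref{l2-10}, with $v=A^{-1}w$:
\[
\left(r^{-1}g^{-1}T(rg\boldsymbol{a})-A\right)v(z)=(gr)^{-1}(z)\int_{C}\frac{r(\lambda)-r(z)}{2\pi i(\lambda-z)}\,g(\boldsymbol{a}-\boldsymbol{f})v(\lambda)\,d\lambda .
\]

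\textbf{Case (i).} Take $r=q_{\zeta}=\zeta/(\zeta-z)$. Then
\[
\frac{r(\lambda)-r(z)}{\lambda-z}=\frac{\zeta}{(\zeta-\lambda)(\zeta-z)}.
\]
The operator therefore has rank one, with range spanned by $g^{-1}$ (note $q_\zeta^{-1}g^{-1}\cdot q_\zeta/\zeta\cdot$ constant $=g^{-1}$ up to scaling). Its trace is the functional evaluated at $g^{-1}$:
\[
\int_{C}\frac{g(\boldsymbol{a}-\boldsymbol{f})A^{-1}g^{-1}(\lambda)}{2\pi i(\zeta-\lambda)}\,d\lambda=\varphi^{(0)}_{g\boldsymbol{a}}(\zeta),
\]
by the integral formula for $\varphi^{(n)}$ following (\ref{2-18}) with $n=0$. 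Since $\det(I+K)=1+\operatorname{tr}K$ for rank one $K$, this yields (i).

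\textbf{Cases (ii) and (iii).} Use the partial fractions
\[
q_{\zeta}q_{\eta}=\frac{\zeta\,q_{\zeta}-\eta\, q_{\eta}}{\zeta-\eta},
\qquad
\widetilde q_{\eta}(z)=\frac{\eta}{\eta-z^{-1}}=1-\eta^{-1}z\,q_{\eta^{-1}}(z)\cdot(\dots).
\]
In each case the perturbation has rank two, with range spanned by $\{g^{-1},g^{-1}z\}$ (respectively $\{g^{-1},g^{-1}z^{-1}\}$). The $2\times2$ matrix entries are functionals of the form
\[
\int\frac{g(\boldsymbol{a}-\boldsymbol{f})A^{-1}g^{-1}z^{k}(\lambda)}{2\pi i(\xi-\lambda)}\,d\lambda=\varphi^{(k)}_{g\boldsymbol{a}}(\xi),\qquad k=0,\pm1,\ \xi\in\{\zeta,\eta,\eta^{-1}\}.
\]
Expanding $\det(I+K)$ for this rank-two $K$ and clearing the prefactor $1/(\zeta-\eta)$ (respectively $1/(\eta-\zeta^{-1})$) gives the stated determinants.

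An alternative that avoids the bookkeeping is the cocycle property, Lemma \ref{l2-5}(ii):
\[
\tau_{g\boldsymbol{a}}(q_\zeta q_\eta)=\tau_{g\boldsymbol{a}}(q_\zeta)\,\tau_{q_\zeta g\boldsymbol{a}}(q_\eta).
\]
Here $\tau_{q_\zeta g\boldsymbol{a}}(q_\eta)=1+\varphi^{(0)}_{q_\zeta g\boldsymbol{a}}(\eta)$ by (i). One then expresses $\varphi^{(0)}_{q_\zeta g\boldsymbol{a}}$ through $\varphi^{(0)}_{g\boldsymbol{a}}$ and $\varphi^{(1)}_{g\boldsymbol{a}}$ by a Darboux-type formula derived from (\ref{2-19}): $1+\varphi^{(0)}_{q_\zeta g\boldsymbol{a}}$ must be the combination of $1+\varphi^{(0)}$ and $z+\varphi^{(1)}$, divided by $(z-\zeta)$, that vanishes at $z=\zeta$. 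For (iii) one also uses $\widetilde{q}_\eta$ together with Lemma \ref{l2-9}(i) to convert $\widetilde{g\boldsymbol{a}}$ quantities into $\varphi^{(-1)}_{g\boldsymbol{a}}(\eta^{-1})$ and $\varphi^{(0)}_{g\boldsymbol{a}}(\eta^{-1})$.

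\textbf{Case (iv).} Write $q(z)=\prod_j(z-\zeta_j)$ with all $\zeta_j\in D_-$, so $q^{-1}$ is a constant multiple of $\prod_j q_{\zeta_j}$. The cocycle property reduces $\tau_{g\boldsymbol{a}}(q^{-1})$ to a product, which should contain the factor $\tau_{g\boldsymbol{a}}(q_{\zeta_0})=0$. The difficulty is that the cocycle requires the intermediate tau-functions to be nonzero. I would therefore instead use the finite-rank determinant directly: its matrix has a column built from $1+\varphi^{(0)}_{g\boldsymbol{a}}$ evaluated at the roots. More precisely, I would use the generalized Vandermonde form
\[
\det\left[\zeta_i^{k}+\varphi^{(k)}_{g\boldsymbol{a}}(\zeta_i)\right]\Big/\prod_{i<j}(\zeta_i-\zeta_j),
\]
with a confluent version when roots repeat. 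Using $1+\varphi^{(0)}\equiv0$ on $D_-^2$ and, by analyticity, on the whole component, the recursion of Lemma \ref{l2-9}(ii) shows that every row $z^k+\varphi^{(k)}$ with $k\geq1$ is a multiple of $z^{-1}+\varphi^{(-1)}$ plus a multiple of $1+\varphi^{(0)}$ on $D_-^2$. The determinant then has rank at most one in those columns once $\deg q\geq2$, and the first column vanishes when $\deg q=1$; either way the determinant is zero.

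I expect (iv) to be the main obstacle, in particular making this column-degeneracy argument precise, including the confluent case of multiple roots.
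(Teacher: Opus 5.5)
For (i)--(iii) your route is essentially the paper's. You reduce $\tau_{g\boldsymbol{a}}(r)$ to a finite determinant and identify the entries as values $\varphi^{(k)}_{g\boldsymbol{a}}(\xi)$ at the poles of $r$. Your kernel $\frac{r(\lambda)-r(z)}{\lambda-z}$ is equivalent to the paper's splitting $rv=\sum_j r_jq_{\zeta_j}v(\zeta_j)+\sum_j r_jq_{\zeta_j}(v-v(\zeta_j))$ behind (\ref{2-23}). However, your partial fraction for $q_\zeta q_\eta$ has the coefficients swapped, and the one for $\widetilde{q}_\eta$ is left unfinished, so the stated determinants would not come out as written. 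The correct decompositions are $q_\zeta q_\eta=(\zeta q_\eta-\eta q_\zeta)/(\zeta-\eta)$ and $\widetilde{q}_\eta=1-q_{\eta^{-1}}$, hence $q_\zeta\widetilde{q}_\eta=\frac{\eta}{\eta-\zeta^{-1}}(q_\zeta-q_{\eta^{-1}})$. Your cocycle alternative also needs $\tau_{g\boldsymbol{a}}(q_\zeta)\neq0$, which is not assumed.

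The genuine gap is in (iv). Write $\Phi_k=z^k+\varphi^{(k)}_{g\boldsymbol{a}}$. Lemma \ref{l2-9}(ii) gives $\Phi_k=P_k\Phi_0+Q_k\Phi_{-1}$ on $D_-$, with $P_k,Q_k$ polynomials in $\phi(z)$. The hypothesis removes the $\Phi_0$-part only at points of $D_-^2$; nothing is known on $D_-^1$. Only one root $\zeta_0$ of $q$ is assumed to lie in $D_-^2$, so for $\deg q\geq2$ your ``rank at most one'' statement constrains a single line of the matrix and proves nothing. And since the $Q_k$ are a priori non-constant, the rank-one claim is not justified even when several roots lie in $D_-^2$.

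The missing observation is that in fact $\Phi_k\equiv0$ on $D_-^2$ for all $k\geq0$. Letting $z\to0$ in $\Phi_0\equiv0$ gives $\varphi^{(0)}_{g\boldsymbol{a}}(0)=-1$. So in Lemma \ref{l2-9}(ii) with $n=0$ the $\Phi_{-1}$-terms add up to $(1+\varphi^{(0)}_{g\boldsymbol{a}}(0))\Phi_{-1}=0$, and $\Phi_1\equiv0$ on $D_-^2$. Then $\varphi^{(1)}_{g\boldsymbol{a}}(0)=0$, and the induction continues. Hence $(\mathfrak{p}_-g(\boldsymbol{a}-\boldsymbol{f})A^{-1}g^{-1}p)(\zeta_0)=-p(\zeta_0)$ for every polynomial $p$.

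Now write $q^{-1}=\sum_j r_jq_{\zeta_j}$ (distinct roots). The functions $p_i=q\,q_{\zeta_i}$ are polynomials with $r_jp_i(\zeta_j)=\delta_{ij}$. So every entry $\delta_{i0}+r_0(\mathfrak{p}_-g(\boldsymbol{a}-\boldsymbol{f})A^{-1}g^{-1}p_i)(\zeta_0)$ of (\ref{2-23}) vanishes: the whole line belonging to $\zeta_0$ is zero, wherever the other roots are.

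The same identity $r_jp_i(\zeta_j)=\delta_{ij}$ is what you would need to actually derive your generalized Vandermonde form. It factors the matrix as the coefficient matrix of the $p_i$, times $[\Phi_k(\zeta_j)]$, times $\mathrm{diag}(r_j)$, and there too the line of $\zeta_0$ is then identically zero. Repeated roots, which (\ref{2-23}) does not cover either, can be reached by separating the roots while keeping one of them at $\zeta_0$, and using the continuity in Lemmas \ref{l2-4} and \ref{l2-6}.
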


\begin{proof}
We compute $\tau_{g\boldsymbol{a}}\left(  r\right)  $ for $r=\sum_{0\leq j\leq
M}r_{j}q_{\zeta_{j}}$ with $\left\{  \zeta_{j}\right\}  _{0\leq j\leq
M}\subset D_{-}$. Since $r$ is bounded analytic on $D_{+}$, one has $rH\left(
D_{+}\right)  \subset H\left(  D_{+}\right)  $, hence for $u\in H_{N,c}\left(
D_{+}\right)  $
\begin{align*}
&  \left(  rg\right)  ^{-1}T\left(  rg\boldsymbol{a}\right)  u=\boldsymbol{f}%
u+\left(  rg\right)  ^{-1}\mathfrak{p}_{+}\left(  rg\left(  \boldsymbol{a}%
-\boldsymbol{f}\right)  u\right) \\
&  =\boldsymbol{f}u+\left(  rg\right)  ^{-1}\mathfrak{p}_{+}\left(  r\left(
\mathfrak{p}_{+}+\mathfrak{p}_{-}\right)  g\left(  \boldsymbol{a}%
-\boldsymbol{f}\right)  u\right) \\
&  =g^{-1}T\left(  g\boldsymbol{a}\right)  u+\left(  rg\right)  ^{-1}%
\mathfrak{p}_{+}\left(  r\mathfrak{p}_{-}g\left(  \boldsymbol{a}%
-\boldsymbol{f}\right)  u\right)
\end{align*}
holds. Generally for $v\in H\left(  D_{-}\right)  $ one has%
\[
rv=\sum_{0\leq j\leq M}r_{j}q_{\zeta_{j}}v\left(  \zeta_{j}\right)
+\sum_{0\leq j\leq M}r_{j}q_{\zeta_{j}}\left(  v-v\left(  \zeta_{j}\right)
\right)  \text{,}%
\]
which yields a decomposition in $H\left(  D_{+}\right)  \oplus H\left(
D_{-}\right)  $, hence%
\[
\left(  rg\right)  ^{-1}T\left(  rg\boldsymbol{a}\right)  u=g^{-1}T\left(
g\boldsymbol{a}\right)  u+\sum_{0\leq j\leq M}r_{j}\left(  \mathfrak{p}%
_{-}g\left(  \boldsymbol{a}-\boldsymbol{f}\right)  u\right)  \left(  \zeta
_{j}\right)  g^{-1}r^{-1}q_{\zeta_{j}}%
\]
implies that $\left(  rg\right)  ^{-1}T\left(  rg\boldsymbol{a}\right)
\left(  g^{-1}T\left(  g\boldsymbol{a}\right)  \right)  ^{-1}-I$ has its image
in a finite dimensional space with basis $\left\{  g^{-1}r^{-1}q_{\zeta_{j}%
}\right\}  _{0\leq j\leq M}$. Therefore, one has
\begin{equation}
\tau_{g\boldsymbol{a}}\left(  r\right)  =\det\left(  \delta_{ij}+r_{j}\left(
\mathfrak{p}_{-}g\left(  \boldsymbol{a}-\boldsymbol{f}\right)  \left(
g^{-1}T\left(  g\boldsymbol{a}\right)  \right)  ^{-1}g^{-1}r^{-1}q_{\zeta_{i}%
}\right)  \left(  \zeta_{j}\right)  \right)  _{0\leq i,j\leq M}\text{.}
\label{2-23}%
\end{equation}

We apply the formula (\ref{2-23}) to $r=q_{\zeta}\widetilde{q}_{\eta}$, which
is the case (iii). The cases (i), (ii) are simpler. Observe%
\[
r=\text{\ }r_{1}q_{\zeta}+r_{2}q_{\eta^{-1}}\text{ with }r_{1}=-r_{2}%
=\eta\left(  \eta-\zeta^{-1}\right)  ^{-1}%
\]
and%
\[
\left\{
\begin{array}
[c]{l}%
\mathfrak{p}_{-}g\left(  \boldsymbol{a}-\boldsymbol{f}\right)  \left(
g^{-1}T\left(  g\boldsymbol{a}\right)  \right)  ^{-1}g^{-1}r^{-1}q_{\zeta
}=\varphi_{g\boldsymbol{a}}^{\left(  0\right)  }-\eta^{-1}\varphi
_{g\boldsymbol{a}}^{\left(  -1\right)  }\\
\mathfrak{p}_{-}g\left(  \boldsymbol{a}-\boldsymbol{f}\right)  \left(
g^{-1}T\left(  g\boldsymbol{a}\right)  \right)  ^{-1}g^{-1}r^{-1}q_{\eta^{-1}%
}=\eta^{-1}\zeta^{-1}\varphi_{g\boldsymbol{a}}^{\left(  0\right)  }-\eta
^{-1}\varphi_{g\boldsymbol{a}}^{\left(  -1\right)  }%
\end{array}
\right.  \text{.}%
\]
Hence%
\[
\tau_{g\boldsymbol{a}}\left(  r\right)  =\det\left(
\begin{array}
[c]{cc}%
1+\eta\dfrac{\varphi_{g\boldsymbol{a}}^{\left(  0\right)  }\left(
\zeta\right)  -\eta^{-1}\varphi_{g\boldsymbol{a}}^{\left(  -1\right)  }\left(
\zeta\right)  }{\eta-\zeta^{-1}} & -\eta\dfrac{\varphi_{g\boldsymbol{a}%
}^{\left(  0\right)  }\left(  \eta^{-1}\right)  -\eta^{-1}\varphi
_{g\boldsymbol{a}}^{\left(  -1\right)  }\left(  \eta^{-1}\right)  }{\eta
-\zeta^{-1}}\\
\dfrac{\zeta^{-1}\varphi_{g\boldsymbol{a}}^{\left(  0\right)  }\left(
\zeta\right)  -\varphi_{g\boldsymbol{a}}^{\left(  -1\right)  }\left(
\zeta\right)  }{\eta-\zeta^{-1}} & 1-\dfrac{\zeta^{-1}\varphi_{g\boldsymbol{a}%
}^{\left(  0\right)  }\left(  \eta^{-1}\right)  -\varphi_{g\boldsymbol{a}%
}^{\left(  -1\right)  }\left(  \eta^{-1}\right)  }{\eta-\zeta^{-1}}%
\end{array}
\right)
\]
holds, from which (iii) follows.

To show (iv) first note that from (ii) of Lemma \ref{l2-9} $\zeta^{j}%
+\varphi_{g\boldsymbol{a}}^{\left(  j\right)  }\left(  \zeta\right)  =0$ on
$D_{-}^{2}$ for any $j\geq1$ follows inductively if $1+\varphi
_{g\boldsymbol{a}}^{\left(  0\right)  }\left(  \zeta\right)  =0$ on $D_{-}%
^{2}$. Then, to apply (\ref{2-23}) observe that for any polynomial
$p=\sum_{0\leq j\leq K}p_{j}z^{j}$ and $\zeta_{0}\in D_{-}^{2}$
\begin{align*}
&  \left(  \mathfrak{p}_{-}g\left(  \boldsymbol{a}-\boldsymbol{f}\right)
\left(  g^{-1}T\left(  g\boldsymbol{a}\right)  \right)  ^{-1}g^{-1}p\right)
\left(  \zeta_{0}\right) \\
&  =\sum_{0\leq j\leq K}p_{j}\varphi_{g\boldsymbol{a}}^{\left(  j\right)
}\left(  \zeta_{0}\right)  =-\sum_{0\leq j\leq K}p_{j}\zeta_{0}^{j}=-p\left(
\zeta_{0}\right)
\end{align*}
holds. Hence letting $p=r^{-1}q_{\zeta_{i}}=qq_{\zeta_{i}}$ one has%
\[
p\left(  \zeta_{0}\right)  =\left\{
\begin{array}
[c]{c}%
r_{0}^{-1}\text{ \ if \ }i=0\\
0\text{ \ \ \ \ if \ }i\neq0
\end{array}
\right.  \text{,}%
\]
which implies for any $i$%
\[
\delta_{i0}+r_{0}\left(  \mathfrak{p}_{-}g\left(  \boldsymbol{a}%
-\boldsymbol{f}\right)  \left(  g^{-1}T\left(  g\boldsymbol{a}\right)
\right)  ^{-1}g^{-1}r^{-1}q_{\zeta_{i}}\right)  \left(  \zeta_{0}\right)
=0\text{.}%
\]
Therefore, $\tau_{g\boldsymbol{a}}\left(  q^{-1}\right)  =0$ is valid.\bigskip
\end{proof}

The $m$-function is defined by%
\[
m_{g\boldsymbol{a}}\left(  z\right)  =\dfrac{z+\varphi_{g\boldsymbol{a}%
}^{\left(  1\right)  }\left(  z\right)  }{1+\varphi_{g\boldsymbol{a}}^{\left(
0\right)  }\left(  z\right)  }+\lim_{\zeta\rightarrow\infty}\zeta
\varphi_{g\boldsymbol{a}}^{\left(  0\right)  }\left(  \zeta\right)
=\dfrac{z+\varphi_{g\boldsymbol{a}}^{\left(  1\right)  }\left(  z\right)
}{1+\varphi_{g\boldsymbol{a}}^{\left(  0\right)  }\left(  z\right)  }%
+\varphi_{\widetilde{g\boldsymbol{a}}}^{\left(  -1\right)  }\left(  0\right)
\]
for $g\in\Gamma_{N}\left(  D_{+}\right)  $, $\boldsymbol{a}\in\boldsymbol{A}%
_{N}^{inv}\left(  C\right)  $ with $\tau_{\boldsymbol{a}}\left(  g\right)
\neq0$. The constant term is added so that $m_{g\boldsymbol{a}}$ satisfies%
\[
m_{g\boldsymbol{a}}\left(  z\right)  =z+O\left(  z^{-1}\right)  \text{ \ as
\ }z\rightarrow\infty\text{.}%
\]
$m_{g\boldsymbol{a}}$ is meromorphic on $D_{-}^{1}$ since $1+\varphi
_{g\boldsymbol{a}}^{\left(  0\right)  }\left(  z\right)  $ does not vanish
identically on $D_{-}^{1}$ due to $\varphi_{g\boldsymbol{a}}^{\left(
0\right)  }\left(  z\right)  \rightarrow0$ as $z\rightarrow\infty$, but
$m_{g\boldsymbol{a}}\left(  z\right)  $ might be identically $\infty$ on
$D_{-}^{2}$ since there is a possibility that $1+\varphi_{g\boldsymbol{a}%
}^{\left(  0\right)  }\left(  z\right)  =0$ identically on $D_{-}^{2}$. If we
assume $\tau_{g\boldsymbol{a}}\left(  z^{-1}\right)  =1+\varphi
_{g\boldsymbol{a}}^{\left(  0\right)  }\left(  0\right)  \neq0$, we can avoid
this inconvenience.

For later purpose it is convenient to introduce an auxiliary function%
\begin{equation}
n_{g\boldsymbol{a}}\left(  z\right)  =\dfrac{z^{-1}+\varphi_{g\boldsymbol{a}%
}^{\left(  -1\right)  }\left(  z\right)  }{1+\varphi_{g\boldsymbol{a}%
}^{\left(  0\right)  }\left(  z\right)  }=\dfrac{1}{1+\varphi_{g\boldsymbol{a}%
}^{\left(  0\right)  }\left(  0\right)  }\left(  z+z^{-1}-m_{g\boldsymbol{a}%
}\left(  z\right)  \right)  \text{ \ (Lemma \ref{l2-9}),} \label{2-26}%
\end{equation}
since it has a tau-function's expression:%
\begin{equation}
n_{g\boldsymbol{a}}\left(  \zeta\right)  =\dfrac{1+\varphi_{\widetilde
{g\boldsymbol{a}}}^{\left(  0\right)  }\left(  \zeta^{-1}\right)  }%
{\zeta\left(  1+\varphi_{g\boldsymbol{a}}^{\left(  0\right)  }\left(
\zeta\right)  \right)  }=\dfrac{\tau_{\widetilde{g\boldsymbol{a}}}\left(
q_{\zeta^{-1}}\right)  }{\zeta\tau_{g\boldsymbol{a}}\left(  q_{\zeta}\right)
}=\dfrac{\tau_{g\boldsymbol{a}}\left(  zq_{\zeta}\right)  }{\zeta
\tau_{g\boldsymbol{a}}\left(  q_{\zeta}\right)  }\text{ \ (Lemma \ref{l2-9},
\ref{l2-10}). } \label{2-27}%
\end{equation}

For simplicity of notation we define an operation%
\begin{equation}
\left\{
\begin{array}
[c]{l}%
\left(  d_{\eta}m\right)  \left(  z\right)  =\phi\left(  z\right)  -\left(
m\left(  \eta\right)  -m\left(  0\right)  \right)  \left(  1-\dfrac
{\phi\left(  z\right)  -\phi\left(  \eta\right)  }{m\left(  z\right)
-m\left(  \eta\right)  }\right) \\
\left(  d_{0}m\right)  \left(  z\right)  =\phi\left(  z\right)  -\dfrac
{m^{\prime}\left(  0\right)  }{m\left(  z\right)  -m\left(  0\right)  }\left(
=\lim_{\eta\rightarrow0}\left(  d_{\eta}m\right)  \left(  z\right)  \right)
\end{array}
\right.  \text{.} \label{2-28}%
\end{equation}
One can show

\begin{lemma}
\label{l2-12}(i) If $g\in\Gamma_{N}\left(  D_{+}\right)  $ satisfies
$g(z)=g\left(  z^{-1}\right)  $, then it holds that%
\[
m_{g\boldsymbol{a}}=m_{\boldsymbol{a}}\text{, \ }n_{g\boldsymbol{a}%
}=n_{\boldsymbol{a}}\text{.}%
\]
(ii) Assume $\tau_{g\boldsymbol{a}}\left(  z\right)  \tau_{g\boldsymbol{a}%
}\left(  z^{-1}\right)  \neq0$ for $\boldsymbol{a}\in\boldsymbol{A}_{N}%
^{inv}\left(  C\right)  $ and $g\in\Gamma_{N}\left(  D_{+}\right)  $ with
$\tau_{\boldsymbol{a}}\left(  g\right)  \neq0$. Then, it holds that
\begin{equation}
\left\{
\begin{array}
[c]{l}%
m_{\widetilde{g\boldsymbol{a}}}\left(  \zeta\right)  =\phi\left(
\zeta\right)  -\dfrac{\tau_{g\boldsymbol{a}}\left(  z\right)  \tau
_{g\boldsymbol{a}}\left(  z^{-1}\right)  }{\phi\left(  \zeta\right)
-m_{g\boldsymbol{a}}\left(  \zeta^{-1}\right)  }\text{, \ \ }n_{\widetilde
{g\boldsymbol{a}}}\left(  \zeta\right)  =n_{g\boldsymbol{a}}\left(  \zeta
^{-1}\right)  ^{-1}\\
m_{q_{\eta}g\boldsymbol{a}}\left(  \zeta\right)  =d_{\eta}m_{g\boldsymbol{a}%
}\left(  \zeta\right)  \text{, \ }m_{z^{-1}g\boldsymbol{a}}\left(
\zeta\right)  =d_{0}m_{g\boldsymbol{a}}\left(  \zeta\right)
\end{array}
\right.  \text{,} \label{2-29}%
\end{equation}
where%
\begin{equation}
\tau_{g\boldsymbol{a}}\left(  z\right)  \tau_{g\boldsymbol{a}}\left(
z^{-1}\right)  =\lim_{\zeta\rightarrow\infty}\zeta\left(  \phi\left(
\zeta\right)  -m_{g\boldsymbol{a}}(\zeta)\right)  \label{2-30}%
\end{equation}
(iii) For $\boldsymbol{a}_{1}$, $\boldsymbol{a}_{2}\in\boldsymbol{A}_{N}%
^{inv}\left(  C\right)  $ assume $\tau_{\boldsymbol{a}_{j}}\left(  z\right)
\tau_{\boldsymbol{a}_{j}}\left(  z^{-1}\right)  \neq0$ for $j=1$, $2$. Then,
$m_{\boldsymbol{a}_{1}}=m_{\boldsymbol{a}_{2}}$ implies $m_{g\boldsymbol{a}%
_{1}}=m_{g\boldsymbol{a}_{2}}$ for $g\in\Gamma_{N}\left(  D_{+}\right)  $ as
long as $\tau_{\boldsymbol{a}_{j}}\left(  g\right)  \neq0$ and $\tau
_{g\boldsymbol{a}_{j}}\left(  z\right)  \tau_{g\boldsymbol{a}_{j}}\left(
z^{-1}\right)  \neq0$ for $j=1$, $2$ hold.
\end{lemma}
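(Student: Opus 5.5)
The plan is to reduce every assertion to the expressions of the $\varphi^{(n)}_{g\boldsymbol{a}}$ and of the tau-functions obtained in Lemmas \ref{l2-5}, \ref{l2-9}, \ref{l2-10} and \ref{l2-11}.

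\emph{Part (i).} If $g(z)=g(z^{-1})$, then $Rgu=gRu$, so $g\boldsymbol{a}u=\boldsymbol{a}(gu)$ and $T(g\boldsymbol{a})=T(\boldsymbol{a})g$, exactly as used in Lemma \ref{l2-5}(ii). Hence
\[
\left(g^{-1}T(g\boldsymbol{a})\right)^{-1}g^{-1}z^{n}=g^{-1}T(\boldsymbol{a})^{-1}z^{n}.
\]
Inserting this into (\ref{2-18}), the factor $g$ cancels and $\varphi^{(n)}_{g\boldsymbol{a}}=\varphi^{(n)}_{\boldsymbol{a}}$ for every $n$. Since $m_{g\boldsymbol{a}}$ and $n_{g\boldsymbol{a}}$ are built only from the $\varphi^{(n)}$, both equalities follow.

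\emph{Part (ii), the $\widetilde{g\boldsymbol{a}}$ formulas.} The identity $n_{\widetilde{g\boldsymbol{a}}}(\zeta)=n_{g\boldsymbol{a}}(\zeta^{-1})^{-1}$ I read off directly from the first expression in (\ref{2-27}) together with Lemma \ref{l2-9}(i), which swaps the roles of $g\boldsymbol{a}$ and $\widetilde{g\boldsymbol{a}}$.

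For $m_{\widetilde{g\boldsymbol{a}}}$ I use Lemma \ref{l2-9}(i) to write
\[
1+\varphi^{(0)}_{\widetilde{g\boldsymbol{a}}}(\zeta)=\zeta^{-1}\left(\zeta+\varphi^{(-1)}_{g\boldsymbol{a}}(\zeta^{-1})\right),\qquad
\zeta+\varphi^{(1)}_{\widetilde{g\boldsymbol{a}}}(\zeta)=\zeta^{-1}\left(\zeta^{2}+\varphi^{(-2)}_{g\boldsymbol{a}}(\zeta^{-1})\right).
\]
I then eliminate $\varphi^{(-2)}$ with the three-term recursion of Lemma \ref{l2-9}(ii) at $n=-1$. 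This expresses $m_{\widetilde{g\boldsymbol{a}}}(\zeta)$ as $\phi(\zeta)$ minus a constant divided by $n_{g\boldsymbol{a}}(\zeta^{-1})$. Converting $n$ to $m$ by (\ref{2-26}) gives the stated form. The constant is identified as $\tau_{g\boldsymbol{a}}(z)\tau_{g\boldsymbol{a}}(z^{-1})$ by using $\tau_{g\boldsymbol{a}}(z^{-1})=1+\varphi^{(0)}_{g\boldsymbol{a}}(0)$, its tilde counterpart $\tau_{g\boldsymbol{a}}(z)=\tau_{\widetilde{g\boldsymbol{a}}}(z^{-1})$ from Lemma \ref{l2-5}(i), and Lemma \ref{l2-10}. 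Comparing the coefficient of $\zeta^{-1}$ in $m_{g\boldsymbol{a}}(\zeta)=\phi(\zeta)-(\cdots)$ as $\zeta\to\infty$ then yields (\ref{2-30}).

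\emph{Part (ii), the $q_\eta$ and $z^{-1}$ formulas.} By the cocycle property (Lemma \ref{l2-5}(ii)),
\[
\tau_{q_\eta g\boldsymbol{a}}(q_\zeta)=\frac{\tau_{g\boldsymbol{a}}(q_\eta q_\zeta)}{\tau_{g\boldsymbol{a}}(q_\eta)},\qquad
\tau_{q_\eta g\boldsymbol{a}}(zq_\zeta)=\frac{\tau_{g\boldsymbol{a}}(zq_\eta q_\zeta)}{\tau_{g\boldsymbol{a}}(q_\eta)}.
\]
The first ratio is a $2\times2$ determinant by Lemma \ref{l2-11}(i),(ii). For the second I apply the finite-rank formula (\ref{2-23}) to $zq_\eta q_\zeta$, written via partial fractions, which gives a similar determinant in $\varphi^{(0)},\varphi^{(1)}$ evaluated at $\eta$ and $\zeta$. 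Taking the quotient as in (\ref{2-27}) gives $n_{q_\eta g\boldsymbol{a}}$, and (\ref{2-26}) turns it into $m_{q_\eta g\boldsymbol{a}}$. The result should simplify to $d_\eta m_{g\boldsymbol{a}}$ once the constant $m(\eta)-m(0)$ is matched using $\varphi_{g\boldsymbol{a}}^{(n)}(0)$.

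For $z^{-1}$, note that $q_\eta(z)=-\eta z^{-1}(1-\eta z^{-1})^{-1}$ and that constant factors do not affect $m$. So $m_{z^{-1}g\boldsymbol{a}}=\lim_{\eta\to0}m_{q_\eta g\boldsymbol{a}}$. I justify this limit either by the continuity Lemma \ref{l2-6} together with Lemma \ref{l2-4}, or directly because the determinants above are analytic in $\eta$ near $0$. The limit is $d_0m_{g\boldsymbol{a}}$ by (\ref{2-28}). I expect this algebra, in particular tracking the additive constants and the $\eta\to0$ limit, to be the main labor.

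\emph{Part (iii).} Factor $g=z^{n}g_1g_2$ by Lemma \ref{l2-7} and approximate $g_1,g_2$ by rational functions $r_k$ in $\mathrm{d}_c$ by Lemma \ref{l2-8}. Each $r_k$ is a product of factors $q_\eta$, $\widetilde q_\eta$, $z^{\pm1}$ and constants. By (\ref{2-29}) each such factor transforms $m$ by a map depending only on $m$: for $\widetilde q_\eta$ one conjugates $d_\eta$ by the tilde map, whose constant is determined by $m$ through (\ref{2-30}). Therefore $m_{r_k z^n\boldsymbol{a}_1}=m_{r_k z^n\boldsymbol{a}_2}$.

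Passing to the limit $k\to\infty$ uses Lemma \ref{l2-6}. The tau-functions, and hence $\varphi^{(0)},\varphi^{(1)}$ via Lemma \ref{l2-11}(i),(ii), converge, and the non-vanishing hypotheses guarantee that the relevant tau-functions stay nonzero along the approximating sequence. One point needs care here: the intermediate factors must also have nonvanishing tau-functions. I would handle this by perturbing the poles of $r_k$ slightly and using the meromorphy of $m$ in $\eta$.
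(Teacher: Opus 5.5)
Your arguments for (i) and (ii) are sound and differ only cosmetically from the paper's. In (i) you use $T(g\boldsymbol{a})=T(\boldsymbol{a})g$ directly. For the tilde formula you use the recursion of Lemma \ref{l2-9}(ii), whereas the paper uses (\ref{2-27}) with $z^{-1}\widetilde q_\zeta=-\zeta q_{\zeta^{-1}}$ and $\widetilde q_\zeta=-\zeta zq_{\zeta^{-1}}$.

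One small slip: for $zq_\eta q_\zeta=\frac{\eta\zeta}{\zeta-\eta}(q_\eta-q_\zeta)$ one has $r^{-1}q_\eta=z^{-1}-\zeta^{-1}$. So (\ref{2-23}) gives a determinant in $\varphi^{(-1)},\varphi^{(0)}$, not $\varphi^{(0)},\varphi^{(1)}$. This is just Lemma \ref{l2-11}(iii), since $zq_\zeta=-\zeta\widetilde q_{\zeta^{-1}}$, and it yields $n$ directly.

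The genuine gap is in (iii). Your claim that each rational $r_k$ is a product of $q_\eta$, $\widetilde q_\eta$, $z^{\pm1}$ and constants is false. The functions $q_\eta=\eta(\eta-z)^{-1}$ and $\widetilde q_\eta=z(z-\eta^{-1})^{-1}$ never vanish off $\{0,\infty\}$, so such products have no zeros in $D_-\setminus\{0\}$. Yet $1-z/\alpha$ with $\alpha\in D_-$ is itself a rational element of $\Gamma_N(D_+)$. The approximants of Lemma \ref{l2-8} also have such zeros: the factor $(1+z^{2N}/n)^n$ vanishes where $z^{2N}=-n$.

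Each such zero contributes a factor $q_\alpha^{-1}$, and (\ref{2-29}) gives no formula for $m_{q_\alpha^{-1}\boldsymbol{b}}$. You cannot simply invert $d_\alpha$ either, because $d_\alpha m$ involves the constant $m(\alpha)-m(0)$ of the unknown preimage.

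The missing step is to use part (i), which you proved but never used. Since $(q_\alpha\widetilde q_\alpha)^{-1}$ is invariant under $z\mapsto z^{-1}$,
\[
m_{q_\alpha^{-1}\boldsymbol{b}}=m_{\widetilde q_\alpha (q_\alpha\widetilde q_\alpha)^{-1}\boldsymbol{b}}=m_{\widetilde q_\alpha\boldsymbol{b}}=m_{\widetilde{q_\alpha\widetilde{\boldsymbol{b}}}}.
\]
This is obtained from $m_{\boldsymbol{b}}$ by applying the tilde map of (\ref{2-29})--(\ref{2-30}), then $d_\alpha$, then the tilde map again. The factor $z=\widetilde{z^{-1}}$ is handled the same way with $d_0$.

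With this, the induction over the factors of an arbitrary rational $r\in\Gamma_N(D_+)$ closes. Your approximation argument via Lemmas \ref{l2-7}, \ref{l2-8} and \ref{l2-6} then completes (iii), subject to the nonvanishing of intermediate tau-functions that you already flagged.
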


\begin{proof}
(i) follows from (\ref{2-26}), (\ref{2-27}) and Lemma \ref{l2-5}.

(\ref{2-27}) says%
\[
\dfrac{\tau_{\widetilde{g\boldsymbol{a}}}\left(  zq_{\zeta}\right)  }%
{\zeta\tau_{\widetilde{g\boldsymbol{a}}}\left(  q_{\zeta}\right)  }%
=\dfrac{\tau_{g\boldsymbol{a}}\left(  z^{-1}\widetilde{q}_{\zeta}\right)
}{\zeta\tau_{g\boldsymbol{a}}\left(  \widetilde{q}_{\zeta}\right)  }%
=\dfrac{\zeta^{-1}\tau_{g\boldsymbol{a}}\left(  q_{\zeta^{-1}}\right)  }%
{\tau_{g\boldsymbol{a}}\left(  zq_{\zeta^{-1}}\right)  }=n_{g\boldsymbol{a}%
}\left(  \zeta^{-1}\right)  ^{-1}\text{,}%
\]
hence from (\ref{2-26})%
\[
m_{\widetilde{g\boldsymbol{a}}}\left(  \zeta\right)  =\phi\left(
\zeta\right)  -\tau_{\widetilde{g\boldsymbol{a}}}\left(  z^{-1}\right)
n_{\widetilde{g\boldsymbol{a}}}\left(  \zeta\right)  =\phi\left(
\zeta\right)  -\frac{\tau_{g\boldsymbol{a}}\left(  z\right)  \tau
_{g\boldsymbol{a}}\left(  z^{-1}\right)  }{\phi\left(  \zeta\right)
-m_{g\boldsymbol{a}}\left(  \zeta^{-1}\right)  }%
\]
follows. $\tau_{g\boldsymbol{a}}\left(  z\right)  \tau_{g\boldsymbol{a}%
}\left(  z^{-1}\right)  $ is recovered from $m_{g\boldsymbol{a}}$, namely%
\begin{align*}
\lim_{\zeta\rightarrow\infty}\zeta\left(  \phi\left(  \zeta\right)
-m_{g\boldsymbol{a}}(\zeta)\right)   &  =\tau_{g\boldsymbol{a}}\left(
z^{-1}\right)  \lim_{\zeta\rightarrow\infty}\zeta n_{g\boldsymbol{a}}\left(
\zeta\right) \\
&  =\tau_{g\boldsymbol{a}}\left(  z^{-1}\right)  \tau_{\widetilde
{g\boldsymbol{a}}}\left(  z^{-1}\right)  =\tau_{g\boldsymbol{a}}\left(
z\right)  \tau_{g\boldsymbol{a}}\left(  z^{-1}\right)  \text{.}%
\end{align*}
On the other hand, $n_{q_{\eta}\boldsymbol{a}}$ is%
\[
n_{q_{\eta}g\boldsymbol{a}}\left(  \zeta\right)  =\text{ }\dfrac{\tau
_{q_{\eta}g\boldsymbol{a}}\left(  zq_{\zeta}\right)  }{\zeta\tau_{q_{\eta
}g\boldsymbol{a}}\left(  q_{\zeta}\right)  }=\text{ }\dfrac{\tau
_{g\boldsymbol{a}}\left(  zq_{\eta}q_{\zeta}\right)  }{\zeta\tau
_{g\boldsymbol{a}}\left(  q_{\eta}q_{\zeta}\right)  }\text{,}%
\]
hence from Lemma \ref{l2-9} one has%
\begin{align*}
n_{q_{\eta}g\boldsymbol{a}}\left(  \zeta\right)   &  =\eta\frac
{n_{g\boldsymbol{a}}\left(  \eta\right)  -n_{g\boldsymbol{a}}\left(
\zeta\right)  }{m_{g\boldsymbol{a}}\left(  \zeta\right)  -m_{g\boldsymbol{a}%
}\left(  \eta\right)  }\\
&  =\frac{\eta}{\tau_{g\boldsymbol{a}}\left(  z^{-1}\right)  }\left(
1-\frac{\phi\left(  \zeta\right)  -\phi\left(  \eta\right)  }%
{m_{g\boldsymbol{a}}\left(  \zeta\right)  -m_{g\boldsymbol{a}}\left(
\eta\right)  }\right)  \text{,}%
\end{align*}
which yields%
\begin{align*}
m_{q_{\eta}g\boldsymbol{a}}\left(  \zeta\right)   &  =\phi\left(
\zeta\right)  -\tau_{q_{\eta}g\boldsymbol{a}}\left(  z^{-1}\right)
n_{q_{\eta}g\boldsymbol{a}}\left(  \zeta\right) \\
&  =\phi\left(  \zeta\right)  -\frac{\tau_{q_{\eta}g\boldsymbol{a}}\left(
z^{-1}\right)  \eta}{\tau_{g\boldsymbol{a}}\left(  z^{-1}\right)  }\left(
1-\frac{\phi\left(  \zeta\right)  -\phi\left(  \eta\right)  }%
{m_{g\boldsymbol{a}}\left(  \zeta\right)  -m_{g\boldsymbol{a}}\left(
\eta\right)  }\right) \\
&  =\phi\left(  \zeta\right)  -\left(  m_{g\boldsymbol{a}}\left(  \eta\right)
-m_{g\boldsymbol{a}}\left(  0\right)  \right)  \left(  1-\frac{\phi\left(
\zeta\right)  -\phi\left(  \eta\right)  }{m_{g\boldsymbol{a}}\left(
\zeta\right)  -m_{g\boldsymbol{a}}\left(  \eta\right)  }\right)  \text{,}%
\end{align*}
Here, we have used an identity%
\[
\tau_{q_{\eta}g\boldsymbol{a}}\left(  z^{-1}\right)  =\frac{\tau
_{g\boldsymbol{a}}\left(  q_{\eta}z^{-1}\right)  }{\tau_{g\boldsymbol{a}%
}\left(  q_{\eta}\right)  }=\frac{m_{g\boldsymbol{a}}\left(  \eta\right)
-m_{g\boldsymbol{a}}\left(  0\right)  }{\eta}\text{,}%
\]
which yields (ii).

If $g$ is a rational function $r$, (iii) obeys from (i) and (ii) by induction.
For instance $m_{q_{\eta_{1}}q_{\eta_{2}}\boldsymbol{a}}\left(  \zeta\right)
=d_{\eta_{1}}d_{\eta_{2}}m_{\boldsymbol{a}}\left(  \zeta\right)  $, and%
\[
m_{q_{\eta}^{-1}\boldsymbol{a}}\left(  \zeta\right)  =m_{\widetilde{q}_{\eta
}\widetilde{q}_{\eta}^{-1}q_{\eta}^{-1}\boldsymbol{a}}\left(  \zeta\right)
=m_{\widetilde{q}_{\eta}\boldsymbol{a}}\left(  \zeta\right)  =m_{\widetilde
{q_{\eta}\widetilde{\boldsymbol{a}}}}\left(  \zeta\right)  \text{.}%
\]
For a general $g$ one can show (iii) by approximating $g$ by rational
functions.\bigskip
\end{proof}

We compute $\varphi_{\boldsymbol{a}}^{\left(  -1\right)  }\left(  z\right)  $,
$\varphi_{\boldsymbol{a}}^{\left(  0\right)  }\left(  z\right)  $,
$m_{\boldsymbol{a}}\left(  z\right)  $ for $\boldsymbol{a}=\boldsymbol{m}%
=\left(  m_{1},m_{2}\right)  \in\boldsymbol{M}_{N}\left(  C\right)  $. It
should be remarked that expansions%
\begin{align}
m_{1}(z)  &  =\left\{
\begin{array}
[c]{l}%
1+\sum_{1\leq k\leq L-1}\mu_{k}^{\left(  1\right)  }z^{-k}+O\left(
z^{-L}\right)  \text{ \ as \ }z\rightarrow\infty\text{ in }D_{-}^{1}\\
1+\sum_{1\leq k\leq L-1}\nu_{k}^{\left(  1\right)  }z^{k}+O\left(
z^{L}\right)  \text{ \ as \ }z\rightarrow0\text{ in }D_{-}^{2}%
\end{array}
\right.  \text{,}\nonumber\\
m_{2}(z)  &  =\left\{
\begin{array}
[c]{l}%
\sum_{1\leq k\leq L-1}\mu_{k}^{\left(  2\right)  }z^{-k}+O\left(
z^{-L}\right)  \text{ \ as \ }z\rightarrow\infty\text{ in }D_{-}^{1}\\
\sum_{1\leq k\leq L-1}\nu_{k}^{\left(  2\right)  }z^{k}+O\left(  z^{L}\right)
\text{ \ as \ }z\rightarrow0\text{ in }D_{-}^{2}%
\end{array}
\right.  \label{2-31}%
\end{align}
hold for any $L\geq1$. This is because the conditions of (\ref{2-4}) imply for
$z\in D_{-}$%
\[
m_{1}(z)=1+\dfrac{1}{2\pi i}\int_{C}\dfrac{m_{1}(\lambda)-1}{z-\lambda
}d\lambda=\dfrac{1}{2\pi i}\int_{C}\dfrac{m_{1}(\lambda)-f_{1}(\lambda
)}{z-\lambda}d\lambda\text{,}%
\]
since
\[
\dfrac{1}{2\pi i}\int_{C}\dfrac{f_{1}(\lambda)-1}{z-\lambda}d\lambda=0
\]
holds due to $f_{1}(z)-1$ is bounded and analytic in $D_{+}$, and
$f_{1}(\lambda)-1\in L^{2}\left(  C\right)  $. Since $m_{1}(\lambda
)-f_{1}(\lambda)$ is rapidly decreasing on $C$, one easily have the first
expansion of (\ref{2-31}). Applying this argument to $\widetilde{m}_{1}$ one
has the second expansion. The expansions for $m_{2}$ can be obtained similarly.

\begin{lemma}
\label{l2-13}For $\boldsymbol{m}=\left(  m_{1},m_{2}\right)  \in
\boldsymbol{M}_{N}\left(  C\right)  $ it holds that%
\begin{equation}
\left\{
\begin{array}
[c]{l}%
z^{-1}+\varphi_{\boldsymbol{m}}^{\left(  -1\right)  }\left(  z\right)
=z^{-1}m_{1}\left(  z\right)  +m_{2}\left(  z\right)  \text{, \ }%
1+\varphi_{\boldsymbol{m}}^{\left(  0\right)  }\left(  z\right)  =m_{1}\left(
z\right)  +z^{-1}m_{2}\left(  z\right) \\
m_{\boldsymbol{m}}=z+\dfrac{\left(  z^{-2}-1\right)  m_{2}-m_{2}^{\prime
}(0)\left(  z^{-1}m_{1}+m_{2}\right)  }{m_{1}+m_{2}z^{-1}}%
\end{array}
\right.  \label{2-32}%
\end{equation}

\end{lemma}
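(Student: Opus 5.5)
Since $\boldsymbol{m}\in\boldsymbol{M}_{N}(C)$, Lemma \ref{l2-2} (i) with $\boldsymbol{n}=\boldsymbol{1}$, or simply the analyticity of $m_{j}$ on $D_{-}$, lets us compute $T(\boldsymbol{m})$ explicitly on monomials. The plan is to guess $u=T(\boldsymbol{m})^{-1}z^{n}$ for $n=0,-1$ and then read off $\varphi_{\boldsymbol{m}}^{(n)}=\boldsymbol{m}u-z^{n}$ from (\ref{2-19}) with $g=1$. We first treat $u\in H(D_{+})$ with $\boldsymbol{f}=0$, and extend to $H_{N,c}(D_{+})$ via Lemma \ref{l2-1}.

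\textbf{Main claim.} We will show $T(\boldsymbol{m})1=1$ and $T(\boldsymbol{m})z^{-1}=z^{-1}$, so that $u=z^{n}$. Since $R1=z^{-1}$ and $Rz^{-1}=1$, we have
\[
\boldsymbol{m}1=m_{1}+z^{-1}m_{2},\qquad \boldsymbol{m}z^{-1}=z^{-1}m_{1}+m_{2}.
\]
By (iii) of (\ref{2-4}), $m_{1}-1\in H(D_{-})$ and $m_{2}\in H(D_{-})$. The expansion (\ref{2-31}) gives $m_{2}(z)=O(z)$ as $z\to0$ in $D_{-}^{2}$, so $z^{-1}m_{2}$ is analytic on $D_{-}$ and decays at $\infty$; hence $z^{-1}m_{2}\in H(D_{-})$. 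Likewise $z^{-1}(m_{1}-1)$ lies in $H(D_{-})$, since at $0$ we may use $\widetilde{m}_{1}-1\in H(D_{-})$, i.e.\ $m_{1}(z)-1=O(z)$ near $0$.

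Therefore $\mathfrak{p}_{+}(\boldsymbol{m}1)=1$ and $\mathfrak{p}_{+}(\boldsymbol{m}z^{-1})=z^{-1}$, where we note that $z^{-1}\in H(D_{+})$ because $0\notin D_{+}$. This gives
\[
\varphi_{\boldsymbol{m}}^{(0)}=m_{1}+z^{-1}m_{2}-1,\qquad \varphi_{\boldsymbol{m}}^{(-1)}=z^{-1}m_{1}+m_{2}-z^{-1},
\]
which are the first two identities of (\ref{2-32}). The only delicate point is justifying membership in $H(D_{-})$, i.e.\ $L^{2}$ boundary values on the unbounded curve $C$. This follows from the boundedness of $m_{j}$ and the rapid decay of $m_{j}-f_{j}$, exactly as in the derivation of (\ref{2-31}).

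\textbf{The $m$-function.} This step is the expected main obstacle, since it is pure bookkeeping. Lemma \ref{l2-9} (ii) with $n=0$ expresses $z+\varphi^{(1)}$ in terms of $\varphi^{(0)}$, $\varphi^{(-1)}$ and the constants $\varphi^{(0)}(0)$ and $\varphi_{\widetilde{\boldsymbol{m}}}^{(-1)}(0)$:
\[
z+\varphi^{(1)}=(\phi-\varphi_{\widetilde{\boldsymbol{m}}}^{(-1)}(0))(1+\varphi^{(0)})-(1+\varphi^{(0)}(0))(z^{-1}+\varphi^{(-1)}),
\]
where we used $\varphi^{(-1)}(0)$-type cancellations. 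Substituting this into the definition of $m_{g\boldsymbol{a}}$ cancels the constant $\varphi_{\widetilde{\boldsymbol{m}}}^{(-1)}(0)$ and leaves
\[
m_{\boldsymbol{m}}=\phi-(1+\varphi^{(0)}(0))\,\frac{z^{-1}m_{1}+m_{2}}{m_{1}+z^{-1}m_{2}}.
\]

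It remains to evaluate the constant. From (\ref{2-31}),
\[
1+\varphi^{(0)}(0)=\lim_{z\to0}\bigl(m_{1}+z^{-1}m_{2}\bigr)=1+m_{2}^{\prime}(0).
\]
Writing $\phi=z+z^{-1}$ and combining over the common denominator $m_{1}+z^{-1}m_{2}$, the numerator becomes
\[
z^{-1}(m_{1}+z^{-1}m_{2})-(z^{-1}m_{1}+m_{2})-m_{2}^{\prime}(0)(z^{-1}m_{1}+m_{2})=(z^{-2}-1)m_{2}-m_{2}^{\prime}(0)(z^{-1}m_{1}+m_{2}).
\]
This yields the formula for $m_{\boldsymbol{m}}$ in (\ref{2-32}). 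Throughout, the signs and index conventions in Lemma \ref{l2-9} (ii) must be checked carefully.
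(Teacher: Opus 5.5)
Your proposal follows the paper's proof. You show $T(\boldsymbol{m})1=1$ and $T(\boldsymbol{m})z^{-1}=z^{-1}$ from $\boldsymbol{m}u-u\in H(D_{-})$ for $u=1,z^{-1}$, read off the first line via (\ref{2-19}), and then obtain $m_{\boldsymbol{m}}$ from Lemma \ref{l2-9} (ii), i.e.\ (\ref{2-26}), with $1+\varphi_{\boldsymbol{m}}^{(0)}(0)=1+m_{2}^{\prime}(0)$; your algebra there is correct and more explicit than the paper's.

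One correction: because $C_{1}$ is unbounded, $1\notin L^{2}(C)$, and hence $z^{-1}=R1\notin L^{2}(C)$ either, since $R$ preserves the $L^{2}(C)$ norm. So these functions lie in $H_{N,c}(D_{+})$ but not in $H(D_{+})$, and $\mathfrak{p}_{+}(\boldsymbol{m}u)$ is not defined. Use (\ref{1-5}) instead, $T(\boldsymbol{m})u=\boldsymbol{f}u+\mathfrak{p}_{+}((\boldsymbol{m}-\boldsymbol{f})u)$, and split $(\boldsymbol{m}-\boldsymbol{f})u=(\boldsymbol{m}u-u)+(u-\boldsymbol{f}u)$ into an $H(D_{-})$ part and a part analytic on $D_{+}$ with $L^{2}$ boundary values (as in the derivation of (\ref{2-31})); this yields $T(\boldsymbol{m})u=u$ as claimed.
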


\begin{proof}
Note%
\[
\left(  \boldsymbol{m}1\right)  \left(  z\right)  =1+\left(  m_{1}\left(
z\right)  -1+z^{-1}\left(  m_{2}\left(  z\right)  -m_{2}\left(  0\right)
\right)  \right)  \text{.}%
\]
Since $m_{1}(\infty)=1$, $m_{2}(0)=0$ imply%
\[
m_{1}\left(  z\right)  -1+z^{-1}\left(  m_{2}\left(  z\right)  -m_{2}\left(
0\right)  \right)  \in H\left(  D_{-}\right)  \text{,}%
\]
one has $T\left(  \boldsymbol{m}\right)  1=1$. Similarly $T\left(
\boldsymbol{m}\right)  z^{-1}=m_{1}\left(  0\right)  z^{-1}=z^{-1}$ holds.
Since $T\left(  \boldsymbol{m}\right)  $ is invertible, we easily see%
\[
T\left(  \boldsymbol{m}\right)  ^{-1}1=1\text{, \ \ }T\left(  \boldsymbol{m}%
\right)  ^{-1}z^{-1}=z^{-1}\text{,}%
\]
which implies the identities on the first line of (\ref{2-32}) by multiplying
$\boldsymbol{m}$ to the both sides. Therefore, $n_{\boldsymbol{m}}$ is
computable, hence so is $m_{\boldsymbol{m}}$.
\end{proof}

\subsection{Non-vanishing of tau-function on $\boldsymbol{A}_{N,++}%
^{inv}\left(  C\right)  $}

The invertibility of $g^{-1}T\left(  g\boldsymbol{a}\right)  $ for
$\boldsymbol{a}\in\boldsymbol{A}_{N}^{inv}\left(  C\right)  $ and $g\in
\Gamma_{N}\left(  D_{+}\right)  $ is crucial in this article. We prove this by
showing $\tau_{\boldsymbol{a}}\left(  g\right)  >0$ if $\boldsymbol{a}%
\in\boldsymbol{A}_{N,++}^{inv}\left(  C\right)  $ and $g\in\Gamma
_{N}^{\operatorname{real}}\left(  D_{+}\right)  $. In the proof the
$m$-functions play an important role.

Recall $D_{-}^{1}=D_{-}\cap\left\{  \left\vert z\right\vert >1\right\}  $,
$D_{-}^{2}=D_{-}\cap\left\{  \left\vert z\right\vert <1\right\}  $. For
$\zeta\in D_{-}$ set
\[
r_{\zeta}\left(  z\right)  =q_{\zeta}\left(  z\right)  q_{\overline{\zeta}%
}\left(  z\right)  \in\Gamma_{N}^{\operatorname{real}}\left(  D_{+}\right)
\text{.}%
\]

\begin{lemma}
\label{l2-14}For $\boldsymbol{a}\in\boldsymbol{A}_{N,+}^{inv}\left(  C\right)
$ and $g\in\Gamma_{N}^{\operatorname{real}}\left(  D_{+}\right)  $ assume
$\tau_{\boldsymbol{a}}\left(  g\right)  >0$. \newline(i) It holds that%
\[
1+\varphi_{g\boldsymbol{a}}^{\left(  0\right)  }\left(  \zeta\right)
\neq0\text{, \ }\tau_{g\boldsymbol{a}}\left(  r_{\zeta}\right)  >0\text{,\ \ }%
\dfrac{\operatorname{Im}m_{g\boldsymbol{a}}\left(  \zeta\right)
}{\operatorname{Im}\zeta}>0\text{\ on }D_{-}^{1}\text{.\ }%
\]
\newline(ii) $\tau_{g\boldsymbol{a}}\left(  z^{-1}\right)  =0$ holds if and
only if $1+\varphi_{g\boldsymbol{a}}^{\left(  0\right)  }\left(  z\right)  =0$
identically on $D_{-}^{2}$. If $\tau_{g\boldsymbol{a}}\left(  z^{-1}\right)
=0$, then $\tau_{g\boldsymbol{a}}\left(  q^{-1}\right)  =0$ for any polynomial
$q$ having a zero in $D_{-}^{2}$. In particular $\tau_{g\boldsymbol{a}}\left(
z^{m}\right)  =0$ holds for any negative integer $m$.
\end{lemma}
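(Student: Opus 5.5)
Since $\boldsymbol{a}\in\boldsymbol{A}_{N,+}^{inv}(C)$ has $\tau_{\boldsymbol{a}}(r)\ge 0$ for every real rational $r$, the plan is to transfer this positivity to $g\boldsymbol{a}$ and evaluate it on $r_{\zeta}$. That gives (i) via Lemma \ref{l2-11}(ii) and the Wronskian form of $\tau_{g\boldsymbol{a}}(q_\zeta q_{\overline\zeta})$. Statement (ii) then follows from Lemma \ref{l2-11}(iv) together with the recursion of Lemma \ref{l2-9}(ii).

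\emph{Step 1 (positivity transfer).} For real rational $r\in\Gamma^{\operatorname{real}}_N(D_+)$ the cocycle property (Lemma \ref{l2-5}(ii)) gives
\[
\tau_{g\boldsymbol{a}}(r)=\tau_{\boldsymbol{a}}(gr)/\tau_{\boldsymbol{a}}(g).
\]
Approximate $g$ by real rational functions $r_n$ with $\mathrm{d}_c(r_n,g)\to0$ (Lemmas \ref{l2-7}, \ref{l2-8}; symmetrize $r_n\mapsto (r_n\overline{r}_n)^{1/2}$ or $r_n$ with real coefficients, as $g=\overline g$). Then $\tau_{\boldsymbol{a}}(r_nr)\ge0$ by definition of $\boldsymbol{A}_{N,+}^{inv}$, and Lemma \ref{l2-6}(i) gives $\tau_{\boldsymbol{a}}(gr)\ge 0$. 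Hence $\tau_{g\boldsymbol{a}}(r)\ge0$ for all real rational $r$.

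\emph{Step 2 (part (i)).} Take $r=r_\zeta$, $\zeta\in D_-^1$ nonreal. Lemma \ref{l2-11}(ii) with $\eta=\overline\zeta$, and $\varphi^{(n)}_{g\boldsymbol{a}}(\overline\zeta)=\overline{\varphi^{(n)}_{g\boldsymbol{a}}(\zeta)}$ (reality of $g\boldsymbol{a}$), gives
\[
\tau_{g\boldsymbol{a}}(r_\zeta)=\frac{|1+\varphi^{(0)}(\zeta)|^2}{\zeta-\overline\zeta}\,\bigl(m(\zeta)-\overline{m(\zeta)}\bigr)=|1+\varphi^{(0)}(\zeta)|^2\frac{\operatorname{Im}m_{g\boldsymbol{a}}(\zeta)}{\operatorname{Im}\zeta}\ge0.
\]
So the meromorphic function $m_{g\boldsymbol{a}}$ on $D_-^1$ (the constant added in its definition is real) maps upper to closed upper half-plane. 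Since $m_{g\boldsymbol{a}}(z)=z+O(z^{-1})$, it is nonconstant. By the open mapping theorem it is Herglotz-type: $\operatorname{Im}m/\operatorname{Im}\zeta>0$ strictly, and it has no poles off the real axis. Poles at zeros of $1+\varphi^{(0)}$ can only be real and simple with negative residue.

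To exclude zeros of $1+\varphi^{(0)}$ even on the real part of $D_-^1$, I would use that a zero $\zeta_0$ makes $\tau_{g\boldsymbol{a}}(q_{\zeta_0})=0$. By Lemma \ref{l2-11}(ii) with $\eta\to\zeta_0$, $\tau_{g\boldsymbol{a}}(q_{\zeta_0}^2)$ then equals $-(1+\varphi^{(0)})'(\zeta_0)(\zeta_0+\varphi^{(1)}(\zeta_0))$. Combining with the cocycle $\tau_{g\boldsymbol{a}}(q_{\zeta_0}^2)=\tau_{g\boldsymbol{a}}(q_{\zeta_0})\tau_{q_{\zeta_0}g\boldsymbol{a}}(q_{\zeta_0})$, perturbing $\zeta_0\to\zeta_0+i\epsilon$, and using that $\tau_{g\boldsymbol{a}}(r_\zeta)\ge0$ near $\zeta_0$ while $r_\zeta\to q_{\zeta_0}^2$, I expect a sign contradiction. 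This real-axis case is the main obstacle. An alternative first attempt is to show that $\tau_{g\boldsymbol{a}}(q_{\zeta_0}^2)$ is a limit of the positive quantities $\tau_{g\boldsymbol{a}}(r_\zeta)$ and cannot vanish, since $\tau_{g\boldsymbol{a}}(q^2_{\zeta_0}q^{-2}_{\zeta_0})=1$ forces $\tau_{g\boldsymbol{a}}(q_{\zeta_0}^2)\neq0$ by multiplicativity for symmetric-type factors. Positivity $\tau_{g\boldsymbol{a}}(r_\zeta)>0$ then follows from the displayed formula.

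\emph{Step 3 (part (ii)).} By Lemma \ref{l2-11}(i), $\tau_{g\boldsymbol{a}}(z^{-1})=\lim_{\zeta\to0}\tau_{g\boldsymbol{a}}(q_\zeta)\cdot$(normalization) $=1+\varphi^{(0)}_{g\boldsymbol{a}}(0)$. If $1+\varphi^{(0)}\equiv0$ on $D_-^2$, it vanishes at $0$. Conversely, suppose $1+\varphi^{(0)}(0)=0$ but $1+\varphi^{(0)}\not\equiv0$ on $D_-^2$. Run the argument of Step 2 on $D_-^2$ via $\widetilde{g\boldsymbol{a}}$ and Lemma \ref{l2-5}(i): $n_{g\boldsymbol{a}}$ relates to $m_{\widetilde{g\boldsymbol{a}}}$ by \eqref{2-27}, so $1+\varphi^{(0)}$ on $D_-^2$ is the denominator of a Herglotz-type function. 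An isolated zero at the real point $0$ then contradicts positivity of $\tau_{g\boldsymbol{a}}(r_\zeta)$ for $\zeta\in D_-^2$ near $0$, since $\operatorname{Im}$ of the ratio must change sign there. Finally, if $\tau_{g\boldsymbol{a}}(z^{-1})=0$, Lemma \ref{l2-11}(iv) yields $\tau_{g\boldsymbol{a}}(q^{-1})=0$ for polynomials with a zero in $D_-^2$. Taking $q=z^{|m|}$ (zero at $0\in D_-^2$ in the limiting sense; or use Lemma \ref{l2-10} to extend (iv) to $\zeta_0=0$) gives $\tau_{g\boldsymbol{a}}(z^{m})=0$ for $m<0$.
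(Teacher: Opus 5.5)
Your Step 1, the formula $\tau_{g\boldsymbol{a}}(r_\zeta)=|1+\varphi^{(0)}_{g\boldsymbol{a}}(\zeta)|^2\operatorname{Im}m_{g\boldsymbol{a}}(\zeta)/\operatorname{Im}\zeta$, and the open-mapping argument for strict $\operatorname{Im}m_{g\boldsymbol{a}}/\operatorname{Im}\zeta>0$ and for the absence of poles of $m_{g\boldsymbol{a}}$ are all fine.

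\textbf{Part (i): the real gap.} The obstacle you single out does not exist. Since $\mathbb{R}\setminus\{0\}\subset D_{+}$ and $0\notin D_{-}$, the set $D_-^1$ contains no real points; it consists of two conjugate components. The actual gap lies elsewhere: "no poles of $m_{g\boldsymbol{a}}$" does not give $1+\varphi^{(0)}_{g\boldsymbol{a}}\neq0$. We have $m_{g\boldsymbol{a}}=(z+\varphi^{(1)}_{g\boldsymbol{a}})/(1+\varphi^{(0)}_{g\boldsymbol{a}})+\mathrm{const}$, so a common zero $\zeta_0$ of numerator and denominator is only a removable singularity, and there $\tau_{g\boldsymbol{a}}(r_{\zeta_0})=0$.

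This cannot be excluded using $m_{g\boldsymbol{a}}$ alone. Suppose $\tau_{g\boldsymbol{a}}(z^{-1})=1+\varphi^{(0)}_{g\boldsymbol{a}}(0)=0$, which is allowed for $\boldsymbol{a}\in\boldsymbol{A}_{N,+}^{inv}(C)$ (Remark~\ref{r1}). Then Lemma~\ref{l2-9}(ii) with $n=0$ forces $m_{g\boldsymbol{a}}\equiv\phi$ on $D_-^1$. Consequently $\tau_{g\boldsymbol{a}}(r_\zeta)=|1+\varphi^{(0)}_{g\boldsymbol{a}}(\zeta)|^2(1-|\zeta|^{-2})\geq0$ no matter where $1+\varphi^{(0)}_{g\boldsymbol{a}}$ vanishes.

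The paper closes this with two further ingredients.
\begin{itemize}
\item A second positivity. $\tau_{g\boldsymbol{a}}(zr_\zeta)\geq0$, and via $zq_\zeta=-\zeta\widetilde{q}_{\zeta^{-1}}$ and Lemma~\ref{l2-11}(iii) this equals $|1+\varphi^{(0)}_{g\boldsymbol{a}}(\zeta)|^2\operatorname{Im}n_{g\boldsymbol{a}}(\zeta)/\operatorname{Im}\zeta^{-1}$. Hence $\operatorname{Im}n_{g\boldsymbol{a}}\leq0$ on $D_-^1\cap\mathbb{C}_{+}$.
\item A nondegeneracy. $\tau_{g\boldsymbol{a}}(q_\zeta\widetilde{q}_\zeta)\neq0$ by Lemma~\ref{l2-5}(ii), since the factor is invariant under $z\mapsto z^{-1}$. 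Its Lemma~\ref{l2-11}(iii) expression then shows $\zeta_0^{-1}+\varphi^{(-1)}_{g\boldsymbol{a}}(\zeta_0)\neq0$ at any zero $\zeta_0$ of $1+\varphi^{(0)}_{g\boldsymbol{a}}$.
\end{itemize}
So $n_{g\boldsymbol{a}}$ would have a genuine pole at $\zeta_0$, which is incompatible with $\operatorname{Im}n_{g\boldsymbol{a}}\leq0$ nearby. Conjugation symmetry handles the lower component.

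Your fallback arguments for the "real" case would fail anyway. The cocycle property cannot be applied through a factor with $\tau_{g\boldsymbol{a}}(q_{\zeta_0})=0$, and $q_{\zeta_0}^2$ is not invariant under $z\mapsto z^{-1}$.

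\textbf{Part (ii).} Step 3 does not establish $\tau_{g\boldsymbol{a}}(z^{-1})=0\Rightarrow1+\varphi^{(0)}_{g\boldsymbol{a}}\equiv0$ on $D_-^2$. The relation between $n_{g\boldsymbol{a}}$ and $m_{\widetilde{g\boldsymbol{a}}}$ in \eqref{2-29} requires $\tau_{g\boldsymbol{a}}(z)\tau_{g\boldsymbol{a}}(z^{-1})\neq0$, which fails precisely in this case. Moreover, $0$ is a boundary point of $D_-^2$ where $\varphi^{(0)}_{g\boldsymbol{a}}$ has only the asymptotic expansion \eqref{2-20}. So "an isolated zero at $0$ forces a sign change" is not an argument: the zero could be of infinite order, and the numerator could vanish as well.

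What is needed is the exact identity you cite in your overview but never use. With $1+\varphi^{(0)}_{g\boldsymbol{a}}(0)=0$, Lemma~\ref{l2-9}(ii) for $n=0$ gives $z+\varphi^{(1)}_{g\boldsymbol{a}}=(\phi(z)-\varphi^{(-1)}_{\widetilde{g\boldsymbol{a}}}(0))(1+\varphi^{(0)}_{g\boldsymbol{a}})$ on $D_{-}$. Hence $m_{g\boldsymbol{a}}=\phi$ wherever $1+\varphi^{(0)}_{g\boldsymbol{a}}\neq0$. Since $\operatorname{Im}\phi(z)/\operatorname{Im}z=1-|z|^{-2}<0$ on $D_-^2$, this gives $\tau_{g\boldsymbol{a}}(r_z)<0$ at any such point of $D_-^2$, contradicting Step~1.

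The remaining deductions match the paper: Lemma~\ref{l2-11}(iv), then letting $\zeta_j\to0$ in $q_{\zeta_1}\cdots q_{\zeta_m}$ to reach $z^{-m}$.
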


\begin{proof}
Set
\[
\mathcal{Z}=\left\{  \zeta\in D_{-}\text{; \ }1+\varphi_{g\boldsymbol{a}%
}^{\left(  0\right)  }\left(  \zeta\right)  =0\right\}  \text{.}%
\]
$\mathcal{Z}$ is a discrete set on $D_{-}^{1}$, since $1+\varphi
_{g\boldsymbol{a}}^{\left(  0\right)  }\left(  \zeta\right)  \rightarrow1$ as
$\zeta\rightarrow\infty$, and hence $m_{g\boldsymbol{a}}$, $n_{g\boldsymbol{a}%
}$ are meromorphic on $D_{-}^{1}$. First observe for a $g\in\Gamma
_{N}^{\operatorname{real}}\left(  D_{+}\right)  $ such that $\tau
_{\boldsymbol{a}}\left(  g\right)  >0$ the property%
\begin{equation}
\tau_{g\boldsymbol{a}}\left(  r\right)  \geq0 \label{2-33}%
\end{equation}
holds for any rational function $r\in\Gamma_{N}^{\operatorname{real}}\left(
D_{+}\right)  $. This is because $g\in\Gamma_{N}^{\operatorname{real}}\left(
D_{+}\right)  $ can be approximated by rational functions $r_{n}\in\Gamma
_{N}^{\operatorname{real}}\left(  D_{+}\right)  $ due to Lemma \ref{l2-8}, and
(\ref{2-33}) is valid due to the cocycle property and the continuity of
tau-functions:%
\[
\tau_{g\boldsymbol{a}}\left(  r\right)  =\dfrac{\tau_{\boldsymbol{a}}\left(
gr\right)  }{\tau_{\boldsymbol{a}}\left(  g\right)  }=\lim_{n\rightarrow
\infty}\dfrac{\tau_{\boldsymbol{a}}\left(  r_{n}r\right)  }{\tau
_{\boldsymbol{a}}\left(  g\right)  }\geq0\text{.}%
\]
Since $r_{\zeta}$, $zr_{\zeta}$ are rational functions of $\Gamma
_{N}^{\operatorname{real}}\left(  D_{+}\right)  $, (\ref{2-33}) and Lemma
\ref{l2-11} yield%
\[
\left\{
\begin{array}
[c]{l}%
0\leq\tau_{g\boldsymbol{a}}\left(  r_{\zeta}\right)  =\left\vert
1+\varphi_{g\boldsymbol{a}}^{\left(  0\right)  }\left(  \zeta\right)
\right\vert ^{2}\dfrac{\operatorname{Im}m_{g\boldsymbol{a}}\left(
\zeta\right)  }{\operatorname{Im}\zeta}\\
0\leq\tau_{g\boldsymbol{a}}\left(  zr_{\zeta}\right)  =\left\vert
1+\varphi_{g\boldsymbol{a}}^{\left(  0\right)  }\left(  \zeta\right)
\right\vert ^{2}\dfrac{\operatorname{Im}n_{g\boldsymbol{a}}\left(
\zeta\right)  }{\operatorname{Im}\zeta^{-1}}\text{ \ (}zq_{\zeta}%
=-\zeta\widetilde{q}_{\zeta^{-1}}\text{)}%
\end{array}
\right.  \text{ \ on }D_{-}^{1}\backslash\mathcal{Z}\text{.}%
\]
Hence
\[
\operatorname{Im}m_{g\boldsymbol{a}}\left(  \zeta\right)  \geq0\text{,
\ }\operatorname{Im}n_{g\boldsymbol{a}}\left(  \zeta\right)  \leq0\text{ \ on
}\left(  D_{-}^{1}\cap\mathbb{C}_{+}\right)  \backslash\mathcal{Z}%
\]
holds. On the other hand, Lemmas \ref{l2-5}, \ref{l2-9} imply
\begin{align*}
&  \dfrac{\left(  \zeta+\varphi_{g\boldsymbol{a}}^{\left(  -1\right)  }\left(
\zeta^{-1}\right)  \right)  \left(  1+\varphi_{g\boldsymbol{a}}^{\left(
0\right)  }\left(  \zeta\right)  \right)  -\left(  \zeta^{-1}+\varphi
_{g\boldsymbol{a}}^{\left(  -1\right)  }\left(  \zeta\right)  \right)  \left(
1+\varphi_{g\boldsymbol{a}}^{\left(  0\right)  }\left(  \zeta^{-1}\right)
\right)  }{\zeta-\zeta^{-1}}\\
&  =\tau_{g\boldsymbol{a}}\left(  q_{\zeta}\widetilde{q}_{\zeta}\right)
\neq0\text{,}%
\end{align*}
which shows $\zeta_{0}^{-1}+\varphi_{g\boldsymbol{a}}^{\left(  -1\right)
}(\zeta_{0})\neq0$ holds for $\zeta_{0}\in\mathcal{Z}\cap D_{-}^{1}$ and
$n_{g\boldsymbol{a}}\left(  \zeta\right)  $ has a pole at $\zeta_{0}$. Since
$\operatorname{Im}n_{g\boldsymbol{a}}\left(  \zeta\right)  ^{-1}\geq0$ and
$n_{g\boldsymbol{a}}\left(  \zeta_{0}\right)  ^{-1}=0$ are valid, the maximum
principle for harmonic functions implies $n_{g\boldsymbol{a}}\left(
\zeta\right)  ^{-1}=0$ identically on $D_{-}^{1}\cap\mathbb{C}_{+}$, which is
impossible. Therefore, $D_{-}^{1}\cap\mathbb{C}_{+}\cap\mathcal{Z}%
=\varnothing$ should hold. Hence, one has%
\[
1+\varphi_{g\boldsymbol{a}}^{\left(  0\right)  }\left(  \zeta\right)
\neq0\text{ \ on }D_{-}^{1}%
\]
due to $1+\overline{\varphi_{g\boldsymbol{a}}^{\left(  0\right)  }\left(
\overline{\zeta}\right)  }=1+\varphi_{g\boldsymbol{a}}^{\left(  0\right)
}\left(  \zeta\right)  $. A similar argument for $m_{g\boldsymbol{a}}$ yields
$\operatorname{Im}m_{g\boldsymbol{a}}\left(  \zeta\right)  >0$ on \ $D_{-}%
^{1}\cap\mathbb{C}_{+}$, since the property $m_{g\boldsymbol{a}}\left(
\zeta\right)  -\zeta\rightarrow0$ as $\zeta\rightarrow\infty$ does not admit
the possibility of $m_{g\boldsymbol{a}}\left(  \zeta\right)  $ being a
constant identically on $D_{-}^{1}\cap\mathbb{C}_{+}$, which shows (i).

\ \ $\ \ \ $(ii) of Lemma \ref{l2-9} for $n=0$ implies%
\[
z+\varphi_{g\boldsymbol{a}}^{\left(  1\right)  }\left(  z\right)  =\left(
z+z^{-1}-\varphi_{\widetilde{g\boldsymbol{a}}}^{\left(  -1\right)  }\left(
0\right)  \right)  \left(  1+\varphi_{g\boldsymbol{a}}^{\left(  0\right)
}\left(  z\right)  \right)  \text{ on }D_{-}\text{,}%
\]
if $1+\varphi_{g\boldsymbol{a}}^{\left(  0\right)  }\left(  0\right)
=\tau_{g\boldsymbol{a}}\left(  z^{-1}\right)  =0$ holds. Hence, if
$1+\varphi_{g\boldsymbol{a}}^{\left(  0\right)  }\left(  z\right)  $ is not
identically $0$ on $D_{-}^{2}$, then the above identity implies
$m_{g\boldsymbol{a}}\left(  z\right)  =z+z^{-1}$ identically on $D_{-}^{2}$,
which contradicts $\operatorname{Im}m_{g\boldsymbol{a}}\left(  z\right)
/\operatorname{Im}z\geq0$ due to $\tau_{g\boldsymbol{a}}\left(  r_{z}\right)
\geq0$, which implies $1+\varphi_{g\boldsymbol{a}}^{\left(  0\right)  }\left(
z\right)  =0$ identically on $D_{-}^{2}$. Then, (iv) of Lemma \ref{l2-11}
yields $\tau_{g\boldsymbol{a}}\left(  q^{-1}\right)  =0$. Setting
$q^{-1}=q_{\zeta_{1}}q_{\zeta_{2}}\cdots q_{\zeta_{m}}$ with $\zeta_{j}\in
D_{-}^{2}$, one has $\tau_{g\boldsymbol{a}}\left(  q_{\zeta_{1}}q_{\zeta_{2}%
}\cdots q_{\zeta_{m}}\right)  =0$. Then, letting $\zeta_{j}\rightarrow0$ shows
$\tau_{g\boldsymbol{a}}\left(  z^{-m}\right)  =0$. Consequently, we have (ii).
$\bigskip$
\end{proof}

\begin{lemma}
\label{l2-15}For $\boldsymbol{a}\in\boldsymbol{A}_{N,+}^{inv}\left(  C\right)
$ and $g\in\Gamma_{N}^{\operatorname{real}}\left(  D_{+}\right)  $ assume
$\tau_{\boldsymbol{a}}\left(  g\right)  >0$. Then, $\tau_{g\boldsymbol{a}%
}\left(  g_{1}g_{2}\right)  >0$ holds for $g_{j}\in\Gamma_{N}%
^{\operatorname{real}}\left(  D_{+}^{j}\right)  $ for $j=1$, $2$.
\end{lemma}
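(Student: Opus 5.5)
We reduce to rational functions and use the multiplicative structure of the tau-function. By the cocycle property (Lemma \ref{l2-5}(ii)), $\tau_{g\boldsymbol{a}}(g_1g_2)=\tau_{g\boldsymbol{a}}(g_1)\,\tau_{g_1g\boldsymbol{a}}(g_2)$. So it suffices to prove the following claim. For $h\in\Gamma^{\operatorname{real}}_N(D_+)$ with $\tau_{\boldsymbol{a}}(h)>0$, we have $\tau_{h\boldsymbol{a}}(g_1)>0$ for $g_1\in\Gamma_N^{\operatorname{real}}(D_+^1)$. We have $\tau_{\boldsymbol{a}}(hg_1)=\tau_{\boldsymbol{a}}(h)\tau_{h\boldsymbol{a}}(g_1)>0$, so the claim can then be applied with $h=g_1g$. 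The $D_+^2$ part, that is $\tau_{g_1g\boldsymbol{a}}(g_2)>0$, follows from the $D_+^1$ case applied to $\widetilde{g_1g\boldsymbol{a}}$ and $\widetilde g_2\in\Gamma_N^{\operatorname{real}}(D_+^1)$. Here we use Lemma \ref{l2-5}(i), and $\widetilde{g_1g\boldsymbol{a}}$ again has nonnegative tau-functions on real rationals by the same identity.

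For the $D_+^1$ case, first treat rational $r\in\Gamma_N^{\operatorname{real}}(D_+^1)$. Such an $r$ has no zeros or poles in $\overline{D_+^1}$, which contains $0$ and a neighbourhood of the real axis and the unit circle. Hence its zeros and poles lie in $D_-^1$ and come in conjugate pairs or real pairs. We write $r$ up to a positive constant as a product of factors $q_\zeta$, $q_\zeta^{-1}$, $r_\zeta=q_\zeta q_{\overline\zeta}$ and $r_\zeta^{-1}$ with $\zeta\in D_-^1$. Fix the sign so that $r(0)>0$. Realness and absence of zeros on the connected set $\overline{D_+^1}\cap\mathbb R$ make the normalisation $r(0)=1$ compatible with positivity. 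Apply the cocycle property factor by factor. For a conjugate pair, Lemma \ref{l2-14}(i) gives $\tau_{h\boldsymbol{a}}(r_\zeta)>0$, and then the new base point $r_\zeta h$ still satisfies $\tau_{\boldsymbol{a}}(r_\zeta h)>0$, so the induction continues. For a real $\zeta\in D_-^1$, Lemma \ref{l2-11}(i) gives $\tau_{h\boldsymbol{a}}(q_\zeta)=1+\varphi^{(0)}_{h\boldsymbol{a}}(\zeta)$. This is real and nonzero by Lemma \ref{l2-14}(i). It is positive because it tends to $1$ as $\zeta\to\pm\infty$ along the real part of $D_-^1$, which is connected to infinity on each side, and cannot change sign. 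Inverse factors are handled similarly: $\tau_{q_\zeta h\boldsymbol{a}}(q_\zeta^{-1})=1/\tau_{h\boldsymbol{a}}(q_\zeta)$ by the cocycle property. Thus $\tau_{h\boldsymbol{a}}(r)>0$ for every such rational $r$.

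For general $g_1\in\Gamma_N^{\operatorname{real}}(D_+^1)$, Lemma \ref{l2-8} gives rationals $r_n\in\Gamma_N(D_+^1)$ with $\mathrm d_c(r_n,g_1)\to0$. We can take them real, after replacing $r_n$ by $\sqrt{r_n\overline{r_n}}$-type symmetrisations, or by noting that the construction in Lemma \ref{l2-8} preserves realness when $h$ is real. Lemma \ref{l2-6} then gives $\tau_{h\boldsymbol{a}}(g_1)=\lim\tau_{h\boldsymbol{a}}(r_n)\ge0$. The main obstacle is upgrading $\ge0$ to $>0$. We do this by a connectedness argument. Consider $g_1^s=\exp(s\log g_1)$ for $s\in[0,1]$, with $\log g_1$ normalised by $g_1(0)>0$ and defined since $D_+^1$ is simply connected. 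Each $g_1^s$ lies in $\Gamma^{\operatorname{real}}_N(D_+^1)$ with $\mathrm\delta(g_1^s)\le\mathrm\delta(g_1)$, and $s\mapsto\tau_{h\boldsymbol{a}}(g_1^s)$ is continuous by Lemmas \ref{l2-4} and \ref{l2-6}. Suppose $s_0$ is the first zero. Then at $s_0-\epsilon$ the value is positive. Writing $\tau_{h\boldsymbol{a}}(g_1^{s_0})=\tau_{h\boldsymbol{a}}(g_1^{s_0-\epsilon})\,\tau_{g_1^{s_0-\epsilon}h\boldsymbol{a}}(g_1^{\epsilon})$, we need $\tau_{k\boldsymbol{a}}(g_1^{\epsilon})\ne0$ uniformly for small $\epsilon$. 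We would get this from the trace-class bound $|\tau-1|\le\|A\|_{tr}e^{\|A\|_{tr}+1}$ with $\|A\|_{tr}\lesssim\mathrm d_c(g_1^\epsilon,1)$. This bound requires uniform control of $\|(k^{-1}T(k\boldsymbol{a}))^{-1}\|$ along the path, which is the delicate point. Failing that, we would use Lemma \ref{l2-14}(i), via $1+\varphi^{(0)}\ne0$, together with the $m$-function representation to show that the limit of positive $\tau_{h\boldsymbol{a}}(r_n)$ cannot vanish.
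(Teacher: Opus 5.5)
Your overall structure matches the paper's: the cocycle reduction, the tilde trick for the $D_+^2$ part, Lemma \ref{l2-14}(i) for conjugate pairs, and approximation by rationals. But there are two genuine gaps.

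\textbf{Pole factors in the rational case.} Since $D_+\supset\mathbb{R}$, $D_-^1$ has no real points, so your real-$\zeta$ case is vacuous. Up to a constant, $r=r_1r_2^{-1}$ with $r_1,r_2$ products of $r_\zeta$, $\zeta\in D_-^1$.

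Your identity $\tau_{q_\zeta h\boldsymbol{a}}(q_\zeta^{-1})=1/\tau_{h\boldsymbol{a}}(q_\zeta)$ only removes a factor already contained in the base point. It does not let you apply $r_2^{-1}$ at the base $r_1h$. For $\tau_{r_1h\boldsymbol{a}}(r_2^{-1})$ the cocycle property only yields $1/\tau_{r_2^{-1}r_1h\boldsymbol{a}}(r_2)$. Lemma \ref{l2-14}(i) applies to this only if $\tau_{\boldsymbol{a}}(r_2^{-1}r_1h)>0$, which is what you are proving.

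The paper breaks the circle with the inversion symmetry. Since $r_2^{-1}\widetilde{r}_2^{-1}$ is invariant under $z\mapsto z^{-1}$, Lemma \ref{l2-5}(ii) gives $\tau_{r_1h\boldsymbol{a}}(r_2^{-1})=\tau_{r_1h\boldsymbol{a}}(r_2^{-1}\widetilde{r}_2^{-1})\,\tau_{r_1h\boldsymbol{a}}(\widetilde{r}_2)$.
\begin{itemize}
\item The first factor equals $1/\tau_{r_1h\boldsymbol{a}}(r_2\widetilde{r}_2)$.
\item The second equals $\tau_{\widetilde{r_1h}\widetilde{\boldsymbol{a}}}(r_2)$ by Lemma \ref{l2-5}(i).
\end{itemize}
Both are positive by the conjugate-pair step, applied to $\boldsymbol{a}$ and to $\widetilde{\boldsymbol{a}}\in\boldsymbol{A}_{N,+}^{inv}(C)$, using $\tau_{\widetilde{\boldsymbol{a}}}(\widetilde{k})=\tau_{\boldsymbol{a}}(k)$. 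You cannot treat $\widetilde{r}_2$ directly, because its poles lie in $D_-^2$, where Lemma \ref{l2-14}(i) says nothing.

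\textbf{Strict positivity for general $g_1$.} Your limit gives only $\tau_{h\boldsymbol{a}}(g_1)\geq0$. The path argument is circular: it needs a bound on $\|(k^{-1}T(k\boldsymbol{a}))^{-1}\|$ uniform over the moving base points $k=g_1^{s_0-\epsilon}h$ as $\epsilon\to0$. Such a bound fails precisely when $\tau_{h\boldsymbol{a}}(g_1^{s_0})=0$, so assuming it begs the question. The fallback is not an argument.

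The missing idea is to move the base point rather than pass to a limit.
\begin{itemize}
\item With real rationals $r_n$ from Lemma \ref{l2-8}, $\mathrm{d}_c(g_1r_n^{-1},1)\to0$.
\item Lemma \ref{l2-6} at the fixed base $h$, via $\tau_{h\boldsymbol{a}}(k)=\tau_{\boldsymbol{a}}(hk)/\tau_{\boldsymbol{a}}(h)$, then gives $\tau_{h\boldsymbol{a}}(g_1r_n^{-1})\to1$. Hence it is positive for some $n_1$.
\item So $\tau_{\boldsymbol{a}}(g_1r_{n_1}^{-1}h)>0$, and the rational case at this new base gives $\tau_{g_1r_{n_1}^{-1}h\boldsymbol{a}}(r_{n_1})>0$.
\item Therefore $\tau_{h\boldsymbol{a}}(g_1)=\tau_{h\boldsymbol{a}}(g_1r_{n_1}^{-1})\,\tau_{g_1r_{n_1}^{-1}h\boldsymbol{a}}(r_{n_1})>0$.
\end{itemize}
Only continuity at the fixed base $h$ is used, so no uniform inverse bounds are needed.

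A minor point: $\sqrt{r_n\overline{r_n}}$ is not rational, so the realness of the approximants has to come from the construction in Lemma \ref{l2-8} itself.
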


\begin{proof}
Let $\zeta_{1}$, $\zeta_{2}\in D_{-}^{1}$. Since $\tau_{\boldsymbol{a}}\left(
r_{\zeta_{1}}g\right)  =\tau_{\boldsymbol{a}}\left(  g\right)  \tau
_{g\boldsymbol{a}}\left(  r_{\zeta_{1}}\right)  >0$ holds due to (i) of Lemma
\ref{l2-14}, \ applying this Lemma to $r_{\zeta_{1}}g$ again, one has
$\tau_{r_{\zeta_{1}}g\boldsymbol{a}}\left(  r_{\zeta_{2}}\right)  >0$. Then,
for $r=r_{\zeta_{1}}r_{\zeta_{2}}$ one sees
\[
\tau_{g\boldsymbol{a}}\left(  r_{\zeta_{1}}r_{\zeta_{2}}\right)
=\tau_{g\boldsymbol{a}}\left(  r_{\zeta_{1}}\right)  \tau_{r_{\zeta_{1}%
}g\boldsymbol{a}}\left(  r_{\zeta_{2}}\right)  >0\text{,}%
\]
and inductively one can know $\tau_{g\boldsymbol{a}}\left(  r_{\zeta_{1}%
}r_{\zeta_{2}}\cdots r_{\zeta_{m}}\right)  >0$ if $\zeta_{j}\in D_{-}^{1}$ for
$j=1$,$\cdots$, $m$. A rational function $r\in\Gamma_{N}^{\operatorname{real}%
}\left(  D_{+}^{1}\right)  $ can be expressed by $r=$ $r_{1}r_{2}^{-1}$ with%
\[
r_{1}=r_{\zeta_{1}}r_{\zeta_{2}}\cdots r_{\zeta_{m}}\text{, \ }r_{2}%
=r_{\eta_{1}}r_{\eta_{2}}\cdots r_{\eta_{n}}\text{ for }\zeta_{j}\text{, }%
\eta_{j}\in D_{-}^{1}\text{.}%
\]
Then, Lemma \ref{l2-5} and the property $\widetilde{\boldsymbol{a}}%
\in\boldsymbol{A}_{N,+}^{inv}\left(  C\right)  $ imply
\[
\tau_{r_{1}g\boldsymbol{a}}\left(  r_{2}^{-1}\right)  =\tau_{r_{1}%
g\boldsymbol{a}}\left(  r_{2}^{-1}\widetilde{r}_{2}^{-1}\widetilde{r}%
_{2}\right)  =\tau_{r_{1}g\boldsymbol{a}}\left(  r_{2}^{-1}\widetilde{r}%
_{2}^{-1}\right)  \tau_{r_{1}g\boldsymbol{a}}\left(  \widetilde{r}_{2}\right)
\text{.}%
\]
Note $\tau_{r_{1}g\boldsymbol{a}}\left(  r_{2}^{-1}\widetilde{r}_{2}%
^{-1}\right)  >0$. The property $\tau_{r_{1}g\boldsymbol{a}}\left(
\widetilde{r}_{2}\right)  =\tau_{\widetilde{r_{1}g}\widetilde{\boldsymbol{a}}%
}\left(  r_{2}\right)  >0$ can be shown by applying the above argument to
$\widetilde{\boldsymbol{a}}$ and $\widetilde{r_{1}g}$. Therefore, one has
\begin{equation}
\tau_{g\boldsymbol{a}}\left(  r\right)  =\tau_{g\boldsymbol{a}}\left(
r_{1}\right)  \tau_{r_{1}g\boldsymbol{a}}\left(  r_{2}^{-1}\right)  >0\text{.}
\label{2-34}%
\end{equation}
Lemma \ref{l2-8} makes it possible to approximate $g_{1}\in\Gamma
_{N}^{\operatorname{real}}\left(  D_{+}^{1}\right)  $ by rational functions
$r_{n}\in\Gamma_{N}^{\operatorname{real}}\left(  D_{+}^{1}\right)  $. Since
$\tau_{g\boldsymbol{a}}\left(  g_{1}r_{n}^{-1}\right)  \rightarrow
\tau_{g\boldsymbol{a}}\left(  1\right)  =1$ due to $\mathrm{d}_{c}\left(
g_{1}r_{n}^{-1},1\right)  \rightarrow0$, one has $\tau_{g\boldsymbol{a}%
}\left(  g_{1}r_{n_{1}}^{-1}\right)  >0$ for a sufficiently large $n_{1}$.
Applying (\ref{2-34}) to $g_{1}r_{n_{1}}^{-1}g$, $r_{n_{1}}$ we have
$\tau_{g_{1}r_{n_{1}}^{-1}g\boldsymbol{a}}\left(  r_{n_{1}}\right)  >0$. Thus,
the cocycle property yields%
\[
\tau_{g\boldsymbol{a}}\left(  g_{1}\right)  =\tau_{g\boldsymbol{a}}\left(
g_{1}r_{n_{1}}^{-1}r_{n_{1}}\right)  =\tau_{g\boldsymbol{a}}\left(
g_{1}r_{n_{1}}^{-1}\right)  \tau_{g_{1}r_{n_{1}}^{-1}g\boldsymbol{a}}\left(
r_{n_{1}}\right)  >0\text{.}%
\]
The properties $\boldsymbol{a}\in\boldsymbol{A}_{N,+}^{inv}\left(  C\right)
\Longrightarrow\widetilde{\boldsymbol{a}}\in\boldsymbol{A}_{N,+}^{inv}\left(
C\right)  $ and $\tau_{\widetilde{\boldsymbol{a}}}\left(  \widetilde
{g}\right)  =\tau_{\boldsymbol{a}}\left(  g\right)  $ yield%
\[
\tau_{g\boldsymbol{a}}\left(  g_{2}\right)  =\tau_{\widetilde{g}%
\widetilde{\boldsymbol{a}}}\left(  \widetilde{g}_{2}\right)  >0\text{ for
}g_{2}\in\Gamma_{N}^{\operatorname{real}}\left(  D_{+}^{2}\right)  \text{,}%
\]
since $\widetilde{g}_{2}\in\Gamma_{N}^{\operatorname{real}}\left(  D_{+}%
^{1}\right)  $. Observing $\tau_{\boldsymbol{a}}\left(  gg_{1}\right)
=\tau_{\boldsymbol{a}}\left(  g\right)  \tau_{g\boldsymbol{a}}\left(
g_{1}\right)  >0$, one has $\tau_{g\boldsymbol{a}}\left(  g_{1}g_{2}\right)
=\tau_{g\boldsymbol{a}}\left(  g_{1}\right)  \tau_{g_{1}g\boldsymbol{a}%
}\left(  g_{2}\right)  >0$.\bigskip
\end{proof}

The following proposition is our first main result.

\begin{proposition}
\label{p1}Let $\boldsymbol{a}\in\boldsymbol{A}_{N,++}^{inv}\left(  C\right)
$. Then, one has $\tau_{\boldsymbol{a}}\left(  g\right)  >0$ for any
$g\in\Gamma_{N}^{\operatorname{real}}\left(  D_{+}\right)  $, hence
$g^{-1}T\left(  g\boldsymbol{a}\right)  $ is invertible on $H_{N,c}\left(
D_{+}\right)  $ for any $c>\mathrm{\delta}_{D_{+}}\left(  g\right)  $.
\end{proposition}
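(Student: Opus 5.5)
The plan is to combine the factorization of Lemma \ref{l2-7} with the positivity result of Lemma \ref{l2-15}, using the cocycle property of Lemma \ref{l2-5} to handle the power of $z$.

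First, for $g\in\Gamma_{N}^{\operatorname{real}}\left(  D_{+}\right)$, I will apply Lemma \ref{l2-7} to write $g=z^{n}g_{1}g_{2}$ with $g_{j}\in\Gamma_{N}\left(  D_{+}^{j}\right)$. I then need the factors to be real. In the construction of Lemma \ref{l2-7}, $g=\overline{g}$ gives $\overline{g^{\prime}/g}=g^{\prime}/g$. The polynomials $p,q$ can be chosen with real coefficients, and $C^{\prime}$ is symmetric under conjugation. Hence $h_{j}^{\prime}$ and $r_{j}$ are real, and choosing the base point $z_{0}$ real makes $g_{1},g_{2}$ real up to a constant factor. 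Alternatively, I can replace $g_{j}$ by $\left(g_{j}\overline{g}_{j}\right)^{1/2}$; this works because $g\overline{g}=g^{2}$ and the decomposition is unique up to constants. A real positive constant multiple does not change $\tau$, since $T\left(  cg\boldsymbol{a}\right)  =cT\left(  g\boldsymbol{a}\right)$. A possible sign $-1$ is harmless for the same reason. So I may assume $g_{j}\in\Gamma_{N}^{\operatorname{real}}\left(  D_{+}^{j}\right)$.

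Next, $\tau_{\boldsymbol{a}}\left(  z^{n}\right)  >0$ holds by the definition of $\boldsymbol{A}_{N,++}^{inv}\left(  C\right)$. Thus the hypothesis of Lemma \ref{l2-15} is satisfied with $g$ replaced by $z^{n}$, which lies in $\Gamma_{N}^{\operatorname{real}}\left(  D_{+}\right)$. That lemma gives $\tau_{z^{n}\boldsymbol{a}}\left(  g_{1}g_{2}\right)  >0$. The cocycle property (ii) of Lemma \ref{l2-5} then yields
\[
\tau_{\boldsymbol{a}}\left(  g\right)  =\tau_{\boldsymbol{a}}\left(  z^{n}\right)  \tau_{z^{n}\boldsymbol{a}}\left(  g_{1}g_{2}\right)  >0\text{.}
\]
Its nonvanishing hypotheses are $\tau_{\boldsymbol{a}}\left(  1\right)  =1$ and $\tau_{\boldsymbol{a}}\left(  z^{n}\right)  \neq0$, so the cocycle identity applies. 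Finally, since $g^{-1}T\left(  g\boldsymbol{a}\right)  T\left(  \boldsymbol{a}\right)  ^{-1}-I$ is trace class and its Fredholm determinant is nonzero, this operator is invertible. Because $T\left(  \boldsymbol{a}\right)$ is invertible, $g^{-1}T\left(  g\boldsymbol{a}\right)$ is invertible on $H_{N,c}\left(  D_{+}\right)$ for every $c>\mathrm{\delta}_{D_{+}}\left(  g\right)$. The independence of $\tau$ from $c$ was already noted via Lemma \ref{l2-1}.

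The main obstacle is the reality of the factors $g_{1},g_{2}$ in Lemma \ref{l2-7}, because that lemma is stated without the reality condition. I would settle it first by checking that each step of its construction commutes with the involution $f\mapsto\overline{f}$, given the symmetry of $D_{+}^{\prime}$ and of the curves $C_{j}^{\prime}$. If this check becomes awkward, I would fall back on the uniqueness argument: $g_{1}/\overline{g}_{1}$ belongs to $\Gamma_{N}\left(  D_{+}^{1}\right)$ and equals $\overline{g}_{2}/g_{2}\in\Gamma_{N}\left(  D_{+}^{2}\right)$. It is therefore analytic and bounded on the whole Riemann sphere minus a compact set where both are defined, hence constant by Liouville. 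This lets me renormalize the factors to be real.
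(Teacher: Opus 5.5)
Your proposal is correct and follows the paper's proof: you factor $g=z^{n}g_{1}g_{2}$ by Lemma \ref{l2-7}, apply Lemma \ref{l2-15} with $z^{n}$ in place of $g$ (valid because $\tau_{\boldsymbol{a}}(z^{n})>0$), and conclude with the cocycle identity $\tau_{\boldsymbol{a}}(g)=\tau_{\boldsymbol{a}}(z^{n})\tau_{z^{n}\boldsymbol{a}}(g_{1}g_{2})$. You also check that $g_{1},g_{2}$ can be taken real up to constant multiples, which do not change $\tau$; the paper passes over this point, and your arguments for it are sound.
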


\begin{proof}
A general $g\in\Gamma_{N}^{\operatorname{real}}\left(  D_{+}\right)  $ has an
expression $g=z^{n}g_{1}g_{2}$ thanks to Lemma \ref{l2-7}. Hence, applying
Lemma \ref{l2-15} to $g=z^{n}$, we have immediately%
\[
\tau_{\boldsymbol{a}}\left(  g\right)  =\tau_{\boldsymbol{a}}\left(
z^{n}\right)  \tau_{z^{n}\boldsymbol{a}}\left(  g_{1}g_{2}\right)  >0\text{.}%
\]
\bigskip
\end{proof}

\begin{remark}
\label{r1}There is an example $\boldsymbol{a}\in\boldsymbol{A}_{N,+}%
^{inv}\left(  C\right)  $ but $\boldsymbol{a}\notin\boldsymbol{A}_{N,++}%
^{inv}\left(  C\right)  $. For such an $\boldsymbol{a}\in\boldsymbol{A}%
_{N,+}^{inv}\left(  C\right)  $ there exists $n_{1}\in\mathbb{Z}$ such that
$\tau_{\boldsymbol{a}}\left(  z^{n_{1}}\right)  =0$. Then, (ii) of Lemma
\ref{l2-14} implies%
\[
n_{1}\geq1\Longrightarrow\tau_{\boldsymbol{a}}\left(  z^{n}\right)  =0\text{
if }n\geq n_{1}\text{, and }n_{1}\leq-1\Longrightarrow\tau_{\boldsymbol{a}%
}\left(  z^{n}\right)  =0\text{ if }n\leq n_{1}\text{.}%
\]

\end{remark}

\section{Identification of $\boldsymbol{A}_{N,++}^{inv}\left(  C\right)  $
with $Q_{N}$ and Toda flow}

In this section we characterize the important set $\boldsymbol{A}_{N,++}%
^{inv}\left(  C\right)  $ in terms of Jacobi operators, which will make it
possible to construct the Toda flow on $Q_{N}$.

To describe $m_{\boldsymbol{a}}$ for $\boldsymbol{a}\in\boldsymbol{A}%
_{N,++}^{inv}\left(  C\right)  $ we introduce
\begin{equation}
\mathcal{M}=\left\{
\begin{array}
[c]{l}%
m\text{; }m\text{ is analytic on }\mathbb{C}\backslash\Sigma\text{ satisfying
}m=\overline{m}\text{ and}\\
\text{(i)}\operatorname{Im}m(z)>0\text{ \ on }\mathbb{C}_{+}\backslash
\text{\ }\Sigma\\
\text{(ii) }m(z)\text{ is not a rational function of }\phi\left(  z\right)
\text{ neither on }\left\{  \left\vert z\right\vert \lessgtr1\right\}
\text{.}\\
\text{(iii) For any }n\geq1\text{ it holds that}\\
\text{ \ }m(z)=z+\sum_{1\leq k\leq n-1}m_{+,k}z^{-k}+O\left(  z^{-n}\right)
\text{ as }z\rightarrow\infty\text{ on }i\mathbb{R}\\
\text{ \ }m(z)=\sum_{0\leq k\leq n-1}m_{-,k}z^{k}+O\left(  z^{n}\right)
\text{ \ \ \ \ \ \ \ \ as }z\rightarrow0\text{ on }i\mathbb{R}%
\end{array}
\right\}  \text{, } \label{3-1}%
\end{equation}
where $\Sigma=\mathbb{R}\cup\left\{  \left\vert z\right\vert =1\right\}  $,
$\phi\left(  z\right)  =z+z^{-1}$. We have the following characterization of
$\mathcal{M}$.

\begin{lemma}
\label{l3-1}An analytic function $m$ on $\mathbb{C}\backslash\Sigma$ is an
element of $\mathcal{M}$ if and only if $m$ has an expression%
\[
m(z)=\left\{
\begin{array}
[c]{c}%
z+z^{-1}+a_{1}^{2}%
%TCIMACRO{\dint _{\mathbb{R}}}%
%BeginExpansion
{\displaystyle\int_{\mathbb{R}}}
%EndExpansion
\dfrac{\sigma_{+}\left(  d\lambda\right)  }{\lambda-\left(  z+z^{-1}\right)
}\text{ \ \ if \ }\left\vert z\right\vert >1\\
-a_{0}^{2}%
%TCIMACRO{\dint _{\mathbb{R}}}%
%BeginExpansion
{\displaystyle\int_{\mathbb{R}}}
%EndExpansion
\dfrac{\sigma_{-}\left(  d\lambda\right)  }{\lambda-\left(  z+z^{-1}\right)
}+b_{0}\text{ \ \ \ \ \ \ \ if \ }\left\vert z\right\vert <1
\end{array}
\right.
\]
with positive numbers $a_{0}$, $a_{1}$, a real number $b_{0}$ and probability
measures $\sigma_{\pm}$ on $\mathbb{R}$ ($\sigma_{\pm}\left(  \mathbb{R}%
\right)  =1$) satisfying%
\begin{equation}
\int_{\mathbb{R}}\left\vert \lambda\right\vert ^{n}\sigma_{\pm}\left(
d\lambda\right)  <\infty\text{\ for any }n\geq0\text{ and \textrm{supp}}%
\sigma_{\pm}\text{ are infinite.} \label{3-2}%
\end{equation}

\end{lemma}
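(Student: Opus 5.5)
The plan is to pass to the variable $w=\phi(z)=z+z^{-1}$. This map sends $\{|z|>1\}$ and $\{|z|<1\}$ each biholomorphically onto $\mathbb{C}\backslash[-2,2]$. The key fact is that $\operatorname{Im}\phi(z)$ and $\operatorname{Im}z$ have the same sign for $|z|>1$ and opposite signs for $|z|<1$.

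\emph{Sufficiency.} Given the representation, $m$ is analytic off $\Sigma$ and satisfies $m=\overline{m}$. For $|z|>1$ we have $\operatorname{Im}m=\operatorname{Im}\phi\,(1+a_1^2\int|\lambda-\phi|^{-2}d\sigma_+)$, which is positive on $\mathbb{C}_+$. For $|z|<1$ we have $\operatorname{Im}m=-a_0^2\operatorname{Im}\phi\int|\lambda-\phi|^{-2}d\sigma_-$, which is also positive there, since $\operatorname{Im}\phi<0$ when $\operatorname{Im}z>0$. This gives (i).

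For (ii): if $m$ were a rational function of $\phi$ on one of the two regions, then the Stieltjes transform of $\sigma_\pm$ would be rational. That forces $\sigma_\pm$ to be finitely supported, which contradicts (\ref{3-2}).

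For (iii): along $z\in i\mathbb{R}$, $z\to\infty$, we have $w=\phi(z)\in i\mathbb{R}$ with $|w|\to\infty$. Finiteness of all moments gives the standard nontangential asymptotic expansion
\[
\int\frac{\sigma(d\lambda)}{\lambda-w}=-\sum_{k<n}s_kw^{-k-1}+O(|w|^{-n-1}),
\]
obtained by expanding $1/(\lambda-w)$ with remainder and using $|\lambda-w|\ge|\lambda|$, $|\lambda-w|\ge|w|$ on the imaginary axis. Re-expanding $w^{-k}$ in powers of $z^{-1}$ produces the expansion at $\infty$. The case $z\to0$ is identical, since there $w\sim z^{-1}$.

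\emph{Necessity.} Define $F_+(w)=m(z)-w$ with $z$ the branch satisfying $|z|>1$. Then $F_+$ is analytic on $\mathbb{C}\backslash[-2,2]$, and by continuity across the circle part of $\Sigma$ it is Herglotz on $\mathbb{C}_+\backslash[-2,2]$. The claim is that $F_+$ extends to a Herglotz function on all of $\mathbb{C}_+$. That is immediate, since $F_+$ is already analytic there ($[-2,2]$ lies in $\mathbb{R}$), and (i) transfers because $\operatorname{Im}w>0\iff\operatorname{Im}z>0$ on $|z|>1$.

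Nevanlinna's representation then gives
\[
F_+(w)=\alpha+\beta w+\int\Big(\frac{1}{\lambda-w}-\frac{\lambda}{1+\lambda^2}\Big)\mu(d\lambda),
\]
with $\beta\ge0$ and $\mu\ge0$. By (iii) along $w=iy$, $F_+(iy)=O(1/y)$. Hence $\beta=0$, and $y\operatorname{Im}F_+(iy)$ stays bounded, so $\mu$ is finite. Then $F_+(iy)\to0$ forces $\alpha=\int\lambda(1+\lambda^2)^{-1}d\mu$, i.e. $F_+=\int\mu(d\lambda)/(\lambda-w)$.

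Set $a_1^2=\mu(\mathbb{R})$, which is positive since otherwise $m=\phi$ would be rational, contradicting (ii), and put $\sigma_+=\mu/a_1^2$. The full asymptotic expansion on $i\mathbb{R}$ then yields finiteness of all moments by the classical Hamburger--Nevanlinna theorem: an expansion to order $2n$ along $iy$ implies $\int\lambda^{2n}d\sigma<\infty$. Finally, (ii) excludes finite support, because otherwise $F_+$ would be rational in $w$.

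For $|z|<1$, define $F_-(w)=m(z)$ with $|z|<1$. Then $-F_-$ is Herglotz in $w$, and $F_-$ has a finite limit $b_0=m_{-,0}$ as $w\to\infty$ along $i\mathbb{R}$. The same argument as above gives $F_-=b_0-a_0^2\int\sigma_-(d\lambda)/(\lambda-w)$.

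\emph{Main obstacle.} The delicate step is deducing finiteness of all moments from an asymptotic expansion that is known only along the imaginary axis, together with correctly matching the powers of $z^{-1}$ against powers of $w^{-1}$. I would cite the Hamburger--Nevanlinna theorem for this and handle the conversion using $w^{-1}=z^{-1}(1+z^{-2})^{-1}$, expanded as a convergent series.
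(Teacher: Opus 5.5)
Your route is the paper's: pull back by $w=\phi(z)$, apply the Nevanlinna--Herglotz representation on each sheet, fix the constants with (iii), and exclude degenerate cases with (ii). The sufficiency direction and the $|z|<1$ sheet (where $-F_-$ is genuinely Herglotz) are fine. Your appeal to the Hamburger--Nevanlinna theorem for the moments is also fine; the paper only says they follow ``easily'' from (iii).

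One step does not hold as written. You assert that $F_+(w)=m(z)-w$ ($|z|>1$) is Herglotz on $\mathbb{C}_+$ because ``(i) transfers''. Condition (i) only gives $\operatorname{Im}m(z)>0$, i.e.\ that $G(w)=m(z)$ is Herglotz. It says nothing about the sign of $\operatorname{Im}F_+=\operatorname{Im}G-\operatorname{Im}w$, so at that stage you cannot apply the representation theorem to $F_+$.

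The repair is exactly the paper's step. Represent $G$ itself as $G(w)=\alpha+\beta w+\int\bigl(\frac{1}{\lambda-w}-\frac{\lambda}{1+\lambda^{2}}\bigr)\mu(d\lambda)$. Condition (iii) gives $G(iy)=iy+O(1/y)$, hence $\beta=\lim_{y\to\infty}G(iy)/(iy)=1$, not $\beta=0$. Only now is $F_+=G-w$ known to be Herglotz, since it has the representation with vanishing linear term. From there your argument goes through unchanged: finiteness of $\mu$ from boundedness of $y\operatorname{Im}F_+(iy)$, identification of $\alpha$, and $a_1^2=\mu(\mathbb{R})>0$ by (ii).

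A smaller inaccuracy: $\Sigma\supset\mathbb{R}$, so the rays $\{z\in\mathbb{R};\ |z|>1\}$ are excluded. Hence $F_+$ is analytic only on $\mathbb{C}\setminus\mathbb{R}$, not on $\mathbb{C}\setminus[-2,2]$. This is harmless, because $\phi$ maps $\mathbb{C}_+\cap\{|z|>1\}$ onto all of $\mathbb{C}_+$, so no continuation ``across the circle'' is needed.
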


\begin{proof}
$\phi\left(  z\right)  $ is a conformal map:%
\[
\phi:\text{ }\mathbb{C}_{+}\cap\left\{  \left\vert z\right\vert >1\right\}
\rightarrow\mathbb{C}_{+}\text{, \ \ }\mathbb{C}_{+}\cap\left\{  \left\vert
z\right\vert <1\right\}  \rightarrow\mathbb{C}_{-}\text{.}%
\]
Therefore, analytic functions $\pm m\left(  \phi^{-1}\left(  z\right)
\right)  $ on $\mathbb{C}_{\pm}$ satisfy $\pm\operatorname{Im}m\left(
\phi^{-1}\left(  z\right)  \right)  >0$ on $\mathbb{C}_{\pm}$ due to (i), and
the Herglotz representation theorem yields%
\[
m\left(  z\right)  =\text{ }\left\{
\begin{array}
[c]{c}%
\alpha_{+}+\beta_{+}\phi\left(  z\right)  +%
%TCIMACRO{\dint _{\mathbb{R}}}%
%BeginExpansion
{\displaystyle\int_{\mathbb{R}}}
%EndExpansion
\left(  \dfrac{1}{\lambda-\phi\left(  z\right)  }-\dfrac{\lambda}{\lambda
^{2}+1}\right)  \sigma_{+}\left(  d\lambda\right)  \text{ \ if }\left\vert
z\right\vert >1\\
-\alpha_{-}-\beta_{-}\phi\left(  z\right)  +%
%TCIMACRO{\dint _{\mathbb{R}}}%
%BeginExpansion
{\displaystyle\int_{\mathbb{R}}}
%EndExpansion
\left(  \dfrac{1}{\lambda-\phi\left(  z\right)  }-\dfrac{\lambda}{\lambda
^{2}+1}\right)  \sigma_{-}\left(  d\lambda\right)  \text{ \ if }\left\vert
z\right\vert <1
\end{array}
\right.  \text{.}%
\]
Then, the property (iii) implies%
\[
\left\{
\begin{array}
[c]{l}%
\sigma_{+}\left(  \mathbb{R}\right)  <\infty\text{, }\alpha_{+}-%
%TCIMACRO{\dint _{\mathbb{R}}}%
%BeginExpansion
{\displaystyle\int_{\mathbb{R}}}
%EndExpansion
\dfrac{\lambda}{\lambda^{2}+1}\sigma_{+}\left(  d\lambda\right)  =0\text{,
}\beta_{+}=1\\
\sigma_{-}\left(  \mathbb{R}\right)  <\infty\text{, }\beta_{-}=0
\end{array}
\right.  \text{. }%
\]
The property (ii) shows $\sigma_{\pm}\left(  \mathbb{R}\right)  >0$, hence we
define $a_{1}^{2}=\sigma_{+}\left(  \mathbb{R}\right)  $, $a_{0}^{2}%
=\sigma_{-}\left(  \mathbb{R}\right)  $ and redefine $\sigma_{\pm}$ by
$a_{1}^{-2}\sigma_{+}$, $a_{0}^{-2}\sigma_{-}$. The property (\ref{3-2})
follows from (iii) easily. Conversely, if $m$ is given by (\ref{3-1}), it is
clear that $m\in\mathcal{M}$. \bigskip
\end{proof}

Our next task is to give a sufficient condition for $\boldsymbol{a}%
\in\boldsymbol{A}_{N}^{inv}\left(  C\right)  $ to be an element of
$\boldsymbol{A}_{N,++}^{inv}\left(  C\right)  $. The definition of $d_{\eta}$
is in (\ref{2-28}).

\begin{lemma}
\label{l3-2}$\mathcal{M}$ is closed under the following operations:%
\[
\phi\left(  z\right)  -\frac{\lim_{\eta\rightarrow\infty}\eta\left(
\phi\left(  \eta\right)  -m(\eta)\right)  }{\phi\left(  z\right)  -m\left(
z^{-1}\right)  }\text{, }\left(  d_{0}m\right)  \left(  z\right)  \text{,
}\left(  d_{\eta}d_{\overline{\eta}}m\right)  \left(  z\right)  \text{.}%
\]

\end{lemma}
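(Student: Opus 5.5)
The plan is to work throughout with the representation of Lemma \ref{l3-1}, writing $w=\phi(z)$ and
\[
m_{\pm}(w)=\int_{\mathbb{R}}\frac{\sigma_{\pm}(d\lambda)}{\lambda-w}.
\]
All three operations are fractional-linear in $m$ with coefficients polynomial in $\phi$. So each of them (and each inverse) maps rational functions of $\phi$ to rational functions of $\phi$. This immediately gives property (ii) of $\mathcal{M}$ for the image: if the image were rational in $\phi$ on $\{|z|\gtrless1\}$, then so would $m$ be. The symmetry $m=\overline m$ is clearly preserved. What remains is (i), positivity of $\operatorname{Im}$ on $\mathbb{C}_+\backslash\Sigma$, and (iii), the full asymptotic expansions at $\infty$ and $0$ along $i\mathbb{R}$. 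By the proof of Lemma \ref{l3-1}, (iii) is equivalent to the Herglotz representing measures having all moments finite, with the correct leading terms.

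\textbf{First two operations.} I will compute them explicitly. For the first one, Lemma \ref{l3-1} gives $\phi(\eta)-m(\eta)=-a_1^2m_+(\phi(\eta))\sim a_1^2/\eta$, so the constant is $c=a_1^2>0$.
\begin{itemize}
\item For $|z|>1$ we have $m(z^{-1})=-a_0^2m_-(w)+b_0$, so the image is $w-a_1^2\big(w-b_0+a_0^2m_-(w)\big)^{-1}$.
\item For $|z|<1$ we have $m(z^{-1})=w+a_1^2m_+(w)$, so the image is $w+m_+(w)^{-1}$.
\end{itemize}
For $d_0m$, the expansion near $0$ gives $m(0)=b_0$ and $m'(0)=a_0^2$. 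Hence $d_0m$ has the same two shapes, with the roles of $\sigma_\pm$ interchanged.

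In every case I then use two standard facts about a Herglotz function $H$ in $w$:
\begin{itemize}
\item If $H(w)=w+\beta+\int\frac{\nu(d\lambda)}{\lambda-w}$ with $\nu$ finite and having all moments, then $-1/H$ is Herglotz, equal to $\int\frac{\tilde\nu(d\lambda)}{\lambda-w}$ with $\tilde\nu$ a probability measure having all moments.
\item If $F=\int\frac{\sigma(d\lambda)}{\lambda-w}$ with $\sigma$ a probability measure having all moments and infinite support, then $-1/F(w)=w-\beta'+\int\frac{\nu'(d\lambda)}{\lambda-w}$ with $\nu'$ finite, nonzero and having all moments.
\end{itemize}
Both follow from the Herglotz theorem together with inversion of the asymptotic expansion of $H$ along $i\mathbb{R}$; this is the one-step continued-fraction (Jacobi) recursion. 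Applying them gives the required form of Lemma \ref{l3-1} on each side of the unit circle. Positivity of $\operatorname{Im}$ on $\mathbb{C}_+$ follows because $\phi$ maps $\mathbb{C}_+\cap\{|z|\gtrless1\}$ to $\mathbb{C}_\pm$. The $\beta_-=0$ normalisation inside the disc holds because the $w$-terms cancel.

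\textbf{Third operation.} For $d_\eta d_{\overline\eta}m$ with $\eta\in D_-$ nonreal, I will not try to check positivity step by step, since $d_\eta m$ alone is not real. Instead I will compose the two fractional-linear maps explicitly and express the result through the kernel
\[
K(z,\eta)=\frac{m(z)-m(\eta)}{\phi(z)-\phi(\eta)}.
\]
By the Herglotz representation this equals $1+a_1^2\int\frac{\sigma_+(d\lambda)}{(\lambda-\phi(z))(\lambda-\phi(\eta))}$ for $|z|,|\eta|>1$, with the analogous formula inside the disc. The goal is to show that the composite is again of the form in Lemma \ref{l3-1}, with $\sigma_\pm$ replaced by a Christoffel/Geronimus-type modification involving the weight $|\lambda-\phi(\eta)|^{\pm2}$, normalised and corrected by a rank-one term. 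This is the Darboux transformation mentioned in the introduction. Moments are preserved because the weight is bounded by a polynomial, or its inverse is bounded on $\mathbb{R}$ since $\phi(\eta)\notin\mathbb{R}$. Infinite support is preserved as well.

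I expect the main obstacle to be this algebraic identification for $d_\eta d_{\overline\eta}$, in particular proving $\operatorname{Im}>0$ and not merely $\geq0$. If the direct computation becomes unwieldy, the fallback is a positivity-kernel argument. For $z\in\mathbb{C}_+$ I would write $\operatorname{Im}(d_\eta d_{\overline\eta}m)(z)/\operatorname{Im}z$ as a determinant of the $3\times3$ Pick-type matrix of $m$ at the points $z,\eta,\overline\eta$, divided by a positive $2\times2$ minor. Its positivity then follows from the strict positive-definiteness of the Nevanlinna kernel of $m$; strictness comes from the infinite support of $\sigma_\pm$.
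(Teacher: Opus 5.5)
Your treatment of the first two operations is correct and follows the paper's argument: the same branchwise formulas, the inversion $-\bigl(\int\sigma(d\lambda)/(\lambda-w)\bigr)^{-1}=w+\alpha+\int\nu(d\lambda)/(\lambda-w)$, and the sign of $\operatorname{Im}\phi$ on each side of the unit circle. You also spell out (ii) and (iii) more fully than the paper does.

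\textbf{The gap is the third operation.} The paper settles it only by citing Lemma 25 of \cite{z}. You neither cite that result nor carry out the computation you describe. So the one non-trivial claim is left as a goal: $\operatorname{Im}(d_\eta d_{\overline\eta}m)>0$ on both components of $\mathbb{C}_+\backslash\Sigma$, for both $|\eta|>1$ and $|\eta|<1$.

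Your fallback also cannot work as stated, because $m$ is not one Nevanlinna function.
\begin{itemize}
\item Its two branches come from independent data: $a_1,\sigma_+$ on one side, $a_0,b_0,\sigma_-$ on the other.
\item In $w=\phi(z)$, the inner branch $b_0-a_0^2m_-(w)$ has negative imaginary part on $\mathbb{C}_+$.
\item Hence the Pick kernel of $m$ is not positive semidefinite across the unit circle. Shifting $b_0$ changes the cross entries but not the diagonal ones.
\item Yet $d_\eta d_{\overline\eta}m(z)$ always involves $m(0)$ and $m'(0)=a_0^2$. It also mixes both branches whenever $z$ and $\eta$ lie on opposite sides of the circle.
\item A Pick matrix at $z,\eta,\overline\eta$ does not even see $a_0^2$, and the result depends on it (formula below).
\end{itemize}

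\textbf{The main route is feasible, but it needs correcting.} The measure being modified is not $\sigma_+$, and the substance lies exactly in the omitted algebra. Take $|z|,|\eta|>1$ and put $w_0=\phi(\eta)$. Define $m^{(0)}(w)=\int\sigma^{(0)}(d\lambda)/(\lambda-w)$ by $m(z)-b_0=-1/m^{(0)}(w)$. Thus $\sigma^{(0)}$ is the $\delta_0$-measure of the half line $\{0,1,2,\dots\}$, which packages $b_0,a_1,\sigma_+$.

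The first step produces generally non-real data, for example $(d_{\overline\eta}m)(0)=\overline{w}_0+1/m^{(0)}(\overline{w}_0)+a_0^2m^{(0)}(\overline{w}_0)$. The imaginary parts cancel only after the second step, which uses this value. The result is $(d_\eta d_{\overline\eta}m)(z)=w+(1+a_0^2D_0)\int\hat\nu(d\lambda)/(\lambda-w)$, where:
\begin{itemize}
\item $D_0=\int|\lambda-w_0|^{-2}\sigma^{(0)}(d\lambda)$;
\item $\hat\sigma$ is the normalization of $|\lambda-w_0|^{-2}\sigma^{(0)}$;
\item $\hat\nu$ is defined by $-\bigl(\int\hat\sigma(d\lambda)/(\lambda-w)\bigr)^{-1}=w-\hat b+\int\hat\nu(d\lambda)/(\lambda-w)$.
\end{itemize}
This is a Geronimus modification of $\sigma^{(0)}$, not of $\sigma_+$, coupled to the left side through $a_0^2$. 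The inner branch, and the case $|\eta|<1$ (where $m(\eta)$ comes from the inner branch), need the same bookkeeping. You must either supply this computation or cite \cite{z}, as the paper does.
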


\begin{proof}
In any case the properties (ii), (iii) are easily verified, so we check only
the property (i). Lemma \ref{l3-1} yields%
\[
\lim_{\eta\rightarrow\infty}\eta\left(  \phi\left(  \eta\right)
-m(\eta)\right)  =a_{1}^{2}\lim_{\eta\rightarrow\infty}%
%TCIMACRO{\dint _{\mathbb{R}}}%
%BeginExpansion
{\displaystyle\int_{\mathbb{R}}}
%EndExpansion
\dfrac{-\eta\sigma_{+}\left(  d\lambda\right)  }{\lambda-\left(  \eta
+\eta^{-1}\right)  }=a_{1}^{2}>0\text{,}%
\]
hence we have%
\begin{align*}
m_{1}(z)  &  \equiv\phi\left(  z\right)  -\frac{a_{1}^{2}}{\phi\left(
z\right)  -m\left(  z^{-1}\right)  }\\
&  =\left\{
\begin{array}
[c]{ll}%
\phi\left(  z\right)  -a_{1}^{2}\left(  \phi\left(  z\right)  +a_{0}^{2}%
%TCIMACRO{\dint _{\mathbb{R}}}%
%BeginExpansion
{\displaystyle\int_{\mathbb{R}}}
%EndExpansion
\dfrac{\sigma_{-}\left(  d\lambda\right)  }{\lambda-\phi\left(  z\right)
}-b_{0}\right)  ^{-1} & \text{if \ }\left\vert z\right\vert >1\\
\phi\left(  z\right)  +\left(
%TCIMACRO{\dint _{\mathbb{R}}}%
%BeginExpansion
{\displaystyle\int_{\mathbb{R}}}
%EndExpansion
\dfrac{\sigma_{+}\left(  d\lambda\right)  }{\lambda-\phi\left(  z\right)
}\right)  ^{-1} & \text{if \ }\left\vert z\right\vert <1
\end{array}
\right.  \text{.}%
\end{align*}
Since $\operatorname{Im}\phi\left(  z\right)  >0$ for $z\in\mathbb{C}_{+}%
\cap\left\{  \left\vert z\right\vert >1\right\}  $, we easily have
$\operatorname{Im}m_{1}(z)>0$ there. To show $\operatorname{Im}m_{1}(z)>0$ on
$\mathbb{C}_{+}\cap\left\{  \left\vert z\right\vert <1\right\}  $ observe
\[
-\left(
%TCIMACRO{\dint _{\mathbb{R}}}%
%BeginExpansion
{\displaystyle\int_{\mathbb{R}}}
%EndExpansion
\dfrac{\sigma_{+}\left(  d\lambda\right)  }{\lambda-z}\right)  ^{-1}=z+\alpha+%
%TCIMACRO{\dint _{\mathbb{R}}}%
%BeginExpansion
{\displaystyle\int_{\mathbb{R}}}
%EndExpansion
\dfrac{\nu\left(  d\lambda\right)  }{\lambda-z}%
\]
with $\alpha\in\mathbb{R}$ and a finite measure $\nu$ on $\mathbb{R}$. This is
due to the fact that $\sigma_{+}$ is a probability measure having every
moment. Then we have $\operatorname{Im}m_{1}(z)>0$ on $\mathbb{C}_{+}%
\cap\left\{  \left\vert z\right\vert <1\right\}  $ since $\operatorname{Im}%
\phi\left(  z\right)  <0$ there.

To verify $d_{0}m\in\mathcal{M}$ for $m\in\mathcal{M}$ note from Lemma
\ref{l3-1}%
\[
m\left(  0\right)  =b_{0}\in\mathbb{R}\text{, \ }m^{\prime}(0)=a_{0}%
^{2}>0\text{.}%
\]
Then, similarly to the case $m_{1}$ we easily have $\operatorname{Im}%
d_{0}m\left(  z\right)  >0$ on $\mathbb{C}_{+}\backslash\Sigma$. The most
complicated case $d_{\eta}d_{\overline{\eta}}m$ relies on Lemma 25 of
\cite{z}.\bigskip
\end{proof}

For $\boldsymbol{a}\in\boldsymbol{A}_{N}^{inv}\left(  C\right)  $ set%
\begin{equation}
\boldsymbol{A}_{\mathcal{M}}=\left\{  \boldsymbol{a}\in\boldsymbol{A}%
_{N}^{inv}\left(  C\right)  \text{; \ }\boldsymbol{a}=\overline{\boldsymbol{a}%
}\text{ and }m_{\boldsymbol{a}}\in\mathcal{M}\right\}  \label{3-3}%
\end{equation}
The precise meaning of $m_{\boldsymbol{a}}\in\mathcal{M}$ is that there exists
$m\in\mathcal{M}$ such that $m_{\boldsymbol{a}}=m$ on $D_{-}$. For
$\boldsymbol{a}\in\boldsymbol{A}_{\mathcal{M}}$ set
\[
\Gamma_{\boldsymbol{a}}=\left\{  g\in\Gamma_{N}^{\operatorname{real}}\left(
D_{+}\right)  \text{; }\tau_{\boldsymbol{a}}\left(  g\right)  >0\text{ and
}m_{g\boldsymbol{a}}\in\mathcal{M}\right\}  \text{.}%
\]
Clearly, we have $1\in\Gamma_{\boldsymbol{a}}$. Since $1+\varphi
_{g\boldsymbol{a}}^{\left(  0\right)  }\left(  \zeta\right)  \rightarrow1$ as
$\zeta\rightarrow\infty$ in $D_{-}$, $m_{g\boldsymbol{a}}$ is meromorphic at
least on $D_{-}^{1}$.

\begin{lemma}
\label{l3-3}$\boldsymbol{A}_{\mathcal{M}}\subset\boldsymbol{A}_{N,++}%
^{inv}\left(  C\right)  $ holds.
\end{lemma}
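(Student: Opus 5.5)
The plan is to show, for $\boldsymbol{a}\in\boldsymbol{A}_{\mathcal{M}}$, that $\tau_{\boldsymbol{a}}(r)\geq0$ for every rational $r\in\Gamma_{N}^{\operatorname{real}}(D_{+})$ and that $\tau_{\boldsymbol{a}}(z^{n})>0$ for all $n$. Both will follow from the stronger claim that $\Gamma_{\boldsymbol{a}}$ contains every such rational function and every $z^{n}$. The tool is a multiplicative induction along the elementary factors $r_{\zeta}=q_{\zeta}q_{\overline{\zeta}}$ ($\zeta\in D_{-}$), $q_{\zeta}$ with $\zeta\in D_{-}\cap\mathbb{R}$, $z^{\pm1}$, and their inverses. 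Any real rational function with no zeros or poles in $D_{+}\cup C$ is, up to a positive constant and a sign, a product of such factors. The constant does not change $\tau$ by Lemma \ref{l2-5}.

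\emph{Step 1: one elementary factor.} Let $g\in\Gamma_{\boldsymbol{a}}$.
\begin{itemize}
\item For $r_{\zeta}$, Lemma \ref{l2-11} and the computation in the proof of Lemma \ref{l2-14} give
\[
\tau_{g\boldsymbol{a}}(r_{\zeta})=\left\vert 1+\varphi_{g\boldsymbol{a}}^{(0)}(\zeta)\right\vert ^{2}\,\frac{\operatorname{Im}m_{g\boldsymbol{a}}(\zeta)}{\operatorname{Im}\zeta}.
\]
Since $m_{g\boldsymbol{a}}\in\mathcal{M}$, the quotient $\operatorname{Im}m_{g\boldsymbol{a}}(\zeta)/\operatorname{Im}\zeta$ is positive. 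The factor $1+\varphi^{(0)}$ is nonzero on $D_{-}^{1}$ by Lemma \ref{l2-14}(i), and nonzero on $D_{-}^{2}$ by the symmetry of Lemma \ref{l2-9}(i) applied to $\widetilde{g\boldsymbol{a}}$. For real $\zeta$ I would use a limiting argument, or note that $m$ is analytic off $\Sigma$. Hence $\tau_{\boldsymbol{a}}(gr_{\zeta})=\tau_{\boldsymbol{a}}(g)\tau_{g\boldsymbol{a}}(r_{\zeta})>0$, and Lemma \ref{l2-12}(ii) gives $m_{r_{\zeta}g\boldsymbol{a}}=d_{\zeta}d_{\overline{\zeta}}m_{g\boldsymbol{a}}\in\mathcal{M}$ by Lemma \ref{l3-2}. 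So $gr_{\zeta}\in\Gamma_{\boldsymbol{a}}$.
\item For $z^{-1}$, note $\tau_{g\boldsymbol{a}}(z^{-1})=m_{g\boldsymbol{a}}'(0)$ up to the normalisation (\ref{2-30}), and this equals $a_{0}^{2}>0$ by Lemma \ref{l3-1}. Then $m_{z^{-1}g\boldsymbol{a}}=d_{0}m_{g\boldsymbol{a}}\in\mathcal{M}$.
\item For $z^{+1}$, use $\tau_{g\boldsymbol{a}}(z)=\tau_{\widetilde{g\boldsymbol{a}}}(z^{-1})$ together with the first formula of (\ref{2-29}). That formula expresses $m_{\widetilde{g\boldsymbol{a}}}$ through the first operation of Lemma \ref{l3-2}, so $m_{\widetilde{g\boldsymbol{a}}}\in\mathcal{M}$, and the $z^{-1}$ case applies to the reflected symbol. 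Its positivity is the limit $a_{1}^{2}>0$ from the proof of Lemma \ref{l3-2}.
\end{itemize}

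\emph{Step 2: inverses and products.} To divide by $r_{\zeta}$ or by $z$, I use the trick from Lemma \ref{l2-15}. Since $r_{\zeta}\widetilde{r}_{\zeta}$ is inversion-symmetric, Lemma \ref{l2-5}(ii) and Lemma \ref{l2-12}(i) make multiplication by it harmless. Then $r_{\zeta}^{-1}=\widetilde{r}_{\zeta}\cdot(r_{\zeta}\widetilde{r}_{\zeta})^{-1}$. Here $\widetilde{r}_{\zeta}$ is a constant times a product of $r_{\zeta^{-1}}$ and powers of $z$, which are handled in Step 1. Likewise $z^{n}$ for $n\in\mathbb{Z}$ comes from iterating $z^{\pm1}$. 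The cocycle property then gives, for each product of factors, $\tau_{\boldsymbol{a}}(r)=\prod\tau(\text{factor})>0$. This yields $\tau_{\boldsymbol{a}}(r)\geq0$ for all real rational $r$ and $\tau_{\boldsymbol{a}}(z^{n})>0$, which is the lemma.

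\emph{Main obstacle.} The hardest part is the bookkeeping of nonvanishing of $1+\varphi_{g\boldsymbol{a}}^{(0)}$ and of $\tau_{g\boldsymbol{a}}(z^{\pm1})$. These are needed before Lemma \ref{l2-12}(ii) may be applied, and they must be transported to $D_{-}^{2}$. To get them I would first prove $\tau_{g\boldsymbol{a}}(z^{\pm1})>0$ from $m_{g\boldsymbol{a}}\in\mathcal{M}$ via (\ref{2-30}) and the Herglotz form, and then invoke Lemma \ref{l2-14}(ii) to rule out identical vanishing on $D_{-}^{2}$. A secondary point is the possible sign of the positive constant in the factorisation. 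This should be absorbed because real $r$ without zeros on $\mathbb{R}\setminus\{0\}\subset D_{+}$ has constant sign on each half-line, which matches the sign of the $z^{n}$ factor.
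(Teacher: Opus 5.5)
Your induction scheme is the paper's: multiply by $r_\zeta$ and $z^{\pm1}$, use reflection for $z^{+1}$, and use the inversion-symmetric factor $r\widetilde r$ for division. Step 2 is fine. Step 1, however, has two genuine gaps.

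\textbf{Nonvanishing of $1+\varphi^{(0)}_{g\boldsymbol a}$.} You need this at every point of $D_-$ to get $\tau_{g\boldsymbol a}(r_\zeta)>0$. You take it from Lemma \ref{l2-14}(i) on $D_-^1$ and Lemma \ref{l2-14}(ii) on $D_-^2$. Both parts assume $\boldsymbol a\in\boldsymbol A^{inv}_{N,+}(C)$, and their proofs run on (\ref{2-33}), which is what you are proving. The argument is therefore circular.

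Lemma \ref{l2-9}(i) does not rescue $D_-^2$. It relates $1+\varphi^{(0)}_{g\boldsymbol a}$ on $D_-^2$ to $z^{-1}+\varphi^{(-1)}_{\widetilde{g\boldsymbol a}}$ on $D_-^1$, not to $1+\varphi^{(0)}_{\widetilde{g\boldsymbol a}}$. Ruling out identical vanishing would not be enough in any case.

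The missing idea is the paper's argument:
\begin{itemize}
\item $m_{g\boldsymbol a}$ agrees on all of $D_-$ with an analytic $m\in\mathcal M$, so any zero $\zeta_0$ of $1+\varphi^{(0)}_{g\boldsymbol a}$ is also a zero of $z+\varphi^{(1)}_{g\boldsymbol a}$.
\item Lemma \ref{l2-9}(ii) with $n=0$, together with $\tau_{g\boldsymbol a}(z^{-1})\neq0$, then gives $\zeta_0^{-1}+\varphi^{(-1)}_{g\boldsymbol a}(\zeta_0)=0$.
\item The right column of the matrix in Lemma \ref{l2-11}(iii) therefore vanishes, so $\tau_{g\boldsymbol a}(q_{\zeta_0}\widetilde q_{\zeta_0})=0$. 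This contradicts Lemma \ref{l2-5}(ii).
\end{itemize}
Two side remarks: there are no real $\zeta$ to handle, since $D_-\cap\mathbb R=\emptyset$. Also, constants of either sign drop out of $\tau$, so no sign bookkeeping is needed for them.

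\textbf{Sign of $\tau_{g\boldsymbol a}(z^{\pm1})$.} Your claim $\tau_{g\boldsymbol a}(z^{-1})=m'_{g\boldsymbol a}(0)=a_0^2$ is false. What $m$ actually yields is
\[
\tau_{g\boldsymbol a}(z^{-2})=\tau_{g\boldsymbol a}(z^{-1})^2a_0^2
\quad\text{and, by (\ref{2-30}),}\quad
\tau_{g\boldsymbol a}(z)\,\tau_{g\boldsymbol a}(z^{-1})=a_1^2 .
\]
These relations propagate positivity through the induction, via $\tau_{z^{-1}g\boldsymbol a}(z^{-1})=a_0^2\,\tau_{g\boldsymbol a}(z^{-1})$ and $\tau_{g\boldsymbol a}(z)=a_1^2/\tau_{g\boldsymbol a}(z^{-1})$. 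They cannot start it, because $m$ does not determine the sign.

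To see this, let $h=1$ on $D_-^1$ and $h=-1$ on $D_-^2$.
\begin{itemize}
\item Since $hH(D_-)\subset H(D_-)$, one has $T(h\boldsymbol a)=T(h)T(\boldsymbol a)$, with $T(h)^2=I$ and $T(h)z^n=z^n$ for $n\ge0$.
\item Hence $1+\varphi^{(0)}_{h\boldsymbol a}=h(1+\varphi^{(0)}_{\boldsymbol a})$ and $z+\varphi^{(1)}_{h\boldsymbol a}=h(z+\varphi^{(1)}_{\boldsymbol a})$.
\item So $m_{h\boldsymbol a}=m_{\boldsymbol a}$, while $\tau_{h\boldsymbol a}(z^{-1})=1+\varphi^{(0)}_{h\boldsymbol a}(0)=-\tau_{\boldsymbol a}(z^{-1})$.
\end{itemize}
Take $\boldsymbol a=\boldsymbol m$ from (\ref{3-14}) and the compensator $\eta\boldsymbol f$, where $\eta$ is bounded analytic on $D_+$ and tends to $\pm1$ faster than any $\rho_c^{-1}$ at $\infty$ and $0$. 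This produces an element of $\boldsymbol A_{\mathcal M}$ with $\tau(z^{-1})=-1$.

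So you must add $\tau_{\boldsymbol a}(z^{-1})>0$ as a hypothesis, or build it into $\Gamma_{\boldsymbol a}$. The paper's line ``hence $\tau_{g\boldsymbol a}(z^{\pm1})>0$'' skips the same point. This is harmless where the lemma is used (Lemma \ref{l3-6}), since there $\tau_{\boldsymbol m}(z^{-1})=1+\varphi^{(0)}_{\boldsymbol m}(0)=m_1(0)=1$.
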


\begin{proof}
The proof proceeds similarly to those of Lemma \ref{l2-14} and Proposition
\ref{p1}. First observe that (\ref{2-29}) and Lemma \ref{l3-2} implies%
\begin{equation}
\ \widetilde{\boldsymbol{a}}\in\boldsymbol{A}_{\mathcal{M}}\text{ and
}\widetilde{g}\in\Gamma_{\widetilde{\boldsymbol{a}}}\text{ for }%
\boldsymbol{a}\in\boldsymbol{A}_{\mathcal{M}}\text{, }g\in\Gamma
_{\boldsymbol{a}}\text{.} \label{3-4}%
\end{equation}
Note that (\ref{2-30}) and Lemma \ref{l3-1} imply%
\[
\tau_{g\boldsymbol{a}}\left(  z\right)  \tau_{g\boldsymbol{a}}\left(
z^{-1}\right)  =\lim_{\zeta\rightarrow\infty}\zeta\left(  \phi\left(
\zeta\right)  -m_{g\boldsymbol{a}}(\zeta)\right)  =a_{1}^{2}>0\text{,}%
\]
hence $\tau_{g\boldsymbol{a}}\left(  z^{\pm1}\right)  >0$. Suppose%
\[
1+\varphi_{g\boldsymbol{a}}^{\left(  0\right)  }\left(  \zeta_{0}\right)
=\zeta_{0}+\varphi_{g\boldsymbol{a}}^{\left(  1\right)  }\left(  \zeta
_{0}\right)  =0
\]
for some $\zeta_{0}\in D_{-}$. Then, one has
\[
\left(  1+\varphi_{g\boldsymbol{a}}^{\left(  0\right)  }\left(  0\right)
\right)  \left(  \zeta_{0}^{-1}+\varphi_{g\boldsymbol{a}}^{\left(  -1\right)
}\left(  \zeta_{0}\right)  \right)  =0
\]
from (ii) of Lemma \ref{l2-9} for $n=0$:
\[
\left(  z+z^{-1}-\varphi_{\widetilde{g\boldsymbol{a}}}^{\left(  -1\right)
}\left(  0\right)  \right)  \left(  1+\varphi_{g\boldsymbol{a}}^{\left(
0\right)  }\right)  =z+\varphi_{g\boldsymbol{a}}^{\left(  1\right)  }+\left(
1+\varphi_{g\boldsymbol{a}}^{\left(  0\right)  }\left(  0\right)  \right)
\left(  z^{-1}+\varphi_{g\boldsymbol{a}}^{\left(  -1\right)  }\right)
\text{.}%
\]
Since $1+\varphi_{g\boldsymbol{a}}^{\left(  0\right)  }\left(  0\right)
=\tau_{g\boldsymbol{a}}\left(  z^{-1}\right)  \neq0$, one has $\zeta_{0}%
^{-1}+\varphi_{g\boldsymbol{a}}^{\left(  -1\right)  }\left(  \zeta_{0}\right)
=0$, which contradicts
\begin{align*}
0  &  \neq\tau_{g\boldsymbol{a}}\left(  q_{\zeta}\widetilde{q}_{\zeta}\right)
\\
&  =\dfrac{\left(  \zeta+\varphi_{g\boldsymbol{a}}^{\left(  -1\right)
}\left(  \zeta^{-1}\right)  \right)  \left(  1+\varphi_{g\boldsymbol{a}%
}^{\left(  0\right)  }\left(  \zeta\right)  \right)  -\left(  \zeta
^{-1}+\varphi_{g\boldsymbol{a}}^{\left(  -1\right)  }\left(  \zeta\right)
\right)  \left(  1+\varphi_{g\boldsymbol{a}}^{\left(  0\right)  }\left(
\zeta^{-1}\right)  \right)  }{\zeta-\zeta^{-1}}\text{.}%
\end{align*}
Hence, $1+\varphi_{g\boldsymbol{a}}^{\left(  0\right)  }$ and $z+\varphi
_{g\boldsymbol{a}}^{\left(  1\right)  }$ do not vanish simultaneously. Since
$m_{g\boldsymbol{a}}=m\in\mathcal{M}$ and $m$ is analytic on $D_{-}$, we have
\[
1+\varphi_{g\boldsymbol{a}}^{\left(  0\right)  }\left(  \zeta\right)
\neq0\text{ \ for any }\zeta\in D_{-}\text{.}%
\]
Recalling the identity%
\[
\tau_{g\boldsymbol{a}}\left(  r_{\zeta}\right)  =\left\vert \tau
_{g\boldsymbol{a}}\left(  q_{\zeta}\right)  \right\vert ^{2}\frac
{\operatorname{Im}m_{g\boldsymbol{a}}\left(  \zeta\right)  }{\operatorname{Im}%
\zeta}\text{ \ \ (}r_{\zeta}=q_{\zeta}q_{\overline{\zeta}}\text{)}%
\]
we have $\tau_{g\boldsymbol{a}}\left(  r_{\zeta}\right)  >0$ on $D_{-}$, since
$\operatorname{Im}m_{g\boldsymbol{a}}\left(  \zeta\right)  =\operatorname{Im}%
m\left(  \zeta\right)  $. On the other hand, the identities
\[
\tau_{g\boldsymbol{a}}\left(  z^{-2}\right)  =\tau_{g\boldsymbol{a}}\left(
z^{-1}\right)  ^{2}m_{g\boldsymbol{a}}^{\prime}(0)=\tau_{g\boldsymbol{a}%
}\left(  z^{-1}\right)  ^{2}m^{\prime}(0)=\tau_{g\boldsymbol{a}}\left(
z^{-1}\right)  ^{2}a_{0}^{2}>0
\]
imply $\tau_{z^{-1}g\boldsymbol{a}}\left(  z^{-1}\right)  >0$, and%
\[
m_{z^{-1}g\boldsymbol{a}}(\zeta)=d_{0}m_{g\boldsymbol{a}}(\zeta)=d_{0}%
m(\zeta)\in\mathcal{M}\text{ (due to Lemma \ref{l3-2})}%
\]
shows%
\begin{equation}
z^{-1}g\in\Gamma_{\boldsymbol{a}}\text{ if }g\in\Gamma_{\boldsymbol{a}}
\label{3-5}%
\end{equation}
and $\tau_{g\boldsymbol{a}}\left(  r_{\zeta}z^{-1}\right)  =\tau
_{g\boldsymbol{a}}\left(  z^{-1}\right)  \tau_{z^{-1}g\boldsymbol{a}}\left(
r_{\zeta}\right)  >0$. Therefore, we have%
\[
\tau_{r_{\zeta}g\boldsymbol{a}}\left(  z^{-1}\right)  =\frac{\tau
_{g\boldsymbol{a}}\left(  r_{\zeta}z^{-1}\right)  }{\tau_{g\boldsymbol{a}%
}\left(  r_{\zeta}\right)  }>0\text{.}%
\]
Since $m_{r_{\zeta}g\boldsymbol{a}}=d_{\zeta}d_{\overline{\zeta}%
}m_{g\boldsymbol{a}}=d_{\zeta}d_{\overline{\zeta}}m\in\mathcal{M}$ is valid
due to Lemma \ref{l3-2}, one has%
\begin{equation}
r_{\zeta}g\in\Gamma_{\boldsymbol{a}}\text{ if }g\in\Gamma_{\boldsymbol{a}%
}\text{.} \label{3-6}%
\end{equation}
Then, applying (\ref{3-5}), (\ref{3-6}) iteratively yields%
\[
\text{ \ }r_{1}\in\Gamma_{\boldsymbol{a}}\text{ and }\tau_{g\boldsymbol{a}%
}\left(  r_{1}\right)  >0
\]
for $r_{1}\equiv z^{-m}r_{\zeta_{1}}r_{\zeta_{2}}\cdots r_{\zeta_{n}}$ with
$m\geq0$, $\zeta_{j}\in D_{-}$. Let $r_{2}\equiv z^{-k}r_{\eta_{1}}r_{\eta
_{2}}\cdots r_{\eta_{\ell}}$ with $k\geq0$, $\eta_{j}\in D_{-}$. Then%
\begin{align*}
\tau_{g\boldsymbol{a}}\left(  r_{1}r_{2}^{-1}\right)   &  =\tau
_{g\boldsymbol{a}}\left(  r_{1}\right)  \tau_{r_{1}g\boldsymbol{a}}\left(
r_{2}^{-1}\right)  =\tau_{g\boldsymbol{a}}\left(  r_{1}\right)  \tau
_{r_{1}g\boldsymbol{a}}\left(  r_{2}^{-1}\widetilde{r}_{2}^{-1}\widetilde
{r}_{2}\right) \\
&  =\tau_{g\boldsymbol{a}}\left(  r_{1}\right)  \tau_{r_{1}g\boldsymbol{a}%
}\left(  r_{2}^{-1}\widetilde{r}_{2}^{-1}\right)  \tau_{\widetilde
{r_{1}g\boldsymbol{a}}}\left(  r_{2}\right)  >0
\end{align*}
hold, where in the last identity we have used (\ref{3-4}). This implies
$\tau_{\boldsymbol{a}}\left(  r\right)  >0$ for any rational $r\in\Gamma
_{N}^{\operatorname{real}}\left(  D_{+}\right)  $ by setting $g=1$, which
completes the proof.\bigskip
\end{proof}

Now $m_{\boldsymbol{a}}\in\mathcal{M}$ is sufficient for $\boldsymbol{a}%
\in\boldsymbol{A}_{N}^{inv}\left(  C\right)  $ to be an element of
$\boldsymbol{A}_{N,++}^{inv}\left(  C\right)  $. We show below it is also
necessary. Our strategy is to employ the Jacobi operator associated with
$\boldsymbol{a}\in\boldsymbol{A}_{N,++}^{inv}\left(  C\right)  $ and to
describe $m_{\boldsymbol{a}}$ by its Weyl functions.

For $\boldsymbol{a}\in\boldsymbol{A}_{N,++}^{inv}\left(  C\right)  $,
$g\in\Gamma_{N}^{\operatorname{real}}\left(  D_{+}\right)  $ define $\left\{
a_{n}\left(  g\boldsymbol{a}\right)  ,b_{n}\left(  g\boldsymbol{a}\right)
\right\}  $ by (\ref{1-13}), namely%
\[
\left\{
\begin{array}
[c]{l}%
a_{n}\left(  g\boldsymbol{a}\right)  =\sqrt{\dfrac{\tau_{g\boldsymbol{a}%
}\left(  z^{n}\right)  \tau_{g\boldsymbol{a}}\left(  z^{n-2}\right)  }%
{\tau_{g\boldsymbol{a}}\left(  z^{n-1}\right)  ^{2}}}\\
b_{n}\left(  g\boldsymbol{a}\right)  =\dfrac{\left.  \partial_{\varepsilon
}\tau_{g\boldsymbol{a}}\left(  z^{n}q_{\varepsilon^{-1}}\right)  \right\vert
_{\varepsilon=0}}{\tau_{g\boldsymbol{a}}\left(  z^{n}\right)  }-\dfrac{\left.
\partial_{\varepsilon}\tau_{g\boldsymbol{a}}\left(  z^{n-1}q_{\varepsilon
^{-1}}\right)  \right\vert _{\varepsilon=0}}{\tau_{g\boldsymbol{a}}\left(
z^{n-1}\right)  }\\
\text{ \ \ \ \ \ \ \ \ }=\varphi_{\widetilde{z^{n}g\boldsymbol{a}}}^{\left(
-1\right)  }\left(  0\right)  -\varphi_{\widetilde{z^{n-1}g\boldsymbol{a}}%
}^{\left(  -1\right)  }\left(  0\right)
\end{array}
\right.
\]
The last identity obeys from%
\[
\tau_{g\boldsymbol{a}}\left(  z^{n}q_{\varepsilon^{-1}}\right)  =\tau
_{g\boldsymbol{a}}\left(  z^{n}\right)  \tau_{z^{n}g\boldsymbol{a}}\left(
q_{\varepsilon^{-1}}\right)  =\tau_{g\boldsymbol{a}}\left(  z^{n}\right)
\left(  1+\varepsilon\varphi_{\widetilde{z^{n}g\boldsymbol{a}}}^{\left(
-1\right)  }\left(  \varepsilon\right)  \right)  \text{.}%
\]
Since $\tau_{g\boldsymbol{a}}\left(  z^{n}\right)  =\tau_{\boldsymbol{a}%
}\left(  z^{n}g\right)  /\tau_{\boldsymbol{a}}\left(  g\right)  >0$ holds for
$\boldsymbol{a}\in\boldsymbol{A}_{N,++}^{inv}\left(  C\right)  $, $g\in
\Gamma_{N}^{\operatorname{real}}\left(  C\right)  $, the quantities
$a_{n}\left(  g\boldsymbol{a}\right)  $, $b_{n}\left(  g\boldsymbol{a}\right)
$ are well-defined. Set%
\[
q\left(  g\boldsymbol{a}\right)  =\left\{  a_{n}\left(  g\boldsymbol{a}%
\right)  ,b_{n}\left(  g\boldsymbol{a}\right)  \right\}  _{n\in\mathbb{Z}}%
\]
and call it by Jacobi coefficients associated with a symbol $\boldsymbol{a}$
and $g$ since it define a Jacobi operator:%
\[
\left(  H_{q\left(  g\boldsymbol{a}\right)  }u\right)  _{n}=a_{n+1}\left(
g\boldsymbol{a}\right)  u_{n+1}+a_{n}\left(  g\boldsymbol{a}\right)
u_{n-1}+b_{n}\left(  g\boldsymbol{a}\right)  u_{n}\text{.}%
\]
First we show that any exponential function $e^{c\lambda^{N}}$ is integrable
with respect to the spectral measures $\sigma_{\pm}$, which leads us naturally
to the fact that the boundaries $\pm\infty$ are of limit point type. For
$n\in\mathbb{Z}$ set%
\[
f_{n}=f_{n}\left(  \zeta\right)  =\frac{\left(  g\boldsymbol{a}T\left(
z^{n}g\boldsymbol{a}\right)  ^{-1}1\right)  \left(  \zeta\right)  }{\sqrt
{\tau_{z^{n}g\boldsymbol{a}}\left(  z^{-1}\right)  }}=\zeta^{-n}%
\frac{1+\varphi_{z^{n}g\boldsymbol{a}}^{\left(  0\right)  }\left(
\zeta\right)  }{\sqrt{1+\varphi_{z^{n}g\boldsymbol{a}}^{\left(  0\right)
}\left(  0\right)  }}\text{.\ }%
\]
Then, it holds that
\begin{equation}
\left(  H_{q\left(  g\boldsymbol{a}\right)  }f\right)  _{n}=\left(
\zeta+\zeta^{-1}\right)  f_{n}\text{.} \label{3-7}%
\end{equation}
To show this identity we describe $f_{n}$ by tau-functions. For simplicity of
notations set $\boldsymbol{b}=z^{n}g\boldsymbol{a}$. Then, $f_{n}$ and the
coefficients can be written as%
\begin{align*}
f_{n+1}  &  =\zeta^{-n-1}\dfrac{\tau_{z\boldsymbol{b}}\left(  q_{\zeta
}\right)  }{\sqrt{\tau_{z\boldsymbol{b}}\left(  z^{-1}\right)  }}\text{,
}f_{n}=\zeta^{-n}\dfrac{\tau_{\boldsymbol{b}}\left(  q_{\zeta}\right)  }%
{\sqrt{\tau_{\boldsymbol{b}}\left(  z^{-1}\right)  }}\text{, }f_{n-1}%
=\zeta^{-n+1}\dfrac{\tau_{z^{-1}\boldsymbol{b}}\left(  q_{\zeta}\right)
}{\sqrt{\tau_{z^{-1}\boldsymbol{b}}\left(  z^{-1}\right)  }}\text{\ ,}\\
a_{n}  &  =\sqrt{\dfrac{\tau_{z^{-1}\boldsymbol{b}}\left(  z^{-1}\right)
}{\tau_{\boldsymbol{b}}\left(  z^{-1}\right)  }}\text{, \ }a_{n+1}=\sqrt
{\tau_{\boldsymbol{b}}\left(  z\right)  \tau_{\boldsymbol{b}}\left(
z^{-1}\right)  }\text{, \ }b_{n}=\varphi_{\widetilde{\boldsymbol{b}}}^{\left(
-1\right)  }\left(  0\right)  -\varphi_{\widetilde{z^{-1}\boldsymbol{b}}%
}^{\left(  -1\right)  }\left(  0\right)  \text{,}%
\end{align*}
and the identity (\ref{3-7}) is equivalent to%
\begin{align}
&  \tau_{\boldsymbol{b}}\left(  z^{-1}\right)  \zeta^{-1}\tau_{\boldsymbol{b}%
}\left(  zq_{\zeta}\right)  +\dfrac{\zeta\tau_{\boldsymbol{b}}\left(
z^{-1}q_{\zeta}\right)  }{\tau_{\boldsymbol{b}}\left(  z^{-1}\right)
}+\left(  \varphi_{\widetilde{\boldsymbol{b}}}^{\left(  -1\right)  }\left(
0\right)  -\varphi_{\widetilde{z^{-1}\boldsymbol{b}}}^{\left(  -1\right)
}\left(  0\right)  \right)  \tau_{\boldsymbol{b}}\left(  q_{\zeta}\right)
\nonumber\\
&  =\left(  \zeta+\zeta^{-1}\right)  \tau_{\boldsymbol{b}}\left(  q_{\zeta
}\right)  \text{.} \label{3-8}%
\end{align}
Since we have%
\[
\tau_{\boldsymbol{b}}\left(  zq_{\zeta}\right)  =\tau_{\boldsymbol{b}}\left(
q_{\zeta}\right)  \zeta n_{\boldsymbol{b}}\left(  \zeta\right)  \text{,
\ }\dfrac{\tau_{\boldsymbol{b}}\left(  z^{-1}q_{\zeta}\right)  }%
{\tau_{\boldsymbol{b}}\left(  z^{-1}\right)  }=\tau_{\boldsymbol{b}}\left(
q_{\zeta}\right)  \zeta^{-1}\left(  m_{\boldsymbol{b}}\left(  \zeta\right)
-m_{\boldsymbol{b}}\left(  0\right)  \right)  \text{,}%
\]
(\ref{3-8}) turns to%
\[
\tau_{\boldsymbol{b}}\left(  z^{-1}\right)  n_{\boldsymbol{b}}\left(
\zeta\right)  +m_{\boldsymbol{b}}\left(  \zeta\right)  -m_{\boldsymbol{b}%
}\left(  0\right)  +\varphi_{\widetilde{\boldsymbol{b}}}^{\left(  -1\right)
}\left(  0\right)  -\varphi_{\widetilde{z^{-1}\boldsymbol{b}}}^{\left(
-1\right)  }\left(  0\right)  =\zeta+\zeta^{-1}\text{,}%
\]
which is%
\begin{equation}
-m_{\boldsymbol{b}}\left(  0\right)  +\varphi_{\widetilde{\boldsymbol{b}}%
}^{\left(  -1\right)  }\left(  0\right)  -\varphi_{\widetilde{z^{-1}%
\boldsymbol{b}}}^{\left(  -1\right)  }\left(  0\right)  =0\text{,} \label{3-9}%
\end{equation}
since $\tau_{\boldsymbol{b}}\left(  z^{-1}\right)  n_{\boldsymbol{b}}\left(
\zeta\right)  +m_{\boldsymbol{b}}\left(  \zeta\right)  =\zeta+\zeta^{-1}$. Now
we compute $\varphi_{\widetilde{z^{-1}\boldsymbol{b}}}^{\left(  -1\right)
}\left(  0\right)  $ by tau-function
\begin{align*}
\varphi_{\widetilde{z^{-1}\boldsymbol{b}}}^{\left(  -1\right)  }\left(
0\right)   &  =\lim_{\zeta\rightarrow\infty}\zeta\varphi_{z^{-1}%
\boldsymbol{b}}^{\left(  0\right)  }\left(  \zeta\right)  =\lim_{\zeta
\rightarrow\infty}\zeta\left(  \tau_{z^{-1}\boldsymbol{b}}\left(  q_{\zeta
}\right)  -1\right)  =\lim_{\zeta\rightarrow\infty}\zeta\left(  \dfrac
{\tau_{\boldsymbol{b}}\left(  z^{-1}q_{\zeta}\right)  }{\tau_{\boldsymbol{b}%
}\left(  z^{-1}\right)  }-1\right) \\
&  =\lim_{\zeta\rightarrow\infty}\zeta\left(  \zeta^{-1}\left(  1+\varphi
_{\boldsymbol{b}}^{\left(  0\right)  }\left(  \zeta\right)  \right)  \left(
m_{\boldsymbol{b}}\left(  \zeta\right)  -m_{\boldsymbol{b}}\left(  0\right)
\right)  -1\right)  =\varphi_{\widetilde{\boldsymbol{b}}}^{\left(  -1\right)
}\left(  0\right)  -m_{\boldsymbol{b}}\left(  0\right)  \text{,}%
\end{align*}
which leads us to (\ref{3-9}) and we have (\ref{3-7}).

$f_{n}$ for $n=0$, $\pm1$ are related to the $m$-function:%
\begin{equation}
\left\{
\begin{array}
[c]{l}%
\dfrac{f_{1}\left(  \zeta\right)  }{f_{0}\left(  \zeta\right)  }=a_{1}%
\zeta^{-1}\dfrac{1+\varphi_{zg\boldsymbol{a}}^{\left(  0\right)  }\left(
\zeta\right)  }{1+\varphi_{g\boldsymbol{a}}^{\left(  0\right)  }\left(
\zeta\right)  }=a_{1}\dfrac{n_{g\boldsymbol{a}}\left(  \zeta\right)  }%
{\tau_{g\boldsymbol{a}}\left(  z\right)  }=a_{1}^{-1}\left(  \phi\left(
\zeta\right)  -m_{g\boldsymbol{a}}\left(  \zeta\right)  \right) \\
\dfrac{f_{-1}\left(  \zeta\right)  }{f_{0}\left(  \zeta\right)  }=a_{0}%
^{-1}\zeta\dfrac{1+\varphi_{z^{-1}g\boldsymbol{a}}^{\left(  0\right)  }\left(
\zeta\right)  }{1+\varphi_{g\boldsymbol{a}}^{\left(  0\right)  }\left(
\zeta\right)  }=a_{0}^{-1}\left(  m_{g\boldsymbol{a}}\left(  \zeta\right)
-m_{g\boldsymbol{a}}\left(  0\right)  \right)
\end{array}
\right.  \text{,} \label{3-10}%
\end{equation}
and they satisfy

\begin{lemma}
\label{l3-4}For $\boldsymbol{a}\in\boldsymbol{A}_{N,++}^{inv}\left(  C\right)
$, $g\in\Gamma_{N}^{\operatorname{real}}\left(  D_{+}\right)  $ we have
expansions%
\begin{equation}
\left\{
\begin{array}
[c]{l}%
\dfrac{f_{1}\left(  \zeta\right)  }{f_{0}\left(  \zeta\right)  }=\sum_{1\leq
k\leq L-1}\mu_{k}\zeta^{-k}+O\left(  \zeta^{-L}\right)  \text{ \ as }%
\zeta\rightarrow\infty\text{ on }D_{-}\\
\dfrac{f_{-1}\left(  \zeta\right)  }{f_{0}\left(  \zeta\right)  }=\sum_{1\leq
k\leq L-1}\nu_{k}\zeta^{k}+O\left(  \zeta^{L}\right)  \text{\ \ \ \ as }%
\zeta\rightarrow0\text{ on }D_{-}%
\end{array}
\right.  \label{3-11}%
\end{equation}
for any $L\geq1$. For any $c\geq1$, there exists a constant $c_{1}>0$ such
that%
\[
\left\{
\begin{array}
[c]{c}%
\left\vert \mu_{k}\right\vert \leq c_{1}c^{-k}\Gamma\left(  k/N+1\right) \\
\left\vert \nu_{k}\right\vert \leq c_{1}c^{-k}\Gamma\left(  k/N+1\right)
\end{array}
\right.  \text{ \ holds for any }k\geq1\text{.}%
\]

\end{lemma}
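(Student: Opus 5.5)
By (\ref{3-10}), $f_{1}/f_{0}=a_{1}\zeta^{-1}\tau_{zg\boldsymbol{a}}\left(  q_{\zeta}\right)  /\tau_{g\boldsymbol{a}}\left(  q_{\zeta}\right)  $. The plan is to write numerator and denominator as $1+\varphi^{\left(  0\right)  }\left(  \zeta\right)  $ and use the integral representation of $\varphi^{\left(  0\right)  }$ preceding (\ref{2-20}). That is,
\[
\varphi_{\boldsymbol{b}}^{\left(  0\right)  }\left(  \zeta\right)  =\frac{1}{2\pi i}\int_{C}\frac{F_{\boldsymbol{b}}\left(  \lambda\right)  }{\zeta-\lambda}d\lambda,\qquad F_{\boldsymbol{b}}=g\left(  \boldsymbol{a}-\boldsymbol{f}\right)  \left(  g^{-1}T\left(  \boldsymbol{b}\right)  \right)  ^{-1}g^{-1}1 ,
\]
for $\boldsymbol{b}=g\boldsymbol{a}$ and $\boldsymbol{b}=zg\boldsymbol{a}$. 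Expanding $(\zeta-\lambda)^{-1}$ at $\zeta=\infty$ gives the coefficients $\alpha_{k}=\frac{1}{2\pi i}\int_{C}\lambda^{k-1}F\left(  \lambda\right)  d\lambda$ of (\ref{2-20}). The key point is that the compensator $\boldsymbol{f}$ may be chosen with $\rho_{c^{\prime}}\left(  \boldsymbol{a}-\boldsymbol{f}\right)  =O\left(  1\right)  $ for \emph{any} $c^{\prime}$. The inverse operator $\left(  g^{-1}T\left(  \boldsymbol{b}\right)  \right)  ^{-1}$ exists by Proposition \ref{p1}, and it is bounded on $H_{N,c}\left(  D_{+}\right)  $. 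Hence $\rho_{c^{\prime}-c-\delta(g)}F\in L^{2}\left(  C\right)  $, i.e. $|F(\lambda)|$ decays like $e^{-c^{\prime\prime}|\operatorname{Re}\lambda|^{N}}$ on $C_{1}$ in an $L^{2}$ sense, with $c^{\prime\prime}$ arbitrary.

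Next I bound the moments. On $C_{1}$ we have $|\rho_{c''}(\lambda)|\asymp e^{c''|\operatorname{Re}\lambda|^{N}}$, so Cauchy--Schwarz gives
\[
|\alpha_{k}|\leq\Big(\int_{C_{1}}|\lambda|^{2k-2}e^{-2c''|\operatorname{Re}\lambda|^{N}}|d\lambda|\Big)^{1/2}\|\rho_{c''}F\|_{L^{2}}+(\text{contribution of }C_{2}),
\]
where the first integral is $\leq c_{2}(2c'')^{-k/N}\Gamma(k/N+1)$ up to harmless factors ($\int x^{2k}e^{-2c''x^{N}}dx\sim (2c'')^{-2k/N}\Gamma((2k+1)/N)/N$, whose square root is controlled by $\Gamma(k/N+1)\,c_{3}^{k}$ via Stirling, with $c_{3}$ absorbed). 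The $C_{2}$ part is bounded because $|\lambda|\leq R$ there ($C_2\subset\{|z|\le R\}$), giving $R^{k}\|F\|_{L^{1}(C_2)}$. To absorb this I would rescale: $R^{k}\leq c_{4}\,c^{-k}\Gamma(k/N+1)$ fails for fixed $R>c^{-1}$ only for finitely many $k$, since $\Gamma(k/N+1)$ grows superexponentially. So for any prescribed $c\geq1$ I choose $c''$ large enough that $(2c'')^{-1/N}$ times the Stirling constant is $\leq c^{-1}$. The same argument with $\lambda^{-k-1}$ and the $\beta_{k}$ of (\ref{2-20}) treats $\zeta\to0$. Equivalently, one can apply the $\infty$ case to $\widetilde{g\boldsymbol{a}}$ via Lemma \ref{l2-9}(i), since $R$ is unitary and the roles of $C_{1},C_{2}$ swap.

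Finally, the coefficients $\mu_{k}$ of the quotient come from dividing two series, $a_{1}\zeta^{-1}(1+\sum\alpha_{k}^{\prime}\zeta^{-k})(1+\sum\alpha_{k}\zeta^{-k})^{-1}$. Each is a polynomial expression in the $\alpha$'s, and the class of sequences with $|x_{k}|\leq c_{1}c^{-k}\Gamma(k/N+1)$ is stable under Cauchy products up to changing $c$ slightly. This uses $\sum_{j}\Gamma(j/N+1)\Gamma((k-j)/N+1)\leq C\,2^{k}\,\Gamma(k/N+1)$, from the Beta-function bound. Stability under inversion follows by recursion $\beta_{k}=-\sum_{j\geq1}\alpha_{j}\beta_{k-j}$ and induction, again after enlarging $c$. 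Since $c$ is arbitrary, these losses are harmless. I expect the main obstacle to be this last step: making the inversion bound rigorous with constants uniform in $k$, which requires care. The asymptotic expansion (\ref{3-11}) itself, with remainder $O(\zeta^{-L})$ uniform on $D_{-}$, follows as in (\ref{2-20}) from the rapid decay of $F$, together with the nonvanishing $1+\varphi_{g\boldsymbol{a}}^{(0)}\neq0$ near $\infty$ (Lemma \ref{l2-14}(i)).
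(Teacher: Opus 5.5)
Your proposal is correct and follows the paper's proof. The paper uses the same ingredients: the quotient $f_{1}/f_{0}=a_{1}\zeta^{-1}\bigl(1+\varphi_{zg\boldsymbol{a}}^{(0)}\bigr)/\bigl(1+\varphi_{g\boldsymbol{a}}^{(0)}\bigr)$, the Cauchy-integral representation with a compensator giving decay like $\rho_{c^{\prime}}^{-1}$ for arbitrary $c^{\prime}$, the resulting bound $|\alpha_{k}|\leq c_{1}c^{-k}\Gamma(k/N+1)$ (where the paper just cites Lemma \ref{a-4}, you do the Gamma-integral and $C_{2}$ estimates by hand), and stability under products and inverses, which is the content of Lemma \ref{a-2} and which your recursive induction also establishes, e.g.\ with $c^{-1}$ replaced by $(1+c_{1})c^{-1}$.
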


\begin{proof}
Recall
\[
\dfrac{f_{1}\left(  \zeta\right)  }{f_{0}\left(  \zeta\right)  }=a_{1}%
\zeta^{-1}\dfrac{1+\varphi_{zg\boldsymbol{a}}^{\left(  0\right)  }\left(
\zeta\right)  }{1+\varphi_{g\boldsymbol{a}}^{\left(  0\right)  }\left(
\zeta\right)  }%
\]
and the expression%
\[
\varphi_{z^{j}g\boldsymbol{a}}^{\left(  0\right)  }\left(  \zeta\right)
=\dfrac{1}{2\pi i}\int_{C}\dfrac{u_{j}\left(  \lambda\right)  }{\zeta-\lambda
}d\lambda\text{, \ }%
\]
\ with $u_{j}\left(  \lambda\right)  =\lambda^{j}g\left(  \lambda\right)
\left(  \boldsymbol{a}\left(  \lambda\right)  -\boldsymbol{f}\left(
\lambda\right)  \right)  \left(  T\left(  z^{j}g\boldsymbol{a}\right)
^{-1}1\right)  \left(  \lambda\right)  $. We have for $L\geq1$ and $j=0$, $1$%
\[
\varphi_{z^{j}g\boldsymbol{a}}^{\left(  0\right)  }\left(  \zeta\right)
=\sum_{1\leq k\leq L}\alpha_{k,j}\zeta^{-k}+\zeta^{-L}\dfrac{1}{2\pi i}%
\int_{C}\dfrac{\lambda^{L}u_{j}\left(  \lambda\right)  }{\lambda-\zeta
}d\lambda
\]
with%
\[
\text{ }\alpha_{k,j}=-\dfrac{1}{2\pi i}\int_{C}\lambda^{k-1}u_{j}\left(
\lambda\right)  d\lambda\text{.}%
\]
Here one can assume that for arbitrary $c>0$ there exists $v_{j}\in
L^{2}\left(  C\right)  $ such that
\[
\left\vert u_{j}\left(  \lambda\right)  \right\vert \leq e^{-c\left(
\left\vert \lambda\right\vert ^{N}+\left\vert \lambda\right\vert ^{-N}\right)
}\left\vert v_{j}\left(  \lambda\right)  \right\vert
\]
holds on $C$ due to $\boldsymbol{a}\in\boldsymbol{A}_{N,++}^{inv}\left(
C\right)  $, which yields without difficulty (see Lemma \ref{a-4}) estimates
for any $c\geq1$%
\[
\left\vert \alpha_{k,j}\right\vert \leq c_{1}c^{-k}\Gamma\left(  k/N+1\right)
\text{ \ for any }k\geq1\text{. }%
\]
Applying Lemma \ref{a-2} completes the proof for $f_{1}/f_{0}$.\bigskip\ The
proof for $f_{-1}/f_{0}$ is similar.
\end{proof}

Then, we can prove an important

\begin{lemma}
\label{l3-5}Suppose $\boldsymbol{a}\in\boldsymbol{A}_{N,++}^{inv}\left(
C\right)  $, $g\in\Gamma_{N}^{\operatorname{real}}\left(  D_{+}\right)  $.
Then the boundaries $\pm\infty$ are of limit point type for $H_{q\left(
g\boldsymbol{a}\right)  }$. The spectral measures $\sigma_{\pm}$ of the Weyl
functions $m_{\pm}$ satisfy%
\[
\int_{-\infty}^{\infty}e^{c\left\vert \lambda\right\vert ^{N}}\sigma_{\pm
}\left(  d\lambda\right)  <\infty\text{ \ for any }c>0\text{.}%
\]
The $m$-function for $g\boldsymbol{a}$ is given by%
\begin{equation}
m_{g\boldsymbol{a}}(z)=\left\{
\begin{array}
[c]{ll}%
z+z^{-1}+a_{1}^{2}m_{+}\left(  z+z^{-1}\right)  & \text{if }\left\vert
z\right\vert >1\\
-a_{0}^{2}m_{-}\left(  z+z^{-1}\right)  +b_{0} & \text{if }\left\vert
z\right\vert <1
\end{array}
\right.  \text{.} \label{3-12}%
\end{equation}

\end{lemma}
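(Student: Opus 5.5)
The plan is to build the Weyl functions of $H_{q(g\boldsymbol{a})}$ directly from the eigenfunctions $f_n(\zeta)$ of (\ref{3-7}), and to read off the spectral measures from the ratios $f_{\pm1}/f_0$ in (\ref{3-10}).

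\textbf{Step 1: square-summable solutions.} Fix $\zeta\in D_{-}^{1}\cap\mathbb{C}_+$, so that $\lambda=\phi(\zeta)\in\mathbb{C}_+$. By (\ref{3-7}), $(f_n(\zeta))_{n}$ solves $H_{q(g\boldsymbol{a})}f=\lambda f$. We show $(f_n)_{n\ge0}\in\ell^2$ at $+\infty$ as follows.
\begin{itemize}
\item Lemma \ref{l2-14}(i) applied to $z^n g\boldsymbol{a}$ gives $\operatorname{Im} m_{z^ng\boldsymbol{a}}(\zeta)/\operatorname{Im}\zeta>0$.
\item Run the standard Wronskian/Green identity for the recursion with $f_n$. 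Equivalently, use $\tau_{z^ng\boldsymbol{a}}(r_\zeta)=|1+\varphi^{(0)}_{z^ng\boldsymbol{a}}(\zeta)|^2\operatorname{Im}m/\operatorname{Im}\zeta$ together with the cocycle property.
\item The resulting telescoping identity has the form
\[
\operatorname{Im}\lambda\sum_{n=0}^{M}|f_n|^2=\operatorname{Im}\bigl(a_{M+1}\overline{f_M}f_{M+1}\bigr)-\operatorname{Im}\bigl(a_0\overline{f_{-1}}f_0\bigr).
\]
\item The boundary term at $M$ has a definite sign, which is the positivity supplied by Lemma \ref{l2-14}. Hence the partial sums are bounded.
\end{itemize}

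\textbf{Step 2: identify the Weyl functions.} Define $m_+(\lambda)=-f_1/(a_1f_0)$ up to the normalisation fixed by (\ref{3-10}), and similarly $m_-$ from $f_{-1}/f_0$. The identity $f_1/f_0=a_1^{-1}(\phi(\zeta)-m_{g\boldsymbol{a}}(\zeta))$ then rewrites as (\ref{3-12}) for $|z|>1$; the case $|z|<1$ is analogous, using $f_{-1}/f_0$ and $m_{g\boldsymbol{a}}(0)=b_0$. These $m_\pm$ are Herglotz in $\lambda$ by the previous step, so they are Stieltjes transforms of positive measures $\sigma_\pm$.

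\textbf{Step 3: exponential moments.} By Lemma \ref{l3-4}, the asymptotic expansion of $f_1/f_0$ in $\zeta^{-1}$ has coefficients with $|\mu_k|\le c_1c^{-k}\Gamma(k/N+1)$ for every $c$.
\begin{itemize}
\item Re-expanding in $\lambda^{-1}=\zeta^{-1}(1+\zeta^{-2})^{-1}$ gives moments satisfying $\int|\lambda|^{k}\,d\sigma_+\le c_1'c^{-k}\Gamma(k/N+1)$. The resummation changes only constants, since $\zeta\sim\lambda$.
\item The moment problem is determinate because of Carleman's condition, which these bounds imply since $N\ge1$.
\item Summing the series gives $\int e^{c'|\lambda|^N}d\sigma_\pm=\sum_j c'^j\int|\lambda|^{Nj}d\sigma_\pm<\infty$, with $c'$ arbitrary because $c$ is.
\end{itemize}

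\textbf{Step 4: limit point property.} Limit point type at $\pm\infty$ follows from determinacy of the moment problem, equivalently Carleman's condition. Alternatively, it follows from the uniqueness of the $\ell^2$ solution, which is forced by the Herglotz function being uniquely determined by its moments.

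The main obstacle is Step 3: converting asymptotic coefficients in a sector into genuine moment bounds. I would use Lemma \ref{a-2} or \ref{a-4} from the Appendix, which presumably handle the passage from $\zeta$-expansions on $D_-$ to $\lambda$-moments. A secondary point is to check that the $m_\pm$ defined via $f_{\pm1}/f_0$ coincide with the usual Weyl functions of the half-line operators; this needs the $\ell^2$ statement from Step 1.
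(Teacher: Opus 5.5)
Your Step 1 is sound and matches the paper. The paper does the same telescoping in product form via $u_n=f_{n+1}/f_n$ and the sign $\operatorname{Im} n_{z^ng\boldsymbol{a}}(\zeta)<0$. Your sign, obtained from $f_n/f_{n+1}=a_{n+1}^{-1}(m_{z^{n+1}g\boldsymbol{a}}(\zeta)-m_{z^{n+1}g\boldsymbol{a}}(0))$ and Lemma \ref{l2-14}, is equivalent.

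The gap is in Steps 2--4, which as ordered are circular. Step 3 reads moment bounds for $\sigma_+$ off the expansion (\ref{3-11}) of $f_1/f_0$. That presupposes the coefficients of this expansion are the moments $((H^+)^k\delta_1,\delta_1)$ of the half-line spectral measure, i.e.\ that $-f_1/(a_1f_0)$ is \emph{the} Weyl function. Step 4 then derives limit point type from those bounds. The identification cannot come from the $\ell^2$ property of $(f_n)$, as you propose: before limit point type is known, every solution may be $\ell^2$ (the limit circle case), and the Weyl function is not unique. Nor can you obtain $\sigma_+$ as the representing measure of $-f_1/(a_1f_0)$. That function is defined only for $\zeta\in D_-^1$, i.e.\ for $|\operatorname{Im}\lambda|>\vartheta$, so it is not known to be Herglotz on $\mathbb{C}_+$. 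Even if it were, nothing in your argument ties its moments to the Jacobi coefficients $\{a_n,b_n\}$.

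The missing idea is the quantitative Weyl disk comparison (\ref{3-13}). Your bounded partial sums say exactly that $-f_1/f_0$ lies in $D_L(\phi(\zeta))$ for every $L$ (Lemma \ref{a-1}, with its normalization). Now let $m_+$ be \emph{any} Weyl function of $H_{q(g\boldsymbol{a})}$ on $\mathbb{Z}_+$, i.e.\ any point of the nested disks, with no limit point assumption. Then $a_1m_+(\phi(\zeta))$ lies in the same disks. The radius formula of Lemma \ref{a-1}, with $s_L$ a polynomial whose degree grows with $L$, makes $|f_1/f_0+a_1m_+(\phi(\zeta))|$ of order $O(\zeta^{-K})$ along $i\mathbb{R}$ for any prescribed $K$, once $L$ is large enough.

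Hence the expansion of $f_1/f_0$ coincides with that of $-a_1m_+(\phi(\zeta))$. After Lemma \ref{a-3}, its coefficients are the moments of $\sigma_+$; these are polynomials in the Jacobi coefficients, so they are the same for every choice of $m_+$. From there:
\begin{enumerate}
\item Lemma \ref{l3-4} gives the Gamma-type bounds on these moments.
\item Lemma \ref{a-4} gives the exponential integrability and, via Carleman, limit point type.
\item Only now is $m_+$ unique, so $-f_1/f_0=a_1m_+(\phi(\zeta))$, and (\ref{3-10}) yields (\ref{3-12}).
\end{enumerate}
The left half-line is handled the same way with $f_{-1}/f_0$.

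Two small slips. Your exponential series is missing the factor $1/j!$. Also, the expansion yields signed moments, so you should work with even moments and interpolate as in Lemma \ref{a-4}.
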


\begin{proof}
Set $u_{n}=f_{n+1}/f_{n}$. Then, one has%
\[
u_{n}=\sqrt{\dfrac{\tau_{\boldsymbol{b}}\left(  z^{-1}\right)  }%
{\tau_{z\boldsymbol{b}}\left(  z^{-1}\right)  }}\dfrac{\tau_{z\boldsymbol{b}%
}\left(  q_{\zeta}\right)  }{\zeta\tau_{\boldsymbol{b}}\left(  q_{\zeta
}\right)  }=\sqrt{\dfrac{\tau_{\boldsymbol{b}}\left(  z^{-1}\right)  }%
{\tau_{\boldsymbol{b}}\left(  z\right)  }}n_{\boldsymbol{b}}\left(
\zeta\right)  \text{ \ (}\boldsymbol{b}=z^{n}g\boldsymbol{a}\text{).}%
\]
Since (\ref{3-7}) implies%
\[
\zeta+\zeta^{-1}=a_{n+1}u_{n}+a_{n}u_{n-1}^{-1}+b_{n}\text{,}%
\]
taking the imaginary part yields%
\[
\left\vert u_{n-1}\right\vert ^{2}=a_{n}a_{n+1}^{-1}\dfrac{\operatorname{Im}%
u_{n-1}}{\operatorname{Im}u_{n}}\left(  1-\dfrac{sa_{n+1}^{-1}}%
{\operatorname{Im}u_{n}}\right)  ^{-1}\text{ \ \ (}s=\operatorname{Im}\left(
\zeta+\zeta^{-1}\right)  \text{),}%
\]
and%
\[
\left\vert \dfrac{f_{n}}{f_{0}}\right\vert ^{2}=%
%TCIMACRO{\dprod _{0\leq k\leq n-1}}%
%BeginExpansion
{\displaystyle\prod_{0\leq k\leq n-1}}
%EndExpansion
\left\vert u_{k}\right\vert ^{2}=BA_{n}%
%TCIMACRO{\dprod _{1\leq k\leq n}}%
%BeginExpansion
{\displaystyle\prod_{1\leq k\leq n}}
%EndExpansion
\left(  1+A_{k}\right)  ^{-1}%
\]
with $A_{k}=-s\left(  a_{k+1}\operatorname{Im}u_{k}\right)  ^{-1}$,
$B=-s^{-1}a_{1}\operatorname{Im}u_{0}$. Since $\operatorname{Im}%
n_{\boldsymbol{b}}\left(  \zeta\right)  <0$ for $\zeta\in D_{-}^{1}%
\cap\mathbb{C}_{+}$, one has $A_{k}$, $B>0$, and
\[
\sum_{1\leq n\leq N}\left\vert \dfrac{f_{n}}{f_{0}}\right\vert ^{2}%
=\sum_{1\leq n\leq N}BA_{n}%
%TCIMACRO{\dprod _{1\leq k\leq n}}%
%BeginExpansion
{\displaystyle\prod_{1\leq k\leq n}}
%EndExpansion
\left(  1+A_{k}\right)  ^{-1}=B\left(  1-%
%TCIMACRO{\dprod _{1\leq k\leq N}}%
%BeginExpansion
{\displaystyle\prod_{1\leq k\leq N}}
%EndExpansion
\left(  1+A_{k}\right)  ^{-1}\right)  \leq B
\]
is valid for any $N\geq1$, which implies $-f_{1}\left(  \zeta\right)
/f_{0}\left(  \zeta\right)  \in D_{L}\left(  \phi\left(  \zeta\right)
\right)  $ (Lemma \ref{a-1}) for any $L\geq1$, and $f_{n}\in\ell^{2}\left(
\mathbb{Z}_{+}\right)  $. Let $m_{+}$ be any Weyl function of $H_{q\left(
g\boldsymbol{a}\right)  }$ on $\mathbb{Z}_{+}$. Then, due to $a_{1}%
m_{+}\left(  \phi\left(  \zeta\right)  \right)  \in D_{L}\left(  \phi\left(
\zeta\right)  \right)  $ Lemma \ref{a-1} implies%
\begin{equation}
\left\vert \dfrac{f_{1}\left(  \zeta\right)  }{f_{0}\left(  \zeta\right)
}+a_{1}m_{+}\left(  \phi\left(  \zeta\right)  \right)  \right\vert \leq
\dfrac{a_{1}}{\sqrt{a_{L+1}}\left\vert \operatorname{Im}s_{L}\left(
\phi\left(  \zeta\right)  \right)  \overline{s_{L+1}\left(  \phi\left(
\zeta\right)  \right)  }\right\vert }=O\left(  \zeta^{-2L-1}\right)
\label{3-13}%
\end{equation}
as $\zeta\rightarrow\infty$ since $s_{L}$ is a polynomial of degree $L$.
Substituting (\ref{3-13}) to (\ref{3-11}) yields%
\[
m_{+}\left(  z+z^{-1}\right)  =\sum_{0\leq k\leq L-1}\mu_{k}z^{-k-1}+O\left(
z^{-L-1}\right)  \text{ as }z\rightarrow\infty\text{ on }i\mathbb{R}%
\]
with $\mu_{k}$ satisfying
\[
\left\vert \mu_{k}\right\vert \leq c_{3}c^{-k}\Gamma\left(  k/N+1\right)
\text{ \ for any }k\geq1\text{,}%
\]
where $c_{3}$ is another constant. Lemma \ref{a-3} shows%
\[
m_{+}\left(  z\right)  =\sum_{0\leq k\leq L-1}\sigma_{k}z^{-k-1}+O\left(
z^{-L-1}\right)  \text{ as }z\rightarrow\infty\text{ on }i\mathbb{R}%
\]
with other coefficients $\sigma_{k}$ satisfying%
\[
\left\vert \sigma_{k}\right\vert \leq c_{3}2^{k}\left(  kc^{-1}\Gamma\left(
\alpha+1\right)  +kc^{-k}\Gamma\left(  \alpha k+1\right)  \right)  \text{.}%
\]
Then, Lemma \ref{a-4} implies%
\[
\int_{-\infty}^{\infty}e^{c\left\vert \lambda\right\vert ^{N}}\sigma
_{+}\left(  d\lambda\right)  <\infty\text{ for any }c>1\text{.}%
\]
Hence the boundary $+\infty$ is of limit point type, which means
$-f_{1}\left(  \zeta\right)  /f_{0}\left(  \zeta\right)  =a_{1}m_{+}\left(
\phi\left(  \zeta\right)  \right)  $ if $\zeta\in D_{-}^{1}$. Then,
(\ref{3-10}) yields the first identity of (\ref{3-12}). The second identity
can be deduced similarly, which completes the proof.\bigskip
\end{proof}

To identify $Q_{N}$ with $\boldsymbol{A}_{N,++}^{inv}\left(  C\right)  $ it is
required to define a symbol $\boldsymbol{m}\in$ $\boldsymbol{A}_{N,++}%
^{inv}\left(  C\right)  $ from $q\in Q_{N}$ or equivalently the Weyl functions
$m_{\pm}$ and coefficients $a_{0}$, $a_{1}$, $b_{0}$. In view of (\ref{3-12})
set%
\begin{align}
m(z)  &  =\left\{
\begin{array}
[c]{ll}%
z+z^{-1}+a_{1}^{2}m_{+}\left(  z+z^{-1}\right)  & \text{if }\left\vert
z\right\vert >1\\
-a_{0}^{2}m_{-}\left(  z+z^{-1}\right)  +b_{0} & \text{if }\left\vert
z\right\vert <1
\end{array}
\right.  \text{ and}\nonumber\\
\boldsymbol{m}\left(  z\right)   &  =\left(  m_{1}(z),m_{2}(z)\right)
=\left(  \dfrac{zm(z)-1}{z^{2}-1},z^{2}\dfrac{z-m(z)}{z^{2}-1}\right)
\text{.} \label{3-14}%
\end{align}
This symbol $\boldsymbol{m}$ satisfies (i) and (iii) of (\ref{2-4}) due to%
\[
m_{1}(z)-1=\dfrac{z\left(  m(z)-z\right)  }{z^{2}-1}\text{.}%
\]
(iv) is verified from the identity
\[
M(z)\equiv m_{1}(z)\widetilde{m}_{1}(z)-m_{2}(z)\widetilde{m}_{2}%
(z)=\dfrac{m(z)-m(z^{-1})}{z-z^{-1}}\text{ }%
\]
and the properties $\operatorname{Im}m\left(  z\right)  /\operatorname{Im}z>0$
if $\left\vert z\right\vert >1$ and $\operatorname{Im}m\left(  z\right)
/\operatorname{Im}z<0$ if $\left\vert z\right\vert <1$.

To show (ii) set%
\[
m_{\pm}^{\left(  0\right)  }\left(  z\right)  =\int_{-\infty}^{\infty}%
\dfrac{1-\left(  \cosh z^{N}\right)  ^{-c}\left(  \cosh\lambda^{N}\right)
^{c}}{\lambda-z}\sigma_{\pm}\left(  d\lambda\right)  \text{.}%
\]
Then, $m_{\pm}^{\left(  0\right)  }\left(  \phi\left(  z\right)  \right)  $ is
analytic on $D_{+}$, and,\ if $\left\vert z\right\vert >1$, an identity
\[
\rho_{c}(z)\left(  m\left(  z\right)  -\left(  \phi\left(  z\right)
+a_{1}^{2}m_{+}^{\left(  0\right)  }\left(  \phi\left(  z\right)  \right)
\right)  \right)  =a_{1}^{2}\int_{-\infty}^{\infty}\dfrac{\left(  \cosh
\lambda^{N}\right)  ^{c}}{\lambda-\phi\left(  z\right)  }\sigma_{+}\left(
d\lambda\right)
\]
yields%
\[
\sup_{z\in C_{1}}\left\vert \rho_{c}(z)\left(  m\left(  z\right)  -\left(
\phi\left(  z\right)  +a_{1}^{2}m_{+}^{\left(  0\right)  }\left(  \phi\left(
z\right)  \right)  \right)  \right)  \right\vert <\infty\text{.}%
\]
Here we have used the fact that $\inf_{z\in C_{1}}\left\vert \operatorname{Im}%
\phi\left(  z\right)  \right\vert >0$ due to (\ref{73}). Similarly one has
\[
\sup_{z\in C_{2}}\left\vert \rho_{c}(z)\left(  m\left(  z\right)  -\left(
-a_{0}^{2}m_{-}^{\left(  0\right)  }\left(  \phi\left(  z\right)  \right)
+b_{0}\right)  \right)  \right\vert <\infty\text{.}%
\]
To obtain a compensator $f$ for $m$ on $C$ set%
\[
f(z)=\eta_{1}(z)\left(  \phi\left(  z\right)  +a_{1}^{2}m_{+}^{\left(
0\right)  }\left(  \phi\left(  z\right)  \right)  \right)  +\eta_{1}%
(z^{-1})\left(  -a_{0}^{2}m_{-}^{\left(  0\right)  }\left(  \phi\left(
z\right)  \right)  +b_{0}\right)
\]
with%
\[
\eta_{1}(z)=\dfrac{e^{z^{2N}}}{e^{z^{2N}}+e^{z^{-2N}}}\text{.}%
\]
Then, $f$ is analytic on $D_{+}$, and one has%
\[
\sup_{\lambda\in C}\left\vert \rho_{c}(\lambda)\left(  m\left(  \lambda
\right)  -f(\lambda)\right)  \right\vert <\infty\text{.}%
\]
$m_{1}$, $m_{2}$ defined by (\ref{3-14}) have singulalities at $z=\pm1$ and we
have to remove them. For this purpose define%
\begin{equation}
f_{1}(z)=\dfrac{zf(z)-1}{z^{2}-1}\eta_{2}(z)\text{, \ \ }f_{2}(z)=z^{2}%
\dfrac{z-f(z)}{z^{2}-1}\eta_{2}(z) \label{3-15}%
\end{equation}
with%
\[
\eta_{2}(z)=1-\dfrac{1}{2}\dfrac{\left(  \rho_{2c}(1)-\rho_{2c}(-1)\right)
z+\rho_{2c}(1)+\rho_{2c}(-1)}{\rho_{2c}(z)}\text{.}%
\]
Then, clearly $f_{1}$, $f_{2}$ are bounded and analytic on $D_{+}$, and the
function
\[
\rho_{c}(\lambda)\left(  m_{1}(\lambda)-f_{1}(\lambda)\right)  =\rho
_{c}(\lambda)\dfrac{\lambda\left(  m(\lambda)-f(\lambda)\eta_{2}%
(\lambda)\right)  +\eta_{2}(\lambda)-1}{\lambda^{2}-1}%
\]
satisfy%
\[
\sup_{\lambda\in C}\left\vert \rho_{c}(\lambda)\left(  m_{1}(\lambda
)-f_{1}(\lambda)\right)  \right\vert +\sup_{\lambda\in C_{1}}\left\vert
\partial_{\lambda}\left(  \rho_{c}(\lambda)\left(  m_{1}(\lambda
)-f_{1}(\lambda)\right)  \right)  \right\vert <\infty\text{.}%
\]
The rest of the relevant estimates can be obtained similarly, which proves
(ii). Consequently, we have $\boldsymbol{m}\in\boldsymbol{M}_{N}\left(
C\right)  $.

To have $\boldsymbol{m}^{-1}\in\boldsymbol{M}_{N}\left(  C\right)  $ we have
to find a compensator for $M\left(  z\right)  $. One candidate is
\[
F(z)\equiv f_{1}(z)\widetilde{f}_{1}(z)-f_{2}(z)\widetilde{f}_{2}%
(z)=\dfrac{f(z)-f(z^{-1})}{z-z^{-1}}\eta_{2}(z)\eta_{2}(z^{-1})\text{,}%
\]
and%
\begin{gather*}
\dfrac{f(z)-f(z^{-1})}{z-z^{-1}}=\dfrac{\eta_{1}(z)-\eta_{1}(z^{-1})}%
{z-z^{-1}}A(\phi\left(  z\right)  )\text{ with}\\
A(z)\equiv z+a_{1}^{2}m_{+}^{\left(  0\right)  }\left(  z\right)  +a_{0}%
^{2}m_{-}^{\left(  0\right)  }\left(  z\right)  -b_{0}\text{.}%
\end{gather*}
It is possible for $F$ to have zeros in $\overline{D}_{+}$. Such zeros are
generated by $A(z)$ on the strip $S=\left\{  \left\vert \operatorname{Im}%
z\right\vert \leq\vartheta\right\}  $, where $A(z)$ is analytic. Since
$A(z)=z+o\left(  1\right)  $ as $z\rightarrow\infty$ in $S$, zeros of $A(z)$
are confined on a compact set $K=\left\{  \left\vert \operatorname{Re}%
z\right\vert \leq a\right\}  \cap S$ for an $a>0$. Let them be $\left\{
\alpha_{j}\right\}  _{1\leq j\leq n}$ and set $p(z)=\prod_{1\leq j\leq
n}\left(  z-\alpha_{j}\right)  $. Define%
\[
F_{0}(z)=F(z)\left(  1-\dfrac{1}{\rho_{2c}\left(  z\right)  p(\phi\left(
z\right)  )}\right)  \text{.}%
\]
Then, without difficulty we can verify (\ref{2-5}) for the compensator $F_{0}%
$, which yields $\boldsymbol{m}^{-1}\in\boldsymbol{M}_{N}\left(  C\right)  $.
Consequently, we have $\boldsymbol{m}\in\boldsymbol{A}_{N}^{inv}\left(
C\right)  $.

\begin{lemma}
\label{l3-6}For $q\in Q_{N}$ define a symbol $\boldsymbol{m}$ by (\ref{3-14}).
Then, $\boldsymbol{m}\in\boldsymbol{A}_{N,++}^{inv}\left(  C\right)  $ is
valid and the $m$-function of $\boldsymbol{m}$ is $m$.
\end{lemma}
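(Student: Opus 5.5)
The text immediately preceding the lemma already shows $\boldsymbol{m}\in\boldsymbol{M}_{N}(C)$ and $\boldsymbol{m}^{-1}\in\boldsymbol{M}_{N}(C)$. Lemma \ref{l2-2} then gives $T(\boldsymbol{m})^{-1}=T(\boldsymbol{m}^{-1})$, bounded on every $H_{N,c}(D_{+})$. Hence $\boldsymbol{m}\in\boldsymbol{A}_{N}^{inv}(C)$. By Lemma \ref{l3-3} it then suffices to check two things: $\boldsymbol{m}=\overline{\boldsymbol{m}}$, and $m_{\boldsymbol{m}}=m$ with $m\in\mathcal{M}$. This places $\boldsymbol{m}$ in $\boldsymbol{A}_{\mathcal{M}}\subset\boldsymbol{A}_{N,++}^{inv}(C)$ and also proves the second claim.

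The reality $\boldsymbol{m}=\overline{\boldsymbol{m}}$ follows from $m=\overline{m}$. This in turn holds because $m_{\pm}$ are Stieltjes transforms of real measures and $a_{0},a_{1},b_{0}$ are real.

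To identify the $m$-function, I will use Lemma \ref{l2-13}, which gives
\[
m_{\boldsymbol{m}}=z+\dfrac{(z^{-2}-1)m_{2}-m_{2}^{\prime}(0)(z^{-1}m_{1}+m_{2})}{m_{1}+z^{-1}m_{2}}.
\]
From (\ref{3-14}) one computes directly
\[
m_{1}+z^{-1}m_{2}=\dfrac{zm-1+z(z-m)}{z^{2}-1}=1,\qquad z^{-1}m_{1}+m_{2}=\dfrac{m-z^{-1}+z^{2}(z-m)}{z^{2}-1}.
\]
Next, $m_{2}(z)=z^{2}(z-m(z))/(z^{2}-1)$ near $0$ gives $m_{2}^{\prime}(0)=0$. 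This is because $m_{2}$ vanishes to second order at $0$, since $m$ is analytic at $0$ by Lemma \ref{l3-1}. Therefore
\[
m_{\boldsymbol{m}}=z+(z^{-2}-1)\dfrac{z^{2}(z-m)}{z^{2}-1}=z-(z-m)=m,
\]
and $m_{\boldsymbol{m}}=m$ on $D_{-}$. One should double-check the $m_{2}^{\prime}(0)$ convention in (\ref{2-32}). If it were meant with respect to the expansion (\ref{2-31}), the same computation applies, since $\nu_{1}^{(2)}=0$.

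Finally, $m\in\mathcal{M}$ comes from Lemma \ref{l3-1} applied in the converse direction. The measures $\sigma_{\pm}$ are probability measures. All their moments are finite, since they integrate $e^{c|\lambda|^{N}}$ for $q\in Q_{N}$. Their supports are infinite, since a Jacobi operator with $a_{n}>0$ on a half line has spectral measure with infinite support (limit point case, unique moment problem). Hence (\ref{3-2}) holds, and $m$ has the representation of Lemma \ref{l3-1}.

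The main obstacle is the nonvanishing needed to apply Lemma \ref{l3-3}. Its hypothesis $\boldsymbol{m}\in\boldsymbol{A}_{N}^{inv}(C)$ rests on the construction of the compensators $f_{1},f_{2},F_{0}$ satisfying (\ref{2-5}). In particular it needs $\inf_{D_{+}}|F_{0}|>0$, which the paper asserts follows after removing the zeros of $A$. I would take that construction as given and verify only that $F_{0}$ has no zeros in $D_{+}$. This holds for large $c$: the factor $1-\left(\rho_{2c}\,p(\phi)\right)^{-1}$ is nonzero on $D_{+}$ when $c$ is large, and its zeros cancel against those of $A$ only in the intended sense.
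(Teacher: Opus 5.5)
Your proof follows the paper's: invertibility of $T(\boldsymbol{m})$ comes from the construction preceding the lemma, $m_{\boldsymbol{m}}=m$ comes from Lemma \ref{l2-13} via $m_1+z^{-1}m_2=1$ and $m_2'(0)=0$, and Lemma \ref{l3-3} then finishes, while your checks of $\boldsymbol{m}=\overline{\boldsymbol{m}}$ and of $m\in\mathcal{M}$ (converse of Lemma \ref{l3-1}) fill in what the paper leaves implicit. Two side remarks are wrong and should be dropped, though neither is load-bearing: $m$ is generally not analytic at $0$ (since $0\in\Sigma$ and $\sigma_-$ may have unbounded support), so $m_2'(0)=0$ must be read as $\lim_{z\to 0}z^{-1}m_2(z)=0$ in $D_-^2$, exactly as in your fallback; and your closing claim that $1-(\rho_{2c}\,p(\phi))^{-1}$ has no zeros on $D_+$ for large $c$ is false (for $p\equiv 1$ it vanishes at $z=\pm i\in D_+$, where $\phi=0$ and $\rho_{2c}=1$), so rely on the preceding construction as the paper does.
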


\begin{proof}
$T\left(  \boldsymbol{m}\right)  $ is invertible on $H_{N,c}\left(
D_{+}\right)  $. All we have to do is to compute its $m$-function. We have
from Lemma \ref{l2-13}%
\[
m_{\boldsymbol{m}}=z+\dfrac{\left(  z^{-2}-1\right)  m_{2}-m_{2}^{\prime
}(0)\left(  z^{-1}m_{1}+m_{2}\right)  }{m_{1}+m_{2}z^{-1}}=m
\]
due to $m_{2}^{\prime}(0)=0$. Lemma \ref{l3-3} implies $\boldsymbol{m}%
\in\boldsymbol{A}_{N,++}^{inv}\left(  C\right)  $.\bigskip
\end{proof}

\begin{remark}
\label{r2}There are many symbols $\boldsymbol{m}$ whose $m$-functions coincide
with the given $m$. For instance, the symbol below is another example:%
\[
\left(  m_{1}(z),m_{2}(z)\right)  =\left(  1,z^{2}\dfrac{m(z)-z}%
{1-zm(z)}\right)  \text{.}%
\]

\end{remark}

\section{Proof of Theorems}

We start the proof of Theorems by defining the flow on $Q_{N}$. For $q\in
Q_{N}$, $\boldsymbol{m}$ of (\ref{3-14}) and $g\in$ $\Gamma_{N}%
^{\operatorname{real}}$ set%
\begin{equation}
\mathrm{Toda}\left(  g\right)  q=\left\{  a_{n}\left(  g\boldsymbol{m}\right)
,b_{n}\left(  g\boldsymbol{m}\right)  \right\}  _{n\in\mathbb{Z}}\in
Q_{N}\text{,} \label{4-1}%
\end{equation}
where%
\[
\left\{
\begin{array}
[c]{l}%
a_{n}\left(  g\boldsymbol{m}\right)  =\sqrt{\dfrac{\tau_{g\boldsymbol{m}%
}\left(  z^{n}\right)  \tau_{g\boldsymbol{m}}\left(  z^{n-2}\right)  }%
{\tau_{g\boldsymbol{m}}\left(  z^{n-1}\right)  ^{2}}}\\
b_{n}\left(  g\boldsymbol{m}\right)  =\dfrac{\left.  \partial_{\varepsilon
}\tau_{g\boldsymbol{m}}\left(  z^{n}q_{\varepsilon^{-1}}\right)  \right\vert
_{\varepsilon=0}}{\tau_{g\boldsymbol{m}}\left(  z^{n}\right)  }-\dfrac{\left.
\partial_{\varepsilon}\tau_{g\boldsymbol{m}}\left(  z^{n-1}q_{\varepsilon
^{-1}}\right)  \right\vert _{\varepsilon=0}}{\tau_{g\boldsymbol{m}}\left(
z^{n-1}\right)  }%
\end{array}
\right.  \text{.}%
\]
There remain two problems to be solved. The first one is to show the flow
property $\mathrm{Toda}\left(  g_{1}g_{2}\right)  =\mathrm{Toda}\left(
g_{1}\right)  \mathrm{Toda}\left(  g_{2}\right)  $. The second is to show the
continuity by imposing suitable metrics on $Q_{N}$, $\Gamma_{N}%
^{\operatorname{real}}$.

\begin{lemma}
$\left\{  \mathrm{Toda}\left(  g\right)  \right\}  _{g\in\Gamma_{N}%
^{\operatorname{real}}}$ defines a flow on $Q_{N}$.
\end{lemma}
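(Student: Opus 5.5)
The plan is to establish three things: well-definedness, $\mathrm{Toda}(1)=\mathrm{id}$, and the group law $\mathrm{Toda}(g_1g_2)=\mathrm{Toda}(g_1)\mathrm{Toda}(g_2)$. All of them reduce to statements about $m$-functions, because Lemma \ref{l3-5} and Lemma \ref{l2-12}(iii) show that the Jacobi coefficients $q(g\boldsymbol{a})$ are determined by $m_{g\boldsymbol{a}}$.

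\textbf{Step 1 (well-definedness).} Let $q\in Q_N$ and let $\boldsymbol{m}$ be the symbol of (\ref{3-14}). Lemma \ref{l3-6} gives $\boldsymbol{m}\in\boldsymbol{A}_{N,++}^{inv}(C)$. For $g\in\Gamma_N^{\operatorname{real}}$, Proposition \ref{p1} gives $\tau_{\boldsymbol{m}}(g)>0$, and hence
\[
\tau_{g\boldsymbol{m}}(z^n)=\tau_{\boldsymbol{m}}(z^ng)/\tau_{\boldsymbol{m}}(g)>0,
\]
so $a_n(g\boldsymbol{m})$ and $b_n(g\boldsymbol{m})$ are well defined. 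Lemma \ref{l3-5} then shows that the spectral measures of $H_{q(g\boldsymbol{m})}$ integrate $e^{c|\lambda|^N}$, so $\mathrm{Toda}(g)q\in Q_N$. Independence of the choice of $D_+$ follows because the determinants do not depend on $c$ or $N$, and shrinking $\vartheta$ does not change the tau-functions (Lemma 4.8 of \cite{k2}).

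\textbf{Step 2 ($g=1$).} The Jacobi coefficients of $\boldsymbol{m}$ recover $q$. By Lemma \ref{l3-5} with $g=1$, $m_{\boldsymbol{m}}$ is given by (\ref{3-12}) with the Weyl functions of $H_{q(\boldsymbol{m})}$. By Lemma \ref{l3-6}, $m_{\boldsymbol{m}}=m$, which is built from the Weyl functions of $H_q$. Hence $a_0,a_1,b_0,m_\pm$ agree for $q$ and $q(\boldsymbol{m})$. A Jacobi operator of limit point type at both ends is determined by $(a_0,a_1,b_0,m_+,m_-)$ through the continued fraction expansions of $m_\pm$. So $q(\boldsymbol{m})=q$, that is, $\mathrm{Toda}(1)=\mathrm{id}$.

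\textbf{Step 3 (group law).} Put $q'=\mathrm{Toda}(g_2)q$ and let $\boldsymbol{m}'$ be its symbol. Lemma \ref{l3-5} applied to $g_2\boldsymbol{m}$ gives $m_{\boldsymbol{m}'}=m_{g_2\boldsymbol{m}}$. The aim is to show $m_{g_1\boldsymbol{m}'}=m_{g_1g_2\boldsymbol{m}}$. Once this holds, Lemma \ref{l3-5} applied to both sides identifies $a_0,a_1,b_0,m_\pm$ of $\mathrm{Toda}(g_1)q'$ and $\mathrm{Toda}(g_1g_2)q$, and Step 2's uniqueness argument finishes.

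To obtain the equality of $m$-functions I would use Lemma \ref{l2-12}(iii) with base symbols $\boldsymbol{a}_1=\boldsymbol{m}'$ and $\boldsymbol{a}_2=g_2\boldsymbol{m}$. The lemma is stated for genuine symbols in $\boldsymbol{A}_N^{inv}(C)$, whereas $g_2\boldsymbol{m}$ is only an extended symbol. I therefore plan to re-run its proof with base $g_2\boldsymbol{m}$. For rational $g_1$, $m_{g_1\boldsymbol{b}}$ is obtained from $m_{\boldsymbol{b}}$ by iterating the explicit operations $d_\eta$, $d_0$ and the $\widetilde{\ \ }$-relation of (\ref{2-29}). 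These formulas were proved for arbitrary $g\boldsymbol{a}$ with nonvanishing tau-functions, so they apply to $\boldsymbol{b}=g_2\boldsymbol{m}$ as well as to $\boldsymbol{m}'$. The nondegeneracy hypotheses $\tau\neq0$ at each stage hold by Proposition \ref{p1} together with the cocycle property (Lemma \ref{l2-5}), since both $\boldsymbol{m}$ and $\boldsymbol{m}'$ lie in $\boldsymbol{A}_{N,++}^{inv}(C)$.

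\textbf{Main obstacle.} The hard part is passing from rational $g_1$ to general $g_1\in\Gamma_N^{\operatorname{real}}$. I would first factor $g_1=z^ng^{(1)}g^{(2)}$ by Lemma \ref{l2-7}, then approximate each factor by rational $r_k$ in $\mathrm{d}_c$ using Lemma \ref{l2-8}. I would then show $\varphi^{(0)}_{r_kg\boldsymbol{a}}(\zeta)\to\varphi^{(0)}_{g_1g\boldsymbol{a}}(\zeta)$ pointwise on $D_-$, using Lemma \ref{l2-11}(i) to write $1+\varphi^{(0)}_{r_kg\boldsymbol{a}}(\zeta)=\tau_{g\boldsymbol{a}}(r_kq_\zeta)/\tau_{g\boldsymbol{a}}(r_k)$ together with the continuity of Lemma \ref{l2-6}(i); the analogous convergence for $\varphi^{(1)}$ follows from Lemma \ref{l2-11}(ii). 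This gives convergence of the $m$-functions. Care is needed to keep $\sup_k\delta_{D_+}(r_k)<c$ uniform, and the approximation must be done simultaneously for the bases $\boldsymbol{m}'$ and $g_2\boldsymbol{m}$.
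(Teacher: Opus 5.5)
Your proposal is correct and follows essentially the paper's proof. The paper takes the symbol $\boldsymbol{m}_1$ built from $m_{g_1\boldsymbol{m}}$, invokes Lemma~\ref{l2-12}(iii) to get $m_{g_2\boldsymbol{m}_1}=m_{g_2g_1\boldsymbol{m}}$, and concludes via Lemma~\ref{l3-5} and the one-to-one correspondence between $q\in Q_N$ and its $m$-function. Your additional steps only make explicit what the paper leaves implicit: well-definedness, $\mathrm{Toda}(1)=\mathrm{id}$, and re-running Lemma~\ref{l2-12}(iii) by rational approximation for the extended base symbol $g_2\boldsymbol{m}$, which the paper applies without comment.
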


\begin{proof}
We show the flow property:%
\begin{equation}
\mathrm{Toda}\left(  g_{2}g_{1}\right)  =\mathrm{Toda}\left(  g_{2}\right)
\mathrm{Toda}\left(  g_{1}\right)  \text{ \ for }g_{1},g_{2}\in\Gamma
_{N}^{\operatorname{real}}\text{.} \label{4-2}%
\end{equation}
\ For $q\in Q_{N}$ let $\boldsymbol{m}\in\boldsymbol{A}_{N,++}^{inv}\left(
C\right)  $ be the symbol defined in (\ref{3-14}) and $\boldsymbol{m}_{1}%
\in\boldsymbol{A}_{N,++}^{inv}\left(  C\right)  $ be the other symbol derived
from the $m$-function $m_{g_{1}\boldsymbol{m}}$. Then, (iii) of Lemma
\ref{l2-12} implies $m_{g_{2}\boldsymbol{m}_{1}}=m_{g_{2}g_{1}\boldsymbol{m}}%
$. Observing the correspondences
\[
q=\left\{  a_{k},b_{k}\right\}  _{k\in\mathbb{Z}}\in Q_{N}\Longleftrightarrow
\left\{  m_{\pm},a_{0},a_{1}\right\}  \Longleftrightarrow m\text{
(}m\text{-function)}%
\]
are one-to-one, one has (\ref{4-2}) since the $m$-functions for $\mathrm{Toda}%
\left(  g_{2}\right)  \mathrm{Toda}\left(  g_{1}\right)  q$ and $\mathrm{Toda}%
\left(  g_{2}g_{1}\right)  q$ are $m_{g_{2}\boldsymbol{m}_{1}}$ and
$m_{g_{2}g_{1}\boldsymbol{m}}$ respectively.\bigskip
\end{proof}

The metric on $Q_{N}$ is introduced by
\[
\mathrm{d}\left(  q_{1},q_{2}\right)  =\sum_{k\in\mathbb{Z}}2^{-\left\vert
k\right\vert }\left(
\begin{array}
[c]{c}%
\left\vert a_{k}\left(  q_{1}\right)  -a_{k}\left(  q_{2}\right)  \right\vert
+\left\vert b_{k}\left(  q_{1}\right)  -b_{k}\left(  q_{2}\right)  \right\vert
\\
+\left\vert \left(  \left(  \cosh\left(  kH_{q_{1}}^{N}\right)  -\cosh\left(
kH_{q_{2}}^{N}\right)  \right)  \delta_{0},\delta_{0}\right)  \right\vert
\end{array}
\right)  \wedge1\text{.}%
\]

\begin{lemma}
\label{l3-7}Let $q$, $q_{n}\in$ $Q_{N}$.\newline(i) If the coefficients
$a_{k}\left(  q_{n}\right)  $, $b_{k}\left(  q_{n}\right)  $ converge to
$a_{k}\left(  q\right)  $, $b_{k}\left(  q\right)  $ for any $k\in\mathbb{Z}$
respectively, the $m$-functions $m_{q_{n}}\left(  z\right)  $ defined by
(\ref{3-14}) converges to $m_{q}\left(  z\right)  $ for any $z\in D_{-}$.
Conversely, if $m_{q_{n}}\left(  z\right)  \rightarrow m_{q}\left(  z\right)
$ and $\sup_{n\geq1}\left(  \cosh\left(  cH_{q_{n}}\right)  \delta_{0}%
,\delta_{0}\right)  <\infty$ \ for some $c>0$, the coefficients $a_{k}\left(
q_{n}\right)  $, $b_{k}\left(  q_{n}\right)  $ converge to $a_{k}\left(
q\right)  $, $b_{k}\left(  q\right)  $ for any $k\in\mathbb{Z}$.\newline(ii)
$\mathrm{d}\left(  q_{n},q\right)  \rightarrow0$ holds if and only if
$m_{q_{n}}\left(  z\right)  \rightarrow m_{q}\left(  z\right)  $ for any $z\in
D_{-}$, and
\begin{equation}
\text{ }\sup_{n\geq1}\left(  \cosh\left(  c\left(  H_{q_{n}}\right)
^{N}\right)  \delta_{0},\delta_{0}\right)  <\infty\text{ \ for any
}c>0\text{.} \label{4-3}%
\end{equation}
(iii) $\mathrm{d}\left(  q_{n},q\right)  \rightarrow0$ implies that
$\left\Vert \boldsymbol{m}_{q_{n}}-\boldsymbol{m}_{q}\right\Vert
_{c,j}\rightarrow0$ for $j=1$, $2$ and for any $c>0$.
\end{lemma}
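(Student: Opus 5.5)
I would prove the three parts in order, working throughout with the dictionary between $q$ and its $m$-function $m_q$. By (\ref{3-14}) and Lemma \ref{l3-1}, $m_q$ is determined by $a_0,a_1,b_0$ and by the Weyl functions $m_\pm$. These in turn are Stieltjes transforms of the probability measures $\sigma_\pm$ evaluated at $\phi(z)$.

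\textbf{Part (i).} I would first reduce to the half-line problems. Since each $\sigma_\pm$ has all moments finite and a determinate moment problem (by the $Q_N$ condition), the moments of $\sigma_\pm$ are polynomials in finitely many of the coefficients $a_k,b_k$, namely the entries $((H^+_q)^j\delta_1,\delta_1)$. Hence coefficientwise convergence gives convergence of every moment of $\sigma_\pm^{(n)}$. For each fixed $\lambda$, the unboundedness of $\lambda$ can be handled as follows: uniform boundedness of the second moments gives tightness. Every weak limit point then has the same moments as $\sigma_\pm$, and determinacy forces $\sigma_\pm^{(n)}\to\sigma_\pm$ weakly. Consequently $m_\pm^{(n)}(\phi(z))\to m_\pm(\phi(z))$ for $z\in D_-$, since $\phi(z)\notin\mathbb R$, and therefore $m_{q_n}\to m_q$.

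For the converse, convergence of $m_{q_n}$ on $D_-$ yields $a_0,a_1,b_0$ as limits of the leading asymptotic coefficients; near $\infty$ we have $\zeta(\phi-m)\to a_1^2$, and near $0$ we have $m'(0)=a_0^2$ and $m(0)=b_0$. It also yields weak convergence of $\sigma_\pm^{(n)}$ by Stieltjes inversion (vague convergence, with mass preserved because each $\sigma$ is a probability measure). The uniform bound on $(\cosh(cH)\delta_0,\delta_0)$ gives uniformly integrable exponential moments of $\sigma_\pm$; I would pass through the relation expressing $\sigma_\pm$ via the spectral measure of $\delta_0,\delta_1$. With that, all moments converge, and the coefficients $a_k,b_k$, which are continued-fraction functions of the moments with positive denominators, converge as well.

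\textbf{Part (ii).} The sum in $\mathrm d$ combines coefficient convergence with convergence of $(\cosh(kH^N)\delta_0,\delta_0)$ for every $k$. Given $\mathrm d\to0$, the quantities $(\cosh(kH_{q_n}^N)\delta_0,\delta_0)$ converge and are therefore bounded for each $k$, which is (\ref{4-3}); part (i) then gives $m$-convergence. Conversely, (\ref{4-3}) together with $m$-convergence gives coefficient convergence by part (i). For the $\cosh$ terms, the spectral measure $\mu_n$ of $\delta_0$ converges weakly. This is because $\mu_n$ is an explicit combination of $\sigma_\pm$ and $a_0,a_1,b_0$ through the Green function $-1/(m_+ -\text{stuff})$, all of which converge by (i). The bound (\ref{4-3}) with larger $c$ then gives uniform integrability of $\cosh(k\lambda^N)$, so $\int\cosh(k\lambda^N)\,d\mu_n\to\int\cosh(k\lambda^N)\,d\mu$.

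\textbf{Part (iii).} I would use the explicit compensators built before Lemma \ref{l3-6}, namely $f$ and $f_1,f_2$ from $m_\pm^{(0)}$, $\eta_1$, $\eta_2$. The difference $\rho_c(\boldsymbol m_{q_n}-\boldsymbol f_n)-\rho_c(\boldsymbol m_q-\boldsymbol f)$ is expressed through integrals $\int(\cosh\lambda^N)^c(\lambda-\phi(z))^{-1}(\sigma^{(n)}_\pm-\sigma_\pm)(d\lambda)$, together with the corresponding $\lambda$-derivatives, which carry $(\lambda-\phi)^{-2}$. Since $|\operatorname{Im}\phi|\ge\vartheta$ on $C$, these kernels are bounded. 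Weak convergence plus uniform integrability of $(\cosh\lambda^N)^{c}$, guaranteed by (\ref{4-3}) with a larger $c$, gives uniform convergence on $C$. This controls $\|\cdot\|_{c,1}$ and $\|\cdot\|_{c,3}$, and (\ref{2-13}) then controls $\|\cdot\|_{c,2}$.

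\textbf{Main obstacle.} The main obstacle is the uniformity in $z\in C$ as $z\to\infty$ or $z\to0$. I would split the $\lambda$-integral into $|\lambda|\le R$, where the integrand is equicontinuous and bounded so convergence is uniform, and $|\lambda|>R$, which is small uniformly by the exponential tail bound.
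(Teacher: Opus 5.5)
Your proposal is correct and follows essentially the paper's route: moment convergence plus determinacy for (i), the $\cosh$ bound as a tightness/uniform-integrability device for the $\cosh$ terms in (ii), and the explicit compensators with $|\operatorname{Im}\phi|=\vartheta$ on $C_1$ followed by (\ref{2-13}) for (iii). One step in the converse of (i) needs tightening, and the paper glosses over it in the same way: pointwise convergence of $m_{q_n}$ on $D_-$ does not by itself give $a_1(q_n)\to a_1(q)$, $a_0(q_n)\to a_0(q)$, $b_0(q_n)\to b_0(q)$, because mass of $a_1(q_n)^2\sigma_+^{q_n}$ can escape to infinity (e.g.\ add $\sqrt{x_n}\,\delta_{x_n}$ with $x_n\to\infty$), so you must first use the $\cosh$ hypothesis, via (\ref{a5}) and Lemmas \ref{a-2}, \ref{a-4}, to get uniform tail bounds on $a_1(q_n)^2\sigma_+^{q_n}+a_0(q_n)^2\sigma_-^{q_n}$, and only then read off these constants and the weak convergence of $\sigma_\pm^{q_n}$.
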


\begin{proof}
(ii) of Lemma \ref{a-2} and Lemma \ref{a-4} imply that (i) is equivalent to%
\[
\sup_{n\geq1}\int_{-\infty}^{\infty}e^{c\left\vert \lambda\right\vert ^{N}%
}\sigma_{\pm}^{q_{n}}\left(  d\lambda\right)  <\infty\text{ \ for any
}c>0\text{.}%
\]
The $k$-th moment of $\sigma_{+}^{q_{n}}\left(  d\lambda\right)  $, which is
$\left(  \left(  H_{q_{n}}^{+}\right)  ^{k}\delta_{0},\delta_{0}\right)  $,
can be described by a polynomial of $\left\{  a_{j}\left(  q_{n}\right)
,b_{j}\left(  q_{n}\right)  \right\}  _{0\leq j\leq k}$, hence (ii) implies
that%
\[
\left(  \left(  H_{q_{n}}^{+}\right)  ^{k}\delta_{0},\delta_{0}\right)
\rightarrow\left(  \left(  H_{q}^{+}\right)  ^{k}\delta_{0},\delta_{0}\right)
\text{ as }n\rightarrow\infty\text{ for any }k\geq0
\]
Then, it is clear that $m_{+}^{q_{n}}\left(  z\right)  \rightarrow m_{+}%
^{q}\left(  z\right)  $ for any $z\in\mathbb{C}\backslash\mathbb{R}$, since
the moment problem is unique for $\sigma_{+}^{q}\left(  d\lambda\right)  $.
The same conclusion is valid also for $m_{-}^{q_{n}}\left(  z\right)  $,
$m_{-}^{q}\left(  z\right)  $, hence $m_{q_{n}}\left(  z\right)  \rightarrow
m_{q}\left(  z\right)  $ holds. Conversely, suppose $m_{q_{n}}\left(
z\right)  \rightarrow m_{q}\left(  z\right)  $. Then, $m_{+}^{q_{n}}\left(
z\right)  \rightarrow m_{+}^{q}\left(  z\right)  $ and $a_{k}\left(
q_{n}\right)  \rightarrow a_{k}\left(  q\right)  $ for $k=0$, $1$ and
$b_{0}\left(  q_{n}\right)  \rightarrow b_{0}\left(  q\right)  $ hold. Suppose
$\sup_{n\geq1}\left(  \cosh\left(  cH_{q_{n}}\right)  \delta_{0},\delta
_{0}\right)  <\infty$ additionally, which means $\sup_{n\geq1}\left(
\cosh\left(  cH_{q_{n}}^{\pm}\right)  \delta_{0},\delta_{0}\right)  <\infty$.
Clearly, we have the convergence of the every moment of $\sigma_{\pm}^{q_{n}%
}\left(  d\lambda\right)  $, and so are the coefficients $a_{k}\left(
q_{n}\right)  $, $b_{k}\left(  q_{n}\right)  $.

As for (ii) of the lemma, only the converse remains to be proved. The
convergence of $a_{k}\left(  q_{n}\right)  $, $b_{k}\left(  q_{n}\right)  $ is
clear from (i). The convergence of $\left(  \cosh\left(  kH_{q_{n}}%
^{N}\right)  \delta_{0},\delta_{0}\right)  $ for fixed $k\in\mathbb{Z}$
follows from the tightness of the measures $\cosh\left(  k\lambda^{N}\right)
\sigma^{q_{n}}\left(  d\lambda\right)  $ ($\sigma^{q_{n}}\left(
d\lambda\right)  $ is the spectral measure of $H_{q_{n}}$ evaluated at
$\delta_{0}$) due to (\ref{4-3}), which shows (ii).

We show $\left\Vert \boldsymbol{m}_{q_{n}}-\boldsymbol{m}_{q}\right\Vert
_{c,1}\rightarrow0$. The precise computation is subtle, although it is
possible, hence we pretend as if the compensators $f_{1}$, $f_{2}$ of
(\ref{3-15}) were defined without the modifiers $\eta_{1}$, $\eta_{2}$. We
regard $m_{j}\left(  z\right)  -f_{j}(z)$ as%
\begin{align*}
m_{1}(z)-f_{1}(z)  &  =\left\{
\begin{array}
[c]{cc}%
\dfrac{a_{1}^{2}z\left(  m_{+}\left(  \phi\left(  z\right)  \right)
-m_{+}^{\left(  0\right)  }\left(  \phi\left(  z\right)  \right)  \right)
}{z^{2}-1} & \text{if \ }\left\vert z\right\vert >1\\
-\dfrac{a_{0}^{2}z(m_{-}\left(  \phi\left(  z\right)  \right)  -m_{-}^{\left(
0\right)  }\left(  \phi\left(  z\right)  \right)  )}{z^{2}-1} & \text{if
\ }\left\vert z\right\vert <1
\end{array}
\right.  \text{,}\\
m_{2}(z)-f_{2}(z)  &  =-z\left(  m_{1}(z)-f_{1}(z)\right)  \text{,}%
\end{align*}
where%
\[
m_{\pm}^{\left(  0\right)  }\left(  z\right)  =\int_{-\infty}^{\infty}%
\dfrac{1-\left(  \cosh z^{N}\right)  ^{-c}\left(  \cosh\lambda^{N}\right)
^{c}}{\lambda-z}\sigma_{\pm}\left(  d\lambda\right)  \text{.}%
\]
Hence, setting $\zeta=\phi\left(  z\right)  $, on $C_{1}$%
\begin{align}
&  \left(  m_{q,1}(z)-f_{q,1}(z)\right)  -\left(  m_{q_{n},1}(z)-f_{q_{n}%
,1}(z)\right) \nonumber\\
&  =\dfrac{z}{z^{2}-1}\left(  a_{1}\left(  q\right)  ^{2}\left(  m_{+}%
^{q}\left(  \zeta\right)  -m_{+}^{\left(  0\right)  ,q}\left(  \zeta\right)
\right)  -a_{1}\left(  q_{n}\right)  ^{2}\left(  m_{+}^{q_{n}}\left(
\zeta\right)  -m_{+}^{\left(  0\right)  ,q_{n}}\left(  \zeta\right)  \right)
\right) \nonumber\\
&  =\dfrac{z\rho_{c}(z)^{-1}}{z^{2}-1}\left(  a_{1}\left(  q\right)  ^{2}%
\int_{-\infty}^{\infty}\dfrac{\cosh c\lambda^{N}}{\lambda-\zeta}\sigma_{+}%
^{q}\left(  d\lambda\right)  -a_{1}\left(  q_{n}\right)  ^{2}\int_{-\infty
}^{\infty}\dfrac{\cosh c\lambda^{N}}{\lambda-\zeta}\sigma_{+}^{q_{n}}\left(
d\lambda\right)  \right)  \label{4-4}%
\end{align}
hold. We know $a_{1}\left(  q_{n}\right)  \rightarrow a_{1}\left(  q\right)  $
and $m_{+}^{q_{n}}\left(  \zeta\right)  \rightarrow m_{+}^{q}\left(
\zeta\right)  $. The convergence of $m_{+}^{q_{n}}$ implies the weak
convergence of the probability measures $\sigma_{+}^{q_{n}}$ on $\mathbb{R}$.
Choosing $k>c$, we know the measures $\cosh c\lambda^{N}\sigma_{+}^{q_{n}}$
also converge weakly to $\cosh c\lambda^{N}\sigma_{+}^{q}$. Thus, it is clear
that
\[
\sup_{z\in C_{1}}\left\vert \rho_{c}(z)\left(  \left(  m_{q,1}(z)-f_{q,1}%
(z)\right)  -\left(  m_{q_{n},1}(z)-f_{q_{n},1}(z)\right)  \right)
\right\vert \rightarrow0\text{ \ as }n\rightarrow\infty\text{ ,}%
\]
since $\left\vert \operatorname{Im}\zeta\right\vert =\vartheta$ on $C_{1}$.
Without difficulty one also sees%
\[
\sup_{z\in C_{1}}\left\vert f_{q,1}(z)-f_{q_{n},1}(z)\right\vert
\rightarrow0\text{ \ as }n\rightarrow\infty\text{.}%
\]
We can make a similar argument on $C_{2}$, and also for the $m_{2}$-parts,
hence we have%
\[
\left\Vert \boldsymbol{m}_{q_{n}}-\boldsymbol{m}_{q}\right\Vert _{c,1}%
\rightarrow0\text{ \ as \ }n\rightarrow\infty\text{.}%
\]
The convergence $\left\Vert \boldsymbol{m}_{q_{n}}-\boldsymbol{m}%
_{q}\right\Vert _{c,2}\rightarrow0$ follows from $\left\Vert \boldsymbol{m}%
_{q_{n}}-\boldsymbol{m}_{q}\right\Vert _{c,3}\rightarrow0$, which can be shown
similarly as $\left\Vert \boldsymbol{m}_{q_{n}}-\boldsymbol{m}_{q}\right\Vert
_{c,1}\rightarrow0$ by taking the derivative of $\boldsymbol{m}_{q_{n}%
}-\boldsymbol{m}_{q}$ in (\ref{4-4}).\bigskip
\end{proof}

\begin{proposition}
\label{p2}$\left\{  \mathrm{Toda}\left(  g\right)  \right\}  _{g\in\Gamma
_{N}^{\operatorname{real}}}$ defines a flow on $Q_{N}$. This flow is
continuous in the following sense. \newline(i) If $\mathrm{d}_{c}\left(
g_{n},g\right)  \rightarrow0$ holds keeping $c>\delta_{D_{+}}\left(  g\right)
$, $\sup_{n}\delta_{D_{+}}\left(  g_{n}\right)  $ for some $c>0$, then
$\mathrm{d}\left(  \mathrm{Toda}\left(  g_{n}\right)  q,\mathrm{Toda}\left(
g\right)  q\right)  \rightarrow0$.\newline(ii) $\mathrm{d}\left(
q_{n},q\right)  \rightarrow0$ implies $\mathrm{d}\left(  \mathrm{Toda}\left(
g\right)  q_{n},\mathrm{Toda}\left(  g\right)  q\right)  \rightarrow0$.
\end{proposition}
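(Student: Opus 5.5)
The flow property was established in the preceding lemma, so only the continuity statements (i) and (ii) remain. In both cases I will use Lemma \ref{l3-7}(ii): $\mathrm{d}(q_n,q)\to0$ is equivalent to two conditions. The first is convergence of the $m$-functions on $D_-$. The second is the uniform bound (\ref{4-3}) on $(\cosh(c H^N)\delta_0,\delta_0)$. So for $\mathrm{Toda}(g_n)q$ versus $\mathrm{Toda}(g)q$, and for $\mathrm{Toda}(g)q_n$ versus $\mathrm{Toda}(g)q$, I need to verify these two conditions for the $m$-functions $m_{g_n\boldsymbol{m}}$, $m_{g\boldsymbol{m}_{q_n}}$ and their spectral measures.

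\emph{Convergence of $m$-functions.} The plan is to write $m_{g\boldsymbol{a}}$ entirely through tau-functions. By (\ref{2-27}) and (\ref{2-26}) we have $n_{g\boldsymbol{a}}(\zeta)=\tau_{\boldsymbol{a}}(gzq_\zeta)/(\zeta\tau_{\boldsymbol{a}}(gq_\zeta))$ and $m_{g\boldsymbol{a}}=\phi-\tau_{g\boldsymbol{a}}(z^{-1})n_{g\boldsymbol{a}}$, with $\tau_{g\boldsymbol{a}}(h)=\tau_{\boldsymbol{a}}(gh)/\tau_{\boldsymbol{a}}(g)$ by the cocycle property. Every denominator is positive by Proposition \ref{p1}.

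For (i), I apply Lemma \ref{l2-6}(i). Fix $h\in\{1,z^{\pm1},q_\zeta,zq_\zeta\}$. Then $\mathrm{d}_c(g_nh,gh)\le C_h\,\mathrm{d}_c(g_n,g)$, up to adjusting $c$ slightly, because $h$ is a fixed rational function and $K_{g_nh}-K_{gh}$ can be compared with $K_{g_n}-K_g$ through the factor $h(z)^{-1}h(\lambda)$. This yields $\tau_{\boldsymbol{m}}(g_nh)\to\tau_{\boldsymbol{m}}(gh)$, and hence pointwise convergence of $m_{g_n\boldsymbol{m}}$.

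For (ii), I use Lemma \ref{l3-7}(iii): $\|\boldsymbol{m}_{q_n}-\boldsymbol{m}_q\|_{c,j}\to0$ for $j=1,2$. Lemma \ref{l2-6}(ii) then gives $\tau_{\boldsymbol{m}_{q_n}}(gh)\to\tau_{\boldsymbol{m}_q}(gh)$. This requires uniform bounds on $\|T(\boldsymbol{m}_{q_n})^{-1}\|$. These follow from Lemma \ref{l2-2}(ii), since $T(\boldsymbol{m}_{q_n})^{-1}=T(\boldsymbol{m}_{q_n}^{-1})$. The norms $\|\boldsymbol{m}_{q_n}^{-1}\|_{c,1}$ are controlled once the compensators $F_0$ for $M_{q_n}$ are chosen uniformly, with the zeros of $A_{q_n}$ confined to a fixed compact set and $\inf|F_0|$ bounded below uniformly. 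That uniformity comes from the convergence of $m_\pm^{q_n}$ and of the weighted measures $\cosh(c\lambda^N)\sigma_\pm^{q_n}$.

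\emph{Uniform exponential moments.} This is the step I expect to be the main obstacle. The bound (\ref{4-3}) is equivalent to uniform bounds $\sup_n\int e^{c|\lambda|^N}\sigma_\pm^{(n)}(d\lambda)<\infty$ for the new spectral measures. I plan to rerun the proof of Lemma \ref{l3-5} quantitatively. In Lemma \ref{l3-4}, the Taylor coefficients $\mu_k,\nu_k$ of $f_{\pm1}/f_0$ are bounded by $c_1c^{-k}\Gamma(k/N+1)$, where $c_1$ depends only on the $L^2$ bound of $v_j$ with $|u_j|\le e^{-c(|\lambda|^N+|\lambda|^{-N})}|v_j|$. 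I would show that $c_1$ can be taken uniform in $n$, as follows. The function $u_j$ is $g(\boldsymbol{a}-\boldsymbol{f})(g^{-1}T(z^jg\boldsymbol{a}))^{-1}g^{-1}$ applied to $1$. The factor $\rho_{c'}(\boldsymbol{a}-\boldsymbol{f})$ is uniformly bounded, by the $\|\cdot\|_{c',1}$ convergence in case (ii) and because $\boldsymbol{a}$ is fixed in case (i). The inverse operators are uniformly bounded, because the operators converge in trace norm to an invertible limit (this is the content of the estimates in the proof of Lemma \ref{l2-6}) and the determinants stay bounded away from $0$. The factor $g_n$ is controlled through $\sup_n\delta_{D_+}(g_n)<c$.

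Once $c_1$ is uniform, Lemmas \ref{a-2}, \ref{a-3} and \ref{a-4} give moment bounds, hence exponential integrability bounds uniform in $n$, exactly as in Lemma \ref{l3-5}. Combined with the convergence of the $m$-functions, Lemma \ref{l3-7}(ii) then gives $\mathrm{d}\to0$ in both (i) and (ii).
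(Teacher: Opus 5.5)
Your proposal is correct and follows essentially the same route as the paper: pointwise convergence of the $m$-functions from the continuity of tau-functions (Lemma \ref{l2-6}), uniform exponential moment bounds obtained by rerunning Lemmas \ref{l3-4}--\ref{l3-5} with constants uniform in $n$, and then Lemma \ref{l3-7}(ii). Two small remarks. First, the detour through Lemma \ref{l2-2}(ii) and uniform compensators to bound $\|T(\boldsymbol{m}_{q_n})^{-1}\|$ is unnecessary: $\|T(\boldsymbol{m}_{q_n})-T(\boldsymbol{m}_q)\|\le c_1\|\boldsymbol{m}_{q_n}-\boldsymbol{m}_q\|_{c,1}\to0$, so a Neumann series gives the uniform bound directly. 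Second, the denominators $\tau_{\boldsymbol{m}}(gq_\zeta)$ are only nonzero (as shown in the proof of Lemma \ref{l3-3}), not positive, because $gq_\zeta\notin\Gamma_N^{\operatorname{real}}$ for $\zeta\in D_-$.
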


\begin{proof}
Suppose $\mathrm{d}_{c}\left(  g_{n},g\right)  \rightarrow0$. Then, the
continuity of tau-functions implies%
\[
a_{k}\left(  g_{n}\boldsymbol{m}\right)  \rightarrow a_{k}\left(
g\boldsymbol{m}\right)  \text{, }b_{k}\left(  g_{n}\boldsymbol{m}\right)
\rightarrow b_{k}\left(  g\boldsymbol{m}\right)  \text{, }m_{g_{n}%
\boldsymbol{m}}\left(  z\right)  \rightarrow m_{g\boldsymbol{m}}\left(
z\right)  \text{\ as }n\rightarrow\infty
\]
for any $k\in Z$ and $z\in D_{-}$. On the other hand, in the expression%
\begin{equation}
\varphi_{z^{j}g_{n}\boldsymbol{m}}^{\left(  0\right)  }\left(  z\right)
=\int_{C}\dfrac{\lambda^{j}g_{n}\left(  \lambda\right)  \left(  \boldsymbol{m}%
-\boldsymbol{f}\right)  \left(  \lambda\right)  \left(  \left(  z^{-j}%
g_{n}^{-1}T\left(  z^{j}g_{n}\boldsymbol{m}\right)  \right)  ^{-1}z^{-j}%
g_{n}^{-1}1\right)  \left(  \lambda\right)  }{2\pi i\left(  z-\lambda\right)
}d\lambda\label{4-5}%
\end{equation}
first observe that $\varphi_{z^{j}g_{n}\boldsymbol{m}}^{\left(  0\right)  }$
remains the same if $g_{n}$ is replaced by $cg_{n}$ for any non-zero constant
$c$, which means one can normalize $g_{n}$ by $g_{n}\left(  1\right)  =1$.
Then, we have
\begin{equation}
\left\vert g_{n}\left(  \lambda\right)  ^{\pm1}\right\vert \leq\exp\left(
\delta_{D_{+}}\left(  g_{n}\right)  \int_{1}^{\lambda}\theta\left(
\xi\right)  \left\vert d\xi\right\vert \right)  \leq c_{1}\exp\left(
c^{\prime}\left\vert \lambda\right\vert ^{N}+\left\vert \lambda\right\vert
^{-N}\right)  \label{4-6}%
\end{equation}
on $D_{+}$ for any $c^{\prime}>\delta_{D_{+}}\left(  g_{n}\right)  $ and a
constant $c_{1}$ depending on $c^{\prime}$. For $c^{\prime}>c$, one sees
$\mathrm{d}_{c^{\prime}}\left(  z^{j}g_{n},z^{j}g\right)  \rightarrow0$ due to
$\mathrm{d}_{c^{\prime}}\left(  z^{j}g_{1},z^{j}g_{2}\right)  \leq
c_{2}\mathrm{d}_{c}\left(  g_{1},g_{2}\right)  $ for some constant $c_{2}$,
which yields $z^{-j}g_{n}^{-1}T\left(  z^{j}g_{n}\boldsymbol{m}\right)  $
converges to $z^{-j}g^{-1}T\left(  z^{j}g\boldsymbol{m}\right)  $ in the HS
norm on $H_{N,c^{\prime}}\left(  D_{+}\right)  $ due to (\ref{2-8}),
(\ref{2-12}). This together with (\ref{4-6}) implies that the numerator of the
integrant of (\ref{4-5}) has a bound by $\rho_{c^{\prime\prime}}\left(
\lambda\right)  ^{-1}$ for any $c^{\prime\prime}>c^{\prime}$ uniformly in $n$,
since $\left\Vert \rho_{c^{\prime\prime}}\left(  \boldsymbol{m}-\boldsymbol{f}%
\right)  \left(  \lambda\right)  \right\Vert =O\left(  1\right)  $ as
$\lambda\rightarrow\infty$ or $\lambda\rightarrow0$ on $C$ by choosing the
compensator $\boldsymbol{f}$ suitably. Then, the coefficients $\mu_{k}$,
$\nu_{k}$ of (\ref{3-11}) are dominated by $c_{3}c^{-k}\Gamma\left(
k/N+1\right)  $ for any $k\in\mathbb{Z}_{+}$ independently in $n$. Therefore,
Lemmas \ref{a-2}, \ref{a-4} yield%
\begin{equation}
\sup_{n\geq1}\int_{-\infty}^{\infty}e^{c\left\vert \lambda\right\vert ^{N}%
}a_{\left(  1\pm1\right)  /2}\left(  q_{n}\right)  ^{2}\sigma_{\pm}^{q_{n}%
}\left(  d\lambda\right)  <\infty\text{.} \label{4-7}%
\end{equation}
This together with the convergence $m_{g_{n}\boldsymbol{m}}\left(  z\right)  $
shows (i) due to Lemma (ii) of Lemma \ref{l3-7}.

(iii) of Lemma \ref{l3-7} implies $\left\Vert \boldsymbol{m}_{{}%
}-\boldsymbol{m}_{q}\right\Vert _{c,2}\rightarrow0$ (refer (\ref{2-11}) for
$\left\Vert \cdot\right\Vert _{c,2}$) if $\mathrm{d}\left(  q_{n},q\right)
\rightarrow0$ holds. Then, the continuity of $\tau_{g\boldsymbol{m}}\left(
g_{1}\right)  $ implies the convergences $a_{k}\left(  \mathrm{Toda}\left(
g\right)  q_{n}\right)  \rightarrow a_{k}\left(  \mathrm{Toda}\left(
g\right)  q\right)  $, $b_{k}\left(  \mathrm{Toda}\left(  g\right)
q_{n}\right)  \rightarrow b_{k}\left(  \mathrm{Toda}\left(  g\right)
q\right)  $, and the uniform bound of (\ref{4-7}) replaced $q_{n}$ by
$\mathrm{Toda}\left(  g\right)  q_{n}$. The rest of the proof of (ii) is
similar to that of (i).\bigskip
\end{proof}

\subsection{Proof of Theorem \ref{t1}}

For Theorem \ref{t1} all we have to show is that $\mathrm{Toda}\left(
e^{-2t\widehat{p}}\right)  q$ solves the Cauchy problem of the Toda hierarchy:%
\[
\partial_{t}H_{q_{t}}=\left[  H_{q_{t}},p\left(  H_{q_{t}}\right)
_{a}\right]  \text{ \ with }q_{0}=q\text{.}%
\]
\ This was already proved if $q$ is bounded, and we employ this fact and the
approximation of unbounded $q$ by bounded ones to show the theorem. For
$q=\left\{  a_{k},b_{k}\right\}  _{k\in\mathbb{Z}}\in Q_{N}$ let $\sigma_{\pm
}$ be the spectral measures and set%
\[
\sigma_{\pm,n}\left(  d\lambda\right)  =\dfrac{I_{\left[  -n,n\right]
}\left(  \lambda\right)  \sigma_{\pm}\left(  d\lambda\right)  +n^{-1}%
\nu\left(  d\lambda\right)  }{\sigma_{\pm}\left(  \left[  -n,n\right]
\right)  +n^{-1}}%
\]
with a probability measure $\nu$ on $\left[  -1,1\right]  $ having non trivial
absolutely continuous part. $\nu$ is added so that the Herglotz functions
associated with $\sigma_{\pm,n}$ are not rational functions. Define%
\[
m_{\pm,n}\left(  z\right)  =\int_{-\infty}^{\infty}\dfrac{\sigma_{\pm
,n}\left(  d\lambda\right)  }{\lambda-z}%
\]
and%
\[
\left\{
\begin{array}
[c]{l}%
m_{n}\left(  z\right)  =\left\{
\begin{array}
[c]{c}%
\phi\left(  z\right)  +a_{1}^{2}m_{+,n}\left(  \phi\left(  z\right)  \right)
\text{ \ \ if \ }\left\vert z\right\vert >1\\
-a_{0}^{2}m_{-,n}\left(  \phi\left(  z\right)  \right)  +b_{0}\text{ \ \ \ if
\ }\left\vert z\right\vert <1
\end{array}
\right. \\
\boldsymbol{m}_{n}\left(  z\right)  =\left(  \dfrac{zm_{n}\left(  z\right)
-1}{z^{2}-1},z^{2}\dfrac{z-m_{n}\left(  z\right)  }{z^{2}-1}\right)
\end{array}
\right.  \text{.}%
\]
If $q_{n}$ denotes the coefficient associated with $m_{n}\left(  z\right)  $,
then $q_{n}$ is bounded for each fixed $n\geq1$ and its Weyl functions are
$m_{\pm,n}$.

\begin{lemma}
\label{l4-1}$\mathrm{d}\left(  q_{n},q\right)  \rightarrow0$ as $n\rightarrow
\infty$ holds.
\end{lemma}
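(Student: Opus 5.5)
We will verify the two conditions of (ii) of Lemma \ref{l3-7}: pointwise convergence $m_{n}(z)\rightarrow m(z)$ on $D_{-}$, and the uniform bound (\ref{4-3}) for $q_{n}$. Lemma \ref{l3-7} (ii) then gives $\mathrm{d}(q_{n},q)\rightarrow0$. Before invoking it, we must check that $q_{n}\in Q_{N}$. This holds because $\sigma_{\pm,n}$ are compactly supported probability measures with infinite support, thanks to the absolutely continuous part of $\nu$. Lemma \ref{l3-1} and Lemma \ref{l3-6} then say that $m_{n}\in\mathcal{M}$ is the $m$-function of a bounded Jacobi coefficient $q_{n}$ with the same $a_{0}$, $a_{1}$, $b_{0}$ as $q$.

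\textbf{Convergence of $m_{n}$.} Since $\sigma_{\pm}([-n,n])\rightarrow1$ and $n^{-1}\nu\rightarrow0$ in total variation, $\sigma_{\pm,n}$ converges to $\sigma_{\pm}$ in total variation. For $\zeta=\phi(z)$ with $z\in D_{-}$ we have $\operatorname{Im}\zeta\neq0$, so the kernel $(\lambda-\zeta)^{-1}$ is bounded. Hence $m_{\pm,n}(\zeta)\rightarrow m_{\pm}(\zeta)$, and therefore $m_{n}(z)\rightarrow m(z)$ on $D_{-}$.

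\textbf{Uniform exponential bound.} The spectral measure $\sigma^{q}$ of $H_{q}$ at $\delta_{0}$ is determined by $m_{\pm}$, $a_{0}$, $a_{1}$, $b_{0}$. Concretely, the diagonal Green function is
\[
\bigl((H_{q}-\zeta)^{-1}\delta_{0},\delta_{0}\bigr)=-\bigl(\zeta-b_{0}+a_{1}^{2}m_{+}(\zeta)+a_{0}^{2}m_{-}(\zeta)\bigr)^{-1},
\]
up to the sign conventions of the Appendix. Rather than controlling $\sigma^{q_{n}}$ through this formula directly, we use moments. The construction gives
\[
\int|\lambda|^{k}\sigma_{\pm,n}(d\lambda)\leq\frac{\int|\lambda|^{k}\sigma_{\pm}(d\lambda)+n^{-1}}{\sigma_{\pm}([-n,n])}\leq2\Bigl(\int|\lambda|^{k}\sigma_{\pm}(d\lambda)+1\Bigr)
\]
for $n$ large. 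Likewise $\int e^{c|\lambda|^{N}}\sigma_{\pm,n}\leq2\bigl(\int e^{c|\lambda|^{N}}\sigma_{\pm}+e^{c}\bigr)$ uniformly in $n$, for every $c>0$. We then transfer this to $\sigma^{q_{n}}$. Since $\delta_{0}$ sits between the two half-line problems, $(\cosh(cH_{q_{n}}^{N})\delta_{0},\delta_{0})$ should be bounded by a combination of the half-line quantities $(\exp(c(H_{q_{n}}^{\pm})^{N})\delta_{0},\delta_{0})$ and of $a_{0}$, $a_{1}$, $b_{0}$.

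The cleanest route is through the Taylor-coefficient estimates already used in the paper. Bounds on the expansion coefficients of $m_{\pm,n}$ of the form $|\sigma_{k}|\leq c_{1}c^{-k}\Gamma(k/N+1)$, uniform in $n$, are equivalent to uniform exponential integrability by Lemma \ref{a-4}. Combining them via Lemma \ref{a-2} applied to the Green-function formula above gives the same type of bound for the moments of $\sigma^{q_{n}}$. Lemma \ref{a-4} then yields $\sup_{n}(\cosh(c(H_{q_{n}})^{N})\delta_{0},\delta_{0})<\infty$ for every $c$. This is the same mechanism that produced (\ref{4-7}) in Proposition \ref{p2}.

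\textbf{Main obstacle.} The hardest step is this transfer of uniform exponential integrability from the half-line measures $\sigma_{\pm,n}$ to the full-line spectral measure $\sigma^{q_{n}}$. The reciprocal in the Green-function formula makes it nonlinear. We will handle it with Lemma \ref{a-2}, using that the denominator is $\zeta+O(1)$ at infinity with uniformly controlled coefficients.
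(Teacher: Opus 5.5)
Your proposal is correct and is essentially the paper's argument. The paper proves Lemma \ref{l4-1} by a one-line appeal to (ii) of Lemma \ref{l3-7}; your verification of its two hypotheses (total-variation convergence $\sigma_{\pm,n}\to\sigma_{\pm}$ giving $m_{n}\to m$ on $D_{-}$, and the transfer of the uniform exponential bound from the half-line measures to the full-line measure through the Green function, Lemma \ref{a-2} and Lemma \ref{a-4}) makes explicit the equivalence the paper asserts at the start of the proof of Lemma \ref{l3-7}.

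One small point concerns the use of Lemma \ref{a-2}(ii). The normalized denominator $1-b_{0}\zeta^{-1}-(a_{0}^{2}+a_{1}^{2})\zeta^{-2}-\cdots$ has fixed low-order coefficients, so it does not lie in $\mathcal{E}_{\epsilon,1/N}$ for small $\epsilon$. A literal application of Lemma \ref{a-2}(ii) therefore gives (\ref{4-3}) only for small $c$. To get every $c>0$, add the observation used in the proof of Theorem \ref{t2}: finitely many such terms are absorbed by the superexponential growth of $\Gamma(k/N+1)$, for example by splitting the recursion for the reciprocal's coefficients into low and high indices.
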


\begin{proof}
(ii) of Lemma \ref{l3-7} implies immediately the convergence $\mathrm{d}%
\left(  q_{n},q\right)  \rightarrow0$ as $n\rightarrow\infty$.\bigskip
\end{proof}

The flow property of $\mathrm{Toda}\left(  g\right)  q$ and its continuity has
been proved in Proposition \ref{p2}. The Lax equation satisfied by
$\mathrm{Toda}\left(  g\right)  q$ is proved based on Theorem 1 of \cite{z}.
First we identify the $\mathrm{Toda}\left(  g\right)  q$ defined in \cite{z}
by a bounded curve $C_{b}$ with the $\mathrm{Toda}\left(  g\right)  q$ defined
in (\ref{4-1}) by a unbounded curve $C$. If we could show%
\begin{equation}
\tau_{\boldsymbol{m}}^{C}\left(  r\right)  =\tau_{\boldsymbol{m}}^{C_{b}%
}\left(  r\right)  \text{ \ for any rational function }r\in\Gamma_{N}\left(
D_{+}\right)  \text{,} \label{4-8}%
\end{equation}
then the identity $\tau_{g\boldsymbol{m}}^{C}\left(  r\right)  =\tau
_{g\boldsymbol{m}}^{C_{b}}\left(  r\right)  $ obeys from the cocycle property
and the continuity with respect to $g$, and the two$\mathrm{Toda}\left(
g\right)  q$ can be identified, since they are defined by $\tau
_{g\boldsymbol{m}}^{C}\left(  r\right)  $ and $\tau_{g\boldsymbol{m}}^{C_{b}%
}\left(  r\right)  $ respectively. Indeed (\ref{4-8}) follows from the fact
that $\tau_{\boldsymbol{m}}^{C}\left(  r\right)  $ can be described only by
the $m$-function, which is deduced from Lemma \ref{l2-9} and the identity
$1+\varphi_{\boldsymbol{m}}^{\left(  0\right)  }\left(  z\right)  =1$ by Lemma
\ref{l2-11}. The equation%
\begin{equation}
\partial_{t}H_{q_{t}}=\left[  H_{q_{t}},p\left(  H_{q_{t}}\right)
_{a}\right]  \text{, }q_{0}=q\Longleftrightarrow H_{q_{t}}=H_{q}+\int_{0}%
^{t}\left[  H_{q_{s}},p\left(  H_{q_{s}}\right)  _{a}\right]  ds \label{4-9}%
\end{equation}
in Theorem \ref{t1} is valid for bounded initial data $q$. For an unbounded
$q\in Q_{N}$ we approximate it by bounded $q_{n}$ in Lemma \ref{l4-1}. Then,
the continuity of $\tau_{g\boldsymbol{m}}\left(  r\right)  $ with respect to
$\boldsymbol{m}$ given in Lemma \ref{l2-6} implies (\ref{4-9}), which
completes the proof of the theorem.

\subsection{Proof of Theorem \ref{t2} and Corollary \ref{c1}}

Before proceeding to the proof of the theorem we need

\begin{lemma}
\label{l4-2}Assume there exist a constant $c_{1}>0$, $\alpha>0$ such that
coefficients $\left\{  a_{n}\text{, }b_{n}\right\}  _{n\geq1}$ satisfy%
\begin{equation}
a_{n},\text{ }\left\vert b_{n}\right\vert \leq c_{1}n^{1/\alpha}%
\ \ \ \text{for any }n\geq1\text{.}\ \label{4-10}%
\end{equation}
Then, the spectral measure $\sigma_{+}$ of associated Jacobi operator $H_{+}$
satisfies%
\begin{equation}
\int_{-\infty}^{\infty}e^{c\left\vert \lambda\right\vert ^{\alpha}}\sigma
_{+}\left(  d\lambda\right)  <\infty\ \ \ \ \text{for any }c<\left(
9c_{1}^{2}\right)  ^{-\alpha/2}\left(  \dfrac{2}{\alpha}\right)  \text{.}
\label{4-11}%
\end{equation}

\end{lemma}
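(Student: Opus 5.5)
We bound the moments of $\sigma_{+}$ and then sum the exponential series. Since $\sigma_{+}$ is the spectral measure of $H_{+}$ at the first basis vector $\delta_{1}$, we have
\[
\int_{-\infty}^{\infty}\lambda^{2k}\sigma_{+}\left(d\lambda\right)=\left\Vert H_{+}^{k}\delta_{1}\right\Vert^{2}.
\]
The vector $H_{+}^{k}\delta_{1}$ is supported on $\{1,\dots,k+1\}$, because the tridiagonal operator moves support by at most one step each time. Let $P_{k}$ be the projection onto $\ell^{2}(\{1,\dots,k+1\})$. Then
\[
H_{+}^{k}\delta_{1}=\left(P_{k}H_{+}P_{k}\right)^{k}\delta_{1}.
\]
This identity holds since at the $j$-th application ($j\le k$) only coordinates up to $j+1\le k+1$ are involved. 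Hence
\[
\left\Vert H_{+}^{k}\delta_{1}\right\Vert\leq\left\Vert P_{k}H_{+}P_{k}\right\Vert^{k}.
\]
On the truncation, each row has at most three entries. Each entry is bounded by $c_{1}(k+2)^{1/\alpha}$ by (\ref{4-10}), since the entries involve $a_{n}$ with $n\le k+1$ and $b_{n}$ with $n\le k+1$. The Schur test therefore gives
\[
\left\Vert P_{k}H_{+}P_{k}\right\Vert\leq3c_{1}(k+2)^{1/\alpha}.
\]
Consequently
\[
\int\lambda^{2k}\sigma_{+}(d\lambda)\leq\left(9c_{1}^{2}\right)^{k}(k+2)^{2k/\alpha}.
\]

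Next I would control $e^{c|\lambda|^{\alpha}}$ by even powers. For $|\lambda|\ge1$, choose the even integer exponent adaptively through the series. Write
\[
e^{c|\lambda|^{\alpha}}=\sum_{j\ge0}\frac{c^{j}|\lambda|^{\alpha j}}{j!}.
\]
Then bound $|\lambda|^{\alpha j}\le1+|\lambda|^{2k_{j}}$ with $k_{j}=\lceil\alpha j/2\rceil$. The $j$-th term then has integral at most
\[
\frac{c^{j}}{j!}\left(1+\left(9c_{1}^{2}\right)^{k_{j}}(k_{j}+2)^{2k_{j}/\alpha}\right).
\]
Since $2k_{j}/\alpha\approx j$, we get $(k_{j}+2)^{2k_{j}/\alpha}\approx(\alpha j/2)^{j}$ up to factors subexponential in $j$, and $(9c_{1}^{2})^{k_{j}}\approx(9c_{1}^{2})^{\alpha j/2}$. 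Stirling's formula $j!\sim(j/e)^{j}\sqrt{2\pi j}$ then reduces the $j$-th term to roughly
\[
\left(c\left(9c_{1}^{2}\right)^{\alpha/2}\frac{\alpha}{2}e\right)^{j}.
\]

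This naive accounting converges only for $c<(9c_{1}^{2})^{-\alpha/2}\frac{2}{\alpha e}$, which loses the factor $e$ relative to (\ref{4-11}). Recovering that factor is the main obstacle. I would first try to sharpen the moment bound itself. The entries $a_{n}$, $b_{n}$ with $n$ small are much smaller than $c_{1}(k+2)^{1/\alpha}$, so the estimate should use the actual path structure. $\|H_{+}^{k}\delta_{1}\|^{2}$ is a sum over lattice paths of length $2k$ from $1$ to $1$, at most $3^{2k}$ of them. A path's weight is a product of entries whose indices are at most the current height $h_{i}$ of the path. Bounding each factor by $c_{1}(h_{i}+1)^{1/\alpha}$ and using $h_{i}\le\min(i,2k-i)+1$, the product is bounded by about $c_{1}^{2k}(k!)^{2/\alpha}$ rather than $c_{1}^{2k}k^{2k/\alpha}$. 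Multiplying by the path count gives
\[
\int\lambda^{2k}\sigma_{+}\leq(9c_{1}^{2})^{k}(k!)^{2/\alpha}\cdot\mathrm{poly}(k).
\]
With this improved moment bound the $e$ factors cancel in Stirling's comparison. The $j$-th term becomes about $\left(c(9c_{1}^{2})^{\alpha/2}\alpha/2\right)^{j}$ times a subexponential factor, which is summable exactly for $c<(9c_{1}^{2})^{-\alpha/2}(2/\alpha)$. The part of the integral over $|\lambda|<1$ is trivially bounded by $e^{c}$. Together these give (\ref{4-11}).
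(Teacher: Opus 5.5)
Your proposal is correct. It follows the paper's overall strategy: even moments as $\|H_{+}^{k}\delta_{1}\|^{2}$, the support of $H_{+}^{k}\delta_{1}$ in $\{1,\dots,k+1\}$, a moment bound $(9c_{1}^{2})^{k}(k!)^{2/\alpha}$ up to polynomial factors, and then Stirling. The two key steps are carried out differently.

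\textbf{The moment bound.} The paper uses a one-step recursion. Cauchy--Schwarz on each row gives $x_{k+1}\le\max_{n\le k+1}(\xi_{n-1}+\xi_{n}+\xi_{n+1})\,x_{k}\le 9c_{1}^{2}(k+2)^{2/\alpha}x_{k}$ with $\xi_{n}=a_{n}^{2}+a_{n-1}^{2}+b_{n}^{2}$. Hence $x_{k}\le x_{1}(9c_{1}^{2})^{k-1}((k+1)!/2)^{2/\alpha}$.

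This is also the direct repair of your first attempt. You lost the factor $e$ only because you used the final truncation $P_{k}$ at all $k$ steps. Using the actual support at step $j$, i.e.\ $\|H_{+}^{j}\delta_{1}\|\le 3c_{1}(j+1)^{1/\alpha}\|H_{+}^{j-1}\delta_{1}\|$, already gives $((k+1)!)^{2/\alpha}$ with no path counting.

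Your lattice-path expansion of $(H_{+}^{2k}\delta_{1},\delta_{1})$ reaches the same bound. It bounds the entry index at step $i$ by about $\min(i,2k-i)+1$ and counts at most $3^{2k}$ paths. This makes visible that the constant $9=3^{2}$ is a path count, at the cost of a more combinatorial argument.

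\textbf{Passage to exponential integrability.} The paper rewrites the moment bound as $c_{2}k^{3/\alpha-1/2}\bigl(9c_{1}^{2}(\alpha/2)^{2/\alpha}\bigr)^{k}\Gamma(2k/\alpha+1)$. It then invokes Lemma~\ref{a-4}, which interpolates between consecutive even moments by H\"older.

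Your device $|\lambda|^{\alpha j}\le 1+|\lambda|^{2\lceil\alpha j/2\rceil}$, with termwise summation of the exponential series, is more elementary and self-contained. It loses only polynomial factors, so it gives the same threshold $(9c_{1}^{2})^{-\alpha/2}(2/\alpha)$. The paper's route has the advantage that Lemma~\ref{a-4} is a reusable tool; it is used again in Lemma~\ref{l3-5} and in the proof of Theorem~\ref{t2}.
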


\begin{proof}
Recall%
\[
\left(  H_{+}u\right)  _{n}=\left\{
\begin{array}
[c]{ll}%
a_{1}u_{2}+b_{1}u_{1} & \text{if \ }n=1\\
a_{n}u_{n+1}+a_{n-1}u_{n-1}+b_{n}u_{n} & \text{if \ }n\geq2
\end{array}
\right.  \text{.}%
\]
Set%
\[
\xi_{n}=a_{n}^{2}+a_{n-1}^{2}+b_{n}^{2}\text{, \ \ }x_{k}=\left(  H_{+}%
^{2k}\delta_{1},\delta_{1}\right)  =\left\Vert H_{+}^{k}\delta_{1}\right\Vert
^{2}\text{\ .}%
\]
First we estimate $x_{k}$ by induction. We have%
\[
x_{k+1}=\left\Vert H_{+}^{k+1}\delta_{1}\right\Vert ^{2}=\sum_{n\geq1}\left(
a_{n}\left(  H_{+}^{k}\delta_{1}\right)  _{n+1}+a_{n-1}\left(  H_{+}^{k}%
\delta_{1}\right)  _{n-1}+b_{n}\left(  H_{+}^{k}\delta_{1}\right)
_{n}\right)  ^{2}\text{,}%
\]
where we understand $a_{0}=0$. Schwarz inequality yields%
\begin{align*}
x_{k+1}  &  \leq\sum_{n\geq1}\xi_{n}\left(  \left(  H_{+}^{k}\delta
_{1}\right)  _{n+1}^{2}+\left(  H_{+}^{k}\delta_{1}\right)  _{n-1}^{2}+\left(
H_{+}^{k}\delta_{1}\right)  _{n}^{2}\right) \\
&  =\sum_{n\geq1}\left(  \xi_{n-1}+\xi_{n+1}+\xi_{n}\right)  \left(  H_{+}%
^{k}\delta_{1}\right)  _{n}^{2}\text{,}%
\end{align*}
regarding $\xi_{0}=0$. Here noting%
\[
\left(  H_{+}^{k}\delta_{1}\right)  _{n}=0\text{ \ if \ }n\geq k+2
\]
one has%
\[
x_{k+1}\leq\sum_{1\leq n\leq k+1}\left(  \xi_{n-1}+\xi_{n+1}+\xi_{n}\right)
\left(  H_{+}^{k}\delta_{1}\right)  _{n}^{2}\leq\left(  \max_{1\leq n\leq
k+1}\left(  \xi_{n-1}+\xi_{n+1}+\xi_{n}\right)  \right)  x_{k}%
\]
holds. Since the assumption yields%
\[
\max_{1\leq n\leq k+1}\left(  \xi_{n-1}+\xi_{n+1}+\xi_{n}\right)  \leq
3\max_{1\leq n\leq k+2}\xi_{n}\leq9c_{1}^{2}\left(  k+2\right)  ^{2/\alpha
}\text{,}%
\]
we have%
\[
x_{k}=x_{1}%
%TCIMACRO{\dprod _{1\leq j\leq k-1}}%
%BeginExpansion
{\displaystyle\prod_{1\leq j\leq k-1}}
%EndExpansion
\dfrac{x_{j+1}}{x_{j}}\leq x_{1}%
%TCIMACRO{\dprod _{1\leq j\leq k-1}}%
%BeginExpansion
{\displaystyle\prod_{1\leq j\leq k-1}}
%EndExpansion
9c_{1}^{2}\left(  j+2\right)  ^{2/\alpha}=x_{1}\left(  9c_{1}^{2}\right)
^{k-1}\left(  \dfrac{\left(  k+1\right)  !}{2}\right)  ^{2/\alpha}\text{.}%
\]
Then, Stirling's formula $\Gamma\left(  x+1\right)  \sim\sqrt{2\pi x}\left(
x/e\right)  ^{x}$ shows that there exists a constant $c_{2}$ such that%
\[
x_{k}\leq c_{2}k^{3/\alpha-1/2}\left(  9c_{1}^{2}\left(  \dfrac{\alpha}%
{2}\right)  ^{2/\alpha}\right)  ^{k}\Gamma\left(  2k/\alpha+1\right)  \text{
\ for }k\geq1\text{.}%
\]
Then, Lemma \ref{a-4} completes the proof. \bigskip\ 
\end{proof}

Turning to the proof of Theorem \ref{t2} we assume $\left\{  a_{n}%
,b_{n}\right\}  $ satisfies for some $\alpha\geq1$%
\[
a_{n},\text{ }\left\vert b_{n}\right\vert =o\left(  n^{1/\alpha}\right)
\text{ \ as }n\rightarrow\infty\text{.}%
\]
We have to show%
\[
\int_{-\infty}^{\infty}e^{c\left\vert \lambda\right\vert ^{\alpha}}\sigma
_{+}\left(  d\lambda\right)  <\infty\ \ \ \ \text{for any }c>0\text{.}%
\]
(\ref{4-10}) implies for any $\varepsilon>0$ there exists a positive integer
$L$ such that%
\[
a_{n},\text{ }\left\vert b_{n}\right\vert \leq\varepsilon n^{1/\alpha}\text{
\ for any }n\geq L
\]
holds. Note under (\ref{4-10}) the boundary $+\infty$ for $H_{+}$ is of limit
point type due to Lemma \ref{a-4}. Hence, if we restrict $H_{+}$ on
$\mathbb{Z}_{\geq L}=\left\{  L,L+1,L+2,\cdots\right\}  $, Lemma \ref{l4-2}
says that the spectral measure $\sigma_{+}^{L}$ for $\left.  H_{+}\right\vert
_{\mathbb{Z}_{\geq L}}$ satisfies%
\[
\int_{-\infty}^{\infty}e^{c\left\vert \lambda\right\vert ^{\alpha}}\sigma
_{+}^{L}\left(  d\lambda\right)  <\infty\ \ \ \ \text{for any }c<\left(
9\varepsilon^{2}\right)  ^{-\alpha/2}\left(  \dfrac{2}{\alpha}\right)
\text{.}%
\]
Let $m_{+}^{L}$ be the Weyl function of $\left.  H_{+}\right\vert
_{\mathbb{Z}_{\geq L}}$, namely%
\[
m_{+}^{L}\left(  z\right)  =\int_{-\infty}^{\infty}\dfrac{1}{\lambda-z}%
\sigma_{+}^{L}\left(  d\lambda\right)  \text{.}%
\]
Then, from Lemma \ref{a-1} $m_{+}^{L}$ can be given by%
\[
m_{+}^{L}\left(  z\right)  =-\dfrac{f_{L+1}(z)}{a_{1}f_{L}(z)}\text{.}%
\]
with $f_{n}(z)=c_{n}(z)-m_{+}\left(  z\right)  s_{n}(z)$ (for $c_{n}$, $s_{n}$
refer to Appendix). Therefore, it holds that%
\[
m_{+}^{L}\left(  z\right)  =-\dfrac{c_{L+1}(z)-m_{+}\left(  z\right)
s_{L+1}(z)}{a_{1}\left(  c_{L}(z)-m_{+}\left(  z\right)  s_{L}(z)\right)
}\text{,}%
\]
hence%
\begin{equation}
m_{+}\left(  z\right)  =\dfrac{c_{L}(z)m_{+}^{L}\left(  z\right)  +c_{L+1}%
(z)}{a_{1}\left(  s_{L}(z)m_{+}^{L}\left(  z\right)  +s_{L+1}(z)\right)
}=a_{1}^{-1}z\dfrac{c_{L}(z)}{s_{L}(z)}\dfrac{z^{-2}m_{+}^{L}\left(  z\right)
+z^{-2}\dfrac{c_{L+1}(z)}{c_{L}(z)}}{z^{-1}m_{+}^{L}\left(  z\right)
+z^{-1}\dfrac{s_{L+1}(z)}{s_{L}(z)}}\text{.} \label{4-12}%
\end{equation}
Observe $c_{L}$ and $s_{L}$ are polynomials of degree $L-2$ and $L-1$
respectively. Hence $c_{L}/s_{L}$, $c_{L+1}/c_{L}$ and $s_{L+1}/s_{L}$ have
expansions at $\infty$
\[
z\dfrac{c_{L}(z)}{s_{L}(z)}=\sum\limits_{k\geq0}\alpha_{k}z^{-k}\text{,
\ }z^{-2}\dfrac{c_{L+1}(z)}{c_{L}(z)}=\sum\limits_{k\geq1}\beta_{k}%
z^{-k}\text{, \ }z^{-1}\dfrac{s_{L+1}(z)}{s_{L}(z)}=\sum\limits_{k\geq0}%
\gamma_{k}z^{-k}%
\]
with $\alpha_{0}$, $\beta_{1}$, $\gamma_{0}\neq0$ and%
\[
\left\vert \alpha_{k}\right\vert \text{, \ }\left\vert \beta_{k+1}\right\vert
\text{, \ }\left\vert \gamma_{k}\right\vert \leq c_{1}r^{k}\text{ \ for some
}r>0\text{ and any }k\geq0\text{.}%
\]
On the other hand, Lemma \ref{a-4} implies%
\[
m_{+}^{L}\left(  z\right)  =\sum_{1\leq k\leq L-1}\sigma_{k}^{L}%
z^{-k}+O\left(  z^{-L}\right)  \text{ with }\sigma_{k}^{L}\leq c_{1}%
c^{-k/\alpha}\Gamma\left(  k/\alpha+1\right)  \text{ for }k\geq1\text{,}%
\]
where $c$ is of (\ref{4-11}). Since $\Gamma\left(  k/\alpha+1\right)  $ grows
faster than any power as $k\rightarrow\infty$, applying Lemma \ref{a-2} to
(\ref{4-12}) yields%
\[
m_{+}\left(  z\right)  =\sum_{1\leq k\leq L-1}\sigma_{k}z^{-k}+O\left(
z^{-L}\right)  \text{ with }\sigma_{k}\leq c_{2}\left(  c/2\right)
^{-k/\alpha}\Gamma\left(  k/\alpha+1\right)  \text{.}%
\]
Then, from Lemma \ref{a-4} again
\[
\int_{-\infty}^{\infty}e^{c\left\vert \lambda\right\vert ^{\alpha}}\sigma
_{+}\left(  d\lambda\right)  <\infty\ \ \ \text{for any }c<\dfrac{1}{2}\left(
9\varepsilon^{2}\right)  ^{-\alpha/2}\left(  \dfrac{2}{\alpha}\right)
\]
follows. Since $\varepsilon$ can be arbitrarily small, we complete the proof
of Theorem \ref{t2}.\bigskip

We start proving Corollary \ref{c1} by a simple lemma

\begin{lemma}
\label{l4-3}Let $\left\{  X_{n}\left(  \omega\right)  \right\}  _{n\geq1}$ be
identically distributed non-negative random variables satisfying%
\[
\mathbb{E}X_{n}\left(  \omega\right)  <\infty\text{.}%
\]
Then, it holds that for a.e. $\omega$%
\begin{equation}
X_{n}\left(  \omega\right)  =o\left(  n\right)  \text{ \ as }n\rightarrow
\infty\text{.} \label{4-13}%
\end{equation}

\end{lemma}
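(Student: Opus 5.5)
The plan is the standard Borel--Cantelli argument. No independence is needed, only the common distribution and integrability. Fix $\varepsilon>0$ and write $X=X_{1}$ for a representative of the common law. I will show that almost surely $X_{n}(\omega)\leq\varepsilon n$ for all sufficiently large $n$. Intersecting the resulting full-measure events over $\varepsilon=1/k$, $k\geq1$, then gives $X_{n}(\omega)/n\rightarrow0$ a.s., which is (\ref{4-13}).

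\textbf{Key step: summability.} Since the $X_{n}$ are identically distributed,
\[
\sum_{n\geq1}P\left(X_{n}>\varepsilon n\right)=\sum_{n\geq1}P\left(X/\varepsilon>n\right)\leq\int_{0}^{\infty}P\left(X/\varepsilon>t\right)dt=\varepsilon^{-1}\mathbb{E}X<\infty .
\]
The middle inequality holds because $t\mapsto P(X/\varepsilon>t)$ is non-increasing, so each term $P(X/\varepsilon>n)$ is at most the integral over $[n-1,n]$. The last equality is the layer-cake formula for a non-negative random variable.

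\textbf{Conclusion.} The first Borel--Cantelli lemma now gives $P\left(X_{n}>\varepsilon n\text{ infinitely often}\right)=0$. This holds for each $\varepsilon=1/k$, and a countable union of null sets is null, so the full-measure event needed in the first paragraph exists.

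I do not expect a real obstacle. The only point to stress is that Borel--Cantelli's first half needs no independence, so the lemma applies directly to the ergodic setting of Corollary~\ref{c1}. There one takes $X_{n}=a(\theta^{n}\omega)^{N}+|b(\theta^{n}\omega)|^{N}$, which is integrable by (\ref{1-20}). The lemma then gives $X_{n}=o(n)$, hence $a_{n},b_{n}=o(n^{1/N})$. Applying the same argument to $n\rightarrow-\infty$ via $\theta^{-n}$ yields (\ref{1-19}).
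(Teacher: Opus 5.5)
Your proof is correct and follows the paper's argument: $\sum_{n}P(X_{n}>\varepsilon n)=\sum_{n}P(X_{1}>\varepsilon n)$ is summable because the $X_{n}$ are identically distributed and $\mathbb{E}X_{1}<\infty$, and the first Borel--Cantelli lemma then finishes the proof. The only differences are minor: you bound the tail sum by the layer-cake integral where the paper uses a summation by parts against $\mathbb{E}X_{1}$, and you make explicit the countable intersection over $\varepsilon=1/k$ that the paper leaves implicit.
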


\begin{proof}
For $\varepsilon>0$ observe%
\begin{align*}
\sum_{k\geq0}P\left(  X_{1}>\varepsilon k\right)   &  =\sum_{k\geq1}k\left(
P\left(  X_{1}>\varepsilon\left(  k-1\right)  \right)  -P\left(
X_{1}>\varepsilon k\right)  \right) \\
&  =\sum_{k\geq1}kP\left(  \varepsilon k\geq X_{1}>\varepsilon\left(
k-1\right)  \right)  \text{,}%
\end{align*}
and%
\begin{align*}
\mathbb{E}X_{1}  &  =\sum_{k\geq1}\mathbb{E}\left(  X_{1};\varepsilon k\geq
X_{1}>\varepsilon\left(  k-1\right)  \right) \\
&  \geq\varepsilon\sum_{k\geq1}\left(  k-1\right)  P\left(  \varepsilon k\geq
X_{1}>\varepsilon\left(  k-1\right)  \right)  \text{.}%
\end{align*}
Hence, the equality of the distributions yields%
\begin{equation}
\sum_{k\geq0}P\left(  X_{k}>\varepsilon k\right)  =\sum_{k\geq0}P\left(
X_{1}>\varepsilon k\right)  <\infty\text{,} \label{4-14}%
\end{equation}
if $\mathbb{E}X_{1}<\infty$. On the other hand, Borel-Cantelli lemma implies
\[
P\left(  \bigcap\limits_{n\geq1}\bigcup\limits_{k\geq n}\left\{
X_{k}>\varepsilon k\right\}  \right)  \leq P\left(  \bigcup\limits_{k\geq
n}\left\{  X_{k}>\varepsilon k\right\}  \right)  \leq\sum\limits_{k\geq
n}P\left(  X_{k}>\varepsilon k\right)  \text{.}%
\]
Since the right side tends to $0$ as $n\rightarrow\infty$ by (\ref{4-14}), the
probability of the left side is $0$ for any $\varepsilon>0$, which implies
(\ref{4-13}).\bigskip
\end{proof}

Applying Lemma \ref{l4-3} to $a\left(  \theta^{n}\omega\right)  ^{N}$,
$\left\vert b\left(  \theta^{n}\omega\right)  \right\vert ^{N}$ one has%
\[
a\left(  \theta^{n}\omega\right)  \text{, \ }b\left(  \theta^{n}\omega\right)
=o\left(  \left\vert n\right\vert ^{1/N}\right)  \text{ \ as \ }%
n\rightarrow\pm\infty\text{,}%
\]
and one can apply Theorem \ref{t2} to construct Toda flow on $Q_{N}$ starting
from $q_{\omega}=\left\{  a\left(  \theta^{n}\omega\right)  ,b\left(
\theta^{n}\omega\right)  \right\}  _{n\in\mathbb{Z}}$for any $N\geq1$.

If $\left\{  a\left(  \theta^{n}\omega\right)  ,b\left(  \theta^{n}%
\omega\right)  \right\}  _{n\in\mathbb{Z}}$ are independent random sequences
with distributions%
\[
\left\{
\begin{array}
[c]{l}%
P\left(  a\left(  \theta^{n}\omega\right)  <x\right)  =\dfrac{2}{\sigma
\Gamma\left(  \nu/2\right)  }\int_{0}^{x}e^{-y^{2}/\sigma^{2}}y^{\nu
-1}dy\text{ \ }(a\left(  \theta^{n}\omega\right)  ^{2}\text{ }\sim\chi
^{2}\text{-distribution})\\
P\left(  b\left(  \theta^{n}\omega\right)  <x\right)  =\dfrac{1}{\sqrt
{2\pi\sigma^{2}}}\int_{-\infty}^{x}e^{-\left(  y-m\right)  ^{2}/2\sigma^{2}%
}dy\text{ \ }\left(  \sim\mathcal{N}\left(  m,\sigma^{2}\right)  \right)
\end{array}
\right.
\]
for $\sigma>0$, $\nu>0$, $m\in\mathbb{R}$, then, certainly $\mathbb{E}\left(
a\left(  \omega\right)  ^{N}+\left\vert b\left(  \omega\right)  \right\vert
^{N}\right)  <\infty$ holds for any $N\geq1$, and one can construct Toda flow
starting from these random sequences.

The invariance of the probability measures induced by $\left\{  a\left(
\theta^{n}\omega\right)  ,b\left(  \theta^{n}\omega\right)  \right\}
_{n\in\mathbb{Z}}$ can be verified by confirming it on periodic lattice. For a
positive integer $L$ let%
\[
\Omega_{L}=\left\{  \left\{  a_{n},b_{n}\right\}  _{n\in\mathbb{Z}}\text{;
\ }a_{n}>0\text{, }b_{n}\in\mathbb{R}\text{, }a_{n+L}=a_{n}\text{, }%
b_{n+L}=b_{n}\text{ for any }n\in\mathbb{Z}\right\}  \text{.}%
\]
For $\left\{  a_{n},b_{n}\right\}  _{n\in\mathbb{Z}}\in\Omega_{L}$ define a
periodic Jacobi operator:%
\[
\left(  Hu\right)  _{n}=a_{n+1}u_{n+1}+a_{n}u_{n-1}+b_{n}u_{n}\text{ for
}u\text{ satisfying }u_{n+L}=u_{n}\text{,}%
\]
and for $k\geq1$
\[
I_{k}=\mathrm{tr}\left(  H^{k}\right)  \text{.}%
\]
The original Toda lattice has a finite dimensional Hamiltonian structure on
periodic lattice and it has infinitely many invariants $\left\{
I_{k}\right\}  _{k\geq1}$. The original variables $\left\{  q_{i}%
,p_{i}\right\}  $ are related to $\left\{  a_{i},b_{i}\right\}  $ by%
\[
a_{i}=\dfrac{1}{2}e^{-\left(  q_{i}-q_{i-1}\right)  /2}\text{, \ \ \ }%
b_{i}=-\dfrac{1}{2}p_{i}\text{.}%
\]
Since%
\[
\partial_{p_{i}}f=-\dfrac{1}{2}\partial_{b_{i}}f\text{, \ \ }\partial_{q_{i}%
}f=-\dfrac{1}{2}a_{i}\partial_{a_{i}}f+\dfrac{1}{2}a_{i+1}\partial_{a_{i+1}%
}f\text{,}%
\]
if $f$ is a smooth function of $\left\{  a_{i},b_{i}\right\}  $, the Poisson
bracket in the variables $\left\{  q_{i},p_{i}\right\}  $%
\[
\left\{  f,g\right\}  =\sum_{1\leq i\leq L}\left(  \partial_{q_{i}}%
f\partial_{p_{i}}g-\partial_{q_{i}}g\partial_{p_{i}}f\right)
\]
turns to%
\[
\left\{  f,g\right\}  =\dfrac{1}{4}\sum_{1\leq i\leq L}\left(  a_{i}\left(
\partial_{a_{i}}f\partial_{b_{i}}g-\partial_{a_{i}}g\partial_{b_{i}}f\right)
-a_{i+1}\left(  \partial_{a_{i+1}}f\partial_{b_{i}}g-\partial_{a_{i+1}%
}g\partial_{b_{i}}f\right)  \right)  \text{,}%
\]
if $f$, $g$ are functions of $\left\{  a_{i},b_{i}\right\}  $. The invariants
$\left\{  I_{k}\right\}  _{k\geq1}$ are polynomials of $\left\{  a_{i}%
,b_{i}\right\}  $ and they satisfy%
\[
\left\{  I_{k},I_{\ell}\right\}  =0\text{ \ for any }k\text{, }\ell
\geq1\text{.}%
\]
There is another invariant $I_{0}$:%
\[
I_{0}=a_{1}a_{2}\cdots a_{L}\text{, }%
\]
since one can verify%
\[
\left\{  I_{k},I_{0}\right\}  =\dfrac{1}{4}\sum_{1\leq i\leq L}\left(
I_{0}\partial_{b_{i}}I_{k}-I_{0}\partial_{b_{i}}I_{k}\right)  =0\text{ \ due
to }a_{i}\partial_{a_{i}}I_{0}=I_{0}\text{.}%
\]
Then, if we consider an ODE%
\begin{equation}
\partial_{t}a_{i}=\dfrac{1}{4}a_{i}\left(  \partial_{b_{i}}I_{k}%
-\partial_{b_{i-1}}I_{k}\right)  \text{, \ \ }\partial_{t}b_{i}=\dfrac{1}%
{4}\left(  a_{i+1}\partial_{a_{i+1}}I_{k}-a_{i}\partial_{a_{i}}I_{k}\right)
\text{,} \label{4-15}%
\end{equation}
one has for any smooth function $f$ of $\left\{  a_{i},b_{i}\right\}  $
\[
\partial_{t}f=\sum_{1\leq i\leq L}\left(  \partial_{a_{i}}f\partial_{t}%
a_{i}+\partial_{b_{i}}f\partial_{t}b_{i}\right)  =\left\{  f,I_{k}\right\}
\text{,}%
\]
which shows that for any positive smooth positive functions $\left\{  \rho
_{k}\right\}  _{1\leq k\leq K}$ on $\mathbb{R}$ and $\rho_{0}$ on
$\mathbb{R}_{+}$
\[
\left(  \prod\limits_{0\leq k\leq K}\rho_{k}\left(  I_{k}\right)  \right)
da_{1}da_{2}\cdots da_{L}db_{1}db_{2}\cdots db_{L}%
\]
provides an invariant measure for the ODE (\ref{4-15}). This ODE for $k=2$
generates the TODA lattice. If we choose $\rho_{k}\left(  x\right)
=e^{c_{k}x}$ and $\rho_{0}\left(  x\right)  =x^{\nu-1}$ with $c_{k}$, $\nu
\in\mathbb{R}$, the resulting measures are called (generalized) Gibbs
measures. A few terms of $I_{k}$ are%
\[
I_{1}=\sum_{1\leq j\leq L}b_{j}\text{, \ }I_{2}=\sum_{1\leq j\leq L}\left(
2b_{j}^{2}+4a_{j}^{2}\right)  \text{, \ }I_{3}=\sum_{1\leq j\leq L}\left(
3a_{j+1}^{2}+3a_{j}^{2}+b_{j}^{2}\right)  b_{j}\text{,}%
\]
hence Gibbs measures $\mu$ using $I_{0}$, $I_{1}$, $I_{2}$ are%
\[
\mu\equiv\Sigma^{-1}\exp\left(  \sum_{1\leq j\leq L}\left(  c_{1}b_{j}%
-c_{2}\left(  b_{j}^{2}+2a_{j}^{2}\right)  \right)  \right)  a_{1}^{\nu
-1}\cdots a_{L}^{\nu-1}da_{1}\cdots da_{L}db_{1}\cdots db_{L}\text{,}%
\]
where $c_{1}\in\mathbb{R}$, $\nu$, $c_{2}>0$ and the normalization constant
$\Sigma$.

\begin{lemma}
\label{l4-4}The probability measure $\mu$ on $\Omega_{L}$ is invariant for any
$g\in\Gamma_{N}^{\operatorname{real}}$.
\end{lemma}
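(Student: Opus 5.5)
The plan is to reduce the invariance for an arbitrary $g\in\Gamma_{N}^{\operatorname{real}}$ to three elementary transformations: the shift $z^{n}$, symmetric factors $g(z)=g(z^{-1})$, and the Hamiltonian Toda hierarchy flows $e^{-2t\widehat{p}}$. Each of these preserves $\mu$ on $\Omega_{L}$, and a limiting argument based on Proposition \ref{p2} will then handle general $g$.

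First, for $q\in\Omega_{L}$ I will check that $\mathrm{Toda}(g)q\in\Omega_{L}$. The shift $q\mapsto\{a_{n+L},b_{n+L}\}$ corresponds to $\mathrm{Toda}(z^{L})$, and by the flow property (\ref{4-2}) it commutes with $\mathrm{Toda}(g)$ because $\Gamma_{N}^{\operatorname{real}}$ is abelian. So the action restricts to the finite dimensional space $\Omega_{L}$. The elementary factors are then handled as follows.
\begin{itemize}
\item $g=z^{n}$: by (\ref{1-13}) this is the shift of indices, and $\mu$ is shift invariant, being a product over $j$ modulo $L$.
\item $g$ with $g(z)=g(z^{-1})$: (i) of Lemma \ref{l2-12} gives $m_{g\boldsymbol{m}}=m_{\boldsymbol{m}}$, so $\mathrm{Toda}(g)$ is the identity.
\item $g=e^{-2t\widehat{p}}$: Theorem \ref{t1} says that $q_{t}$ solves $\partial_{t}H=[H,p(H)_{a}]$. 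On $\Omega_{L}$ this is (after rescaling time) the ODE (\ref{4-15}) with Hamiltonians $I_{k}$, $k\le N+1$. These flows preserve $\mu$: in the variables $(q_{i},p_{i})$ Liouville measure is preserved; $da\,db$ equals $\mathrm{const}\cdot\prod a_{i}\,dq\,dp$ modulo the center of mass, and $\prod a_{i}=I_{0}$ is invariant; and the density of $\mu$ is a function of $I_{0},I_{1},I_{2}$ only, which are conserved.
\end{itemize}

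Next, let $h$ be a real Laurent polynomial and write $h(z)=h_{+}(z)+h_{-}(z^{-1})+c$ with $h_{\pm}$ polynomials without constant term. Then
\[
h=\bigl(h_{+}-h_{-}\bigr)(z)+\bigl(h_{-}(z)+h_{-}(z^{-1})\bigr)+c\text{.}
\]
The first summand is $-2t\widehat{p}$ for a suitable real polynomial $p$ of degree $\le N$ (if $\deg h_{\pm}\le N$); the second is symmetric; the constant is irrelevant. By the flow property, $\mathrm{Toda}(z^{n}e^{h})$ preserves $\mu$.

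For general $g$ I use Lemma \ref{l2-7} to write $g=z^{n}g_{1}g_{2}$ with $g_{j}\in\Gamma_{N}(D_{+}^{j})$, and approximate $\log g_{1}$ and $\log\widetilde{g}_{2}$ by polynomials of degree $\le N$ in the spirit of Steps 1--2 of Lemma \ref{l2-8}. I must keep $\sup_{n}\mathrm{\delta}_{D_{+}}<c$, so that $\mathrm{d}_{c}(g_{n},g)\to0$ by Lemma \ref{l2-4}; then Proposition \ref{p2}(i) gives $\mathrm{Toda}(g_{n})q\to\mathrm{Toda}(g)q$ for every $q\in\Omega_{L}$. For bounded continuous $F$, dominated convergence yields
\[
\int F(\mathrm{Toda}(g)q)\,\mu(dq)=\lim_{n}\int F(\mathrm{Toda}(g_{n})q)\,\mu(dq)=\int F\,d\mu\text{.}
\]

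The main obstacle is this last approximation. The polynomial approximants must respect the growth constraint $\mathrm{\delta}<c$ uniformly, while $h$ of degree $N$ has $h'=O(z^{N-1})$, so there is only borderline room. If a direct truncation fails, I would fall back on the rational approximants of Lemma \ref{l2-8} and the factorization $q_{\zeta}=\lim(\text{Darboux steps})$. I would then verify $\mu$-invariance separately for each $\mathrm{Toda}(r_{\zeta})$, $r_{\zeta}=q_{\zeta}q_{\overline{\zeta}}$, by writing $r_{\zeta}$ modulo symmetric factors as a Hamiltonian flow time-$1$ map of a function of the $I_{k}$, using that on $\Omega_{L}$ the action factors through the finite-dimensional isospectral torus.
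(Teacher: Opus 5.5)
Your treatment of the elementary pieces is sound. $\mathrm{Toda}(z^{n})$ is a shift, symmetric $g$ act trivially by Lemma \ref{l2-12}(i), and the polynomial flows preserve $\mu$ through the Hamiltonian structure. The gap is in the passage to a general $g$.

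\textbf{Degree-$N$ polynomials cannot approximate general $g$.} You propose to approximate $\log g_{1}$ and $\log\widetilde{g}_{2}$ by polynomials of degree at most $N$. These form a finite-dimensional space, and a uniform limit of such polynomials on a compact set with interior is again a polynomial of degree at most $N$. Convergence in $\mathrm{d}_{c}$ with $\sup_{n}\mathrm{\delta}_{D_{+}}(g_{n})<c$ forces $g_{n}/g_{n}(z_{0})\to g/g(z_{0})$ locally uniformly on $D_{+}$. So the only limits you can reach are of the form $z^{n}e^{h}$ with $h$ a Laurent polynomial of degree at most $N$; for instance $g=q_{\zeta}q_{\overline{\zeta}}$ with $\zeta\in D_{-}^{1}$ is out of reach. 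The obstacle is therefore structural, not the borderline room in $\mathrm{\delta}<c$ that you point to.

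\textbf{Higher degree leaves $\Gamma_{N}$.} If you instead let $\deg p_{n}\to\infty$, then $\mathrm{\delta}_{D_{+}}(e^{p_{n}})=\infty$, so $e^{p_{n}}\notin\Gamma_{N}(D_{+})$. Then $\mathrm{d}_{c}(e^{p_{n}},g)$ is not even finite and Proposition \ref{p2}(i) cannot be used.

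\textbf{The fallback only moves the problem.} With rational approximants you would need $\mu$-invariance of each $\mathrm{Toda}(r)$, and your Hamiltonian argument does not give that. The claim that $\mathrm{Toda}(r_{\zeta})$, modulo symmetric factors, is a time-one map of a Hamiltonian in the $I_{k}$ would need outside algebro-geometric input. Equivalently, you would need that it translates isospectral tori on which $\mu$ disintegrates into Haar measures. Nothing in the paper provides this. Also, $q_{\zeta}$ is itself a single Darboux step; it is $e^{tz}$ that arises as a limit of Darboux steps.

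\textbf{The missing idea: use boundedness on $\Omega_{L}$.} The paper avoids the unbounded-contour continuity entirely. For $q\in\Omega_{L}$ the spectrum $\Sigma$ is bounded, so one can take $C_{1}$ to be a bounded simple curve in $D_{+}^{1}$ surrounding $\phi^{-1}(\Sigma)$. One then works in the bounded setting of \cite{z}, which yields the same $\mathrm{Toda}(g)q$ by the identification (\ref{4-8}).

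\begin{enumerate}
\item In that setting $e^{p}$ is admissible for real polynomials $p$ of every degree. Each such flow is Hamiltonian in the $I_{k}$ on $\Omega_{L}$, so each preserves $\mu$.
\item Runge's theorem gives polynomials $p_{n}\to h=\log g_{1}$ uniformly on the compact set bounded by $C_{1}$.
\item $\mathrm{Toda}(e^{p_{n}})q$ and $\mathrm{Toda}(e^{h})q$ are isospectral with uniformly bounded coefficients, so $\mathrm{Toda}(e^{p_{n}})q\to\mathrm{Toda}(e^{h})q$. Dominated convergence then gives invariance under $g_{1}$.
\item The $D_{+}^{2}$ factor follows from $\mathrm{Toda}(\widetilde{g})=\mathrm{Toda}(g^{-1})$. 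This is exactly your observation that symmetric factors act trivially, applied to $g_{2}\widetilde{g}_{2}$.
\item The factor $z^{n}$ is the shift.
\end{enumerate}
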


\begin{proof}
Since the equations (\ref{4-15}) are equivalent to those obtained by the Lax
equations in Theorem \ref{t1}, the previous argument implies that $\mu$ is
invariant under $g=e^{p}$ with real polynomial $p$. Assume $g=e^{h}\in
\Gamma_{N}^{\operatorname{real}}\left(  D_{+}^{1}\right)  $ for some $D_{+}$.
Fix $q\in\Omega_{L}$. Then, the associated Jacobi operator has a bounded
spectrum $\Sigma$ in $\mathbb{R}$. Then, we can assume the curve $C_{1}$ to be
a bounded and simple curve surrouding $\phi^{-1}\left(  \Sigma\right)  $ and
contained in the above $D_{+}^{1}$. Let $K$ be a compact set with boundary
$C_{1}$ and $p_{n}$ be a sequence of polynomials approximating $h$ uniformly
on $K$, which is possible by Runge's theorem. The resulting Toda map
\textrm{Toda}$\left(  e^{p_{n}}\right)  q$, \textrm{Toda}$\left(
e^{h}\right)  q\in\Omega_{L}$ generate Jacobi operators with the same spectrum
$\Sigma$. Since the coefficients are uniformly bounded, without difficulty,
one can show the convergence $\mathrm{d}\left(  \mathrm{Toda}\left(  e^{p_{n}%
}\right)  q,\mathrm{Toda}\left(  e^{h}\right)  q\right)  \rightarrow0$. Let
$f$ be a bounded continuous function on $Q_{N}$. Then, we have \
\[
\int_{Q_{N}}f\left(  q\right)  \mu\left(  dq\right)  =\int_{Q_{N}}f\left(
\mathrm{Toda}\left(  e^{p_{n}}\right)  q\right)  \mu\left(  dq\right)
\underset{n\rightarrow\infty}{\rightarrow}\int_{Q_{N}}f\left(  \mathrm{Toda}%
\left(  e^{h}\right)  q\right)  \mu\left(  dq\right)  \text{.}%
\]
Here, note the identity%
\begin{equation}
\mathrm{Toda}\left(  \widetilde{g}\right)  q=\mathrm{Toda}\left(
g^{-1}\right)  q\text{.} \label{4-16}%
\end{equation}
which comes from%
\[
\tau_{\widetilde{g}\boldsymbol{m}}\left(  g_{1}\right)  =\dfrac{\tau
_{\boldsymbol{m}}\left(  \widetilde{g}g_{1}\right)  }{\tau_{\boldsymbol{m}%
}\left(  \widetilde{g}\right)  }=\dfrac{\tau_{\boldsymbol{m}}\left(
\widetilde{g}gg^{-1}g_{1}\right)  }{\tau_{\boldsymbol{m}}\left(  \widetilde
{g}gg^{-1}\right)  }=\dfrac{\tau_{\boldsymbol{m}}\left(  g^{-1}g_{1}\right)
}{\tau_{\boldsymbol{m}}\left(  g^{-1}\right)  }=\tau_{g^{-1}\boldsymbol{m}%
}\left(  g_{1}\right)  \text{.}%
\]
Thus, for $g_{2}\in\Gamma_{N}^{\operatorname{real}}\left(  D_{+}^{2}\right)  $
(\ref{4-16}) implies%
\[
\int_{Q_{N}}f\left(  \mathrm{Toda}\left(  g_{2}\right)  q\right)  \mu\left(
dq\right)  =\int_{Q_{N}}f\left(  \mathrm{Toda}\left(  \widetilde{g}_{2}%
^{-1}\right)  q\right)  \mu\left(  dq\right)  =\int_{Q_{N}}f\left(  q\right)
\mu\left(  dq\right)  \text{,}%
\]
since $\widetilde{g}_{2}^{-1}\in\Gamma_{N}^{\operatorname{real}}\left(
D_{+}^{1}\right)  $ holds. The invariance of $\mu$ by the action $g=z^{k}$ is
immediate due to $\mathrm{Toda}\left(  z^{k}\right)  q$ is nothing but the
shift of the coefficients by $k$, which completes the proof.\bigskip
\end{proof}

The proof of the corollary is now clear. The variables $\left\{  a_{j}%
,b_{j}\right\}  _{1\leq j\leq L}$ can be regarded as independent random
variables with distributions%
\[
\left\{
\begin{array}
[c]{l}%
P\left(  a_{j}\left(  \omega\right)  <x\right)  =\dfrac{2}{\sigma\Gamma\left(
\nu/2\right)  }\int_{0}^{x}e^{-y^{2}/\sigma^{2}}y^{\nu-1}dy\text{ \ (}%
\nu>0\text{)}\\
P\left(  b_{j}\left(  \omega\right)  <x\right)  =\dfrac{1}{\sqrt{2\pi
\sigma^{2}}}\int_{-\infty}^{x}e^{-\left(  y-m\right)  ^{2}/2\sigma^{2}%
}dy\text{ \ }\left(  \sim\mathcal{N}\left(  m,\sigma^{2}\right)  \right)
\end{array}
\text{.}\right.
\]
Letting the period $L\rightarrow\infty$, one has the conclusion of the corollary.

The random Jacobi operator with coefficients $\left\{  a_{j}\left(
\omega\right)  ,b_{j}\left(  \omega\right)  \right\}  $ was first employed by
I. Dumitriu - A. Edelman \cite{d} to have a random Jacobi matrix having the
same distributions of the eigen-values as that of Gaussian orthogonal
ensembles (GOE). This invariant measures also were used by H. Spohn \cite{sp}
to explain a chaotic behavior of fluid.

\section{Appendix}

\subsection{Weyl disk}

The notion of Weyl disk was introduced by H. Weyl to investigate boundary
value problems for Sturm-Liouville differential operators. In this section we
give its definition for Jacobi operators and show several properties which
will be necessary for our purpose.

Let $c_{n}\left(  z\right)  $, $s_{n}(z)$ be the solutions to%
\begin{gather*}
a_{n+1}f_{n+1}\left(  z\right)  +a_{n}f_{n-1}\left(  z\right)  +b_{n}%
f_{n}\left(  z\right)  =zf_{n}\left(  z\right)  \text{ for }n\geq1\text{,
}z\in\mathbb{C}\\
c_{0}\left(  z\right)  =s_{1}\left(  z\right)  =1\text{, \ }c_{1}\left(
z\right)  =s_{0}\left(  z\right)  =0\text{.}%
\end{gather*}
Then, its equivalent matrix form is%
\[
\left(
\begin{array}
[c]{cc}%
c_{n} & s_{n}\\
c_{n+1} & s_{n+1}%
\end{array}
\right)  =\left(
\begin{array}
[c]{cc}%
0 & 1\\
-\dfrac{a_{n}}{a_{n+1}} & \dfrac{z-b_{n}}{a_{n+1}}%
\end{array}
\right)  \left(
\begin{array}
[c]{cc}%
c_{n-1} & s_{n-1}\\
c_{n} & s_{n}%
\end{array}
\right)  \text{,}%
\]
hence%
\begin{equation}
\det\left(
\begin{array}
[c]{cc}%
c_{n} & s_{n}\\
c_{n+1} & s_{n+1}%
\end{array}
\right)  =\dfrac{a_{n}}{a_{n+1}}\det\left(
\begin{array}
[c]{cc}%
c_{n-1} & s_{n-1}\\
c_{n} & s_{n}%
\end{array}
\right)  =\cdots=\dfrac{a_{1}}{a_{n+1}} \label{a1}%
\end{equation}
hold for any $n\geq0$. With these $\left\{  c_{n},s_{n}\right\}  $ set%
\[
D_{L}(z)=\left\{  m\in\mathbb{C}_{+}\text{; }\sum_{1\leq n\leq L}\left\vert
c_{n}(z)-a_{1}^{-1}ms_{n}(z)\right\vert ^{2}\leq\dfrac{\operatorname{Im}%
m}{\operatorname{Im}z}\right\}  \text{.}%
\]
Then, $D_{L}(z)$ is a disk in $\mathbb{C}_{+}$, although it might be empty.

To compute its radius let $\left\{  f_{n}(z)\right\}  _{n\geq0}$ be a solution
to%
\[
a_{n+1}f_{n+1}\left(  z\right)  +a_{n}f_{n-1}\left(  z\right)  +b_{n}%
f_{n}\left(  z\right)  =zf_{n}\left(  z\right)  \text{ for }n\geq1\text{,}%
\]
where $z$ moves in a domain $D_{+}$ in $\mathbb{C}_{+}$. Then, for $z_{1}$,
$z_{2}\in D_{+}$ it holds that%
\begin{gather}
\left(  z_{1}-\overline{z}_{2}\right)  \sum_{1\leq n\leq L}f_{n}\left(
z_{1}\right)  \overline{f_{n}\left(  z_{2}\right)  }=a_{1}\left(  f_{0}\left(
z_{1}\right)  \overline{f_{1}\left(  z_{2}\right)  }-f_{1}\left(
z_{1}\right)  \overline{f_{0}\left(  z_{2}\right)  }\right) \nonumber\\
-a_{L+1}\left(  f_{L}\left(  z_{1}\right)  \overline{f_{L+1}\left(
z_{2}\right)  }-f_{L+1}\left(  z_{1}\right)  \overline{f_{L}\left(
z_{2}\right)  }\right)  \text{.} \label{a2}%
\end{gather}
Applying (\ref{a2}) to $f_{n}(z)=c_{n}(z)-a_{1}^{-1}ms_{n}(z)$ ($f_{0}%
(z)=1,f_{1}(z)=-a_{1}^{-1}m$), one has%
\[
\operatorname{Im}m\geq\left(  \operatorname{Im}z\right)  \sum_{1\leq n\leq
L}\left\vert c_{n}(z)-a_{1}^{-1}ms_{n}(z)\right\vert ^{2}=\operatorname{Im}%
m-a_{L+1}\operatorname{Im}\left(  f_{L}\left(  z\right)  \overline
{f_{L+1}\left(  z\right)  }\right)  \text{,}%
\]
hence the following equivalences are valid.%
\begin{align}
m  &  \in D_{L}(z)\Longleftrightarrow\operatorname{Im}\left(  f_{L}\left(
z\right)  \overline{f_{L+1}\left(  z\right)  }\right)  \geq0\nonumber\\
&  \Longleftrightarrow\operatorname{Im}\left(  \left(  c_{L}-a_{1}^{-1}%
ms_{L}\right)  \left(  \overline{c_{L+1}-a_{1}^{-1}ms_{L+1}}\right)  \right)
\geq0\nonumber\\
&  \Longleftrightarrow\operatorname{Im}\left(  s_{L}\overline{s_{L+1}}%
a_{1}^{-2}m\overline{m}-s_{L}\overline{c_{L+1}}a_{1}^{-1}m-c_{L}%
\overline{s_{L+1}}a_{1}^{-1}\overline{m}+c_{L}\overline{c_{L+1}}\right)  \geq0
\label{a3}%
\end{align}
Here note from (\ref{a2})%
\begin{align*}
-a_{L+1}\operatorname{Im}\left(  c_{L}\overline{c_{L+1}}\right)   &  =\left(
\operatorname{Im}z\right)  \sum_{1\leq n\leq L}\left\vert c_{n}\right\vert
^{2}>0\text{,}\\
-a_{L+1}\operatorname{Im}\left(  s_{L}\overline{s_{L+1}}\right)   &  =\left(
\operatorname{Im}z\right)  \sum_{1\leq n\leq L}\left\vert s_{n}\right\vert
^{2}>0\text{.}%
\end{align*}
For simplicity set%
\[
A=-s_{L}\overline{s_{L+1}}\text{, \ }B=s_{L}\overline{c_{L+1}}\text{,
\ }C=c_{L}\overline{s_{L+1}}\text{, \ }D=-c_{L}\overline{c_{L+1}}\text{.}%
\]
Then, $\operatorname{Im}A>.0$, $\operatorname{Im}D>0$ hold and we have%
\begin{align*}
0  &  \geq\operatorname{Im}\left(  a_{1}^{-2}Am\overline{m}+a_{1}^{-1}%
Bm+a_{1}^{-1}C\overline{m}+D\right) \\
&  =a_{1}^{-2}m\overline{m}\operatorname{Im}A+\operatorname{Im}D+a_{1}%
^{-1}\dfrac{B}{2i}m-a_{1}^{-1}\dfrac{\overline{B}}{2i}\overline{m}+a_{1}%
^{-1}\dfrac{C}{2i}\overline{m}-a_{1}^{-1}\dfrac{\overline{C}}{2i}m\\
&  =\left\vert a_{1}^{-1}\sqrt{\operatorname{Im}A}m+E\right\vert ^{2}-F
\end{align*}
with%
\[
E=\dfrac{C-\overline{B}}{2i\sqrt{\operatorname{Im}A}}\text{, \ \ \ }%
F=\dfrac{\left\vert C-\overline{B}\right\vert ^{2}}{4\operatorname{Im}%
A}-D\text{.}%
\]
To compute $F$ we observe%
\begin{align*}
F  &  =\dfrac{\left\vert c_{L}\overline{s_{L+1}}-\overline{s_{L}%
\overline{c_{L+1}}}\right\vert ^{2}-4\left(  -\operatorname{Im}s_{L}%
\overline{s_{L+1}}\right)  \left(  -\operatorname{Im}\left(  c_{L}%
\overline{c_{L+1}}\right)  \right)  }{4\left(  -\operatorname{Im}%
s_{L}\overline{s_{L+1}}\right)  }\\
&  =\dfrac{\left\vert c_{L}s_{L+1}-s_{L}c_{L+1}\right\vert ^{2}}{4\left(
-\operatorname{Im}s_{L}\overline{s_{L+1}}\right)  }=\dfrac{1}{4a_{L+1}\left(
-\operatorname{Im}s_{L}\overline{s_{L+1}}\right)  }\text{,}%
\end{align*}
where we have used (\ref{a1}). Then, (\ref{a3}) implies $m\in D_{L}(z)$ is
equivalent to%
\begin{equation}
\left\vert a_{1}^{-1}m+\dfrac{E}{\sqrt{\operatorname{Im}A}}\right\vert
\leq\dfrac{1}{2\sqrt{a_{L+1}}\left\vert \operatorname{Im}s_{L}\overline
{s_{L+1}}\right\vert }\text{,} \label{a4}%
\end{equation}
which shows that $D_{L}(z)$ is a non-empty disk in $\mathbb{C}_{+}$. The disk
$D_{L}(z)$ is called \textbf{Weyl disk}. Clearly $D_{L}(z)$ is decreasing as
$L\rightarrow\infty$, and if its limit reduces to a single point, the Jacobi
operator is called of \textbf{limit point type} at the boundary $+\infty$. It
is known that $D_{\infty}(z)$ is a single point simultaneously for
$z\in\mathbb{C}_{+}$.

Summing up the above argument we have

\begin{lemma}
\label{a-1}The radius of the disk $D_{L}(z)$ is
\[
\dfrac{a_{1}}{2\sqrt{a_{L+1}}\left\vert \operatorname{Im}s_{L}\left(
z\right)  \overline{s_{L+1}\left(  z\right)  }\right\vert }\text{.}%
\]
For a fixed $z\in D_{+}$ let $\left\{  f_{n}\right\}  _{n\geq0,}$ be a
solution to%
\[
a_{n+1}f_{n+1}+a_{n}f_{n-1}+b_{n}f_{n}=zf_{n}\text{ \ for }n\geq1
\]
and assume $\operatorname{Im}\left(  f_{L+1}/f_{L}\right)  <0$. Then,
$-f_{1}/\left(  a_{1}f_{0}\right)  \in D_{L}(z)$ holds.
\end{lemma}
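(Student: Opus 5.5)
The radius formula has essentially been derived in the discussion preceding the statement, so I will only assemble it. The first step is to apply the Green-type identity (\ref{a2}) with $z_1=z_2=z$ to the solution $f_n=c_n-a_1^{-1}ms_n$, for which $f_0=1$ and $f_1=-a_1^{-1}m$. This gives
\[
(\operatorname{Im}z)\sum_{1\le n\le L}|f_n|^2=\operatorname{Im}m-a_{L+1}\operatorname{Im}\bigl(f_L\overline{f_{L+1}}\bigr),
\]
so $m\in D_L(z)$ is equivalent to $\operatorname{Im}(f_L\overline{f_{L+1}})\ge 0$.

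Next I expand this condition as a Hermitian quadratic form in $m$, using the coefficients $A,B,C,D$ introduced above, and complete the square. Applying (\ref{a2}) to $c_n$ and to $s_n$ separately shows that $\operatorname{Im}A>0$ and $\operatorname{Im}D>0$. In the constant $F$, the cross terms combine into $|c_Ls_{L+1}-s_Lc_{L+1}|^2$, and the Wronskian identity (\ref{a1}) reduces this to $a_1^2/a_{L+1}^2$. Dividing through by $a_1^{-2}\operatorname{Im}A$ then gives the disk (\ref{a4}) in the $m$-plane with radius $a_1/(2\sqrt{a_{L+1}}|\operatorname{Im}s_L\overline{s_{L+1}}|)$. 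The only point needing care is the bookkeeping of the $a_1$ and $a_{L+1}$ factors: with the normalization in (\ref{a1}), $\det = a_1/a_{L+1}$, so $|c_Ls_{L+1}-s_Lc_{L+1}|^2=a_1^2/a_{L+1}^2$. I will recheck that this yields exactly the stated power $\sqrt{a_{L+1}}$, adjusting for how $m$ enters as $a_1^{-1}m$.

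For the second assertion, take an arbitrary solution $\{f_n\}$ with $\operatorname{Im}(f_{L+1}/f_L)<0$. Then $f_0\neq 0$; otherwise $f$ would be a multiple of $s_n$, and (\ref{a2}) applied to $s$ would give the opposite sign. Since the solution space is two-dimensional and spanned by $c_n$ and $s_n$, normalizing by $f_0$ gives $f_n/f_0=c_n-a_1^{-1}ms_n$ with $m=-a_1f_1/f_0$, matching the paper's convention $f_1=-a_1^{-1}m$ (the stated $-f_1/(a_1f_0)$ differs only by this normalization, which I would reconcile). The hypothesis gives $\operatorname{Im}(f_L\overline{f_{L+1}})=|f_L|^2\operatorname{Im}(\overline{f_{L+1}}/\overline{f_L})=-|f_L|^2\operatorname{Im}(f_{L+1}/f_L)>0$. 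By the first equivalence this places $m$ in $D_L(z)$, and also $\operatorname{Im}m>0$ since the left side of the identity above is nonnegative. I expect no real obstacle beyond the normalization constants.
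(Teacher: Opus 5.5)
Your outline is the paper's own derivation: Green's identity (\ref{a2}), the reduction of $m\in D_L(z)$ to $\operatorname{Im}(f_L\overline{f_{L+1}})\ge 0$, completing the square with $A,B,C,D$, and the Wronskian (\ref{a1}). Those steps are sound, and so is your argument that $f_0\neq 0$. The gap is in the two items you defer: neither can be carried out.

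\textbf{The radius.} With your correct value $|c_Ls_{L+1}-s_Lc_{L+1}|^2=a_1^2/a_{L+1}^2$, the constant in the completed square is $F=a_1^2/(4a_{L+1}^2\operatorname{Im}A)$. The condition then reads $|a_1^{-1}m+E/\sqrt{\operatorname{Im}A}|\le a_1/(2a_{L+1}\operatorname{Im}A)$. So the radius of $D_L(z)$ in the $m$-plane is $a_1^2/(2a_{L+1}|\operatorname{Im}s_L\overline{s_{L+1}}|)$. No bookkeeping of the $a_1^{-1}m$ scaling produces a $\sqrt{a_{L+1}}$, so your ``recheck'' cannot succeed.

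You can confirm this at $L=1$. There $c_1=0$, $s_1=1$, $s_2=(z-b_1)/a_2$, so $D_1(z)=\{m\in\mathbb{C}_+:\ |m|^2\le a_1^2\operatorname{Im}m/\operatorname{Im}z\}$. This disk has radius $a_1^2/(2\operatorname{Im}z)$, while the printed formula gives $a_1\sqrt{a_2}/(2\operatorname{Im}z)$. The paper's text reaches $\sqrt{a_{L+1}}$ only because, when it invokes (\ref{a1}), it replaces $|c_Ls_{L+1}-s_Lc_{L+1}|^2$ by $1/a_{L+1}$ instead of $a_1^2/a_{L+1}^2$.

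\textbf{The membership claim.} The second assertion has the same problem. What your argument proves is $-a_1f_1/f_0\in D_L(z)$. The printed point $-f_1/(a_1f_0)$ is $a_1^{-2}$ times this, and the difference is not a normalization you can reconcile. Take $L=1$, $a_1=1/2$, $\operatorname{Im}z=1$, $f_0=1$, $f_1=-2i/5$. Then $\operatorname{Im}(f_2/f_1)=-1/(4a_2)<0$ and $-a_1f_1/f_0=i/5\in D_1(z)$. However, $-f_1/(a_1f_0)=4i/5\notin D_1(z)$, since $16/25>5/25$.

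\textbf{What to do instead.} Do not assert the statement as printed. State and prove the corrected version, which is exactly what your computation gives: the radius is $a_1^2/(2a_{L+1}|\operatorname{Im}s_L\overline{s_{L+1}}|)$, and $-a_1f_1/f_0\in D_L(z)$. For the estimate (\ref{3-13}), only the order of the radius in $|\operatorname{Im}s_L\overline{s_{L+1}}|^{-1}$ matters, so the correction is harmless there.
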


Suppose $D_{\infty}(z)$ consists of one point $m_{+}\left(  z\right)  $. Then,
by its definition of $D_{\infty}(z)$ we have $m_{+}\left(  z\right)
\in\mathbb{C}_{+}$, and the analyticity of $m_{+}\left(  z\right)  $ on
$\mathbb{C}_{+}$ is also known. Therefore, $m_{+}\left(  z\right)  $ is a
Herglotz function. Moreover, (\ref{a4}) implies that
\[
m_{+}\left(  z\right)  =\sum_{1\leq k\leq L-1}\sigma_{k}z^{-k}+O\left(
z^{-L}\right)  \text{ \ as }z\rightarrow\infty\text{ \ on }i\mathbb{R}%
\]
for any $L\geq1$, hence the measure $\sigma_{+}$ on $\mathbb{R}$ representing
$m_{+}\left(  z\right)  $ has any moment. Therefore, $m_{+}\left(  z\right)  $
can be expressed as%
\[
m_{+}\left(  z\right)  =\int_{-\infty}^{\infty}\dfrac{\sigma_{+}\left(
d\lambda\right)  }{\lambda-z}\text{.}%
\]
$m_{+}$ and $\sigma_{+}$ are called the (right) Weyl function and the spectral
measure respectively. Similarly, if we consider the operator $H_{q}$ on
$\mathbb{Z}_{-}$, we have the (left) Weyl function $m_{-}$ and its spectral
measure $\sigma_{-}$. If the boundaries $\pm\infty$ are of the limit point
type, the operator $H_{q}$ can be realized as a self-adjoint operator on
$\ell^{2}\left(  \mathbb{Z}\right)  $ and the kernel of the resolvent operator
$\left(  H_{q}-z\right)  ^{-1}$ is represented by%
\begin{align*}
&  \left(  H_{q}-z\right)  ^{-1}\left(  j,k\right) \\
&  =-\dfrac{\left(  c_{j}(z)+a_{1}^{-1}\left(  a_{0}^{2}m_{-}\left(  z\right)
+z-b_{0}\right)  s_{j}(z)\right)  \left(  c_{k}(z)-a_{1}m_{+}\left(  z\right)
s_{k}(z)\right)  }{a_{1}^{2}m_{+}\left(  z\right)  +a_{0}^{2}m_{-}\left(
z\right)  +z-b_{0}}%
\end{align*}
\ for $j\leq k$. Especially, one has%
\begin{equation}
\left(  H_{q}-z\right)  ^{-1}\left(  0,0\right)  =\dfrac{-1}{a_{1}^{2}%
m_{+}\left(  z\right)  +a_{0}^{2}m_{-}\left(  z\right)  +z-b_{0}}\text{.}
\label{a5}%
\end{equation}
Refer \cite{t}, \cite{z} for the detail.

\subsection{Some properties of sequences growing rapidly}

We consider formal power series%
\[
a\left(  z\right)  =\sum_{k\geq0}a_{k}z^{k}\text{, \ \ }b\left(  z\right)
=\sum_{k\geq0}b_{k}z^{k}\text{.}%
\]
We do not care about the convergence of this series. One can define a product
$f(z)g(z)$ and the coefficients for this product can be obtained by%
\begin{equation}
\left(  a\ast b\right)  _{k}\equiv\sum_{0\leq j\leq k}a_{j}b_{k-j}\text{.}
\label{a6}%
\end{equation}
If $a_{0}=1$, one can define a formal power series $b(z)=a\left(  z\right)
^{-1}$ so that $a(z)b(z)=1$ holds. The coefficients $b_{k}$ for $a^{-1}$ is
obtained by solving%
\[
b_{0}=1\text{, \ \ }\sum_{0\leq j\leq k}a_{j}b_{k-j}=0\text{ \ for }%
k\geq1\text{.}%
\]
However, a more practical way to get $b_{k}$ is to use the expansion of
$\left(  1+x\right)  ^{-1}$:%
\[
\left(  1+\sum_{k\geq1}a_{k}z^{k}\right)  ^{-1}=1-\sum_{k\geq1}a_{k}%
z^{k}+\left(  \sum_{k\geq1}a_{k}z^{k}\right)  ^{2}-\left(  \sum_{k\geq1}%
a_{k}z^{k}\right)  ^{3}+\cdots\text{,}%
\]
and calculate the coefficients of $z^{k}$. Namely, one has for $k\geq1$%
\begin{equation}
b_{k}=-a_{k}+\sum_{\substack{j_{1}+j_{2}=k\\j_{1},j_{2}\geq1}}a_{j_{1}%
}a_{j_{2}}-\sum_{\substack{j_{1}+j_{2}+j_{3}=k\\j_{1},j_{2},j_{3}\geq
1}}a_{j_{1}}a_{j_{2}}a_{j_{3}}+\cdots+\left(  -1\right)  ^{k}a_{1}^{k}\text{.}
\label{a7}%
\end{equation}
For $c$, $\alpha>0$ we are interested in a class of coefficients $a=\left\{
a_{k}\right\}  _{k\geq0}$:
\[
\mathcal{E}_{c,\alpha}=\left\{  a=\left\{  a_{k}\right\}  _{k\geq0}\text{;
\ }\left\vert a_{k}\right\vert \leq c^{k}\Gamma\left(  \alpha k+1\right)
\text{ for any }k\geq1\right\}  \text{.}%
\]
In the sequel an inequality on Gamma function:%
\begin{equation}
\Gamma\left(  x+1\right)  \Gamma\left(  y+1\right)  \leq\Gamma\left(
x+y+1\right)  \text{ \ for }x\text{, }y\geq0 \label{a8}%
\end{equation}
is effective to simplify arguments. This can be deduced inductively from the
identity $\Gamma\left(  x+1\right)  =x\Gamma\left(  x\right)  $ and an
inequality for Beta function obtained by Iv\'{a}dy \cite{i}:%
\[
B\left(  x,y\right)  \leq\dfrac{x+y}{xy}\dfrac{1}{1+xy}\text{ \ for }x\text{,
}y\in\left(  0,1\right)  \text{.}%
\]

\begin{lemma}
\label{a-2}(i) For $a\in\mathcal{E}_{c_{1},\alpha}$, $b\in\mathcal{E}%
_{c_{2},\alpha}$ it holds that%
\[
\left\vert \left(  a\ast b\right)  _{k}\right\vert \leq\left(  \left\vert
a_{0}\right\vert c_{2}^{k}+\left\vert b_{0}\right\vert c_{1}^{k}+c_{1}%
c_{2}\dfrac{c_{2}^{k-1}-c_{1}^{k-1}}{c_{2}-c_{1}}\right)  \Gamma\left(  \alpha
k+1\right)  \text{ \ for }k\geq1\text{.}%
\]
(ii) For $a\in\mathcal{E}_{c,\alpha}$ if $a_{0}=1$, then for $k\geq1$%
\[
\left\vert \left(  a^{-1}\right)  _{k}\right\vert \leq\dfrac{1}{2}\left(
2c\right)  ^{k}\Gamma\left(  \alpha k+1\right)  \text{.}%
\]

\end{lemma}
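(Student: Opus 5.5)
For (i), I would split the convolution $(a\ast b)_k=\sum_{0\le j\le k}a_jb_{k-j}$ into the two boundary terms $j=0$ and $j=k$ and the interior terms $1\le j\le k-1$. The boundary terms are bounded directly by the class conditions:
\[
|a_0||b_k|\le |a_0|c_2^k\Gamma(\alpha k+1),\qquad |a_k||b_0|\le |b_0|c_1^k\Gamma(\alpha k+1).
\]
For an interior term, $a\in\mathcal{E}_{c_1,\alpha}$ and $b\in\mathcal{E}_{c_2,\alpha}$ give $|a_jb_{k-j}|\le c_1^jc_2^{k-j}\Gamma(\alpha j+1)\Gamma(\alpha(k-j)+1)$. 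The Gamma inequality (\ref{a8}) with $x=\alpha j$, $y=\alpha(k-j)$ bounds the product of Gammas by $\Gamma(\alpha k+1)$. It then remains to sum the geometric series
\[
\sum_{1\le j\le k-1}c_1^jc_2^{k-j}=c_1c_2\sum_{0\le i\le k-2}c_1^ic_2^{k-2-i}=c_1c_2\frac{c_2^{k-1}-c_1^{k-1}}{c_2-c_1},
\]
interpreted as the limit $(k-1)c^{k}$ when $c_1=c_2=c$. Adding the three contributions gives the stated bound.

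For (ii), I would start from the explicit expansion (\ref{a7}). This writes $(a^{-1})_k$ as a sum over compositions of $k$: for each $m=1,\dots,k$ there is a signed sum over ordered tuples $(j_1,\dots,j_m)$ with $j_i\ge1$ and $j_1+\dots+j_m=k$. Each product satisfies
\[
|a_{j_1}\cdots a_{j_m}|\le c^{k}\prod_i\Gamma(\alpha j_i+1)\le c^k\Gamma(\alpha k+1),
\]
by iterating (\ref{a8}). The number of compositions of $k$ into $m$ positive parts is $\binom{k-1}{m-1}$, and summing over $m$ gives $2^{k-1}$ compositions in total. Therefore
\[
|(a^{-1})_k|\le 2^{k-1}c^k\Gamma(\alpha k+1)=\tfrac12(2c)^k\Gamma(\alpha k+1).
\]

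The only step needing care is the iterated use of (\ref{a8}): for $m$ factors one applies it repeatedly as $\Gamma(x+1)\Gamma(y+1)\le\Gamma(x+y+1)$, which is valid because all arguments $\alpha j_i$ are nonnegative. The rest is counting. If (\ref{a8}) were unavailable, the fallback would be an induction on $k$ via the recursion $b_k=-\sum_{j\ge1}a_jb_{k-j}$ together with (i), but the composition count gives the sharp constant $\tfrac12(2c)^k$ directly, so I expect no real obstacle here.
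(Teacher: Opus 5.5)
Your proposal is correct and matches the paper's proof essentially step for step: for (i) the same split of $(a\ast b)_k$ into the two boundary terms and the interior terms, bounded via (\ref{a8}) and the geometric sum, and for (ii) the same term-by-term bound on the composition expansion (\ref{a7}) using iterated (\ref{a8}) and the count $\sum_{m}\binom{k-1}{m-1}=2^{k-1}$. Your remark that the geometric factor should be read as $(k-1)c^{k}$ when $c_1=c_2=c$ is a small clarification that the paper leaves implicit.
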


\begin{proof}
(i) obeys from (\ref{a6}) and (\ref{a8}):%
\begin{align*}
\left\vert \left(  a\ast b\right)  _{k}\right\vert  &  \leq\sum_{0\leq j\leq
k}\left\vert a_{j}b_{k-j}\right\vert \\
&  \leq\left(  \left\vert a_{0}\right\vert c_{2}^{k}+\left\vert b_{0}%
\right\vert c_{1}^{k}\right)  \Gamma\left(  \alpha k+1\right) \\
&  \text{ \ \ \ \ \ \ \ }+\sum_{1\leq j\leq k-1}c_{1}^{j}c_{2}^{k-j}%
\Gamma\left(  \alpha j+1\right)  \Gamma\left(  \alpha\left(  k-j\right)
+1\right) \\
&  \leq\left(  \left\vert a_{0}\right\vert c_{2}^{k}+\left\vert b_{0}%
\right\vert c_{1}^{k}+c_{1}c_{2}\dfrac{c_{2}^{k-1}-c_{1}^{k-1}}{c_{2}-c_{1}%
}\right)  \Gamma\left(  \alpha k+1\right)  \text{.}%
\end{align*}
For $k\geq1$ (\ref{a7}) yields%
\begin{align*}
\left\vert \left(  a^{-1}\right)  _{k}\right\vert  &  \leq\left\vert
a_{k}\right\vert +\sum_{j_{1}+j_{2}=k}\left\vert a_{j_{1}}a_{j_{2}}\right\vert
+\sum_{j_{1}+j_{2}+j_{3}=k}\left\vert a_{j_{1}}a_{j_{2}}a_{j_{3}}\right\vert
+\cdots+\left\vert a_{1}\right\vert ^{k}\\
&  \leq c^{k}\Gamma\left(  \alpha k+1\right)  +\sum_{\substack{j_{1}%
+j_{2}=k\\j_{1},j_{2}\geq1}}c^{j_{1}+j_{2}}\Gamma\left(  \alpha j_{1}%
+1\right)  \Gamma\left(  \alpha j_{2}+1\right)  +\cdots+c^{k}\Gamma\left(
\alpha+1\right)  ^{k}\\
&  \leq c^{k}\Gamma\left(  \alpha k+1\right)  \sum_{1\leq j\leq k}\dbinom
{k-1}{k-j}=\dfrac{1}{2}\left(  2c\right)  ^{k}\Gamma\left(  \alpha k+1\right)
\text{.}%
\end{align*}

\end{proof}

\begin{lemma}
\label{a-3}Let $\left\{  \mu_{k}\right\}  _{k\geq1}$, $\left\{  \sigma
_{k}\right\}  _{k\geq1}$ be 2 sequences of complex numbers connected with%
\[
\sum_{1\leq k\leq L-1}\mu_{k}\zeta^{-k}=\sum_{1\leq k\leq L-1}\sigma
_{k}\left(  \zeta+\zeta^{-1}\right)  ^{-k}+O\left(  \zeta^{-L}\right)  \text{
as }\zeta\rightarrow\infty
\]
for any $L\geq1$. Assume $\left\{  \mu_{k}\right\}  _{k\geq1}\in
\mathcal{E}_{c,\alpha}$. Then one has%
\begin{equation}
\left\vert \sigma_{k}\right\vert \leq2^{k}\left(  kc\Gamma\left(
\alpha+1\right)  +kc^{k}\Gamma\left(  \alpha k+1\right)  \right)  \text{.}
\label{a9}%
\end{equation}

\end{lemma}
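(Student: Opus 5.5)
The plan is to compare the two expansions coefficient by coefficient, after inverting the change of variables $w=\zeta^{-1}\mapsto x=(\zeta+\zeta^{-1})^{-1}$. Write $x=w/(1+w^{2})$, so that $x^{k}=w^{k}(1+w^{2})^{-k}$. Then the hypothesis says that, as formal power series in $w$,
\[
\sum_{k\geq1}\mu_{k}w^{k}=\sum_{k\geq1}\sigma_{k}\,w^{k}(1+w^{2})^{-k}.
\]
To recover $\sigma_{k}$ we invert: $w$ is the branch of $x w^{2}-w+x=0$ vanishing at $0$, namely $w=\bigl(1-\sqrt{1-4x^{2}}\bigr)/(2x)=\sum_{j\geq0}C_{j}x^{2j+1}$, where $C_{j}$ are the Catalan numbers. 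Hence
\[
\sum_{k}\sigma_{k}x^{k}=\sum_{j\geq1}\mu_{j}\,w(x)^{j},
\]
and $\sigma_{k}=\sum_{j\leq k}\mu_{j}\,[x^{k}]w(x)^{j}$.

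The key step is bounding $[x^{k}]w(x)^{j}$. Every coefficient of $w(x)$ is nonnegative, and it is dominated by the coefficients of the series for $2x/(1-2x)$: indeed $C_{j}\leq 4^{j}$, so $[x^{2j+1}]w\leq 2^{2j}\leq 2^{2j+1}$. Therefore
\[
[x^{k}]w^{j}\leq [x^{k}]\Bigl(\frac{2x}{1-2x}\Bigr)^{j}=2^{k}\binom{k-1}{j-1}.
\]
Alternatively, Lagrange inversion gives the exact value $[x^{k}]w^{j}=\frac{j}{k}\binom{k}{(k-j)/2}$ when $k-j$ is even, and this is bounded by $2^{k}$. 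That simpler bound suffices.

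We then split the sum for $\sigma_{k}$ into two parts. For $j=k$ the coefficient is $1$, contributing $|\mu_{k}|\leq c^{k}\Gamma(\alpha k+1)$. For $1\leq j\leq k-1$ the terms are $|\mu_{j}|\,2^{k}\leq 2^{k}c^{j}\Gamma(\alpha j+1)$, and we want these dominated by $2^{k}\bigl(c\,\Gamma(\alpha+1)+c^{k}\Gamma(\alpha k+1)\bigr)$. Since $\Gamma(\alpha j+1)\leq\Gamma(\alpha k+1)$ whenever $\alpha j\geq1$ (monotonicity on $[1,\infty)$), and more generally $\Gamma(\alpha j+1)\leq\max(\Gamma(\alpha+1),\Gamma(\alpha k+1))$ because $\Gamma(\cdot+1)$ is log-convex, so its maximum on $[\alpha,\alpha k]$ is attained at an endpoint, each term is at most $2^{k}\max(c,c^{k})\max(\Gamma(\alpha+1),\Gamma(\alpha k+1))$. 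Summing the $k$ terms then produces the factor $k$ in (\ref{a9}).

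The main obstacle is the mixed powers $c^{j}$ paired with $\Gamma(\alpha j+1)$: the endpoint maxima do not combine termwise into $c\,\Gamma(\alpha+1)+c^{k}\Gamma(\alpha k+1)$. We handle this by applying log-convexity to the whole product $j\mapsto c^{j}\Gamma(\alpha j+1)$, which is log-convex in $j$ because $j\log c$ is linear. Its maximum over $1\leq j\leq k$ is therefore at most $c\,\Gamma(\alpha+1)+c^{k}\Gamma(\alpha k+1)$. Multiplying by $k$ terms and by the factor $2^{k}$ yields (\ref{a9}).
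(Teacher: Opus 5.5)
Your proof is correct and follows the paper's argument: invert $\zeta^{-1}$ as a power series in $(\zeta+\zeta^{-1})^{-1}$, bound the coefficient of $\mu_j$ in $\sigma_k$ by $2^k$, and then bound $\sum_{1\le j\le k}c^j\Gamma(\alpha j+1)$ by $k\bigl(c\,\Gamma(\alpha+1)+c^k\Gamma(\alpha k+1)\bigr)$. The sub-steps are justified differently. The paper gets the coefficient bound from positivity of the coefficients of $\varphi(y)=(1-\sqrt{1-y})/y$ together with $\varphi(1)^k=1$, where you use Lagrange inversion. It gets the last estimate by writing $c^j\Gamma(\alpha j+1)=\int_0^\infty e^{-t}(ct^\alpha)^j\,dt$ and splitting at $ct^\alpha=1$, where you use log-convexity. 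Your detour through $2^k\binom{k-1}{j-1}$ is too weak for (\ref{a9}) and can simply be dropped.
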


\begin{proof}
Set $z=\left(  \zeta+\zeta^{-1}\right)  /2$. Then, we have%
\[
\zeta^{-1}=z^{-1}\varphi\left(  z^{-2}\right)
\]
\ with
\[
\varphi\left(  z\right)  =\dfrac{1-\sqrt{1-z}}{z}=\sum_{j\geq1}p_{j}%
z^{j-1}\text{, \ }p_{j}=\dfrac{2^{-2j}\left(  2j\right)  !}{\left(  j!\right)
^{2}\left(  2j-1\right)  }\text{.}%
\]
It is crucial that%
\[
0<p_{j}<1\text{, \ }\sum_{j\geq1}p_{j}=\varphi\left(  1\right)  =1
\]
holds. Let $\beta_{j}^{\left(  k\right)  }$ be the coefficients defined by%
\[
\varphi\left(  z\right)  ^{k}=\sum_{j\geq0}\beta_{j}^{\left(  k\right)  }%
z^{j}\text{.}%
\]
Then, substituting this expression of $\zeta^{-1}$ yields
\begin{align*}
\sum_{1\leq k\leq L-1}2^{-k}\sigma_{k}z^{k}  &  =\sum_{1\leq k\leq L-1}\mu
_{k}z^{k}\varphi\left(  z^{2}\right)  ^{k}+O\left(  z^{L}\right) \\
&  =\sum_{1\leq i\leq L-1}\mu_{i}z^{i}\sum_{0\leq2j\leq L-1-i}\beta
_{j}^{\left(  i\right)  }z^{2j}+O\left(  z^{L}\right)
\end{align*}
\ as \ $z\rightarrow0$ replacing $z^{-1}$ with $z$. Therefore, one has%
\[
2^{-k}\sigma_{k}=\sum_{0\leq2j\leq k-1}\mu_{k-2j}\beta_{j}^{\left(
k-2j\right)  }\text{,}%
\]
which leads us to%
\[
\left\vert \sigma_{k}\right\vert \leq2^{k}\sum_{0\leq2j\leq k-1}\left\vert
\mu_{k-2j}\right\vert \beta_{j}^{\left(  k-2j\right)  }\leq2^{k}\sum
_{0\leq2j\leq k-1}\left\vert \mu_{k-2j}\right\vert \leq2^{k}\sum_{1\leq j\leq
k}\left\vert \mu_{j}\right\vert \text{,}%
\]
where we have used the fact%
\[
0<\beta_{j}^{\left(  k\right)  }<1\text{ }\ \text{due to }\sum_{j\geq0}%
\beta_{j}^{\left(  k\right)  }=\varphi\left(  1\right)  ^{k}=1\text{.}%
\]
Then, (\ref{a9}) follows from%
\[
I\equiv\sum_{1\leq j\leq k}c^{j}\Gamma\left(  \alpha j+1\right)  =\int
_{0}^{\infty}e^{-t}\sum_{1\leq j\leq k}c^{j}t^{\alpha j}dt\text{.}%
\]
Separating the region of integration into 2 parts $\left\{  ct^{\alpha
}<1\right\}  $, $\left\{  ct^{\alpha}>1\right\}  $, one has%
\[
I\leq k\int_{ct^{\alpha}<1}e^{-t}ct^{\alpha}dt+k\int_{ct^{\alpha}>1}\left(
ct^{\alpha}\right)  ^{k}e^{-t}dt\leq kc\Gamma\left(  \alpha+1\right)
+kc^{k}\Gamma\left(  \alpha k+1\right)  \text{.}%
\]

\end{proof}

\subsection{Exponential integrability of spectral measures}

We know any spectral measure of a Jacobi operator has finite moments of any
degree. Here we characterize an exponential integrability by growing order of
the moments.

\begin{lemma}
\label{a-4}If the moment $\sigma_{k}$ of the spectral measure $\sigma_{+}$
satisfies%
\begin{equation}
\left\vert \sigma_{2k}\right\vert \leq c_{1}c^{-2k/\alpha}\Gamma\left(
2k/\alpha+1\right)  \text{ for any }k\geq1\text{,} \label{a10}%
\end{equation}
then, for any $c^{\prime}<c$ there exists a constant $c_{2}$ depending on
$c_{1}$, $c$, $c^{\prime}$, $\alpha$ such that%
\begin{equation}
\int_{-\infty}^{\infty}e^{c^{\prime}\left\vert \lambda\right\vert ^{\alpha}%
}\sigma_{+}\left(  d\lambda\right)  \leq c_{2}\text{.} \label{a11}%
\end{equation}
Conversely, the property (\ref{a11}) implies
\begin{equation}
\left\vert \sigma_{k}\right\vert \leq\int_{-\infty}^{\infty}\left\vert
\lambda\right\vert ^{k}\sigma_{+}\left(  d\lambda\right)  \leq c_{2}\left(
c^{\prime}\right)  ^{-k/\alpha}\Gamma\left(  k/\alpha+1\right)  \text{ for any
}k\in\mathbb{Z}_{+}\text{. } \label{a12}%
\end{equation}
In this case the associated Jacobi operator has the limit point type boundary
at $+\infty$ if $\alpha\geq1$. In this case the associated Jacobi operator has
the limit point type boundary at $+\infty$ if $\alpha\geq1$.
\end{lemma}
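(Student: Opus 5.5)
The forward direction (\ref{a10}) $\Rightarrow$ (\ref{a11}) will go through a Taylor expansion of the exponential, reducing everything to even moments. Since $\sigma_{+}$ is a positive measure, the even moments satisfy $\sigma_{2k}=\int\lambda^{2k}\sigma_{+}(d\lambda)\geq0$. Writing $|\lambda|^{\alpha}=(\lambda^{2})^{\alpha/2}$, I would first bound $e^{c^{\prime}|\lambda|^{\alpha}}$ by a series in even powers $\lambda^{2k}$, using the same gamma-integral device as in Lemma \ref{a-3}. Concretely, I aim to show: for $c^{\prime}<c^{\prime\prime}<c$ there is $C$ with
\[
e^{c^{\prime}|\lambda|^{\alpha}}\leq C\sum_{k\geq0}\dfrac{(c^{\prime\prime})^{2k/\alpha}\lambda^{2k}}{\Gamma\left(2k/\alpha+1\right)}\text{.}
\]
This is a comparison of the Mittag-Leffler type function $E(x)=\sum_{k}x^{2k/\alpha}/\Gamma(2k/\alpha+1)$ with $e^{x}$ at $x=(c^{\prime\prime})^{1/\alpha}|\lambda|$.

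The first step is to prove this comparison. Set $f(x)=\sum_{m\geq0}x^{m\beta}/\Gamma(m\beta+1)$ with $\beta=2/\alpha$. Its terms are the values $x^{s}/\Gamma(s+1)$ sampled at $s=m\beta$, and the function $s\mapsto x^{s}/\Gamma(s+1)$ is log-concave with maximum near $s=x$ of size about $e^{x}/\sqrt{2\pi x}$. Its width is about $\sqrt{x}$, so a grid of spacing $\beta$ catches a total of order $\sqrt{x}\cdot e^{x}/\sqrt{x}\cdot\beta^{-1}$. I would therefore expect $f(x)\geq c_{\beta}e^{x}$ for large $x$. It will be enough to show $f(x)\geq c_{\beta}x^{-1}e^{x}$, say by keeping only the grid point nearest $x$ and applying Stirling's formula, which loses at most a polynomial factor. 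That polynomial loss is absorbed by passing from $c^{\prime\prime}$ to $c^{\prime}<c^{\prime\prime}$.

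Next, I integrate the comparison against $\sigma_{+}$ and use monotone convergence. This gives
\[
\int e^{c^{\prime}|\lambda|^{\alpha}}d\sigma_{+}\leq C\sum_{k}\dfrac{(c^{\prime\prime})^{2k/\alpha}\sigma_{2k}}{\Gamma(2k/\alpha+1)}\leq Cc_{1}\sum_{k}(c^{\prime\prime}/c)^{2k/\alpha}<\infty\text{,}
\]
where the last sum is a convergent geometric series. This yields (\ref{a11}).

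For the converse (\ref{a11}) $\Rightarrow$ (\ref{a12}), I use the elementary bound $|\lambda|^{k}=(c^{\prime})^{-k/\alpha}(c^{\prime}|\lambda|^{\alpha})^{k/\alpha}$ together with $y^{s}\leq\Gamma(s+1)e^{y}$, valid for $y\geq0$ and real $s\geq0$ because $y^{s}e^{-y}\leq\int_{0}^{\infty}t^{s}e^{-t}dt$ fails in general. A valid form of this bound is $\sup_{y}y^{s}e^{-y}=s^{s}e^{-s}\leq\Gamma(s+1)$, which holds since $\Gamma(s+1)\geq\sqrt{2\pi s}\,(s/e)^{s}\geq(s/e)^{s}$ for $s\geq1$, and is checked directly for $s<1$. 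Taking $s=k/\alpha$ and integrating against $\sigma_{+}$ gives (\ref{a12}).

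Finally, for the limit-point claim with $\alpha\geq1$, (\ref{a12}) gives $\sigma_{2k}\leq C\,\Gamma(2k/\alpha+1)(c^{\prime})^{-2k/\alpha}\leq C^{\prime}(Ak)^{2k}$. Hence $\sum_{k}\sigma_{2k}^{-1/(2k)}=\infty$, so Carleman's condition holds. The moment problem is then determinate, and a determinate moment problem for the half-line Jacobi matrix is equivalent to the limit point case at $+\infty$ (the classical Hamburger/Weyl correspondence). The main obstacle is the lower bound on $f$ when $\beta=2/\alpha$ is not an integer. If the Stirling argument gets messy, I would fall back on the known asymptotics of the Mittag-Leffler function, $E_{1/\beta}(x^{\beta})\sim\beta^{-1}e^{x}$.
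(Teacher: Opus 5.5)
Your proposal is correct. For the forward implication it takes a genuinely different route from the paper; the converse and the limit-point step are the same as the paper's.

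The paper expands $e^{c^{\prime}|\lambda|^{\alpha}}=\sum_{k}(c^{\prime})^{k}|\lambda|^{\alpha k}/k!$ directly. It bounds each \emph{fractional} moment $\int|\lambda|^{\alpha k}\sigma_{+}(d\lambda)$ by H\"older interpolation between the neighbouring even moments $\sigma_{2j}$ and $\sigma_{2j+2}$, where $2j\leq\alpha k<2j+2$. Stirling's formula then bounds the interpolated Gamma factor by $c_{3}\Gamma(k+1)$. This works because $\log\Gamma$ is nearly affine, far out, on intervals of fixed length $2/\alpha$. The result is $\int|\lambda|^{\alpha k}d\sigma_{+}\leq c_{1}c_{3}c^{-k}k!$, followed by a geometric series with ratio exactly $c^{\prime}/c$.

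You move the interpolation from the measure to the function. You prove a measure-independent pointwise bound $e^{c^{\prime}|\lambda|^{\alpha}}\leq C\,f(c^{\prime\prime}|\lambda|^{\alpha})$, where $f$ samples $s\mapsto y^{s}/\Gamma(s+1)$ on the grid $\beta\mathbb{Z}_{+}$, and then integrate termwise by Tonelli.

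Your ``main obstacle'' is in fact harmless. Let $s=y+\delta$ be the grid point nearest $y$, so $|\delta|\leq\beta/2$. Using $\log(1+u)\leq u$ gives $\log\bigl((ey/s)^{s}\bigr)\geq y-\delta^{2}/y$. Stirling's upper bound on $\Gamma(s+1)$ then gives $f(y)\geq c_{\beta}e^{y}/\sqrt{y}$ for large $y$, for every $\beta>0$.

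What each approach buys:
\begin{itemize}
\item The paper's argument is slightly sharper, since it needs no auxiliary $c^{\prime\prime}$, and it stays at the level of moments.
\item Yours isolates a clean analytic inequality from the measure and avoids the H\"older and Gamma bookkeeping; incidentally, the paper's written exponents there contain typos.
\item The price is the intermediate constant $c^{\prime\prime}$, which costs nothing since only $c^{\prime}<c$ is claimed.
\end{itemize}

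Three small slips to fix.
\begin{itemize}
\item The evaluation point must be $x=c^{\prime\prime}|\lambda|^{\alpha}$, not $(c^{\prime\prime})^{1/\alpha}|\lambda|$. Only the former turns $x^{2k/\alpha}$ into $(c^{\prime\prime})^{2k/\alpha}\lambda^{2k}$ and $e^{x}$ into $e^{c^{\prime\prime}|\lambda|^{\alpha}}$. Your subsequent treatment of $f$ versus $e^{x}$ is already consistent with this corrected choice.
\item The inequality $y^{s}e^{-y}\leq\Gamma(s+1)$ does not fail. For all $y,s\geq0$ one has $\Gamma(s+1)\geq\int_{y}^{\infty}t^{s}e^{-t}dt\geq y^{s}e^{-y}$. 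Your replacement argument via $\sup_{y}y^{s}e^{-y}=(s/e)^{s}$ is also valid.
\item In the usual notation $E_{a}(z)=\sum_{m}z^{m}/\Gamma(am+1)$, your $f$ is $E_{\beta}(x^{\beta})$, not $E_{1/\beta}(x^{\beta})$.
\end{itemize}
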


\begin{proof}
Observe%
\[
\int_{-\infty}^{\infty}e^{c^{\prime}\left\vert \lambda\right\vert ^{\alpha}%
}\sigma_{+}\left(  d\lambda\right)  =\sum_{k\geq0}\dfrac{\left(  c^{\prime
}\right)  ^{k}}{k!}\int_{-\infty}^{\infty}\left\vert \lambda\right\vert
^{\alpha k}\sigma_{+}\left(  d\lambda\right)  \text{.}%
\]
Define a non-negative integer $j$ by $j\leq k\alpha/2<j+1$ and set
$t=k\alpha/2-j$. Then,\ $k\alpha=2\left(  1-t\right)  j+2t\left(  j+1\right)
$ and H\"{o}lder's inequality imply%
\begin{align*}
\int_{-\infty}^{\infty}\left\vert \lambda\right\vert ^{\alpha k}\sigma
_{+}\left(  d\lambda\right)   &  \leq\left(  \int_{-\infty}^{\infty}\left\vert
\lambda\right\vert ^{2j}\sigma_{+}\left(  d\lambda\right)  \right)
^{1-t}\left(  \int_{-\infty}^{\infty}\left\vert \lambda\right\vert ^{2\left(
j+1\right)  }\sigma_{+}\left(  d\lambda\right)  \right)  ^{t}\\
&  \leq c_{1}c^{-k}\Gamma\left(  2j/\alpha+1\right)  ^{j+1-k\alpha}%
\Gamma\left(  2\left(  j+1\right)  /\alpha+1\right)  ^{k\alpha-j}\text{.}%
\end{align*}
Stirling's formula $\Gamma\left(  x+1\right)  \sim\sqrt{2\pi x}\left(
x/e\right)  ^{x}$ yields
\[
\Gamma\left(  j/\alpha+1\right)  ^{j+1-k\alpha}\Gamma\left(  \left(
j+1\right)  /\alpha+1\right)  ^{k\alpha-j}\leq c_{3}\Gamma\left(  k+1\right)
\]
for a constant $c_{3}$ depending only on $\alpha$. Therefore, one has%
\[
\int_{-\infty}^{\infty}\left\vert \lambda\right\vert ^{\alpha k}\sigma
_{+}\left(  d\lambda\right)  \leq c_{1}c_{3}c^{-k}k!\text{,}%
\]
which implies%
\[
\int_{-\infty}^{\infty}e^{c^{\prime}\left\vert \lambda\right\vert ^{\alpha}%
}\sigma_{+}\left(  d\lambda\right)  \leq c_{1}c_{3}\sum_{k\geq0}\dfrac{\left(
c^{\prime}\right)  ^{k}c^{-k}k!}{k!}=c_{1}c_{3}\sum_{k\geq0}\left(
\dfrac{c^{\prime}}{c}\right)  ^{k}<\infty\text{.}%
\]
Conversely, noting an inequality%
\[
e^{x}\geq\dfrac{x^{\beta}}{\Gamma\left(  \beta+1\right)  }\text{ \ for any
}\beta\text{, }x\geq0\text{,}%
\]
we have%
\[
\int_{-\infty}^{\infty}e^{c^{\prime}\left\vert \lambda\right\vert ^{\alpha}%
}\sigma_{+}\left(  d\lambda\right)  \geq\dfrac{\left(  c^{\prime}\right)
^{\beta}}{\Gamma\left(  \beta+1\right)  }\int_{-\infty}^{\infty}\left\vert
\lambda\right\vert ^{\alpha\beta}\sigma_{+}\left(  d\lambda\right)  \text{.}%
\]
Choosing $\beta=k\alpha^{-1}$ yields (\ref{a12}).

The last statement obeys from Carleman's criterion on the uniqueness of the
moment problem:%
\[
\sum_{k\geq1}\sigma_{2k}^{-\left(  2k\right)  ^{-1}}\geq\sum_{k\geq1}\left(
c_{1}\right)  ^{-\left(  2k\right)  ^{-1}}c^{1/\alpha}\Gamma\left(
2k/\alpha+1\right)  ^{-\left(  2k\right)  ^{-1}}\geq c_{3}\sum_{k\geq
1}k^{-1/\alpha}=\infty\text{.}%
\]

\end{proof}

\end{document}